\def\ps@pprintTitle{%
 \let\@oddhead\@empty
 \let\@evenhead\@empty
 \def\@oddfoot{\centerline{\thepage}}%
 \let\@evenfoot\@oddfoot}
\newtheorem{thm}{Theorem}[section]
\newtheorem{lemma}[thm]{Lemma}
\newtheorem{cor}[thm]{Corollary}
\newtheorem{prop}[thm]{Proposition}
\newtheorem{exa}[thm]{Example}
\theoremstyle{remark}
\newtheorem{remark}[thm]{Remark}
\newtheorem{openproblem}[thm]{Open Problem}
\newcounter{myenumi}
\newsavebox{\cmm}
\savebox{\cmm}{\indent}
\newenvironment{myenumerate}[1]{
\begin{list}{
{\bf #1~\themyenumi}. } {\labelwidth=0pt
\labelsep=0pt\leftmargin=0pt\usecounter{myenumi}} }{\end{list}}
\newenvironment{enumerateB}{
\begin{enumerate}
  \setlength{\itemsep}{3pt}
  \setlength{\parskip}{0pt}
}{\end{enumerate}}
\begin{document}

\begin{frontmatter}

\title{A family of non-Cayley cores based on vertex-transitive or strongly regular self-complementary graphs}

\author{Marko Orel}

\address{University of Primorska, FAMNIT, Glagolja\v{s}ka 8, 6000 Koper, Slovenia}
\address{University of Primorska, IAM, Muzejski trg 2, 6000 Koper, Slovenia}
\address{IMFM, Jadranska 19, 1000 Ljubljana, Slovenia}
\ead{marko.orel@upr.si}

\begin{abstract}
Given a finite simple graph $\Gamma$ on $n$ vertices its complementary prism is the graph~$\Gamma\bar{\Gamma}$ that is obtained from~$\Gamma$ and its complement $\bar{\Gamma}$ by adding a perfect matching, where each its edge connects two copies of the same vertex in $\Gamma$ and~$\bar{\Gamma}$. It generalizes the Petersen graph, which is obtained if $\Gamma$ is the pentagon. The automorphism group of $\Gamma\bar{\Gamma}$ is described for arbitrary graph~$\Gamma$. In particular, it is shown that the ratio between the cardinalities of the automorphism groups of $\Gamma\bar{\Gamma}$ and $\Gamma$ can attain only values $1$, $2$, $4$, and~$12$. It is shown that the Cheeger number of $\Gamma\bar{\Gamma}$ equals either 1 or $1-\frac{1}{n}$, and the two corresponding classes of~graphs are fully determined. It is proved that $\Gamma\bar{\Gamma}$ is vertex-transitive if and only if $\Gamma$ is vertex-transitive and self-complementary. In this case the complementary prism is Hamiltonian-connected whenever $n>5$, and is not a Cayley graph whenever $n>1$. The main results involve endomorphisms of graph $\Gamma\bar{\Gamma}$. It is shown that $\Gamma\bar{\Gamma}$  is a core, i.e. all its endomorphisms are automorphisms, whenever $\Gamma$ is strongly regular and self-complementary. The same conclusion is obtained for many vertex-transitive self-complementary graphs. In particular, it is shown that if there exists a vertex-transitive self-complementary graph~$\Gamma$ such that $\Gamma\bar{\Gamma}$  is not a core, then $\Gamma$ is neither a core nor its core is a complete graph.
\end{abstract}

\begin{keyword}
graph homomorphism\sep core \sep automorphism group\sep self-complementary graph \sep vertex-transitive graph \sep strongly regular graph \sep non-Cayley graph \sep Cheeger number \sep graph spectrum

\MSC[2020] 05C60 \sep 05C76 \sep  05E30 \sep 05C45
\end{keyword}

\end{frontmatter}

\section{Introduction}

The existence of a homomorphism $\varphi : \Gamma_1\to\Gamma_2$ between two graphs provides information on the relation between the two values of various graph invariants for $\Gamma_1$ and $\Gamma_2$. For example, the clique number and the chromatic number of $\Gamma_1$ cannot exceed the corresponding value for $\Gamma_2$. Since many of these invariants are difficult to compute, the study of graph homomorphisms gain on importance in the last couple of decades, and some systematic treatment appeared in the literature~\cite{nesetril,godsil_knjiga,hahn}. On the other hand it is also not surprising that exploring graph homomorphisms between two given graphs is often a hard task to do. Related to this research is a notion of a core, which is a graph with the property that each its endomorphism is an automorphism. Each graph $\Gamma$ has a subgraph~$\Gamma'$, which is a core and such that there exists some homomorphism $\varphi: \Gamma\to \Gamma'$. It is referred as the core of $\Gamma$ and it is unique up to an isomorphism. The investigation of cores is mostly focused on graphs that posses either some degree of `symmetry' or some nice combinatorial properties. Often their definitions rely on some algebraic or geometric structure. It turns out that many such graphs are either cores or their cores are complete graphs. In fact, Cameron and Kazanidis~\cite{cameron} proved that this property is shared by all graphs with the automorphism group that acts transitively on (unordered) pairs of nonadjacent vertices. The same kind of a result was obtained by Godsil and Royle~\cite{godsil2011} for connected regular graphs with the automorphism group acting transitively on (unordered) pairs of vertices at distance two. Recently Roberson proved that each primitive strongly regular graph is either a core or its core is complete~\cite{roberson-strgreg}. There are other examples of graphs with the same property, which show that the union of the graph families in~\cite{cameron,godsil2011,roberson-strgreg} is not optimal (cf. \cite{LAA,JACO,E-JC,JCTA}). In these cases it is often very difficult to decide, which of the two possibilities regarding the core is true. In fact, for certain classes of graphs this question translates to some of longstanding open problems in finite geometry~\cite{cameron,JCTA}. For some other results related to cores see~\cite{roberson-universal,roberson-colorings,mancinska,nesetril-samal,roberson-vertex}.

A famous core is the Petersen graph (cf.~\cite{godsil_knjiga}). Its importance (cf.~\cite{petersen}) resulted in several generalizations of this graph. Perhaps the most known is the cubic version that was introduced in~\cite{coxeter,watkins2}. Yet there exist other generalizations with combinatorial/algebraic flavor. Among them the most studied family consists of Kneser graphs $K(n,r)$, which are cores whenever $2r<n$~\cite[Theorem~7.9.1]{godsil_knjiga}. If we consider a graph with the vertex set consisting of all invertible $n\times n$ Hermitian matrices over the field with four elements, where a pair of matrices $\{A, B\}$ form an edge if and only if $\textrm{rank}(A-B)=1$, we obtain another family of cores~\cite{E-JC}. It generalizes the Petersen graph, which is obtained if $n=2$. In this paper we study the complementary prism $\Gamma\bar{\Gamma}$ of a graph $\Gamma$. This is another generalization of the Petersen graph, which is obtained if $\Gamma$ is the pentagon. Since the later graph is vertex-transitive, strongly regular, and self-complementary,  the author's initial aim was to the study the core of~$\Gamma\bar{\Gamma}$ under the assumption of (a combination of) these three graph properties. During the research other motivation to study $\Gamma\bar{\Gamma}$ appeared, as we explain in the sequel.

The complementary prism was introduced in~\cite{haynes2007}. Despite it was studied in several papers (see for example \cite{cp1,cp2,cp3,carvalho2018,cp4,cp6,cp5,cp7,cp13,cp8,cp9,cp10,haynes2007,cp14,cp11,cp15,cp12}) it turns out that many important graph invariants of a complementary prism are not determined yet. One such invariant is the automorphism group $\textrm{Aut}(\Gamma\bar{\Gamma})$, which we describe for arbitrary finite simple graph~$\Gamma$. Recall that each automorphism of $\Gamma$ is also an automorphism of the complement $\bar{\Gamma}$. Hence, it induces a natural automorphism of $\Gamma\bar{\Gamma}$. Similarly, each antimorphism of $\Gamma$, if it exists, induces a natural member of $\textrm{Aut}(\Gamma\bar{\Gamma})$. Are there any other automorphisms of the complementary prism? For some families of graphs the answer is positive, and the most difficult part of determining $\textrm{Aut}(\Gamma\bar{\Gamma})$ in this paper was to identify these families. The automorphism group  $\textrm{Aut}(\Gamma\bar{\Gamma})$ is by definition related to our study of graph homomorphisms. Yet, its examination yields other benefits. With it we are able to partition all finite simple graphs into four classes with respect to the fraction $|\textrm{Aut}(\Gamma\bar{\Gamma})|/|\textrm{Aut}(\Gamma)|$, which can attain only values 1, 2, 4, and 12. Moreover, we prove that $\Gamma\bar{\Gamma}$ is vertex-transitive if and only if $\Gamma$ is vertex-transitive and self-complementary, while it is not a Cayley graph if $\Gamma$ has more than one vertex. Consequently, the study of $\Gamma\bar{\Gamma}$ joins two seemingly unrelated research areas, which are the study of vertex-transitive self-complementary graphs, and the investigation of non-Cayley vertex-transitive graphs. Self-complementary graphs provided several applications in other subfields of graph theory, including some bounds on the Ramsey numbers~(see for example~\cite{sc-survey} and the references therein). Non-Cayley vertex-transitive graphs are much more illusive than their Cayley counterparts and got  much attention in the past. We postpone the `state-of-the-art' of these two areas to the end of Section~\ref{section-auto}, where our corresponding results are presented in more details.

Since the graph spectrum is one of the key ingredients to study the core of $\Gamma\bar{\Gamma}$, we investigate also two other concepts of the complementary prisms that are related to the spectrum. One of them is represented by Hamiltonian properties of the complementary prism, the other is its Cheeger number (a.k.a. isoperimetric number) $h(\Gamma\bar{\Gamma})$. Since it can be difficult to compute the Cheeger number of a graph from a complexity point of view (cf.~\cite{mohar2,golovach}), it is rather unexpected that we are able to determine $h(\Gamma\bar{\Gamma})$ for each finite simple graph $\Gamma$.

The paper is organized as follows. In Section~2 we recall several results from various subareas of graph theory, which we apply in the proofs. In Section~3 we determine the automorphism group of a complementary prims (Theorem~\ref{p3} and Propositions~\ref{p4},~\ref{p5}), and present the applications that are mentioned above. Section~4 is devoted to Hamiltonian aspects of $\Gamma\bar{\Gamma}$ and to its Cheeger number. In particular, we prove that graph $\Gamma\bar{\Gamma}$ is Hamiltonian-connected whenever $\Gamma$ is a self-complementary graph on more than five vertices, which is either strongly regular or vertex-transitive (Corollary~\ref{p9}). The Cheeger number~$h(\Gamma\bar{\Gamma})$ is computed in Proposition~\ref{cheeger}. In Section~5 we study the core of a complementary prism. In particular, we prove that $\Gamma\bar{\Gamma}$ is a core whenever $\Gamma$ is self-complementary and strongly regular (Theorem~\ref{thm-strg}). Here, the proof relies on the properties of the Lov\'{a}sz theta function. Despite Section~5 provides substantial information on the core of $\Gamma\bar{\Gamma}$ for arbitrary finite simple graph $\Gamma$, the rest of the section is mainly focused on the case, where $\Gamma\bar{\Gamma}$ is vertex-transitive. In particular, Theorem~\ref{thm-vt} proves that $\Gamma\bar{\Gamma}$ is a core for each vertex-transitive self-complementary graph $\Gamma$, which is either a core or has a complete core.

Despite the topics in Sections 3-5 are interconnected, and some lemmas from Section~2 are applied to prove multiple results from  Sections 3-5, it turns out that from a technical point of view the proofs in Sections 3-5 are pairwise independent. Hence, the reader interested only in some aspects of this paper, should be able to read just his favorite Section among 3-5.

Lastly we mention that the paper states also two open problems. Open Problem~\ref{op} questions the existence of a vertex-transitive self-complementary graph $\Gamma$ such that $\Gamma\bar{\Gamma}$ is not a core. Another open problem is about the existence of a self-complementary graph, which is vertex-transitive (or at least regular) and has a `large' second eigenvalue of the adjacency matrix (see Remark~\ref{opomba2}).

\section{Preliminaries}\label{preliminaries}

This section is divided into five subsections. Subsections 2.2-2.5 are devoted to self-complementary graphs, graph homomorphisms, lexicographic product, and complementary prism, respectively. Subsection~2.1 contains the rest. While all of the results in this section are needed as lemmas for Sections~3-5, some of them seem to be new as we did not find them in the literature. For example, Lemma~\ref{sr-1wr} proves the equivalence between strong regularity and 1-walk regularity in the class of self-complementary graphs. Lemma~\ref{hamconnected} proves that each self-complementary graph on more than five vertices, which is either strongly regular or vertex-transitive, is Hamiltonian-connected.

\subsection{General graph theory}

All graphs in this paper are simple and finite unless otherwise stated. We assume that the vertex set $V(\Gamma)$ of a graph $\Gamma$ is nonempty unless otherwise stated in which case we refer to $\Gamma$ as the \emph{null graph}. In the rare occasions, where we allow the vertex set to be empty, we use the symbol $\Lambda$ for the graph. That is, $|V(\Gamma)|\geq 1$ and $|V(\Lambda)|\geq 0$, where $|.|$ denotes the cardinality. If two vertices $u,v\in V(\Gamma)$ form an edge, we write $u\sim_{\Gamma} v$ or simply $u\sim v$. Hence, $E(\Gamma)=\{\{u,v\} : u,v\in V(\Gamma), u\sim v\}$, $N_{\Gamma}(v)=\{u\in V(\Gamma) : u\sim v\}$, and $N_{\Gamma}[v]=N_{\Gamma}(v)\cup \{v\}$ are the edge set, the neighborhood of the vertex $v$, and the closed neighborhood of the vertex $v$, respectively. The degree $\delta_{\Gamma}(v)=|N_{\Gamma}(v)|$ of a vertex $v\in V(\Gamma)$ is often abbreviated as $\delta(v)$. Given a connected graph~$\Gamma$, we denote by $\textrm{diam}(\Gamma)$ its diameter, while $d_{\Gamma}(u,v)$ is the distance between vertices $u,v\in V(\Gamma)$. We use $K_n$, $C_n$, and $P_n$ to denote the complete graph, the cycle, and the path on $n$ vertices, respectively. The complement of a graph~$\Gamma$ is denoted by $\bar{\Gamma}$. A subset $C\in V(\Gamma)$ is a \emph{clique} if it induces a complete graph in~$\Gamma$. Similarly, a subset $I\in V(\Gamma)$ is \emph{independent} if it induce a complete graph in the complement $\bar{\Gamma}$. The \emph{clique number} $\omega(\Gamma)$ and the \emph{independence number}~$\alpha(\Gamma)$ are the orders of the largest clique and independent set in $\Gamma$, respectively. We use $\chi(\Gamma)$ do denote the chromatic number of a graph $\Gamma$. It is well known that $\chi(\Gamma)\geq \omega(\Gamma)$ and $\chi(\Gamma)\geq \frac{|V(\Gamma)|}{\alpha(\Gamma)}$ (cf.~\cite{brouwer-haemers}). The \emph{vertex-connectivity}~$\kappa(\Gamma)$ of a non-complete connected graph $\Gamma$ is defined as the minimum number of vertices, removal of which disconnects the graph. The \emph{Cheeger number} $h(\Gamma)$ (or the \emph{isoperimetric constant} or the \emph{edge expansion constant}) of a graph $\Gamma$ is the minimum value of the fraction $e(S,T)/|S|$, where $\{S,T\}$ runs over all partitions of~$V(\Gamma)$ such that $1\leq |S|\leq |T|$, and $e(S,T)$ denotes the number of edges meeting both $S$ and $T$ (cf.~\cite{brouwer-haemers,mohar2}).

If $|V(\Gamma)|=n$, then a sequence $(v_1,v_2,\ldots,v_n)$ of distinct vertices form a \emph{Hamiltonian path} if $v_i\sim v_{i+1}$ for all $1\leq i\leq n-1$. It is a \emph{Hamiltonian cycle} if in addition $v_n\sim v_1$. A graph $\Gamma$ is \emph{Hamiltonian-connected} if for each pair of vertices $u,v\in V(\Gamma)$ there exists a Hamiltonian path in $\Gamma$ that joins $u$ and $v$. Clearly, each Hamiltonian-connected graph has a Hamiltonian cycle.

Given graphs $\Gamma_1, \Gamma_2$, a map $\varphi: V(\Gamma_1)\to V(\Gamma_2)$ is a \emph{graph homomorphism} if it obeys the implication
\begin{equation}\label{e104}
\{u,v\}\in E(\Gamma_1)\Longrightarrow \{\varphi(u),\varphi(v)\}\in E(\Gamma_2)
\end{equation}
for all $u,v\in V(\Gamma_1)$. In particular, $\varphi(u)\neq \varphi(v)$ for the two endpoints of each edge in $\Gamma_1$. If in addition $\varphi$ is bijective, and the implication in~\eqref{e104} is replaced by the equivalence $\Longleftrightarrow$, then $\varphi$ is a \emph{graph isomorphism} and graphs $\Gamma_1, \Gamma_2$ are \emph{isomorphic}, which is denoted by $\Gamma_1\cong \Gamma_2$. The sets of all graph homomorphisms/isomorphisms from $\Gamma_1$ to $\Gamma_2$ are denoted by $\textrm{Hom}(\Gamma_1,\Gamma_2)$ and  $\textrm{Iso}(\Gamma_1,\Gamma_2)$, respectively. Similarly, we use
$\textrm{End}(\Gamma)=\textrm{Hom}(\Gamma,\Gamma)$ and $\textrm{Aut}(\Gamma)=\textrm{Iso}(\Gamma,\Gamma)$ to denote the sets of all \emph{graph endomorphisms} and \emph{automorphisms}, respectively. Clearly, $\textrm{Aut}(\Gamma)$ forms a group, where the composition of maps is the group operation, which we denote by $(\varphi_2\circ \varphi_1)(v)=\varphi_2\big(\varphi_1(v)\big)$ for all $v\in V(\Gamma)$.

A subgroup $G$ in $\textrm{Aut}(\Gamma)$ \emph{acts transitively} on $V(\Gamma)$ if for each pair of vertices $u,v\in V(\Gamma)$ there exists $\varphi\in G$ such that $\varphi(u)=v$. A graph $\Gamma$ is \emph{vertex-transitive} if $\textrm{Aut}(\Gamma)$ acts transitively on $V(\Gamma)$. Similarly, $\Gamma$ is \emph{edge-transitive} or \emph{arc-transitive} if $\textrm{Aut}(\Gamma)$ acts transitively on $E(\Gamma)$ or on the set of all arcs in $\Gamma$, respectively. Let $G$ be a group and $S$ its subset, which does not contain the identity element and $S=S^{-1}:=\{s^{-1} : s\in S\}$. Then the \emph{Cayley graph} $\textrm{Cay}(G,S)$ has the group $G$ as the vertex set, and two vertices $g_1,g_2\in G$ form an edge if and only if $g_1^{-1}g_2\in S$. It is well known that each Cayley graph is vertex-transitive. A graph $\Gamma$ is \emph{$k$-regular} if $\delta_{\Gamma}(v)=k$ for all $v\in V(\Gamma)$. It is \emph{regular} if it is $k$-regular for some $k$. A graph $\Gamma$ is \emph{strongly regular} with parameters $(n,k,\lambda,\mu)$ if it is a $k$-regular graph on $n$ vertices such that
$$|N_{\Gamma}(u)\cap N_{\Gamma}(v)|=\left\{\begin{array}{ll} \lambda &\textrm{if}\ u\sim v,\\
\mu &\textrm{if}\ u\nsim v, u\neq v,
 \end{array}\right.$$
for all distinct $u,v\in V(\Gamma)$. Given a graph $\Gamma$ with the vertex set $\{v_1,\ldots,v_n\}$ let $A$ be its \emph{adjacency matrix}, i.e. a $n\times n$ real matrix with $(i,j)$-th entry equal to 1 if $v_i\sim v_j$ and 0 otherwise. In this paper its eigenvalues are referred as the eigenvalues of $\Gamma$, and we number them in the decreasing order $\lambda_1\geq \cdots\geq \lambda_n$. If $\Gamma$ is $k$-regular, then $k=\lambda_1$ (cf.~\cite{brouwer-haemers}). A graph $\Gamma$ is \emph{1-walk regular} if for each positive integer $i$ there are constants $a_i, b_i$ such that $A^i\bullet I=a_i I$, $A^i\bullet A=b_i A$, where $I$ is the identity matrix and $\bullet$ is the entry-wise product (cf.~\cite{roberson-thetas,roberson-colorings}). In other words, a graph is 1-walk regular if for each $i$ both (a) the number of walks
of length $i$ starting and ending at a vertex does not depend on the choice of vertex and (b) the number of walks of length $i$ between the endpoints of an edge does not depend on the edge. Clearly, 1-walk regular graphs include strongly regular graphs and graphs, which are both vertex-transitive and edge-transitive.

Lemma~\ref{l3} is well known. We provide a short proof for reader's convenience.
\begin{lemma}\label{l3}
Let $\Gamma$ be any graph. Then $\Gamma$ or $\bar{\Gamma}$ is connected.
\end{lemma}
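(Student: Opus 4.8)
The plan is to prove the contrapositive: if $\Gamma$ is not connected, then $\bar{\Gamma}$ is connected. Since $|V(\Gamma)|\geq 1$ by our standing convention, the case $|V(\Gamma)|=1$ is immediate (both $\Gamma$ and $\bar\Gamma$ have a single vertex and are trivially connected), so we may assume $|V(\Gamma)|\geq 2$ and that $\Gamma$ is disconnected.

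The key observation is that disconnectedness of $\Gamma$ yields a partition $V(\Gamma)=A\cup B$ into two nonempty sets with no edge of $\Gamma$ joining a vertex of $A$ to a vertex of $B$; one may simply take $A$ to be the vertex set of one connected component and $B$ its complement. By the definition of the complement, this means that in $\bar{\Gamma}$ every vertex of $A$ is adjacent to every vertex of $B$, i.e. $\bar\Gamma$ contains a complete bipartite graph with parts $A$ and $B$. From here I would check that any two vertices $u,v\in V(\Gamma)$ are joined by a walk in $\bar\Gamma$: if $u$ and $v$ lie in different parts, then $u\sim_{\bar\Gamma} v$; if both lie in $A$, pick any $w\in B$ (which exists since $B\neq\emptyset$) and use the walk $u\sim_{\bar\Gamma} w\sim_{\bar\Gamma} v$; the case where both lie in $B$ is symmetric. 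Hence $\bar\Gamma$ is connected.

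There is essentially no real obstacle here; the statement is elementary and the only points requiring a little care are the degenerate cases (the null graph is excluded by convention, and the one-vertex graph is handled separately) and making sure the partition $A\cup B$ is genuinely into nonempty parts, which is exactly what "disconnected" guarantees.
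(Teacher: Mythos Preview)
Your proof is correct and follows essentially the same approach as the paper: assume $\Gamma$ is disconnected, take a bipartition of the vertex set into a component and its complement (the paper uses the full decomposition into components, but this is an immaterial difference), and connect any two vertices of $\bar\Gamma$ by a path of length one or two using a vertex from the other part.
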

\begin{proof}
Suppose that $\Gamma$ is not connected. Let $\Gamma_1,\ldots,\Gamma_m$ be its connected components, where $m\geq 2$. Pick arbitrary vertices $v$ and $w$ in $V(\bar{\Gamma})$. Then there exist $i,j\in \{1,\ldots,m\}$ such that $v\in V(\Gamma_i)$ and $w\in V(\Gamma_j)$. If $i\neq j$, then $v\sim_{\bar{\Gamma}} w$. If $i=j$, then we can pick arbitrary $l\in\{1,\ldots,m\}\backslash\{i\}$ and arbitrary vertex $u\in V(\Gamma_l)$. Then $v\sim_{\bar{\Gamma}} u\sim_{\bar{\Gamma}} w$. Hence, $\bar{\Gamma}$ is connected.
\end{proof}

Lemma~\ref{clique-coclique} is applied extensively throughout the paper. The vertex-transitive part can be found in~\cite[Corollaries~2.1.2, 2.1.3]{godsil_cambridge}, where it is stated in more general settings. The claim for 1-walk-regular graphs is proved in \cite[Corollary~4.1]{roberson-thetas}.

\begin{lemma}\label{clique-coclique}
If a graph $\Gamma$ is vertex-transitive or 1-walk-regular, then $\alpha(\Gamma)\omega(\Gamma)\leq |V(\Gamma)|$. If the equality holds in the case of a vertex-transitive graph, then $|C\cap I|=1$  for each clique $C$ and each independent set $I$ that attain the equality.
\end{lemma}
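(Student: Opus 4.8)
The plan is to treat the two hypotheses by different means: an averaging argument over $\mathrm{Aut}(\Gamma)$ in the vertex-transitive case, and two classical spectral bounds in the $1$-walk-regular case.

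\emph{Vertex-transitive case.} Put $G=\mathrm{Aut}(\Gamma)$ and $n=|V(\Gamma)|$, and fix a clique $C$ with $|C|=\omega(\Gamma)$ and an independent set $I$ with $|I|=\alpha(\Gamma)$. The idea is to evaluate $\sum_{g\in G}|gC\cap I|$ in two ways. On the one hand, $gC$ is a clique for every $g$, and a clique and an independent set share at most one vertex, so each summand is at most $1$ and the whole sum is at most $|G|$. On the other hand, interchanging the order of summation and using that, by transitivity and the orbit-stabiliser theorem, the number of $g\in G$ with $gc=v$ equals $|G|/n$ for all $c,v\in V(\Gamma)$, one gets
\[
\sum_{g\in G}|gC\cap I|=\sum_{v\in I}\bigl|\{g\in G:g^{-1}v\in C\}\bigr|=\sum_{v\in I}\sum_{c\in C}\bigl|\{g\in G:gc=v\}\bigr|=|C|\,|I|\,\frac{|G|}{n}.
\]
Comparing the two evaluations gives $\omega(\Gamma)\alpha(\Gamma)=|C|\,|I|\le n$. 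If equality holds here, the displayed sum equals $|G|$ while being a sum of $|G|$ terms each at most $1$, so every term equals $1$; in particular, taking $g$ to be the identity, $|C\cap I|=1$. Since $C$ and $I$ were arbitrary among the cliques and independent sets attaining the equality, the second assertion follows.

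\emph{$1$-walk-regular case.} First I would note that $1$-walk-regularity forces $\Gamma$ to be regular: $A^{2}\bullet I=a_{2}I$ says that the diagonal of $A^{2}$, which is the degree sequence, is constant. Write $k=\lambda_{1}$ for the degree and $\tau=\lambda_{n}<0$ for the least eigenvalue (the edgeless case being trivial). Then I would multiply the Hoffman ratio bound $\alpha(\Gamma)\le\frac{-n\tau}{k-\tau}$, which holds for any regular graph, by the clique bound $\omega(\Gamma)\le 1-\frac{k}{\tau}$:
\[
\alpha(\Gamma)\,\omega(\Gamma)\le\frac{-n\tau}{k-\tau}\cdot\frac{\tau-k}{\tau}=n .
\]
(Alternatively the $1$-walk-regular case follows through the Lov\'{a}sz theta function, using $\alpha(\Gamma)\le\vartheta(\Gamma)$, $\omega(\Gamma)=\alpha(\bar\Gamma)\le\vartheta(\bar\Gamma)$, and $\vartheta(\Gamma)\vartheta(\bar\Gamma)=n$ for $1$-walk-regular $\Gamma$, the last being the analogue for such graphs of Lov\'{a}sz's identity for vertex-transitive graphs.)

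\emph{Where the difficulty lies.} The vertex-transitive part is routine bookkeeping with the group action. The one nontrivial ingredient is the clique bound $\omega(\Gamma)\le 1-k/\tau$: unlike the ratio bound, it is \emph{false} for general regular graphs (for instance $\Gamma=K_{3}\cup C_{5}$ is $2$-regular with $\omega(\Gamma)=3$, yet $1-k/\tau=\sqrt5<3$), so its proof must exploit the constancy of short walk counts. Concretely, if $E_{\tau}$ denotes the spectral idempotent of $A$ belonging to $\tau$ and $m_{\tau}$ the multiplicity of $\tau$, then $1$-walk-regularity forces every diagonal entry of $E_{\tau}$ to equal $m_{\tau}/n$ and every entry of $E_{\tau}$ at an edge to equal $\tau m_{\tau}/(nk)$ (the latter being read off from $\mathrm{tr}(AE_{\tau})=\tau m_{\tau}$); then, for a clique $C$,
\[
0\le \mathbf{1}_{C}^{T}E_{\tau}\mathbf{1}_{C}=|C|\,\frac{m_{\tau}}{n}+|C|(|C|-1)\,\frac{\tau m_{\tau}}{nk},
\]
which rearranges to $|C|\le 1-k/\tau$. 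That is precisely the step where the $1$-walk-regularity hypothesis is essential.
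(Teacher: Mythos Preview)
Your proof is correct. The paper does not actually supply a proof of this lemma: it only cites Godsil--Meagher (Corollaries~2.1.2 and 2.1.3) for the vertex-transitive statement and de~Carli~Silva--Coutinho--Godsil--Roberson (Corollary~4.1) for the $1$-walk-regular statement. Your averaging argument over $\mathrm{Aut}(\Gamma)$ is exactly the standard proof recorded in Godsil--Meagher, so on that half you match the cited reference.

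For the $1$-walk-regular half your route differs somewhat from the cited source. The reference goes through the Lov\'asz theta function (the identity $\vartheta(\Gamma)\vartheta(\bar{\Gamma})=n$ for $1$-walk-regular graphs together with the sandwich $\alpha\le\vartheta$, $\omega\le\vartheta(\bar{\Gamma})$), which is the alternative you mention parenthetically. Your main argument instead multiplies the Hoffman ratio bound by the Delsarte-type clique bound $\omega\le 1-k/\tau$, and you correctly identify that the latter is the only step needing the hypothesis, supplying a clean proof via the constant entries of the idempotent $E_{\tau}$ (which is a polynomial in $A$, so $1$-walk-regularity indeed forces its diagonal and edge entries to be constant). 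Both approaches are standard; yours is arguably more self-contained since it avoids importing the machinery behind $\vartheta$, while the theta-function route has the advantage of giving the inequality as a one-line consequence once the identity $\vartheta(\Gamma)\vartheta(\bar{\Gamma})=n$ is in hand. Your counterexample $K_3\cup C_5$ showing that mere regularity is insufficient for the clique bound is a nice touch.
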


The next result is the well known Sabidussi's Theorem~\cite[Lemma~4]{sabidussi}, which we apply in Corollary~\ref{c2}.
\begin{lemma}\label{sabid}
A graph $\Gamma$ is a Cayley graph if and only if there exists a subgroup $G$ in $\textrm{Aut}(\Gamma)$ that acts transitively on $V(\Gamma)$ and $|G|=|V(\Gamma)|$.
\end{lemma}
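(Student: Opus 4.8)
The plan is to prove both implications straight from the definitions, the one conceptual point being that a transitive action on $V(\Gamma)$ of a group $G$ with $|G|=|V(\Gamma)|$ is automatically \emph{regular}. Indeed, by the orbit--stabilizer relation one has $|G|=|G_v|\cdot|V(\Gamma)|$ for every vertex $v$, so each point stabilizer $G_v$ is trivial, and hence for every pair $u,v\in V(\Gamma)$ there is a \emph{unique} $\varphi\in G$ with $\varphi(u)=v$.

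For the forward direction I would start from $\Gamma=\textrm{Cay}(G,S)$ and exhibit the group of left translations inside $\textrm{Aut}(\Gamma)$. For $h\in G$ let $\rho_h\colon G\to G$ be given by $\rho_h(g)=hg$. Since $\bigl(\rho_h(g_1)\bigr)^{-1}\rho_h(g_2)=g_1^{-1}g_2$, the map $\rho_h$ preserves the adjacency relation ``$g_1^{-1}g_2\in S$'', so $\rho_h\in\textrm{Aut}(\Gamma)$. The assignment $h\mapsto\rho_h$ is a homomorphism $G\to\textrm{Aut}(\Gamma)$, and it is injective because $\rho_h=\mathrm{id}$ forces $h=\rho_h(e)=e$. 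Its image is therefore a subgroup of $\textrm{Aut}(\Gamma)$ of order $|G|=|V(\Gamma)|$, and it acts transitively since $\rho_{vu^{-1}}$ sends $u$ to $v$.

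For the converse, assume $G\leq\textrm{Aut}(\Gamma)$ acts transitively on $V(\Gamma)$ with $|G|=|V(\Gamma)|$; by the observation above the action is regular. Fix a base vertex $v_0$ and define $\varphi\colon G\to V(\Gamma)$ by $\varphi(g)=g(v_0)$, which is a bijection by regularity. Set $S:=\{g\in G: g(v_0)\sim_\Gamma v_0\}$. Then $e\notin S$ because $\Gamma$ has no loops, and $S=S^{-1}$ because applying the automorphism $g^{-1}$ to $g(v_0)\sim_\Gamma v_0$ gives $v_0\sim_\Gamma g^{-1}(v_0)$. Finally $\varphi$ is an isomorphism $\textrm{Cay}(G,S)\to\Gamma$: for $g,h\in G$ we have $g\sim_{\textrm{Cay}(G,S)}h$ iff $g^{-1}h\in S$ iff $(g^{-1}h)(v_0)\sim_\Gamma v_0$, and applying the automorphism $g$ this is equivalent to $h(v_0)\sim_\Gamma g(v_0)$, i.e.\ $\varphi(g)\sim_\Gamma\varphi(h)$.

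The whole argument is essentially bookkeeping and there is no genuine obstacle; the only place that requires a moment's care is the reduction ``transitive together with $|G|=|V(\Gamma)|$ implies regular'', since everything afterwards is a routine base-point transfer of the edge set and a verification that the resulting connection set is inverse-closed and identity-free.
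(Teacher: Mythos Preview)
Your proof is correct. The paper does not actually prove this lemma; it simply quotes it as Sabidussi's Theorem with a citation to \cite[Lemma~4]{sabidussi}. Your argument---left translations giving a regular subgroup for the forward direction, and the base-point bijection $g\mapsto g(v_0)$ together with the connection set $S=\{g:g(v_0)\sim v_0\}$ for the converse---is exactly the standard proof of Sabidussi's result, and every step is valid.
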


In Example~\ref{ex:misteriozen} we rely on the clique number and on the chromatic number of the Kneser graph and its complement. Recall that the \emph{Kneser graph} $K(n,r)$, for $2r<n$, has the vertex set formed by all $r$-subsets in $\{1,2,\ldots,n\}$, where two $r$-subsets form and edge if and only if they are disjoint. It is obvious that $\omega(K(n,r))=\lfloor \frac{n}{r}\rfloor$. On the other hand the Erd\H{o}s-Ko-Rado theorem states that $\omega(\overline{K(n,r)})=\alpha(K(n,r))=\binom{n-1}{r-1}$ (cf.~\cite[Theorem~7.8.1]{godsil_knjiga}). The famous Lov\'{a}sz proof of the Kneser conjecture~\cite{lovasz1978} implies that $\chi(K(n,r))=n-2r+2$ (see also \cite[Theorem~7.11.4]{godsil_knjiga}). The chromatic number of the complement is computed in \cite[Corollary~4, p.~93]{baranyai} and equals
$$\chi(\overline{K(n,r)})=\left\lceil\frac{\binom{n}{r}}{\lfloor\frac{n}{r}\rfloor}\right\rceil.$$
In particular,
$$\omega(K(10,4))=2,\ \chi(K(10,4))=4,\  \omega(\overline{K(10,4)})=84,\ \chi(\overline{K(10,4)})=105.$$
Moreover, if $V=\{S\subseteq \{1,2,\ldots,10\} : |S|=4 \}$ is the vertex set of $K(10,4)$, then a proper $4$-vertex-coloring $c: V\to \{1,2,3,4\}$ can be obtained by defining $c(S)=\min_{s\in S} s$ whenever $\min_{s\in S}\leq 3$ and $c(S)=4$ otherwise. Hence, if we add the edge $e=\{\{1,2,3,4\},\{2,3,4,5\}\}$ to graph $K(10,4)$, then the newly obtained graph $K(10,4) + e$ satisfies
\begin{equation}\label{e86}
\chi(K(10,4) + e)=4\ \textrm{and}\ \omega(K(10,4) + e)\geq 2,
\end{equation}
whereas the graph $\overline{K(10,4)} - e$ that is obtained by deleting the edge $e$ satisfies
\begin{equation}\label{e87}
\chi(\overline{K(10,4)} - e)\geq 104 \ \textrm{and}\ \omega(\overline{K(10,4)} - e)\leq 84.
\end{equation}

Lemma~\ref{DAM} is proved in \cite[Theorem~2.6]{DAM2008}.
\begin{lemma}\label{DAM}
If $\Gamma$ and $\bar{\Gamma}$ are connected graphs with minimum degrees $\delta(\Gamma)$ and $\delta(\bar{\Gamma})$, respectively, then
$\kappa(\Gamma)+\kappa(\bar{\Gamma})\geq \min\{\delta(\Gamma),\delta(\bar{\Gamma})\}+1$.
\end{lemma}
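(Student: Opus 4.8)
The plan is to fix a minimum vertex cut of one of the two graphs, say $\Gamma$, and use it to force a large lower bound on the vertex-connectivity of $\bar{\Gamma}$. Write $n=|V(\Gamma)|$ and $d=\min\{\delta(\Gamma),\delta(\bar{\Gamma})\}$; we may assume $n\geq 2$, so that (since $\overline{K_n}$ is disconnected) neither $\Gamma$ nor $\bar{\Gamma}$ is complete and both $\kappa(\Gamma)$, $\kappa(\bar{\Gamma})$ are genuine minimum cuts. Let $S\subseteq V(\Gamma)$ with $|S|=\kappa(\Gamma)=:k$ be such that $\Gamma-S$ is disconnected, and group its components into two nonempty sets $A,B$ with $V(\Gamma)\setminus S=A\cup B$ and no edge of $\Gamma$ joining $A$ to $B$. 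The key structural observation is that in $\bar{\Gamma}$ every vertex of $A$ is adjacent to every vertex of $B$, so $\bar{\Gamma}$ contains a spanning complete bipartite graph on the parts $A$ and $B$ when restricted to $A\cup B$.

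Next I would show that no set $T\subseteq V(\bar{\Gamma})$ with $|T|\leq d-k$ disconnects $\bar{\Gamma}$; this gives $\kappa(\bar{\Gamma})\geq d-k+1$, which rearranges to the claimed inequality. First, $A\not\subseteq T$: otherwise, pick $v\in A$; all $\Gamma$-neighbours of $v$ lie in $(A\setminus\{v\})\cup S$, whence $\delta(\Gamma)\leq\delta_{\Gamma}(v)\leq|A|+k-1\leq|T|+k-1\leq d-1$, a contradiction, and symmetrically $B\not\subseteq T$. Hence $A\setminus T$ and $B\setminus T$ are both nonempty, and the complete-bipartite structure shows that $(A\cup B)\setminus T=V(\bar{\Gamma})\setminus(S\cup T)$ induces a connected subgraph of $\bar{\Gamma}-T$. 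Finally, for any $v\in S\setminus T$ the vertex $v$ has at least $\delta(\bar{\Gamma})\geq d$ neighbours in $\bar{\Gamma}$, while $|(S\cup T)\setminus\{v\}|\leq(k-1)+|T|\leq d-1$; so $v$ has a $\bar{\Gamma}$-neighbour outside $S\cup T$, i.e. in $(A\cup B)\setminus T$, and therefore lies in the same component of $\bar{\Gamma}-T$ as that set. Consequently $\bar{\Gamma}-T$ is connected, and it has at least $n-(d-k)\geq 2$ vertices (as $d\leq n-1$ and $k\geq 1$), so the conclusion is not vacuous.

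Putting the pieces together, every vertex cut of $\bar{\Gamma}$ has at least $d-k+1$ vertices, i.e. $\kappa(\bar{\Gamma})\geq\min\{\delta(\Gamma),\delta(\bar{\Gamma})\}-\kappa(\Gamma)+1$, which is exactly the asserted bound (trivial when the right-hand side is nonpositive). The only genuinely delicate point is the case distinction on whether $T$ can swallow all of $A$ or all of $B$; apart from that, the argument is just a short counting comparison between a minimum degree of $\bar{\Gamma}$ and the size of the ``forbidden'' set $S\cup T$. Small orders ($n\leq 2$) and the usual conventions about $\kappa$ need only a line of comment and cause no difficulty.
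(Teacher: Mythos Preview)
Your argument is correct. The paper itself does not supply a proof of this lemma: it simply cites \cite[Theorem~2.6]{DAM2008} (Hellwig--Volkmann). So there is nothing in the paper to compare your proof against beyond the bare statement.

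For the record, your strategy---take a minimum cut $S$ of $\Gamma$, observe that the two sides $A,B$ of $\Gamma-S$ span a complete bipartite subgraph in $\bar{\Gamma}$, and then show by degree-counting that any small set $T$ in $\bar{\Gamma}$ can swallow neither $A$ nor $B$ and cannot isolate a vertex of $S\setminus T$---is the standard route and matches the spirit of the cited source. The only place to be slightly careful is the edge cases ($d-k\le 0$ and very small~$n$), which you handle correctly; note in particular that the hypothesis ``both $\Gamma$ and $\bar{\Gamma}$ connected'' already forces $n\ge 4$, so the ``$n\le 2$'' comment is in fact vacuous.
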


Lemma~\ref{connectivity_hamilton} is the well-known Chv\'{a}tal-Erd\H{o}s Theorem~\cite[Theorem~3]{chvatal1972}.

\begin{lemma}\label{connectivity_hamilton}
If $\alpha(\Gamma)<\kappa(\Gamma)$, then $\Gamma$ is Hamiltonian-connected.
\end{lemma}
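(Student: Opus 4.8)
The plan is to prove this by the classical rotation-extension (``longest path'') method, essentially the Chv\'{a}tal--Erd\H{o}s argument adjusted so that it delivers a Hamiltonian path with \emph{prescribed} endpoints. If $\Gamma$ is complete it is Hamiltonian-connected as soon as it has at least two vertices, so assume $\Gamma$ is not complete; then $\alpha(\Gamma)\geq 2$, hence $\kappa(\Gamma)\geq 3$ and in particular $\Gamma$ is connected. Fix distinct $u,w\in V(\Gamma)$ and, arguing by contradiction, choose a $u$--$w$ path $P=(u=v_0,v_1,\ldots,v_\ell=w)$ with the largest possible number of vertices; suppose it is not Hamiltonian, so that $\Gamma-V(P)$ is a nonempty graph. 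The target is an independent set of $\Gamma$ of size at least $\kappa(\Gamma)\geq\alpha(\Gamma)+1$, which contradicts the definition of $\alpha(\Gamma)$.

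First I would fix a connected component $H$ of $\Gamma-V(P)$ and study the attachment set $A=\{v_i\in V(P): v_i\text{ has a neighbor in }V(H)\}$. The decisive consequence of the maximality of $P$ is that $A$ contains no two consecutive vertices of $P$: if $v_i,v_{i+1}\in A$, then taking a neighbor $a\in V(H)$ of $v_i$, a neighbor $b\in V(H)$ of $v_{i+1}$, and a path $Q$ of $H$ from $a$ to $b$, and splicing $Q$ into $P$ between $v_i$ and $v_{i+1}$, we obtain a strictly longer $u$--$w$ path (the ends stay $u$ and $w$), a contradiction. In particular $A\neq V(P)$, so $A$ is a vertex cut of $\Gamma$ separating $V(H)$ from the nonempty set $V(P)\setminus A$, and therefore $|A|\geq\kappa(\Gamma)$.

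Next I would carry out the ``successor'' step. For each $v_i\in A$ with $v_i\neq w$ let $v_i^{+}=v_{i+1}$ denote its successor on $P$; by the previous paragraph $v_i^{+}\notin A$, so $v_i^{+}$ has no neighbor in $V(H)$. I claim the set $S=\{v_i^{+}:v_i\in A\setminus\{w\}\}$ is independent: should $v_i^{+}\sim v_j^{+}$ with $i<j$, then reassembling the initial segment $v_0,\ldots,v_i$, a path of $H$ from a neighbor of $v_i$ to a neighbor of $v_j$, the reversed segment $v_j,v_{j-1},\ldots,v_{i+1}$, the edge $v_i^{+}v_j^{+}$, and the final segment $v_{j+1},\ldots,v_\ell$ produces a $u$--$w$ path containing every vertex of $P$ together with at least one vertex of $H$, once more contradicting maximality. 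Since in addition no vertex of $S$ has a neighbor in $V(H)$, adjoining any one vertex of $H$ to $S$ still gives an independent set, and this set has size $|A\setminus\{w\}|+1\geq|A|\geq\kappa(\Gamma)\geq\alpha(\Gamma)+1$, which is the contradiction we wanted.

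The main obstacle is precisely what separates this statement from ordinary Hamiltonicity: the endpoints $u,w$ are frozen, so the cyclic symmetry that makes the Hamiltonicity proof effortless is gone. Consequently one must check, at each splicing or rerouting step, that the reassembled vertex sequence is a genuine path \emph{with the same two endpoints}, and one must allow $u$ or $w$ itself to lie in $A$. That last possibility is disposed of cheaply by dropping $w$ from the successor construction: we lose a single vertex, but the inequality $|A|\geq\kappa(\Gamma)\geq\alpha(\Gamma)+1$ absorbs the loss and the independent set still exceeds $\alpha(\Gamma)$. Everything else is routine verification that the re-glued edge sequences are simple paths. (One could replace the cut argument of the second paragraph by an appeal to Menger's theorem, producing a fan of $\kappa(\Gamma)$ internally disjoint paths from a vertex of $H$ to $V(P)$, but the elementary argument above is enough.)
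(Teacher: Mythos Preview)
The paper does not prove this lemma; it simply records it as the Chv\'{a}tal--Erd\H{o}s theorem and cites \cite{chvatal1972}. Your argument is the classical Chv\'{a}tal--Erd\H{o}s longest-path proof, and the details are sound: the attachment set $A$ contains no two consecutive path vertices (so it is a genuine cutset of size at least $\kappa(\Gamma)$), the successor set $S$ is independent by the rerouting you describe, and adjoining a vertex of $H$ to $S$ yields an independent set of size at least $|A\setminus\{w\}|+1\geq\kappa(\Gamma)>\alpha(\Gamma)$. Your handling of the frozen endpoints---dropping $w$ from the successor construction and verifying that every reroute preserves both ends---is exactly what is needed, and the inequality absorbs the possible loss of one vertex.
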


We apply Lemmas~\ref{DAM} and~\ref{connectivity_hamilton} along Lemma~\ref{sr-1wr} to prove Lemma~\ref{hamconnected}, which is subsequently applied to prove Corollary~\ref{p9}.

\subsection{Self-complementary graphs}\label{subsection-sc}

An isomorphism between the graph $\Gamma$ and its complement $\bar{\Gamma}$ is an \emph{antimorphism} (or a \emph{complementing permutation}) of $\Gamma$. The set $\textrm{Iso}(\Gamma,\bar{\Gamma})=\textrm{Iso}(\bar{\Gamma},\Gamma)$ of all such maps is shortly denoted by $\overline{\textrm{Aut}(\Gamma)}$. If it is nonempty, then $\Gamma$ is a \emph{self-complementary} graph. In this case $\textrm{Aut}(\Gamma)\cup \overline{\textrm{Aut}(\Gamma)}$ is a group with composition of maps as the group operation, and $\textrm{Aut}(\Gamma)$ is its subgroup of index two, i.e. $|\textrm{Aut}(\Gamma)|=|\overline{\textrm{Aut}(\Gamma)}|$ (see \cite{gibbs} or \cite{sc-survey}). Moreover, a composition $\varphi_1\circ\cdots \circ \varphi_m$ of members of $\textrm{Aut}(\Gamma)\cup \overline{\textrm{Aut}(\Gamma)}$ is an antimorphism whenever $\varphi_i\in \overline{\textrm{Aut}(\Gamma)}$ for an odd number of indices $i$. Otherwise, $\varphi_1\circ\cdots \circ \varphi_m\in \textrm{Aut}(\Gamma)$ (cf. \cite{sc-survey}). Each self-complementary graph is connected, as it follows from Lemma~\ref{l3}.

The following result is well known and follows immediately from the results of Sachs~\cite{sachs} or Ringel~\cite{ringel} (see also~\cite{gibbs,sc-survey}). We apply it in Corollary~\ref{c2}.
\begin{lemma}\label{order_antim}
The order of an antimorphism of a self-complementary graph with more than one vertex is divisible by four.
\end{lemma}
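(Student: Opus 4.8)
The plan is to study the cycle structure of an antimorphism $\sigma$ of a self-complementary graph $\Gamma$ on $n=|V(\Gamma)|>1$ vertices and to show that every cycle of $\sigma$ of length at least $2$ already has length divisible by $4$. Since the order of the permutation $\sigma$ is the least common multiple of its cycle lengths, the existence of one such cycle then forces $4$ to divide the order of $\sigma$. (This argument in fact yields the sharper Sachs--Ringel statement that \emph{every} cycle of $\sigma$ has length divisible by $4$, with at most one exceptional fixed point.)

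First I would record the defining property of $\sigma\in\overline{\textrm{Aut}(\Gamma)}$: for any two distinct vertices $u,v$ one has $u\sim_\Gamma v$ if and only if $\sigma(u)\not\sim_\Gamma\sigma(v)$. In particular $\sigma$ cannot fix two distinct vertices $u,v$, since that would give $u\sim_\Gamma v\iff u\not\sim_\Gamma v$; hence $\sigma$ has at most one fixed point, and since $n>1$ it must have at least one cycle of length $\ge 2$.

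Next, fix a cycle $(v_0,v_1,\dots,v_{l-1})$ of $\sigma$ with $l\ge 2$, with indices read modulo $l$ and $\sigma(v_i)=v_{i+1}$, and argue $4\mid l$ in two steps. Advancing both arguments by one negates the truth value of the statement ``$v_i\sim_\Gamma v_j$''; running all the way around the cycle ($l$ applications of $\sigma$) returns the ordered pair $(v_i,v_j)$ to itself, so the truth value has been negated $l$ times and must be unchanged. Choosing $i\ne j$ (possible since $l\ge 2$) forces $l$ to be even, say $l=2m$. For the second step, apply the same observation to the unordered pair $\{v_0,v_m\}$, which consists of two distinct vertices because $0<m<l$: after $m$ applications of $\sigma$ this pair is mapped to $\{v_m,v_{2m}\}=\{v_m,v_0\}$, i.e.\ to itself, while the adjacency has been negated $m$ times; since $\sim_\Gamma$ is a symmetric (unordered) relation, $m$ must be even, and hence $4\mid l$.

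The one genuinely delicate point -- the step I would single out as the crux -- is this last one: iterating the edge/non-edge interchange around a full cycle only excludes \emph{odd} cycle lengths, and to rule out lengths $\equiv 2\pmod 4$ one must exploit that adjacency is an unordered relation, so that the ``antipodal'' pair $\{v_0,v_m\}$ is fixed as a set after $m$ steps rather than merely swapped. The remaining ingredients -- at most one fixed point, and passing from cycle lengths to the order via the least common multiple -- are routine.
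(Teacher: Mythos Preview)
Your proof is correct and is precisely the classical Sachs--Ringel argument that the paper invokes but does not reproduce: the paper simply states the lemma as well known and cites \cite{sachs,ringel,gibbs,sc-survey}, while mentioning (just before Lemma~\ref{fiksna_tocka}) the stronger fact that every cycle of an antimorphism has length divisible by four except for at most one fixed point. You have supplied exactly that stronger statement with full details, including the key ``antipodal pair'' step that upgrades evenness to divisibility by four.
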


If a graph on $n\geq 1$ vertices is $(\frac{n-1}{2})$-regular, then $n$ must be odd. Moreover,  it follows from the hand-shaking lemma that $n=4m+1$ for some integer $m\geq 0$. In particular, this is true for all regular self-complementary graphs. By a result of Sachs~\cite{sachs} or Ringel~\cite{ringel}, each cycle in an antimorphism has the length divisible by four, except for one cycle of length one, in the case the order of the graph equals 1 modulo 4 (a proof in English can be found in~\cite[p.~12]{sc-survey}). Lemma~\ref{fiksna_tocka} is a special case of this fact, and is crucial in the proofs of Theorem~\ref{thm-vt} and Propositions~\ref{prop-lex1}, \ref{prop-lex2}.
\begin{lemma}\label{fiksna_tocka}
If $\sigma$ is an antimorphism of a regular self-complementary graph $\Gamma$, then there exists a unique vertex $v\in V(\Gamma)$ such that $\sigma(v)=v$.
\end{lemma}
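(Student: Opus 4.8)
The plan is to prove the two assertions---existence and uniqueness of a fixed vertex---separately. Uniqueness will be immediate from the definition of an antimorphism, while existence will follow from a parity count on the cycle decomposition of $\sigma$ that invokes the order of $\Gamma$.

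For uniqueness I would argue by contradiction: if $\sigma(v)=v$ and $\sigma(w)=w$ with $v\neq w$, then, since $\sigma\in\overline{\textrm{Aut}(\Gamma)}$, the relation $v\sim w$ holds if and only if $\sigma(v)\nsim\sigma(w)$, i.e. if and only if $v\nsim w$, which is absurd. Hence $\sigma$ has at most one fixed vertex; note that this step uses neither regularity nor the order of $\Gamma$.

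For existence I would first recall that a regular self-complementary graph has order $n\equiv 1\pmod 4$, so $n$ is odd. Then I would decompose $V(\Gamma)$ into the orbits of $\langle\sigma\rangle$ and show that no orbit has odd size greater than $1$: if $O=\{v,\sigma(v),\dots,\sigma^{\ell-1}(v)\}$ is an orbit with $\ell\geq 2$ and one sets $t_i=1$ when $\sigma^{i}(v)\sim\sigma^{i+1}(v)$ and $t_i=0$ otherwise (superscripts read modulo $\ell$, so that $\sigma^{\ell}(v)=v$), then applying the antimorphism property to the pair $\{\sigma^{i}(v),\sigma^{i+1}(v)\}$ gives $t_{i+1}=1-t_i$ for every $i$; since these values strictly alternate, consistency around the cycle (where $t_{\ell}=t_0$) forces $\ell$ to be even. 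Therefore all non-fixed vertices lie in orbits of even size, so the number $f$ of fixed vertices satisfies $f\equiv n\equiv 1\pmod 2$, whence $f\geq 1$. Combined with the uniqueness bound, this yields $f=1$.

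As a shortcut, the whole statement is the special case---already recalled above---of the Sachs--Ringel description of complementing permutations, since a regular self-complementary graph has order $\equiv 1\pmod 4$; alternatively, existence can be obtained from $PAP^{\top}=J-I-A$, where $P$ is the permutation matrix of $\sigma$ and $A$ the adjacency matrix of $\Gamma$, by comparing $\textrm{tr}(P\bar A)=\textrm{tr}(PA)$ with $\textrm{tr}\big(P(J-I-A)\big)=n-f-\textrm{tr}(PA)$, which gives $f=n-2\,\textrm{tr}(PA)$. I do not anticipate a genuine obstacle here; the only point calling for a little care is the orbit-parity claim, which, as indicated, reduces to a single application of the defining property of an antimorphism, everything else being bookkeeping.
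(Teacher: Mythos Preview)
Your proof is correct. The paper does not supply its own argument for this lemma; it simply records that the statement is a special case of the Sachs--Ringel result that every cycle of an antimorphism has length divisible by four, with a single fixed point when $n\equiv 1\pmod 4$.

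Your route is a self-contained and slightly weaker version of that argument: rather than proving divisibility by four, your alternation of the indicators $t_i$ only yields that each nontrivial orbit has \emph{even} length, which together with $n$ odd already forces an odd number of fixed points. Combined with your clean uniqueness argument (which, as you note, needs no regularity), this suffices. The trace identity $f=n-2\,\mathrm{tr}(PA)$ is a pleasant alternative packaging of the same parity count. So compared to the paper's citation, you gain a short elementary proof at the cost of not establishing the stronger $4\mid\ell$ statement---which the paper never uses anyway.
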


The smallest two (non-null) regular self-complementary graphs are $K_1$ and the pentagon $C_5$. In total there are 36 (nonisomorphic) self-complementary graphs on 9 vertices. They were firstly described in~\cite{morris}. Among them only 4 are regular (see Figure~\ref{f8}). Their adjacency matrices are listed in Figure~\ref{f9} in the same  order.
\begin{figure}
\centering
\begin{tabular}{ccc}
\includegraphics[width=0.4\textwidth]{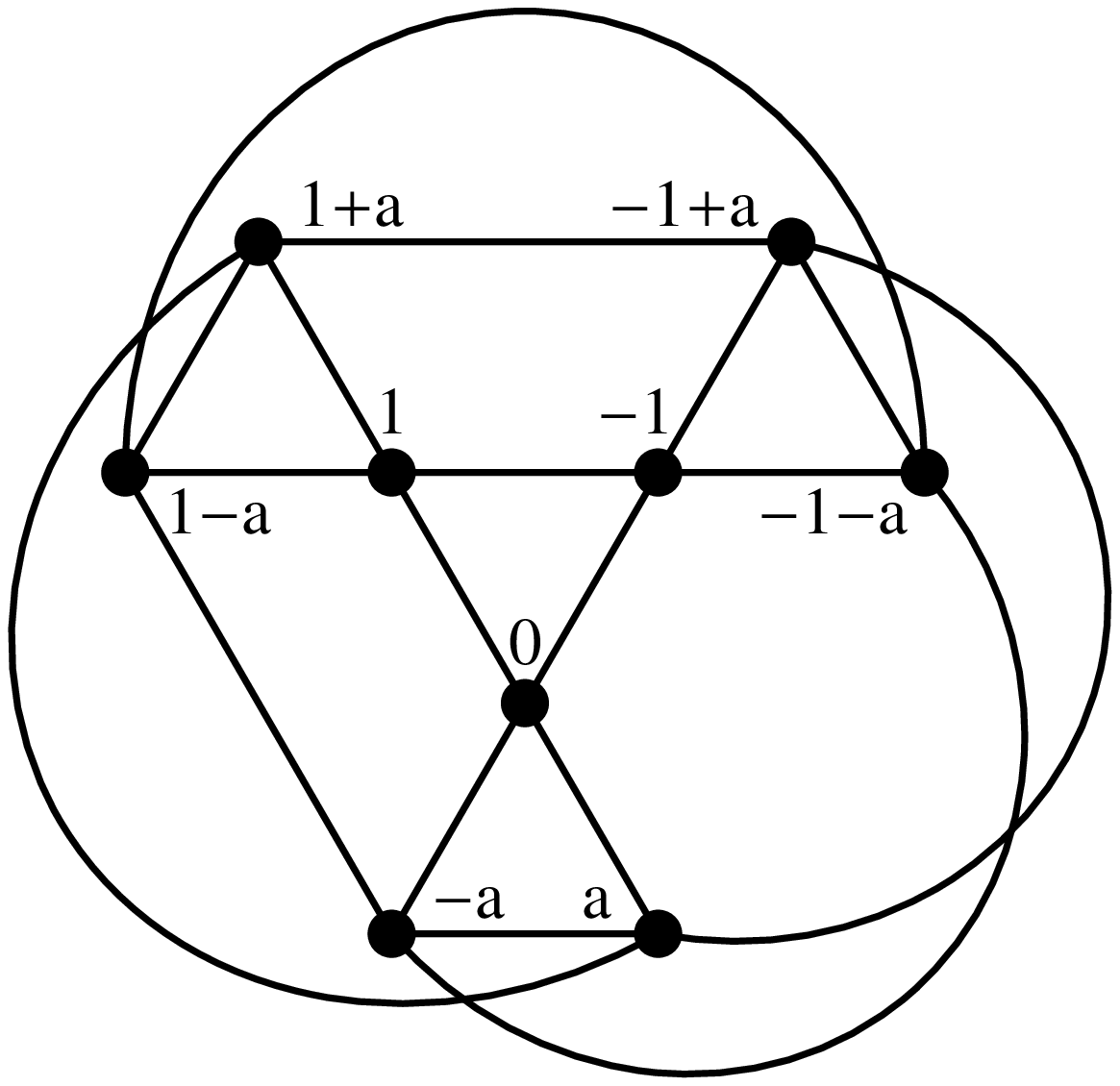}& &
\includegraphics[width=0.4\textwidth]{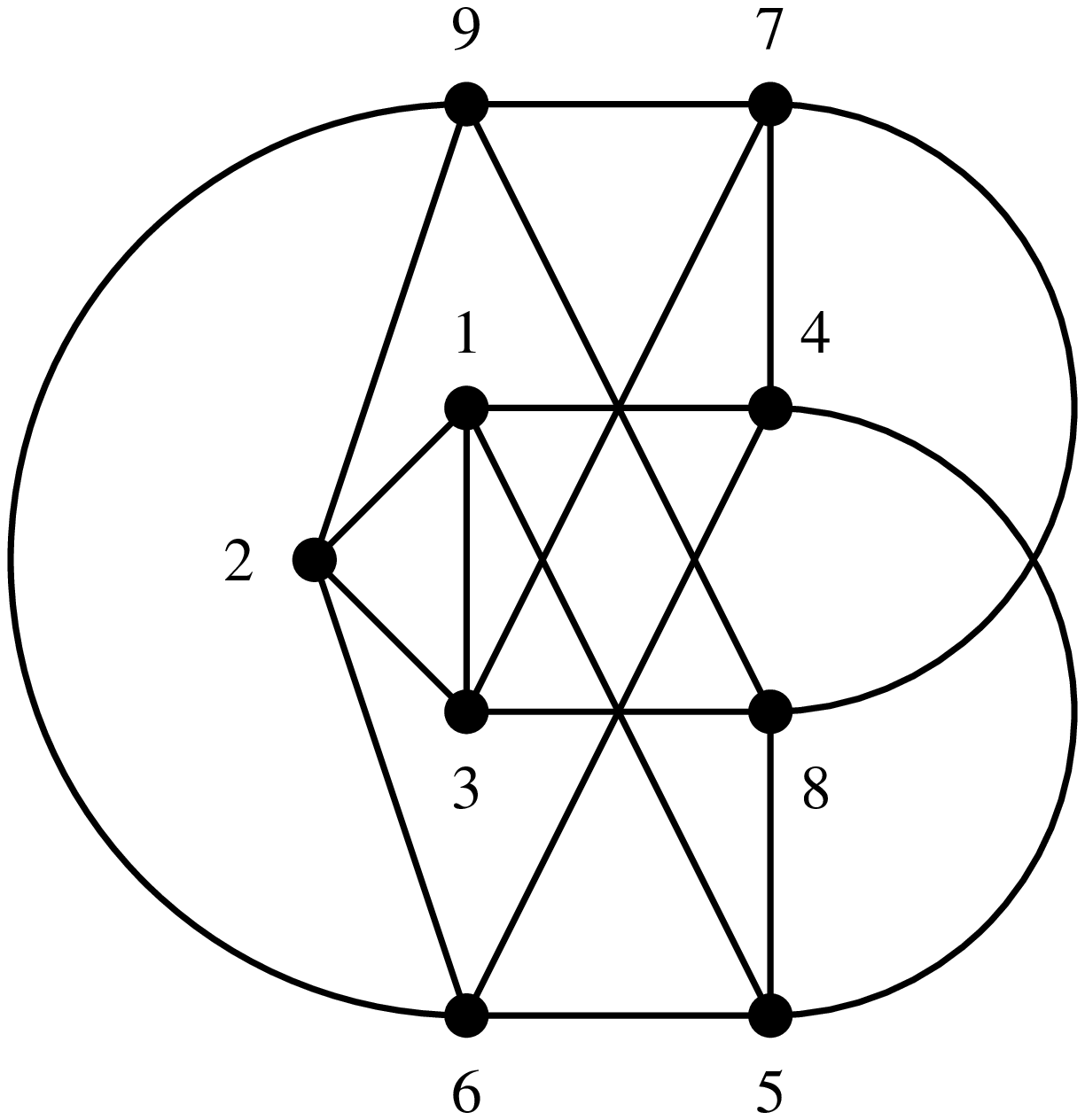}\\
\includegraphics[width=0.4\textwidth]{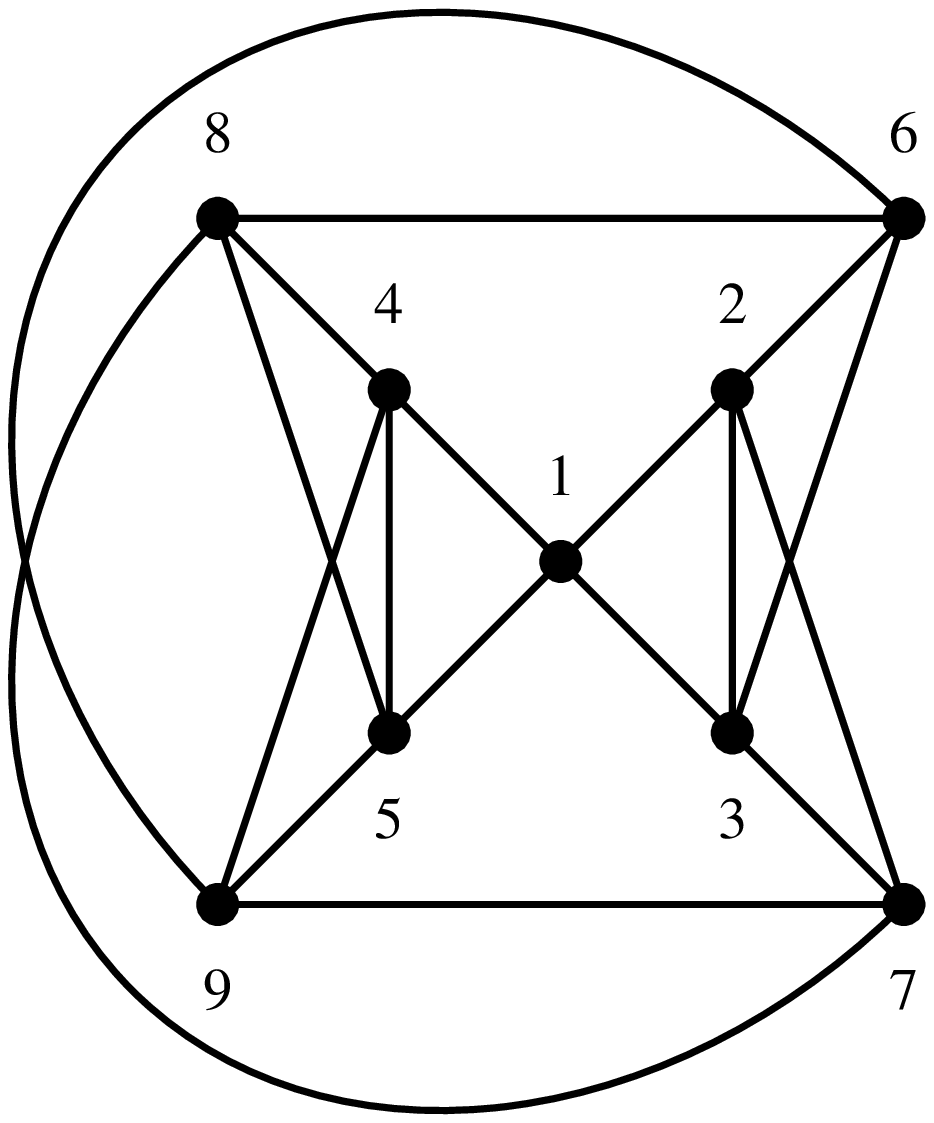}& &
\includegraphics[width=0.4\textwidth]{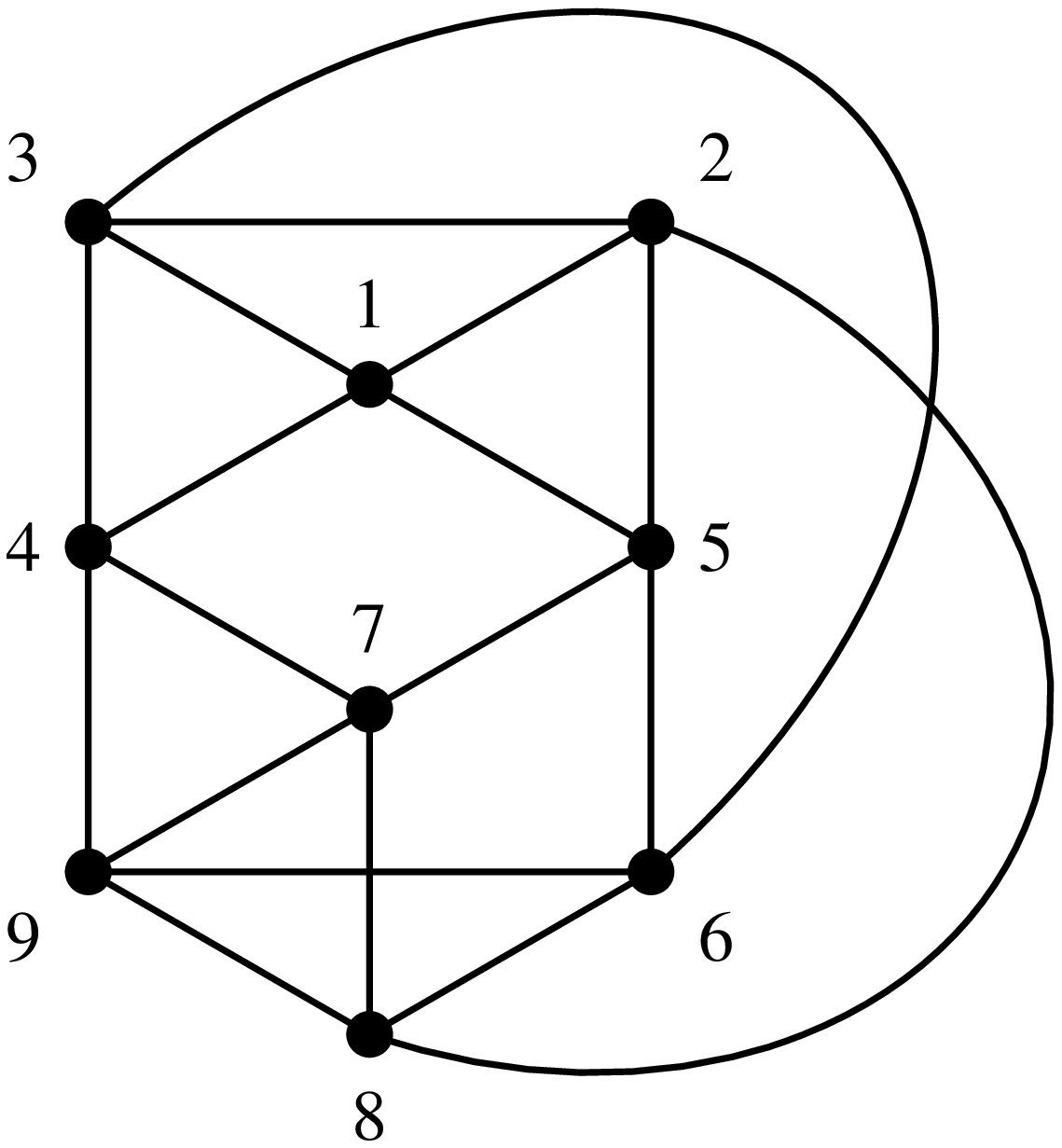}\\
\end{tabular}
\caption{Regular self-complementary graphs on 9 vertices. The Paley graph is in the top-left.}
\label{f8}
\end{figure}
\begin{figure}
\centering
{\tiny
\begin{align*}
A_1&=\left(
\begin{array}{ccccccccc}
 0 & 1 & 1 & 1 & 1 & 0 & 0 & 0 & 0 \\
 1 & 0 & 1 & 0 & 0 & 1 & 1 & 0 & 0 \\
 1 & 1 & 0 & 0 & 0 & 0 & 0 & 1 & 1 \\
 1 & 0 & 0 & 0 & 1 & 1 & 0 & 1 & 0 \\
 1 & 0 & 0 & 1 & 0 & 0 & 1 & 0 & 1 \\
 0 & 1 & 0 & 1 & 0 & 0 & 1 & 1 & 0 \\
 0 & 1 & 0 & 0 & 1 & 1 & 0 & 0 & 1 \\
 0 & 0 & 1 & 1 & 0 & 1 & 0 & 0 & 1 \\
 0 & 0 & 1 & 0 & 1 & 0 & 1 & 1 & 0
\end{array}
\right),\quad A_2=\left(
\begin{array}{ccccccccc}
 0 & 1 & 1 & 1 & 1 & 0 & 0 & 0 & 0 \\
 1 & 0 & 1 & 0 & 0 & 1 & 0 & 0 & 1 \\
 1 & 1 & 0 & 0 & 0 & 0 & 1 & 1 & 0 \\
 1 & 0 & 0 & 0 & 1 & 1 & 1 & 0 & 0 \\
 1 & 0 & 0 & 1 & 0 & 1 & 0 & 1 & 0 \\
 0 & 1 & 0 & 1 & 1 & 0 & 0 & 0 & 1 \\
 0 & 0 & 1 & 1 & 0 & 0 & 0 & 1 & 1 \\
 0 & 0 & 1 & 0 & 1 & 0 & 1 & 0 & 1 \\
 0 & 1 & 0 & 0 & 0 & 1 & 1 & 1 & 0
\end{array}
\right)\\
&\\
A_3&=\left(
\begin{array}{ccccccccc}
 0 & 1 & 1 & 1 & 1 & 0 & 0 & 0 & 0 \\
 1 & 0 & 1 & 0 & 0 & 1 & 1 & 0 & 0 \\
 1 & 1 & 0 & 0 & 0 & 1 & 1 & 0 & 0 \\
 1 & 0 & 0 & 0 & 1 & 0 & 0 & 1 & 1 \\
 1 & 0 & 0 & 1 & 0 & 0 & 0 & 1 & 1 \\
 0 & 1 & 1 & 0 & 0 & 0 & 0 & 1 & 1 \\
 0 & 1 & 1 & 0 & 0 & 0 & 0 & 1 & 1 \\
 0 & 0 & 0 & 1 & 1 & 1 & 1 & 0 & 0 \\
 0 & 0 & 0 & 1 & 1 & 1 & 1 & 0 & 0
\end{array}
\right),\quad A_4=\left(
\begin{array}{ccccccccc}
 0 & 1 & 1 & 1 & 1 & 0 & 0 & 0 & 0 \\
 1 & 0 & 1 & 0 & 1 & 0 & 0 & 1 & 0 \\
 1 & 1 & 0 & 1 & 0 & 1 & 0 & 0 & 0 \\
 1 & 0 & 1 & 0 & 0 & 0 & 1 & 0 & 1 \\
 1 & 1 & 0 & 0 & 0 & 1 & 1 & 0 & 0 \\
 0 & 0 & 1 & 0 & 1 & 0 & 0 & 1 & 1 \\
 0 & 0 & 0 & 1 & 1 & 0 & 0 & 1 & 1 \\
 0 & 1 & 0 & 0 & 0 & 1 & 1 & 0 & 1 \\
 0 & 0 & 0 & 1 & 0 & 1 & 1 & 1 & 0
\end{array}
\right).
\end{align*}
}
\caption{The adjacency matrix $A_1$ of $\textrm{Paley}(9)$ is indexed  with respect to the vertices in the following order: $0$, $-1$, $1$, $-a$, $a$, $-1-a$, $-1+a$, $1-a$, $1+a$.}
\label{f9}
\end{figure}
The adjacency matrices were obtained from the database~\cite{repository} with the help of the online converter~\cite{converter}. The top-left graph in Figure~\ref{f8} is an example of a Paley graph $\textrm{Paley}(q)$, which has a finite field $\mathbb{F}_q$ as its vertex set, where $q=1\ (\textrm{mod}\, 4)$, and two elements in $\mathbb{F}_q$ form an edge if any only if their difference is a nonzero square element in $\mathbb{F}_q$. It is well known that each Paley graph is self-complementary. In fact, if $c\in \mathbb{F}_q$ is any nonsquare element, then the map $\sigma$ on $\mathbb{F}_q$, defined by $\sigma(x)=cx$ for all $x\in \mathbb{F}_q$, is an example of an antimorphism. It is also well known that Paley graphs are arc-transitive (consequently also vertex/edge-transitive). In fact, if $(x_1,y_1)$ and $(x_2,y_2)$ are any two arcs in $\textrm{Paley}(q)$, then the map $\varphi$, defined by $\varphi(x)=\frac{y_2-x_2}{y_1-x_1}(x-x_1)+x_2$ for all $x\in \mathbb{F}_q$, is an automorphism that maps $(x_1,y_1)$ to $(x_2,y_2)$. Hence, it follows from~\cite[Corollary~1A]{watkins} that $\kappa(\textrm{Paley}(q))=\frac{q-1}{2}$. If $q$ is a square, then we have a decomposition $\mathbb{F}_q=\mathbb{F}_{\sqrt{q}}+a \mathbb{F}_{\sqrt{q}}$, where $\mathbb{F}_{\sqrt{q}}:=\{x\in \mathbb{F}_q : x^{\sqrt{q}}=x\}$ is a subfield with $\sqrt{q}$ elements and $a\in \mathbb{F}_q\backslash \mathbb{F}_{\sqrt{q}}$ satisfies $a^{\sqrt{q}}=-a$ (in the case of the graph $\textrm{Paley}(9)$ from Figure~\ref{f8} we deduce that $a^2=-1$).

Recall that given a regular graph with eigenvalues $k=\lambda_1\geq\lambda_2\geq \cdots \geq \lambda_n$, its complement has eigenvalues $n-k-1\geq -1-\lambda_n\geq\cdots \geq -1-\lambda_2$ (cf.~\cite{brouwer-haemers}). Consequently, the largest eigenvalue of a regular self-complementary graph with $n$ vertices equals $\frac{n-1}{2}$, while other eigenvalues occur in pairs $\{\lambda, -1-\lambda\}$ that are symmetric with respect to the value $-\frac{1}{2}$. More can be said in the case of   strong regularity. In fact, the parameters $(n,k,\lambda,\mu)$ of a strongly regular self-complementary graph are of the form $(n,\frac{n-1}{2},\frac{n-5}{4},\frac{n-1}{4})$ (see \cite[Lemma~5.1.2]{mullin} or~\cite[Theorem~2]{mathon}). The spectrum of the adjacency matrix of a strongly regular graph is determined by its parameters (cf.~\cite[Theorem~9.1.3]{brouwer-haemers}). Consequently, we have the following lemma, which is stated also in~\cite[p.~82]{sc-survey}, and applied in the proof of Theorem~\ref{thm-strg}.
\begin{lemma}\label{strg}
Let $\Gamma$ be a strongly regular self-complementary graph on $n>1$ vertices. Then $\Gamma$ has eigenvalues $\frac{n-1}{2}, \frac{\sqrt{n}-1}{2}, \frac{-\sqrt{n}-1}{2}$ with multiplicities $1, \frac{n-1}{2}, \frac{n-1}{2}$, respectively.
\end{lemma}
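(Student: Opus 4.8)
The plan is to combine two standard facts that are both recalled in the preceding text: first, that the parameters $(n,k,\lambda,\mu)$ of a strongly regular self-complementary graph are necessarily of the special form $\bigl(n,\tfrac{n-1}{2},\tfrac{n-5}{4},\tfrac{n-1}{4}\bigr)$; and second, that the spectrum of a strongly regular graph is completely determined by its parameters via the usual closed formula. So the whole argument reduces to plugging the self-complementary parameters into the general eigenvalue formula for strongly regular graphs and simplifying.

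First I would recall the general description of the spectrum of a connected strongly regular graph with parameters $(n,k,\lambda,\mu)$, $k\neq n-1$: besides the principal eigenvalue $\lambda_1=k$ (with multiplicity $1$, since a connected strongly regular graph is connected and regular), the other two eigenvalues are the roots $\theta,\tau$ of the quadratic $x^2-(\lambda-\mu)x-(k-\mu)=0$, namely
$$
\theta,\tau=\frac{(\lambda-\mu)\pm\sqrt{(\lambda-\mu)^2+4(k-\mu)}}{2},
$$
with multiplicities
$$
m_{\theta},m_{\tau}=\frac{1}{2}\left((n-1)\mp\frac{2k+(n-1)(\lambda-\mu)}{\sqrt{(\lambda-\mu)^2+4(k-\mu)}}\right).
$$
This is exactly \cite[Theorem~9.1.3]{brouwer-haemers}, which is cited just before the statement.

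Next I would substitute $k=\tfrac{n-1}{2}$, $\lambda=\tfrac{n-5}{4}$, $\mu=\tfrac{n-1}{4}$. Then $\lambda-\mu=-1$ and $k-\mu=\tfrac{n-1}{2}-\tfrac{n-1}{4}=\tfrac{n-1}{4}$, so the discriminant becomes $(\lambda-\mu)^2+4(k-\mu)=1+(n-1)=n$. Hence $\theta,\tau=\tfrac{-1\pm\sqrt{n}}{2}$, which are the claimed eigenvalues $\tfrac{\sqrt n-1}{2}$ and $\tfrac{-\sqrt n-1}{2}$. For the multiplicities, the correction term in the numerator is $2k+(n-1)(\lambda-\mu)=(n-1)-(n-1)=0$, so $m_\theta=m_\tau=\tfrac{n-1}{2}$. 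This also forces $n$ to be a perfect square for the multiplicities to be integers, but that is already implicit in the parameter restrictions and is not needed for the statement. To be safe I would add the remark that $n>1$ together with the parameter form excludes the trivial/disconnected cases, so the eigenvalue formula genuinely applies; alternatively one notes directly that every self-complementary graph is connected (stated in Subsection~2.2 via Lemma~\ref{l3}) and is not complete, so it is a primitive strongly regular graph.

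There is essentially no obstacle here: the lemma is a routine specialization, and the only thing to be careful about is citing the correct form of the parameters (from \cite[Lemma~5.1.2]{mullin} or \cite[Theorem~2]{mathon}, both referenced in the paragraph preceding the statement) and then performing the arithmetic cleanly. If anything the mild subtlety is bookkeeping: making sure the $\pm$ signs in the general multiplicity formula are matched to the $\pm$ in the eigenvalue formula, but since the correction term vanishes this matching is irrelevant and both multiplicities come out equal to $\tfrac{n-1}{2}$, consistent with the trace condition $1\cdot\tfrac{n-1}{2}+m_\theta\theta+m_\tau\tau=0$, which I would use as a quick sanity check.
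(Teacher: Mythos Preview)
Your proposal is correct and follows exactly the approach the paper indicates: the paper does not give an explicit proof but simply cites the parameter form $\bigl(n,\tfrac{n-1}{2},\tfrac{n-5}{4},\tfrac{n-1}{4}\bigr)$ from \cite{mullin,mathon} and the general spectrum formula \cite[Theorem~9.1.3]{brouwer-haemers}, which is precisely the substitution you carry out. Your arithmetic is clean and the sanity checks are appropriate.
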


\begin{lemma}\label{sr-1wr}
A self-complementary graph is 1-walk regular if and only if it is strongly regular.
\end{lemma}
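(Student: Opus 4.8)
The plan is to prove the two implications separately. The direction "strongly regular $\Rightarrow$ 1-walk regular" is immediate and requires no self-complementarity: every strongly regular graph is 1-walk regular, as already noted in the text just after the definition of 1-walk regularity (strong regularity gives control on the number of walks of each length $i$ between a vertex and itself, and between the endpoints of an edge, via powers of the adjacency matrix together with the defining identities). So the content is in the converse: if a self-complementary graph $\Gamma$ is 1-walk regular, then it is strongly regular.

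First I would record that a 1-walk regular graph is in particular regular (take $i=1$: $A\bullet I = a_1 I$ forces nothing, but $A^2\bullet I = a_2 I$ says every vertex has the same number of closed walks of length $2$, i.e. the same degree), so $\Gamma$ is $k$-regular, and being self-complementary on $n$ vertices forces $k=\frac{n-1}{2}$ and $n\equiv 1\pmod 4$ by the remarks in Subsection~\ref{subsection-sc}. Next I would use the spectral characterization: a connected regular graph is strongly regular if and only if its adjacency matrix has exactly three distinct eigenvalues. So the goal reduces to showing that a $1$-walk regular self-complementary graph has at most three distinct eigenvalues. Here is where I would bring in self-complementarity through the eigenvalue symmetry already stated in the excerpt: for a regular self-complementary graph the eigenvalues other than $\lambda_1=\frac{n-1}{2}$ come in pairs $\{\lambda,-1-\lambda\}$ symmetric about $-\frac12$. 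The key extra leverage from 1-walk regularity is that one can also control the spectrum of $\bar\Gamma$ in relation to that of $\Gamma$ via the entrywise identities — or, more cleanly, use a known fact (e.g. from \cite{roberson-thetas,roberson-colorings}) that for a 1-walk regular graph the complement is also 1-walk regular, together with the fact that 1-walk regular graphs satisfy strong restrictions tying the main eigenvalue's eigenvector ($\mathbf 1$) to the structure.

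The cleanest route I expect to take is this: a $1$-walk regular graph $\Gamma$ has the property that $A^i$ agrees with $a_iI + b_i A + c_i(J-I-A)$ on the diagonal, on edges, and... — no, that is exactly strong regularity, so that cannot be assumed. Instead I would argue on the number of distinct eigenvalues directly. Suppose $\Gamma$ is 1-walk regular with $d+1$ distinct eigenvalues $\theta_0=\frac{n-1}{2}>\theta_1>\cdots>\theta_d$. One shows (this is essentially \cite[Corollary~4.1]{roberson-thetas} territory, or a short argument with the minimal polynomial) that 1-walk regularity is equivalent to the existence, for each $i$, of scalars with $A^i\bullet I = a_iI$ and $A^i\bullet A = b_i A$; feeding in $i=0,1,\dots,d$ and using that $I, A, A^2,\dots, A^d$ are linearly independent, one gets that the "diagonal part" and the "edge part" of every polynomial in $A$ are determined by two linear functionals on the coefficient vector. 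Combining this with the self-complementary eigenvalue pairing $\{\lambda,-1-\lambda\}$, which forces the non-principal spectrum to be symmetric about $-\frac12$, I would derive that $d\le 2$: if there were two distinct eigenvalue pairs, the extra degrees of freedom in the minimal polynomial would contradict the two-functional constraint on diagonal/edge entries coming from 1-walk regularity of both $\Gamma$ and $\bar\Gamma$ (using that $\bar\Gamma$ is again 1-walk regular and self-complementary, and that its non-principal eigenvalues are $-1-\theta_i$). Hence $\Gamma$ has exactly three distinct eigenvalues and, being connected and regular, is strongly regular.

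The main obstacle I anticipate is making the counting in the last paragraph fully rigorous: pinning down exactly why 1-walk regularity plus the self-complementary symmetry forces at most three eigenvalues, rather than merely constraining the multiplicities. A safe fallback, if the slick argument stalls, is to invoke Lemma~\ref{strg}'s source more directly: a regular self-complementary graph with three distinct eigenvalues automatically has the "conference-graph-type" parameters $(n,\frac{n-1}{2},\frac{n-5}{4},\frac{n-1}{4})$, so one only needs the eigenvalue-count reduction; and that reduction is exactly the statement that a 1-walk regular graph whose non-principal eigenvalues all lie on the two-point set obtained from a single pair $\{\lambda,-1-\lambda\}$ is strongly regular — which can be checked by writing $A^2$ in the basis $\{I,A,J\}$ using that $A^2\bullet I$, $A^2\bullet A$ are scalar multiples of $I$, $A$ (1-walk regularity) and that $A^2$ has only the eigenvalues $\frac{(n-1)^2}{4}, \lambda^2, (1+\lambda)^2$ — two of which may coincide — so $A^2\in\mathrm{span}\{I,A,J\}$, i.e. $\Gamma$ is strongly regular.
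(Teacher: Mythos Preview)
Your proposal has a genuine gap. The spectral route you sketch (``derive that $d\le 2$'') is never actually carried out, and your fallback is circular: you write that one can check $A^2\in\mathrm{span}\{I,A,J\}$ ``using \ldots\ that $A^2$ has only the eigenvalues $\tfrac{(n-1)^2}{4},\lambda^2,(1+\lambda)^2$'', but having only one eigenvalue pair $\{\lambda,-1-\lambda\}$ is precisely the three-eigenvalue conclusion you are trying to establish. Nothing in 1-walk regularity alone bounds the number of distinct eigenvalues, and the self-complementary pairing $\lambda\leftrightarrow -1-\lambda$ only tells you the non-principal spectrum is symmetric about $-\tfrac12$, not that it has just two points. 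Also, the ``known fact'' that the complement of a 1-walk regular graph is 1-walk regular is not true in general; here it holds only because $\bar\Gamma\cong\Gamma$.

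The paper's argument bypasses eigenvalue counting entirely and is a two-line computation you almost had the ingredients for. From 1-walk regularity of $\Gamma$ you know the diagonal entries of $A^2$ are all $a_2$ and the entries on edges are all $b_2$. Self-complementarity gives that $\bar\Gamma$ is 1-walk regular too (trivially, since $\bar\Gamma\cong\Gamma$), so $\bar A^2\bullet\bar A=\bar b_2\bar A$. Now use regularity with $k=\tfrac{n-1}{2}$ to expand $\bar A^2=(J-I-A)^2=A^2+2A+I-J$ and take the Schur product with $\bar A$: this yields $A^2\bullet\bar A=(\bar b_2+1)\bar A$, i.e.\ the entries of $A^2$ on \emph{non}-edges are all the constant $\bar b_2+1$. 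Hence $A^2=a_2 I+b_2 A+(\bar b_2+1)\bar A\in\mathrm{span}\{I,A,J\}$, and $\Gamma$ is strongly regular. The point you missed is that 1-walk regularity of $\bar\Gamma$ controls $A^2$ on non-edges directly via the identity $\bar A^2=A^2+2A+I-J$; no spectral detour is needed.
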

\begin{proof}
Recall that each strongly regular graph is 1-walk regular. Conversely, suppose that $\Gamma$ is a 1-walk regular self-complementary graph. We may assume that $n:=|V(\Gamma)|>1$. Let $A$ and $\bar{A}$ be the adjacency matrices of $\Gamma$ and $\bar{\Gamma}$, respectively. Since both are 1-walk regular, for each positive integer $i$ there are constants $a_i,b_i, \bar{a}_i, \bar{b}_i$ such that $A^i\bullet I=a_i I$, $A^i\bullet A=b_i A$, $\bar{A}^i\bullet I=\bar{a}_i I$, $\bar{A}^i\bullet \bar{A}=b_i \bar{A}$. In particular, the diagonal entries $[A^2]_{vv}$ all equal $a_2$, while $[A^2]_{vu}=b_2$ whenever vertices $v$ and $u$ are adjacent in $\Gamma$. Since the valency of $\Gamma$ is $\frac{n-1}{2}$, we can square the equality $\bar{A}=J-A-I$, where $J$ is the $n\times n$ all-one matrix, to deduce that $\bar{A}^2=A^2+2A+I-J$. If we multiply the later equality entry-wise by $\bar{A}$, we deduce that $\bar{b}_2\bar{A}=A^2\bullet \bar{A}+\bar{a}_1 I - \bar{A}$. Consequently, if distinct vertices $v$ and $u$ are nonadjacent in $\Gamma$, then $[A^2]_{vu}=1+\bar{b}_2$. Hence,
$$A^2=a_2 I+b_2 A+ (1+\bar{b}_2)\bar{A}=(b_2-\bar{b}_2-1)A+(a_2-\bar{b}_2-1)I+(1+\bar{b}_2)J.$$
It now follows from \cite[Theorem~9.1.2]{brouwer-haemers} that $\Gamma$ is strongly regular.
\end{proof}

The next result follows immediately from Rao's paper~\cite[Theorem~A.1.]{rao1979}. It is applied in Proposition~\ref{p8}.
\begin{lemma}\label{regularscHamiltonskost}
Each regular self-complementary graph on $n>1$ vertices has a Hamiltonian cycle.
\end{lemma}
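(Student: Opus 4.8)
Since the sentence just above already records it, the statement is in essence \cite[Theorem~A.1]{rao1979}, so the shortest proof is that citation; but let me describe how one would attack it with the tools of Section~\ref{preliminaries} and why a self-contained argument is not immediate.

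The plan is to aim for the Chv\'{a}tal--Erd\H{o}s criterion (Lemma~\ref{connectivity_hamilton}). First I would fix the numerology: $\bar\Gamma\cong\Gamma$, both are connected by Lemma~\ref{l3}, and the hand-shaking remark preceding Lemma~\ref{fiksna_tocka} forces $n=4m+1$ and $\delta(\Gamma)=\delta(\bar\Gamma)=\frac{n-1}{2}$. Inserting this into Lemma~\ref{DAM} and using $\kappa(\Gamma)=\kappa(\bar\Gamma)$ yields $2\kappa(\Gamma)\ge\frac{n-1}{2}+1$, i.e.
\[
\kappa(\Gamma)\ \ge\ \Big\lceil\frac{n+1}{4}\Big\rceil.
\]
Consequently, once $\alpha(\Gamma)<\kappa(\Gamma)$ Lemma~\ref{connectivity_hamilton} gives that $\Gamma$ is even Hamiltonian-connected, in particular it has a Hamiltonian cycle; so everything is reduced to bounding the independence number from above.

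When $\Gamma$ is in addition vertex-transitive or $1$-walk regular this is easy: Lemma~\ref{clique-coclique} gives $\alpha(\Gamma)\omega(\Gamma)\le n$, and $\alpha(\Gamma)=\omega(\bar\Gamma)=\omega(\Gamma)$ since $\Gamma$ is self-complementary, so $\alpha(\Gamma)\le\sqrt n$, which is strictly below $\lceil\frac{n+1}{4}\rceil$ for all but a few small orders, the exceptions being checked directly --- this is exactly how Lemma~\ref{hamconnected} is obtained. The real obstacle is that a general regular self-complementary graph need be neither vertex-transitive nor $1$-walk regular (for instance, of the four regular self-complementary graphs on $9$ vertices only the Paley graph is strongly regular, hence only it is $1$-walk regular by Lemma~\ref{sr-1wr}), so no clique--coclique bound is at hand and the only generally valid estimate is the weak $\alpha(\Gamma)\le\frac{n-1}{2}$ (attained by $C_5$), which does not beat $\kappa(\Gamma)$; Chv\'{a}tal--Erd\H{o}s alone therefore cannot finish the proof.

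To get past this I would switch to building the cycle explicitly, which is what Rao does: start from the classical fact that every self-complementary graph has a Hamiltonian path $v_1v_2\cdots v_n$, and then use regularity together with an antimorphism $\sigma$ --- which by Lemma~\ref{fiksna_tocka} fixes exactly one vertex and partitions the other $4m$ vertices into $\sigma$-orbits of length~$4$ --- to reroute the path near its two ends and link $v_1$ to $v_n$. Since carrying this through in detail is precisely \cite[Theorem~A.1]{rao1979}, I would invoke that theorem; this gives the required Hamiltonian cycle, and it is also the step I expect to be the genuine difficulty.
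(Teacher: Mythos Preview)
Your proposal is correct and matches the paper's approach: the paper gives no independent proof but simply records that the lemma ``follows immediately from Rao's paper~\cite[Theorem~A.1.]{rao1979}.'' Your additional discussion of why the Chv\'atal--Erd\H{o}s route via Lemmas~\ref{DAM} and~\ref{connectivity_hamilton} stalls in the general regular case (absent a clique--coclique bound) is accurate and in fact anticipates the strategy the paper later uses for the more restrictive Lemma~\ref{hamconnected}.
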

\begin{remark}\label{opomba}
It is well known that Hamiltonicity provides some information on graph spectrum (cf. \cite{mohar,heuvel,brouwer-haemers}). If $\frac{n-1}{2}=\lambda_1\geq\cdots\geq \lambda_n$ are eigenvalues of a graph $\Gamma$ from Lemma~\ref{regularscHamiltonskost}, then the Hamiltonian cycle provides an induced cycle $C_{n}$ in the line graph of $\Gamma$, which has eigenvalues $\lambda_1+\frac{n-5}{2}\geq\cdots \geq \lambda_n+\frac{n-5}{2}\geq -2\geq \cdots \geq -2$~\cite[Subsection~1.4.5]{brouwer-haemers}. Since the $j$-th largest eigenvalue of $C_n$ equals $2\cos(\frac{2\pi\lfloor j/2 \rfloor}{n})$ for all $j$~\cite[Subsection~1.4.3]{brouwer-haemers}, the interlacing~\cite[Section~3.2]{brouwer-haemers} and equation $\lambda_i=-1-\lambda_{n-i+2}$  yield  $\lambda_i\leq \frac{n-7}{2}-2\cos(\frac{2\pi\lfloor (n-i+2)/2 \rfloor}{n})$ for $i\in \{2,\ldots,\frac{n+1}{2}\}$. In particular, $\lambda_2\leq \frac{n-7}{2}-2\cos(\frac{\pi(n-1)}{n})<\frac{n-3}{2}$.
\end{remark}
In~\cite{rao1979} Rao proposed a problem about the characterization of Hamiltonian connected self-complementary graphs. Lemma~\ref{hamconnected} provides some information in this direction and indicates the difficulty of the problem.
\begin{lemma}\label{hamconnected}
If a self-complementary graph $\Gamma$ on $n>5$ vertices is vertex-transitive or strongly-regular, then $\Gamma$ is Hamiltonian-connected.
\end{lemma}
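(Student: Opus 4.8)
The strategy is to invoke the Chv\'{a}tal--Erd\H{o}s criterion, Lemma~\ref{connectivity_hamilton}: it suffices to prove that $\alpha(\Gamma)<\kappa(\Gamma)$. Observe first that, whether $\Gamma$ is vertex-transitive or strongly regular, it is a regular self-complementary graph, hence $\frac{n-1}{2}$-regular with $n=4m+1$ for some integer $m\geq 2$ (because $n>5$); moreover $\Gamma$, and therefore also $\bar\Gamma\cong\Gamma$, is connected and non-complete, so that $\kappa(\Gamma)$ and $\kappa(\bar\Gamma)$ are defined and $\kappa(\Gamma)=\kappa(\bar\Gamma)$.

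For the independence number I would argue as follows. If $\Gamma$ is vertex-transitive, Lemma~\ref{clique-coclique} applies directly; if $\Gamma$ is strongly regular it is $1$-walk regular, so Lemma~\ref{clique-coclique} applies again, giving $\alpha(\Gamma)\,\omega(\Gamma)\leq n$ in both cases. Since a clique of $\Gamma$ is an independent set of $\bar\Gamma\cong\Gamma$, we have $\omega(\Gamma)=\alpha(\bar\Gamma)=\alpha(\Gamma)$, whence $\alpha(\Gamma)\leq\lfloor\sqrt n\rfloor$. For the connectivity, both $\Gamma$ and $\bar\Gamma$ are connected with minimum degree $\frac{n-1}{2}$, so Lemma~\ref{DAM} yields $\kappa(\Gamma)+\kappa(\bar\Gamma)\geq\frac{n-1}{2}+1=\frac{n+1}{2}$, and using $\kappa(\bar\Gamma)=\kappa(\Gamma)$ this gives $\kappa(\Gamma)\geq\big\lceil\frac{n+1}{4}\big\rceil=m+1$. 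When $m\geq 3$, i.e.\ $n\geq 13$, one has $\lfloor\sqrt{4m+1}\rfloor\leq m$ (equivalently $2m<m^2$), so $\alpha(\Gamma)\leq m<m+1\leq\kappa(\Gamma)$, and Lemma~\ref{connectivity_hamilton} finishes the argument.

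The one case the above does not settle is $n=9$ ($m=2$), where the estimates only yield $\alpha(\Gamma)\leq 3\leq\kappa(\Gamma)$; this boundary case is the main (small) obstacle. To deal with it I would use that $\Gamma$, being a regular self-complementary graph on $9$ vertices, is one of the four graphs with adjacency matrices $A_1,\dots,A_4$ of Figure~\ref{f9}, and that a routine inspection---comparing the subgraphs induced on the neighbourhoods of different vertices in the vertex-transitive case, and comparing the numbers of common neighbours of adjacent versus non-adjacent pairs in the strongly regular case---shows that necessarily $\Gamma=\textrm{Paley}(9)$. As $\textrm{Paley}(9)$ is arc-transitive, \cite[Corollary~1A]{watkins} gives $\kappa(\textrm{Paley}(9))=4$, while $\alpha(\textrm{Paley}(9))=3$, so $\alpha<\kappa$ and Lemma~\ref{connectivity_hamilton} applies. (Alternatively, in the vertex-transitive case one may avoid the case analysis altogether by invoking the classical lower bound $\kappa\geq\frac23(\delta+1)$ for connected vertex-transitive graphs, which already gives $\kappa(\Gamma)\geq\frac{n+1}{3}>\sqrt n\geq\alpha(\Gamma)$ for every admissible $n>5$.)
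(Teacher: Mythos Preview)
Your proof is correct and follows essentially the same route as the paper: bound $\alpha(\Gamma)\le\lfloor\sqrt n\rfloor$ via Lemma~\ref{clique-coclique} (using that strongly regular $\Rightarrow$ $1$-walk regular), bound $\kappa(\Gamma)\ge\frac{n+1}{4}$ via Lemma~\ref{DAM} and $\kappa(\Gamma)=\kappa(\bar\Gamma)$, then apply Chv\'atal--Erd\H{o}s for $n\ge 13$ and treat $n=9$ separately by showing that only $\mathrm{Paley}(9)$ qualifies. The only cosmetic difference is the $n=9$ verification: the paper rules out the other three regular self-complementary graphs by computing the diagonal entries of $A_2^4,A_3^3,A_4^3$ (distinct closed-walk counts, hence not $1$-walk regular and not vertex-transitive), whereas you invoke a direct inspection of neighbourhoods and common-neighbour counts; your alternative use of Watkins' bound $\kappa\ge\tfrac{2}{3}(\delta+1)$ for the vertex-transitive case is a nice shortcut not in the paper, though it still leaves the strongly regular $n=9$ case to the enumeration.
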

\begin{proof}
By Lemmas~\ref{clique-coclique},~\ref{sr-1wr},  and self-complementarity we have $\alpha(\Gamma)\leq \lfloor\sqrt{n}\rfloor$.
Since $\Gamma$ is regular and self-complementary it follows from Lemmas~\ref{DAM} that
$\kappa(\Gamma)\geq \frac{1}{2}(\frac{n-1}{2}+1)=\frac{n+1}{4}$. Consequently, $\alpha(\Gamma)<\kappa(\Gamma)$ for $n\geq 12$. Since $n=1\, (\textrm{mod}~4)$, the claim for $n\neq 9$ follows from Lemma~\ref{connectivity_hamilton}. Recall that if $\Gamma$ is isomorphic to $\textrm{Paley}(9)$, then $\kappa(\Gamma)=\frac{9-1}{2}=4$ and the claim follows from Lemma~\ref{connectivity_hamilton} as above. There are three more regular self-complementary graphs on 9 vertices, but none of them is vertex-transitive or strongly regular (i.e. 1-walk regular). To see this we can compute the powers $A_2^4, A_3^3, A_4^3$ of matrices in Figure~\ref{f9} and observe that their $(1,1)$ and $(2,2)$ entries differ.
Hence, the numbers of closed walks of length four/three at vertices~1 and~2 are different.
\end{proof}

\begin{remark}
The statement in Lemma~\ref{hamconnected} can be generalized: Each regular self-complementary graph $\Gamma$ on $n>5$ vertices such that $\alpha(\Gamma)\leq \sqrt{n}$ is Hamiltonian-connected. The proof above confirms this statement for graphs that are not isomorphic to graphs with adjacency matrices $A_2,A_3,A_4$ in Figure~\ref{f9}. The statement for these three graphs was verified by a computer. Actually, it can be checked that graphs with adjacency matrices $A_2$ and $A_4$ both satisfy $3=\alpha(\Gamma)<\kappa(\Gamma)=4$, as do the graph $\textrm{Paley}(9)$. Hence, Hamiltonian-connectivity follows from Lemma~\ref{connectivity_hamilton}. The graph with the adjacency matrix $A_3$ is more complicated, since $\alpha(\Gamma)=3=\kappa(\Gamma)$. In fact, the deletion of the independent set $\{1,6,7\}$ or $\{1,8,9\}$ disconnects this graph.
\end{remark}

\subsection{Graph homomorphisms}

A graph $\Gamma$ is a \emph{core} if $\textrm{End}(\Gamma)=\textrm{Aut}(\Gamma)$. Simple examples include complete graphs, odd cycles, or more generally \emph{vertex-critical} graphs, i.e. graphs, where a removal of any vertex decreases the chromatic number (cf. Lemma~\ref{lemma-chromatic}). A~subgraph $\Gamma'$ in a graph $\Gamma$ is a \emph{core of} $\Gamma$, if $\Gamma'$ is a core and there exists some graph homomorphism from $\Gamma$ to $\Gamma'$. A core of a graph is an induced subgraph and is unique up to isomorphism~\cite[Lemma~6.2.2]{godsil_knjiga}. By $\textrm{core}(\Gamma)$ we denote any of the cores of $\Gamma$. There always exists a \emph{retraction} $\psi: \Gamma\to \textrm{core}(\Gamma)$, i.e. a graph homomorphism that fixes each vertex in $\textrm{core}(\Gamma)$. Namely, if $\varphi: \Gamma\to \textrm{core}(\Gamma)$ is any graph homomorphism, then its restriction to the vertices of $\textrm{core}(\Gamma)$ is a member of $\textrm{Aut}(\textrm{core}(\Gamma))$ and therefore $(\varphi|_{V(\textrm{core}(\Gamma))})^{-1}\circ \varphi$ is a retraction. The core of a graph $\Gamma$ is a complete graph if and only if $\chi(\Gamma)=\omega(\Gamma)$. Lemma~\ref{lemma-clique} is obvious. Lemma~\ref{lemma-chromatic} is also well known (cf.~\cite{handbook_produkti,hahn}).

\begin{lemma}\label{lemma-clique}
Let $\Gamma_1$ and $\Gamma_2$ be two graphs such that there exists a homomorphism from $\Gamma_1$ to $\Gamma_2$. Then $\omega(\Gamma_1)\leq \omega(\Gamma_2)$.
\end{lemma}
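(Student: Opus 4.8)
The plan is to transport a maximum clique of $\Gamma_1$ forward along the homomorphism. Fix a homomorphism $\varphi\colon V(\Gamma_1)\to V(\Gamma_2)$ and let $C\subseteq V(\Gamma_1)$ be a clique with $|C|=\omega(\Gamma_1)$; such a $C$ exists by the definition of the clique number. The first step is to record that $\varphi$ is injective on $C$: any two distinct vertices $u,v\in C$ satisfy $\{u,v\}\in E(\Gamma_1)$, hence by~\eqref{e104} we have $\{\varphi(u),\varphi(v)\}\in E(\Gamma_2)$, and in particular $\varphi(u)\neq\varphi(v)$, since the two endpoints of an edge are distinct.

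The second step is to observe that $\varphi(C)$ is a clique of $\Gamma_2$. Indeed, any two distinct elements of $\varphi(C)$ are of the form $\varphi(u),\varphi(v)$ with $u,v\in C$ distinct (here we use the injectivity of $\varphi|_C$), and the first step already shows that they are adjacent in $\Gamma_2$. Thus $\varphi(C)$ induces a complete subgraph, so it is a clique, and $|\varphi(C)|=|C|=\omega(\Gamma_1)$ again by injectivity of $\varphi|_C$. By the maximality in the definition of $\omega(\Gamma_2)$ this gives $\omega(\Gamma_2)\geq|\varphi(C)|=\omega(\Gamma_1)$, which is the claim.

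There is essentially no obstacle here; the statement is elementary, which is why the paper labels it obvious. The only point worth stating explicitly is that the implication in~\eqref{e104} already forces distinct endpoints of an edge to have distinct images, and this is precisely what makes an arbitrary homomorphism injective when restricted to a clique. No bijectivity or surjectivity of $\varphi$, and no hypothesis on $\Gamma_1$ or $\Gamma_2$, is needed.
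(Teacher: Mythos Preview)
Your proof is correct and is precisely the standard elementary argument one has in mind here. The paper does not actually prove this lemma; it simply declares it obvious, so there is nothing further to compare.
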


\begin{lemma}\label{lemma-chromatic}
Let $\Gamma_1$ and $\Gamma_2$ be two graphs such that there exists a homomorphism from $\Gamma_1$ to $\Gamma_2$. Then $\chi(\Gamma_1)\leq \chi(\Gamma_2)$.
\end{lemma}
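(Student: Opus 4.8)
The plan is to exploit the standard identification of a proper $k$-colouring of a graph with a homomorphism to $K_k$, together with the fact that a composition of homomorphisms is a homomorphism. Concretely, I would let $\varphi:\Gamma_1\to\Gamma_2$ be the homomorphism provided by the hypothesis, set $k:=\chi(\Gamma_2)$, and let $c:V(\Gamma_2)\to\{1,\dots,k\}$ be a proper colouring realising this chromatic number. The candidate colouring of $\Gamma_1$ is then the composite map $c\circ\varphi:V(\Gamma_1)\to\{1,\dots,k\}$.

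The key verification is that $c\circ\varphi$ is a proper colouring of $\Gamma_1$. Take any edge $\{u,v\}\in E(\Gamma_1)$. By the implication~\eqref{e104} applied to $\varphi$ we get $\{\varphi(u),\varphi(v)\}\in E(\Gamma_2)$; in particular $\varphi(u)\neq\varphi(v)$. Since $c$ is a proper colouring of $\Gamma_2$, this forces $c(\varphi(u))\neq c(\varphi(v))$, i.e. the two endpoints of $\{u,v\}$ receive distinct colours. Hence $c\circ\varphi$ is a proper $k$-colouring of $\Gamma_1$, and therefore $\chi(\Gamma_1)\leq k=\chi(\Gamma_2)$.

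There is essentially no obstacle here; the only point needing a sentence of care is that an edge of $\Gamma_1$ is sent to a genuine edge (and not a ``loop'') of $\Gamma_2$, which is precisely the content of~\eqref{e104}. One could alternatively phrase the whole argument purely in terms of homomorphisms: a $k$-colouring of $\Gamma_i$ is the same thing as a homomorphism $\Gamma_i\to K_k$, so the existence of a homomorphism $\Gamma_1\to\Gamma_2$ and of a homomorphism $\Gamma_2\to K_{\chi(\Gamma_2)}$ yields, by composition, a homomorphism $\Gamma_1\to K_{\chi(\Gamma_2)}$, whence $\chi(\Gamma_1)\leq\chi(\Gamma_2)$.
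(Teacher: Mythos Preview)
Your argument is correct and is exactly the standard proof: compose the given homomorphism with an optimal proper colouring of $\Gamma_2$ (equivalently, with a homomorphism $\Gamma_2\to K_{\chi(\Gamma_2)}$) to obtain a proper $\chi(\Gamma_2)$-colouring of $\Gamma_1$. The paper does not supply its own proof of this lemma, merely citing it as well known, so there is nothing further to compare.
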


In Corollary~\ref{lemma-preimage}, the claim for vertex-transitive graphs follows directly from the proof of \cite[Theorem~6.13.2]{godsil_knjiga}. We provide a short proof that works also for 1-walk regular graphs. Corollary~\ref{lemma-preimage} is applied in Propositions~\ref{prop-lex1} and~\ref{prop-lex2}.
\begin{cor}\label{lemma-preimage}
If a graph $\Gamma$ is vertex-transitive or 1-walk regular, and $\varphi: \Gamma\to K_{\omega(\Gamma)}$ is a graph homomorphism, then $\alpha(\Gamma)\omega(\Gamma)=|V(\Gamma)|$ and $|\varphi^{-1}(v)|=\alpha(\Gamma)$ for each vertex $v$ in $K_{\omega(\Gamma)}$.
\end{cor}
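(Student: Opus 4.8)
The plan is to recognize that a homomorphism $\varphi:\Gamma\to K_{\omega(\Gamma)}$ is exactly a proper $\omega(\Gamma)$-coloring of $\Gamma$, so the fibers $\varphi^{-1}(v)$, for $v\in V(K_{\omega(\Gamma)})$, partition $V(\Gamma)$ into $\omega(\Gamma)$ (possibly empty) classes, each of which is an independent set in $\Gamma$ since $\varphi$ maps the two endpoints of any edge to distinct vertices of a complete graph. The whole proof is then a double-counting squeeze between two inequalities.

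First I would record the elementary upper bound. Each fiber $\varphi^{-1}(v)$ is independent, hence $|\varphi^{-1}(v)|\le\alpha(\Gamma)$, and summing over the $\omega(\Gamma)$ fibers yields
\[
|V(\Gamma)|=\sum_{v\in V(K_{\omega(\Gamma)})}|\varphi^{-1}(v)|\le\alpha(\Gamma)\,\omega(\Gamma).
\]
Next I would invoke the matching lower bound: since $\Gamma$ is vertex-transitive or $1$-walk regular, Lemma~\ref{clique-coclique} gives $\alpha(\Gamma)\,\omega(\Gamma)\le|V(\Gamma)|$. Combining the two inequalities forces $\alpha(\Gamma)\,\omega(\Gamma)=|V(\Gamma)|$, which is the first assertion of the corollary.

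For the second assertion I would observe that equality has now been attained in the displayed estimate, so none of the $\omega(\Gamma)$ summands $|\varphi^{-1}(v)|$ can be strictly less than $\alpha(\Gamma)$ — otherwise the sum would be strictly smaller than $\alpha(\Gamma)\,\omega(\Gamma)=|V(\Gamma)|$. Hence $|\varphi^{-1}(v)|=\alpha(\Gamma)$ for every $v\in V(K_{\omega(\Gamma)})$, as claimed.

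I do not anticipate a real obstacle here: the argument is purely combinatorial and short. The only nontrivial input is the inequality $\alpha(\Gamma)\,\omega(\Gamma)\le|V(\Gamma)|$ for vertex-transitive and $1$-walk regular graphs, which is precisely Lemma~\ref{clique-coclique} (and is the reason the hypothesis on $\Gamma$ is needed). The single point that deserves a careful sentence is that the fibers really do form a partition of $V(\Gamma)$ into $\omega(\Gamma)$ independent sets, so that the counting above is legitimate.
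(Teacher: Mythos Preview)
Your proof is correct and follows essentially the same route as the paper's. The only cosmetic difference is that the paper obtains the inequality $|V(\Gamma)|\le\alpha(\Gamma)\,\omega(\Gamma)$ by first passing through the chromatic number (using Lemma~\ref{lemma-chromatic} to get $\chi(\Gamma)\le\omega(\Gamma)$ and then the standard bound $\chi(\Gamma)\ge |V(\Gamma)|/\alpha(\Gamma)$), whereas you extract it directly from the fiber partition; the underlying counting is identical.
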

\begin{proof}
By Lemma~\ref{lemma-chromatic}, $\chi(\Gamma)\leq \chi(K_{\omega(\Gamma)})=\omega(\Gamma)$. Hence, $\omega(\Gamma)=\chi(\Gamma)\geq \frac{|V(\Gamma)|}{\alpha(\Gamma)}$. By Lemma~\ref{clique-coclique} it follows that $\alpha(\Gamma)\omega(\Gamma)=|V(\Gamma)|$. Since the preimages
$\varphi^{-1}(v)$, with  $v\in V(K_{\omega(\Gamma)})$, are independent sets in $\Gamma$ that partition $V(\Gamma)$, we have
$$|V(\Gamma)|=\sum_{v\in V(K_{\omega(\Gamma)})} |\varphi^{-1}(v)|\leq  \alpha(\Gamma)\omega(\Gamma)=|V(\Gamma)|.$$
Consequently,  $|\varphi^{-1}(v)|=\alpha(\Gamma)$ for all $v$.
\end{proof}

The Lov\'{a}sz theta function $\vartheta(\Gamma)$ of a graph $\Gamma$ was introduced in~\cite{lovasz1979}. If a graph $\Gamma$ has $n$ vertices and  $\nu<0$ is a real number, then consider the infinite graph $S(n,\nu)$ with the unit sphere $S^{n+1}:=\{{\bf x}\in \mathbb{R}^n : \langle{\bf x}, {\bf x}\rangle_n=1\}$ as the vertex set and the edge set given by $\{\{{\bf x}, {\bf y}\}: {\bf x},{\bf y}\in S^{n+1}, \langle{\bf x}, {\bf y}\rangle_n=\nu\}$. Here, $\langle{\bf x}, {\bf y}\rangle_n=\sum_{i=1}^n x_i y_i$ is the dot product of vectors ${\bf x}=(x_1,\ldots,x_n)$ and ${\bf y}=(y_1,\ldots,y_n)$. In~\cite[Theorem~8.2]{JACM} it was shown that the Lov\'{a}sz theta function of the complement $\vartheta({\bar \Gamma})$ is the same as the infimum among all values $1-\frac{1}{\nu}$, where $\nu$ ranges over all negative values such that there exists a homomorphism $\varphi_{n,\nu}: \Gamma\to  S(n,\nu)$. Consequently it was observed in~\cite{robersonJCTB} that $\vartheta({\overline \Gamma_1})\leq \vartheta({\overline \Gamma_2})$ whenever there exists a homomorphism $\varphi: \Gamma_1\to \Gamma_2$ (see~Lemma~\ref{lemma-theta} below). In fact, if $n_1:=|V(\Gamma_1)|$, $n_2:=|V(\Gamma_2)|$, and $\varphi_{n_2,\nu}$ is a homomorphism between $\Gamma_2$ and $S(n_2,\nu)$, then the image of the map $\varphi_{n_2,\nu}\circ\varphi$ spans a vector subspace $U\subseteq \mathbb{R}^{n_2}$ of dimension $\dim U\leq n_1$. If we choose any (linear) map $g: (U,\langle\cdot,\cdot\rangle_{n_2})\to (\mathbb{R}^{\dim U}, \langle\cdot,\cdot\rangle_{\dim U})$ that preserves the dot product, and the map $f: \mathbb{R}^{\dim U} \to \mathbb{R}^{n_1}$ that extends vectors by $n_1-\dim U$ zero entries, we deduce that the map $\varphi_{n_1,\nu}:=f\circ g\circ \varphi_{n_2,\nu}\circ\varphi$ is a homomorphism between $\Gamma_1$ and $S(n_1,\nu)$.

\begin{lemma}\label{lemma-theta}
Let $\Gamma_1$ and $\Gamma_2$ be two graphs such that there exists a homomorphism from $\Gamma_1$ to $\Gamma_2$. Then $\vartheta({\overline \Gamma_1})\leq \vartheta({\overline \Gamma_2})$.
\end{lemma}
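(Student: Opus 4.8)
The plan is to follow exactly the argument sketched in the paragraph preceding the statement, using the characterization of $\vartheta(\overline{\Gamma})$ via homomorphisms into the infinite spherical graphs $S(n,\nu)$ that was quoted from \cite{JACM}. Concretely, let $\varphi:\Gamma_1\to\Gamma_2$ be the given homomorphism, and write $n_1:=|V(\Gamma_1)|$, $n_2:=|V(\Gamma_2)|$. By the cited result, $\vartheta(\overline{\Gamma_2})$ equals the infimum of $1-\frac{1}{\nu}$ over all $\nu<0$ admitting a homomorphism $\varphi_{n_2,\nu}:\Gamma_2\to S(n_2,\nu)$. So it suffices to show that for every such $\nu$ there is also a homomorphism $\Gamma_1\to S(n_1,\nu)$: then the infimum defining $\vartheta(\overline{\Gamma_1})$ is taken over a superset of the relevant $\nu$'s (or at least over values at least as small in the quantity $1-\tfrac1\nu$), giving $\vartheta(\overline{\Gamma_1})\le\vartheta(\overline{\Gamma_2})$.

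The construction of the homomorphism $\Gamma_1\to S(n_1,\nu)$ goes in three steps. First, compose to get $\psi:=\varphi_{n_2,\nu}\circ\varphi:\Gamma_1\to S(n_2,\nu)\subseteq\mathbb{R}^{n_2}$; since $\varphi$ preserves edges and $\varphi_{n_2,\nu}$ sends edges to pairs of unit vectors with inner product $\nu$, the composite $\psi$ does too. Second, observe that the image $\psi(V(\Gamma_1))$ spans a subspace $U\subseteq\mathbb{R}^{n_2}$ with $\dim U\le n_1$ (it is spanned by at most $n_1$ vectors), and choose a linear isometry $g:(U,\langle\cdot,\cdot\rangle_{n_2})\to(\mathbb{R}^{\dim U},\langle\cdot,\cdot\rangle_{\dim U})$; such a $g$ exists because $U$ is a finite-dimensional inner product space. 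Third, let $f:\mathbb{R}^{\dim U}\to\mathbb{R}^{n_1}$ be the isometric embedding that appends $n_1-\dim U$ zero coordinates. Then $\varphi_{n_1,\nu}:=f\circ g\circ\psi$ maps $V(\Gamma_1)$ into $\mathbb{R}^{n_1}$; since $f$ and $g$ preserve the dot product, every value of $\psi$, being a unit vector, is sent to a unit vector, and every edge of $\Gamma_1$ is sent to a pair with inner product $\nu$. Hence $\varphi_{n_1,\nu}:\Gamma_1\to S(n_1,\nu)$ is a homomorphism, as required.

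Finally I would assemble the inequality: for each $\nu<0$ with a homomorphism $\Gamma_2\to S(n_2,\nu)$ we have produced a homomorphism $\Gamma_1\to S(n_1,\nu)$, so $1-\tfrac1\nu$ is among the values over which the infimum defining $\vartheta(\overline{\Gamma_1})$ is taken; taking infima yields $\vartheta(\overline{\Gamma_1})\le\vartheta(\overline{\Gamma_2})$. This is essentially the observation attributed to \cite{robersonJCTB}, so the proof can simply cite that reference after spelling out the construction above.

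I do not expect a serious obstacle here, since all the heavy lifting is in the quoted theorem of \cite{JACM}; the only point requiring a little care is the existence and isometry properties of the maps $g$ and $f$, i.e.\ checking that restricting to the span and re-embedding into a possibly smaller-dimensional Euclidean space preserves both unit norm and the prescribed inner product $\nu$ on edges. This is routine linear algebra (any two real inner product spaces of the same finite dimension are isometric), so the main "work" is just writing the composition cleanly and noting $\dim U\le n_1$.
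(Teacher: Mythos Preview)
Your proposal is correct and is exactly the argument the paper gives in the paragraph preceding the lemma; the lemma itself is stated without a separate proof and simply credited to \cite{robersonJCTB}. You have faithfully reproduced the construction $\varphi_{n_1,\nu}=f\circ g\circ\varphi_{n_2,\nu}\circ\varphi$ and the infimum comparison, so nothing further is needed.
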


Lemmas~\ref{lemma-thetaproduct}, \ref{lemma-thetabound} were proved by Lov\'{a}sz~\cite[Corollary~2,Theorem~9]{lovasz1979}.
\begin{lemma}\label{lemma-thetaproduct}
Let $\Gamma$ be a graph on $n$ vertices. Then $\vartheta(\Gamma)\vartheta(\bar{\Gamma})\geq n$.
\end{lemma}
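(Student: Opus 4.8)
The plan is to bypass the semidefinite description of the theta function and argue instead from Lov\'{a}sz's orthonormal representation formula~\cite{lovasz1979}. Recall that an \emph{orthonormal representation} of a graph $\Delta$ on the vertex set $\{1,\dots,m\}$ is a family of unit vectors $u_1,\dots,u_m$ in some Euclidean space with $\langle u_i,u_j\rangle=0$ whenever $\{i,j\}\notin E(\Delta)$, and that
$$\vartheta(\Delta)=\min_{(u_i),\,c}\ \max_{1\le i\le m}\frac{1}{\langle c,u_i\rangle^{2}},$$
the minimum being over all orthonormal representations $(u_i)$ of $\Delta$ and all unit vectors $c$ (the ``handle''); one may restrict to representations inside $\mathbb{R}^{m}$, a compact parameter set on which the minimum is attained.

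First I would fix an optimal orthonormal representation $u_1,\dots,u_n$ of $\Gamma$ with handle $c$, and an optimal orthonormal representation $v_1,\dots,v_n$ of $\bar{\Gamma}$ with handle $d$. Optimality means that $\langle c,u_i\rangle^{2}\ge 1/\vartheta(\Gamma)$ and $\langle d,v_i\rangle^{2}\ge 1/\vartheta(\bar{\Gamma})$ for every $i\in\{1,\dots,n\}$. I would then pass to the Kronecker products $w_i:=u_i\otimes v_i$ and $e:=c\otimes d$; these are again unit vectors, and they satisfy $\langle w_i,w_j\rangle=\langle u_i,u_j\rangle\langle v_i,v_j\rangle$ as well as $\langle e,w_i\rangle=\langle c,u_i\rangle\langle d,v_i\rangle$.

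The decisive point is that $w_1,\dots,w_n$ form an orthonormal system. Indeed, for $i\neq j$ the pair $\{i,j\}$ is either a non-edge of $\Gamma$, which forces $\langle u_i,u_j\rangle=0$, or an edge of $\Gamma$ and therefore a non-edge of $\bar{\Gamma}$, which forces $\langle v_i,v_j\rangle=0$; either way $\langle w_i,w_j\rangle=0$. Applying Bessel's inequality to the unit vector $e$ and this orthonormal system gives
$$1=\|e\|^{2}\ \ge\ \sum_{i=1}^{n}\langle e,w_i\rangle^{2}=\sum_{i=1}^{n}\langle c,u_i\rangle^{2}\langle d,v_i\rangle^{2}\ \ge\ \sum_{i=1}^{n}\frac{1}{\vartheta(\Gamma)\vartheta(\bar{\Gamma})}=\frac{n}{\vartheta(\Gamma)\vartheta(\bar{\Gamma})},$$
and rearranging yields $\vartheta(\Gamma)\vartheta(\bar{\Gamma})\ge n$.

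I do not anticipate a genuine difficulty: the only care needed is in quoting the fact that the orthonormal representation formula attains its minimum, and in checking the elementary tensor identities used for $w_i$ and $e$. A parallel argument through the semidefinite formulation is also available --- one takes optimal feasible matrices $X$ for $\vartheta(\Gamma)$ and $Y$ for $\vartheta(\bar{\Gamma})$, notes that their Schur product $X\bullet Y$ is positive semidefinite and, because the zero patterns of $X$ and $Y$ are complementary, in fact diagonal, and then reads off the inequality by comparing $\langle J,X\bullet Y\rangle$ with $\langle X,Y\rangle$, where $J$ is the all-ones matrix --- but it leans on complementary slackness for the optimal solutions, so I would present the orthonormal-representation proof above.
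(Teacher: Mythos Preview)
Your proof is correct; it is in fact essentially Lov\'{a}sz's original argument from~\cite{lovasz1979}, which is exactly what the paper invokes (the paper does not supply its own proof but simply cites~\cite[Corollary~2]{lovasz1979}). The tensor-product construction you give and the application of Bessel's inequality are precisely the steps behind that corollary, so there is nothing to add.
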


\begin{lemma}\label{lemma-thetabound}
Let $\Gamma$ be a $k$-regular graph on $n$ vertices. Then $$\vartheta(\Gamma)\leq \frac{n}{1-\frac{k}{\lambda_{n}}},$$
where $\lambda_{n}$ is the smallest eigenvalue of the adjacency matrix of $\Gamma$.
\end{lemma}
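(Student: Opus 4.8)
The plan is to invoke the standard dual (min-eigenvalue) description of the Lov\'{a}sz theta function (Lov\'{a}sz~\cite{lovasz1979}; alternatively one may derive it from the orthonormal-representation definition by semidefinite duality), namely
$$\vartheta(\Gamma)=\min\Bigl\{\lambda_{\max}(M):\ M=M^{T},\ M_{uu}=1\ \text{for all}\ u\in V(\Gamma),\ M_{uv}=1\ \text{whenever}\ u\nsim v\Bigr\},$$
where the minimum ranges over the real symmetric matrices indexed by $V(\Gamma)$ whose entries are forced to equal $1$ on the diagonal and on every pair of non-adjacent vertices, and are free on the edges of $\Gamma$. It then suffices to exhibit one feasible $M$ whose largest eigenvalue is at most $\frac{n}{1-k/\lambda_{n}}$.

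To this end I would consider the one-parameter family $M_{t}:=J+tA$ for $t\in\mathbb{R}$, where $A$ is the adjacency matrix of $\Gamma$ and $J$ the all-ones matrix. Since $A$ has zero diagonal and $A_{uv}=0$ whenever $u\nsim v$, each $M_{t}$ has $1$'s on the diagonal and on all non-adjacent pairs, hence is feasible. Using $k$-regularity, the all-ones vector is a common eigenvector of $J$ (eigenvalue $n$) and $A$ (eigenvalue $k$), hence an eigenvector of $M_{t}$ with eigenvalue $n+tk$; on its orthogonal complement $J$ vanishes and $A$ has eigenvalues $\lambda_{2}\geq\cdots\geq\lambda_{n}$, so there $M_{t}$ acts with eigenvalues $t\lambda_{2},\ldots,t\lambda_{n}$. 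Consequently, for $t<0$ the number $t\lambda_{n}$ is the largest of the $t\lambda_{i}$ ($2\leq i\leq n$), and $\lambda_{\max}(M_{t})=\max\{\,n+tk,\ t\lambda_{n}\,\}$.

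Finally I would optimize over $t$. Assume $\Gamma$ has an edge, so $k\geq1$ and $\lambda_{n}<0$ (otherwise the bound is vacuous). Set $t_{0}:=\frac{n}{\lambda_{n}-k}$, which is negative; it makes the two competing quantities coincide, $n+t_{0}k=t_{0}\lambda_{n}=\frac{n\lambda_{n}}{\lambda_{n}-k}$, and since $t\mapsto n+tk$ increases while $t\mapsto t\lambda_{n}$ decreases on $(-\infty,0)$, this common value equals $\min_{t<0}\lambda_{\max}(M_{t})$. As each $M_{t}$ is feasible,
$$\vartheta(\Gamma)\ \leq\ \lambda_{\max}(M_{t_{0}})\ =\ \frac{n\lambda_{n}}{\lambda_{n}-k}\ =\ \frac{n}{1-\frac{k}{\lambda_{n}}}.$$
The points requiring care — more bookkeeping than genuine difficulty — are quoting the matrix formulation of $\vartheta$ with the correct prescribed-entry pattern, and verifying that for $t<0$ it is $t\lambda_{n}$ rather than $t\lambda_{2}$ that governs $\lambda_{\max}(M_{t})$; once the family $J+tA$ is on the table, the rest is a routine one-variable optimization.
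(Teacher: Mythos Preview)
Your argument is correct and is essentially Lov\'asz's original proof of this bound (Theorem~9 in~\cite{lovasz1979}): the paper does not reproduce a proof but merely cites that reference, and what you have written is exactly the intended approach --- use the dual characterization $\vartheta(\Gamma)=\min\{\lambda_{\max}(M)\}$ over symmetric matrices with $1$'s on the diagonal and on non-edges, and plug in the feasible family $J+tA$, whose spectrum decouples by regularity into $n+tk$ on $\mathbf{1}$ and $t\lambda_i$ on $\mathbf{1}^{\perp}$. One small remark: you do not actually need the optimization step, since exhibiting the single feasible matrix $M_{t_0}$ with $t_0=n/(\lambda_n-k)$ and computing its largest eigenvalue already yields the bound.
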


The above results about $\vartheta$ are applied in the proof of Theorem~\ref{thm-strg}.

\begin{remark}
If we apply Lemmas~\ref{lemma-thetaproduct} and \ref{lemma-thetabound} at a regular self-complementary graph, we deduce that its second eigenvalue $\lambda_2=-1-\lambda_n$ satisfies $\lambda_2\geq\frac{\sqrt{n}-1}{2}$. By Lemma~\ref{strg}, strongly regular self-complementary graphs satisfy the equality.
\end{remark}

The following result can be found in \cite[Lemma~6.2.3]{godsil_knjiga}. It is an important tool in Section~\ref{cores}.

\begin{lemma}\label{lemma-izomorfnostjeder}
Let $\Gamma_1$ and $\Gamma_2$ be two graphs. Then there exist homomorphisms from $\Gamma_1$ to $\Gamma_2$ and from $\Gamma_2$ to $\Gamma_1$ if and only if $\textrm{core}(\Gamma_1)\cong \textrm{core}(\Gamma_2)$.
\end{lemma}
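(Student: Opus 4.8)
The plan is to prove both implications directly, using the retractions onto the cores together with the defining property that every endomorphism of a core is an automorphism.

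For the ``if'' direction I would argue as follows. Suppose $\textrm{core}(\Gamma_1)\cong \textrm{core}(\Gamma_2)$. Since $\textrm{core}(\Gamma_i)$ is an induced subgraph of $\Gamma_i$, the inclusion map $\iota_i: \textrm{core}(\Gamma_i)\to \Gamma_i$ is a homomorphism, and (as recalled in the excerpt) there is a retraction $\psi_i: \Gamma_i\to \textrm{core}(\Gamma_i)$. Fixing an isomorphism $\theta: \textrm{core}(\Gamma_1)\to \textrm{core}(\Gamma_2)$, the composition $\iota_2\circ\theta\circ\psi_1$ is a homomorphism from $\Gamma_1$ to $\Gamma_2$, and $\iota_1\circ\theta^{-1}\circ\psi_2$ is a homomorphism from $\Gamma_2$ to $\Gamma_1$; hence homomorphisms exist in both directions.

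For the ``only if'' direction, suppose $f\in \textrm{Hom}(\Gamma_1,\Gamma_2)$ and $g\in\textrm{Hom}(\Gamma_2,\Gamma_1)$. Write $\Delta_i:=\textrm{core}(\Gamma_i)$, and let $\iota_i,\psi_i$ be as above. Consider the homomorphisms $a:=\psi_2\circ f\circ\iota_1 \in \textrm{Hom}(\Delta_1,\Delta_2)$ and $b:=\psi_1\circ g\circ\iota_2 \in \textrm{Hom}(\Delta_2,\Delta_1)$. Then $b\circ a\in\textrm{End}(\Delta_1)$ and $a\circ b\in\textrm{End}(\Delta_2)$; since $\Delta_1$ and $\Delta_2$ are cores, both compositions are automorphisms, in particular bijections. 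From the bijectivity of $b\circ a$ we obtain that $a$ is injective, and from the bijectivity of $a\circ b$ that $a$ is surjective, so $a$ is a bijection.

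It then remains to upgrade the bijective homomorphism $a$ to a graph isomorphism, and I expect this to be the only genuinely delicate point, since a bijective homomorphism of finite graphs need not be an isomorphism in general. Here one exploits that $a^{-1}=(b\circ a)^{-1}\circ b$, where $(b\circ a)^{-1}$ is an automorphism of $\Delta_1$ (hence a homomorphism) and $b$ is a homomorphism; therefore $a^{-1}$ is a homomorphism. Consequently $u\sim v \iff a(u)\sim a(v)$ for all $u,v\in V(\Delta_1)$, so $a$ is an isomorphism and $\textrm{core}(\Gamma_1)=\Delta_1\cong \Delta_2=\textrm{core}(\Gamma_2)$, as required.
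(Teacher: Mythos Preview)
Your proof is correct. The paper does not give its own proof of this lemma; it simply cites \cite[Lemma~6.2.3]{godsil_knjiga} and moves on. Your argument is the standard one (essentially the proof in Godsil--Royle): retract each graph onto its core, compose with the given homomorphisms, use that endomorphisms of a core are automorphisms to get bijectivity, and then recover the inverse as a homomorphism via $a^{-1}=(b\circ a)^{-1}\circ b$. Nothing is missing.
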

\begin{remark}\label{remark}
In particular, if $\psi$ is a retraction from $\Gamma$ onto its image, then $\Gamma$  and the graph induced by the image of $\psi$ have isomorphic cores.
\end{remark}

Lemma~\ref{lemma-core-povezanost} is well known but we did not find it in the literature.
\begin{lemma}\label{lemma-core-povezanost}
If graph $\Gamma$ is connected, then $\textrm{core}(\Gamma)$ is connected too.
\end{lemma}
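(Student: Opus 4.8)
The plan is to argue by contradiction, using the retraction $\psi\colon \Gamma\to\textrm{core}(\Gamma)$ whose existence was recalled just before the statement. Assume that $\textrm{core}(\Gamma)$ is disconnected, and let $C_1,\ldots,C_m$ with $m\geq 2$ be its connected components. First I would isolate the two properties of $\psi$ that are needed: it is a graph homomorphism, hence it maps every edge of $\Gamma$ to an edge of $\textrm{core}(\Gamma)$; and it is surjective, since it fixes every vertex of $\textrm{core}(\Gamma)$. It follows that the sets $A_i:=\psi^{-1}(V(C_i))$, for $i=1,\ldots,m$, are pairwise disjoint, have union $V(\Gamma)$, and are nonempty (each $A_i$ contains $V(C_i)\neq\emptyset$ because $\psi$ fixes the core pointwise).

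Next I would invoke connectivity of $\Gamma$. Since $V(\Gamma)$ is partitioned into the $m\geq 2$ nonempty classes $A_1,\ldots,A_m$, a connected graph cannot have all its edges inside the classes; hence there is an edge $\{u,v\}\in E(\Gamma)$ with $u\in A_i$ and $v\in A_j$ for some $i\neq j$. Applying $\psi$ gives $\psi(u)\sim\psi(v)$ in $\textrm{core}(\Gamma)$ with $\psi(u)\in V(C_i)$ and $\psi(v)\in V(C_j)$, i.e.\ an edge of $\textrm{core}(\Gamma)$ joining two distinct connected components, which is absurd. Therefore $\textrm{core}(\Gamma)$ is connected.

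I do not expect any genuine obstacle: the argument uses only that homomorphisms preserve adjacency, that a retraction onto the core is surjective, and the elementary fact that a connected graph admits no partition of its vertex set into two (or more) nonempty parts with no crossing edges. The single point that warrants a word of care is the nonemptiness of the classes $A_i$, which is precisely where the surjectivity of $\psi$ enters; once that is noted, the rest is immediate.
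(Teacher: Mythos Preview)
Your argument is correct and uses the same ingredients as the paper's proof: the existence of a retraction $\psi$ onto $\textrm{core}(\Gamma)$ and the fact that homomorphisms preserve adjacency. The only difference is stylistic: the paper argues directly by taking any two vertices $u,v$ of $\textrm{core}(\Gamma)$, choosing a walk between them in $\Gamma$, and pushing that walk forward through $\psi$ to obtain a walk in $\textrm{core}(\Gamma)$; you instead argue by contradiction, pulling back the putative component decomposition and finding a crossing edge. These are dual formulations of the same idea, and neither has any advantage over the other.
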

\begin{proof}
Let $u$ and $v$ be arbitrary vertices in $\textrm{core}(\Gamma)$. Since $\textrm{core}(\Gamma)$ is a subgraph in the connected graph $\Gamma$, we can find a walk $$u=u_0\sim_{\Gamma} u_1\sim_{\Gamma}\cdots\sim_{\Gamma}u_{d-1}\sim_{\Gamma} u_d=v$$ in $\Gamma$ that joins $u$ and $v$. If $\psi$ is any retraction of $\Gamma$ onto $\textrm{core}(\Gamma)$, then $$u=u_0\sim_{\Gamma} \psi(u_1)\sim_{\Gamma}\cdots \sim_{\Gamma}\psi(u_{d-1})\sim_{\Gamma} u_d=v$$ is a walk in $\textrm{core}(\Gamma)$ that connects $u$ and $v$.
\end{proof}

Lemma~\ref{lemma-core-vt} is proved in~\cite[Theorem~3.2]{welzl}, where it is stated in an old terminology. Its proofs can be found also in~\cite{hahn,godsil_knjiga}.
\begin{lemma}\label{lemma-core-vt}
If graph $\Gamma$ is vertex-transitive, then $\textrm{core}(\Gamma)$ is vertex-transitive.
\end{lemma}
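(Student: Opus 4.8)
The plan is to produce, for any two vertices $u,v$ of $\textrm{core}(\Gamma)$, an automorphism of $\textrm{core}(\Gamma)$ sending $u$ to $v$, by pushing a suitable automorphism of $\Gamma$ through a retraction onto the core.

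First I would write $\Gamma' := \textrm{core}(\Gamma)$ and regard it as an induced subgraph of $\Gamma$, and fix a retraction $\psi : \Gamma \to \Gamma'$, which exists by the discussion preceding Lemma~\ref{lemma-clique}. Let $u,v \in V(\Gamma')$. Since $\Gamma$ is vertex-transitive there is $g \in \textrm{Aut}(\Gamma)$ with $g(u)=v$. The restriction $g|_{V(\Gamma')}$ is a graph homomorphism from $\Gamma'$ to $\Gamma$: it sends each edge of $\Gamma'$, which is also an edge of $\Gamma$, to an edge of $\Gamma$; note that its image need not lie inside $\Gamma'$, but that is harmless. Composing with $\psi$ gives $h := \psi \circ g|_{V(\Gamma')} \in \textrm{Hom}(\Gamma',\Gamma') = \textrm{End}(\Gamma')$.

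Now the defining property of a core enters: since $\Gamma'$ is a core, $h \in \textrm{Aut}(\Gamma')$. Finally $h(u) = \psi(g(u)) = \psi(v) = v$, where the last equality holds because $v \in V(\Gamma')$ and a retraction fixes every vertex of $\Gamma'$. Hence $h$ is an automorphism of $\textrm{core}(\Gamma)$ taking $u$ to $v$; as $u$ and $v$ were arbitrary, $\textrm{core}(\Gamma)$ is vertex-transitive.

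The argument is short, and I do not expect a genuine obstacle. The two points that need a moment of care are: (i) checking that $h$ really is an endomorphism of $\Gamma'$ — this uses only that $\Gamma'$ is a subgraph of $\Gamma$ and that a retraction $\psi$ onto $\textrm{core}(\Gamma)$ exists; and (ii) keeping track of the fact that $g|_{V(\Gamma')}$ may move vertices of $\Gamma'$ outside of $\Gamma'$, so that the retraction $\psi$ is genuinely needed to return to the core before invoking the core property.
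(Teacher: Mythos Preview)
Your proof is correct and is precisely the standard argument. The paper does not give its own proof of this lemma; it simply cites \cite{welzl}, \cite{hahn}, and \cite{godsil_knjiga}, and the proof appearing in those references is exactly the one you have written: push an automorphism of $\Gamma$ through a retraction onto the core and use that every endomorphism of a core is an automorphism.
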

\begin{remark}
An analogous claim, where vertex-transitivity in Lemma~\ref{lemma-core-vt} is replaced by regularity, is not true. Consider for example the regular graph~$\Gamma$ in Figure~\ref{f11}.
\begin{figure}[h!]
\centering
\includegraphics[width=0.4\textwidth]{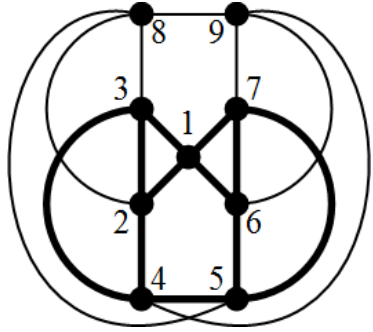}
\caption{A regular graph with a non-regular core.}
\label{f11}
\end{figure}
The map $\psi$ defined on the vertex set $\{1,\ldots,9\}$ by $\psi(8)=4$, $\psi(9)=5$, and $\psi(i)=i$ for $i\leq 7$, is a retraction onto the bold part of the graph. Since the bold part is a vertex-critical graph, it follows that it is a core of $\Gamma$.
\end{remark}

\subsection{Lexicographic product}

The results listed or proved in this subsection are applied exclusively in the last two results of this paper, that is, in Propositions~\ref{prop-lex1} and~\ref{prop-lex2}. The notion of a lexicographic product appears also in Proposition~\ref{p7}.

The lexicographic product of graphs $\Gamma_1$ and $\Gamma_2$ is the graph $\Gamma_1[\Gamma_2]$ with the vertex set $V(\Gamma_1)\times V(\Gamma_2)$, where $(u_1,u_2)\sim (v_1,v_2)$ if and only if either $u_1\sim_{\Gamma_1} v_1$ or $u_1=v_1$ and $u_2\sim_{\Gamma_2} v_2$. Consequently, $\overline{\Gamma_1[\Gamma_2]}=\bar{\Gamma}_1[\bar{\Gamma}_2]$. In particular, $\Gamma_1[\Gamma_2]$ is self-complementary whenever $\Gamma_1$ and $\Gamma_2$ are self-complementary. Analogous claim is true for vertex-transitivity (cf.~\cite[Theorem~10.14]{handbook_produkti}).

If for $i=1,2$ graphs $\Gamma_i$ and $\Gamma_i'$ are isomorphic, then we define
\begin{equation}\label{e99}
\textrm{Iso}(\Gamma_1,\Gamma_1')\wr \textrm{Iso}(\Gamma_2,\Gamma_2')
\end{equation}
as the set of all maps $(\varphi, \beta) : \Gamma_1[\Gamma_2]\to \Gamma_1'[\Gamma_2']$ that are of the form
\begin{equation}\label{e98}
(\varphi, \beta)(v_1,v_2):=\big(\varphi(v_1), \beta(v_1)(v_2)\big)
\end{equation}
for all $v_1\in V(\Gamma_1), v_2\in V(\Gamma_2)$, where $\varphi\in \textrm{Iso}(\Gamma_1,\Gamma_1')$ and $\beta: V(\Gamma_1)\to \textrm{Iso}(\Gamma_2,\Gamma_2')$ is a map. Clearly, maps~\eqref{e98} are isomorphisms and \eqref{e99} is a subset in $\textrm{Iso}(\Gamma_1[\Gamma_2], \Gamma_1'[\Gamma_2'])$. If $\Gamma_i'=\Gamma_i$ for $i=1,2$ or  $\Gamma_i'=\bar{\Gamma}_i$ for $i=1,2$, then we write
$\textrm{Aut}(\Gamma_1)\wr \textrm{Aut}(\Gamma_2)$ or $\overline{\textrm{Aut}(\Gamma_1)}\wr \overline{\textrm{Aut}(\Gamma_2)}$ instead of~\eqref{e99}, respectively.

Given a graph $\Gamma$ let $R_{\Gamma}=\{(u,v)\in V(\Gamma)\times V(\Gamma) : N_{\Gamma}(u)=N_{\Gamma}(v)\}$, $S_{\Gamma}=\{(u,v)\in V(\Gamma)\times V(\Gamma): N_{\Gamma}[u]=N_{\Gamma}[v]\}$, and $\triangle_{\Gamma}=\{(u,u)\in V(\Gamma)\times V(\Gamma) : u\in V(\Gamma)\}$. Sabidussi~\cite{sabidussi1959} proved the following result (see also~\cite[Theorem~10.13]{handbook_produkti}).

\begin{lemma}\label{lex-auto}
 For any graphs $\Gamma_1, \Gamma_2$ we have $\textrm{Aut}\big(\Gamma_1[\Gamma_2]\big)=\textrm{Aut}(\Gamma_1)\wr \textrm{Aut}(\Gamma_2)$ if and only if the following two assertions hold:
\begin{enumerate}
\item If $R_{\Gamma_1}\neq \triangle_{\Gamma_1}$, then $\Gamma_2$ is connected.
\item If $S_{\Gamma_1}\neq \triangle_{\Gamma_1}$, then $\bar{\Gamma}_2$ is connected.
\end{enumerate}
\end{lemma}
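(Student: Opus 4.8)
The plan is to prove Sabidussi's characterization of when $\mathrm{Aut}(\Gamma_1[\Gamma_2])$ equals the wreath-type product $\mathrm{Aut}(\Gamma_1)\wr\mathrm{Aut}(\Gamma_2)$ by analysing how an arbitrary automorphism of $\Gamma_1[\Gamma_2]$ interacts with the natural partition of $V(\Gamma_1)\times V(\Gamma_2)$ into the ``fibres'' $\{v_1\}\times V(\Gamma_2)$, $v_1\in V(\Gamma_1)$. First I would record the two easy implications and the easy direction. Observe that maps of the form~\eqref{e98} are always automorphisms, so one inclusion $\mathrm{Aut}(\Gamma_1)\wr\mathrm{Aut}(\Gamma_2)\subseteq\mathrm{Aut}(\Gamma_1[\Gamma_2])$ holds unconditionally; the content is the reverse inclusion, and its failure when (i) or (ii) is violated. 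For the failure direction, suppose $R_{\Gamma_1}\neq\triangle_{\Gamma_1}$, so there are distinct $u,v\in V(\Gamma_1)$ with $N_{\Gamma_1}(u)=N_{\Gamma_1}(v)$ (in particular $u\nsim v$); if moreover $\Gamma_2$ is disconnected, pick a proper nonempty union of connected components $W\subsetneq V(\Gamma_2)$ and define $\theta$ on $V(\Gamma_1)\times V(\Gamma_2)$ that swaps $(u,w)\leftrightarrow(v,w)$ for $w\in W$ and is the identity elsewhere. One checks directly from the adjacency rule that $\theta$ is an automorphism: edges inside the fibre over $u$ or $v$ are preserved because $W$ is a union of components of $\Gamma_2$, and edges between these fibres and any third fibre are preserved because $u$ and $v$ have the same $\Gamma_1$-neighbourhood; yet $\theta$ does not permute the fibres as blocks, hence $\theta\notin\mathrm{Aut}(\Gamma_1)\wr\mathrm{Aut}(\Gamma_2)$. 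The case $S_{\Gamma_1}\neq\triangle_{\Gamma_1}$ with $\bar\Gamma_2$ disconnected is handled by the complementary construction, using $\overline{\Gamma_1[\Gamma_2]}=\bar\Gamma_1[\bar\Gamma_2]$ together with the fact that $u\sim v$ and $N_{\Gamma_1}[u]=N_{\Gamma_1}[v]$ becomes $u\nsim v$ and $N_{\bar\Gamma_1}(u)=N_{\bar\Gamma_1}(v)$ in the complement.

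Next I would prove the substantive direction: assuming (i) and (ii), every $\psi\in\mathrm{Aut}(\Gamma_1[\Gamma_2])$ has the form~\eqref{e98}. The key claim is that $\psi$ permutes the fibres, i.e.\ for each $v_1$ there is $v_1'$ with $\psi(\{v_1\}\times V(\Gamma_2))=\{v_1'\}\times V(\Gamma_2)$; granting this, the induced permutation of $V(\Gamma_1)$ is the desired $\varphi$ (it respects $\Gamma_1$-adjacency because whenever $u_1\sim_{\Gamma_1}v_1$ the fibres over $u_1$ and $v_1$ are completely joined in $\Gamma_1[\Gamma_2]$, and ``completely joined'' is preserved by $\psi$; conversely, if $u_1\nsim_{\Gamma_1}v_1$ then no edges run between the two fibres, again a property preserved by $\psi$), and the restriction of $\psi$ to each fibre, read through the identification with $V(\Gamma_2)$, gives the maps $\beta(v_1)\in\mathrm{Aut}(\Gamma_2)$ since within a fibre the induced subgraph of $\Gamma_1[\Gamma_2]$ is exactly a copy of $\Gamma_2$. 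To establish the claim I would argue that fibres are exactly the equivalence classes of a relation definable purely in terms of the graph $\Gamma_1[\Gamma_2]$, hence automorphism-invariant. A natural candidate: declare $x\equiv y$ if $x=y$ or if $x$ and $y$ lie in a common fibre, and show this coincides with the relation ``$N(x)\setminus\{x,y\} = N(y)\setminus\{x,y\}$ \emph{and} $N[x]\setminus\{x,y\}=N[y]\setminus\{x,y\}$ are both simultaneously realisable'' — more precisely, use that $(x,y)$ lie in a common fibre iff either they have identical neighbourhoods outside $\{x,y\}$ (which forces, via (i), them to be in one fibre) or identical closed neighbourhoods outside $\{x,y\}$ (forcing it via (ii)), after the connectivity hypotheses rule out the pathological across-fibre coincidences.

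The main obstacle, and the place where hypotheses (i)--(ii) are genuinely used, is precisely this last step: showing that the fibre partition is canonically recoverable. Two vertices $(u_1,u_2),(v_1,v_2)$ in the same fibre need not have the same neighbourhood in $\Gamma_1[\Gamma_2]$, and two vertices in different fibres can accidentally have very similar neighbourhoods. The delicate point is that when $\Gamma_2$ is disconnected one can have $(u_1,u_2)$ and $(v_1,u_2)$ with $u_1\neq v_1$, $N_{\Gamma_1}(u_1)=N_{\Gamma_1}(v_1)$, and $u_2$ in a singleton component of $\Gamma_2$, so that these two across-fibre vertices have literally equal neighbourhoods — exactly the configuration exploited in the failure direction. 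Hypothesis (i) excludes this; hypothesis (ii) excludes its complement-analogue with closed neighbourhoods. So the heart of the proof is a careful case analysis: given $\psi\in\mathrm{Aut}(\Gamma_1[\Gamma_2])$ and a fibre $\{v_1\}\times V(\Gamma_2)$, first handle $v_1$ with $v_1\in R_{\Gamma_1}$-class or $S_{\Gamma_1}$-class trivial (so adjacency/non-adjacency to other fibres already pins things down) and then handle the exceptional $v_1$ lying in a nontrivial $R_{\Gamma_1}$- or $S_{\Gamma_1}$-class, where one invokes (i) or (ii) to force $\Gamma_2$ or $\bar\Gamma_2$ connected and thereby recovers the fibre as a connected (or co-connected) component-like block that $\psi$ must respect. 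Once the fibre partition is shown $\psi$-invariant, assembling $\varphi$ and the $\beta(v_1)$'s and verifying~\eqref{e98} is routine.
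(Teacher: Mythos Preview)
The paper does not prove this lemma at all: it is quoted verbatim as Sabidussi's 1959 result (with a secondary reference to Hammack--Imrich--Klav\v{z}ar, \emph{Handbook of Product Graphs}), so there is no in-paper argument to compare against.

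As for your sketch itself: the overall architecture is the standard one and the two easy pieces (the inclusion $\mathrm{Aut}(\Gamma_1)\wr\mathrm{Aut}(\Gamma_2)\subseteq\mathrm{Aut}(\Gamma_1[\Gamma_2])$ and the explicit ``partial swap'' counterexamples when (i) or (ii) fails) are correct and cleanly described. The substantive direction, however, is left genuinely incomplete. You correctly identify that the crux is proving the fibre partition is $\psi$-invariant, and you correctly note that two vertices in a common fibre need \emph{not} have equal (open or closed) neighbourhoods in $\Gamma_1[\Gamma_2]$; but then the proposed fix---``$(x,y)$ lie in a common fibre iff they have identical neighbourhoods outside $\{x,y\}$ or identical closed neighbourhoods outside $\{x,y\}$''---is still false in general (take two $\Gamma_2$-adjacent vertices in one fibre with distinct $\Gamma_2$-neighbourhoods), and the subsequent ``careful case analysis'' is not actually carried out. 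What Sabidussi's argument really shows under (i)--(ii) is the one-sided implication: if two vertices of $\Gamma_1[\Gamma_2]$ have equal open (resp.\ closed) neighbourhoods then they lie in the same fibre. From this one proves that the fibre partition is the \emph{coarsest} partition into ``externally related'' blocks (blocks $B$ such that every vertex outside $B$ is adjacent to all of $B$ or none of $B$), a description that is purely graph-theoretic and hence automorphism-invariant. Your plan would become a proof once this step is made precise; as written it names the difficulty but does not resolve it.
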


\begin{cor}\label{c6}
Suppose that $\Gamma_i\cong \Gamma_i'$ for $i=1,2$. If $\Gamma_2$ and $\bar{\Gamma}_2$ are both connected, then $\textrm{Iso}\big(\Gamma_1[\Gamma_2], \Gamma_1'[\Gamma_2']\big)=\textrm{Iso}(\Gamma_1,\Gamma_1')\wr \textrm{Iso}(\Gamma_2,\Gamma_2')$.
\end{cor}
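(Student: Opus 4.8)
The plan is to deduce this from Sabidussi's Lemma~\ref{lex-auto} by a standard "translation" trick that converts the statement about isomorphisms into a statement about automorphisms. First I would fix isomorphisms $\varphi_1\in\textrm{Iso}(\Gamma_1,\Gamma_1')$ and $\varphi_2\in\textrm{Iso}(\Gamma_2,\Gamma_2')$, which exist by hypothesis. These induce an isomorphism $\Phi:=(\varphi_1,\widehat{\varphi_2})\in\textrm{Iso}\big(\Gamma_1[\Gamma_2],\Gamma_1'[\Gamma_2']\big)$ via~\eqref{e98}, where $\widehat{\varphi_2}$ is the constant map $v_1\mapsto\varphi_2$; indeed any such map of the form~\eqref{e98} is an isomorphism, as noted just after that display. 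Composition with $\Phi$ then gives a bijection $\textrm{Iso}\big(\Gamma_1[\Gamma_2],\Gamma_1'[\Gamma_2']\big)\to\textrm{Aut}\big(\Gamma_1[\Gamma_2]\big)$, namely $\Psi\mapsto\Phi^{-1}\circ\Psi$. Likewise, composition with the "product" isomorphism carries $\textrm{Iso}(\Gamma_1,\Gamma_1')\wr\textrm{Iso}(\Gamma_2,\Gamma_2')$ onto $\textrm{Aut}(\Gamma_1)\wr\textrm{Aut}(\Gamma_2)$.

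Next I would check that these two bijections are compatible, i.e. that $\Phi^{-1}\circ\big(\textrm{Iso}(\Gamma_1,\Gamma_1')\wr\textrm{Iso}(\Gamma_2,\Gamma_2')\big)=\textrm{Aut}(\Gamma_1)\wr\textrm{Aut}(\Gamma_2)$. This is a direct computation from~\eqref{e98}: if $\Psi=(\psi,\beta)$ with $\psi\in\textrm{Iso}(\Gamma_1,\Gamma_1')$ and $\beta:V(\Gamma_1)\to\textrm{Iso}(\Gamma_2,\Gamma_2')$, then $\Phi^{-1}\circ\Psi$ sends $(v_1,v_2)$ to $\big(\varphi_1^{-1}\psi(v_1),\,\varphi_2^{-1}\beta(v_1)(v_2)\big)$, which is exactly the element of $\textrm{Aut}(\Gamma_1)\wr\textrm{Aut}(\Gamma_2)$ determined by $\varphi_1^{-1}\psi\in\textrm{Aut}(\Gamma_1)$ and the map $v_1\mapsto\varphi_2^{-1}\beta(v_1)\in\textrm{Aut}(\Gamma_2)$; and conversely every element of $\textrm{Aut}(\Gamma_1)\wr\textrm{Aut}(\Gamma_2)$ arises this way by reversing the correspondence. (One also needs $\textrm{Aut}(\Gamma_1)=\varphi_1^{-1}\textrm{Iso}(\Gamma_1,\Gamma_1')$, i.e. $\textrm{Iso}(\Gamma_1,\Gamma_1')$ is a single coset, which is immediate.)

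Finally, since $\Gamma_2$ and $\bar\Gamma_2$ are both connected, the two conditions of Lemma~\ref{lex-auto} are satisfied (their hypotheses "$R_{\Gamma_1}\neq\triangle_{\Gamma_1}$" and "$S_{\Gamma_1}\neq\triangle_{\Gamma_1}$" are irrelevant, as the conclusions $\Gamma_2$ connected, $\bar\Gamma_2$ connected hold unconditionally), so $\textrm{Aut}\big(\Gamma_1[\Gamma_2]\big)=\textrm{Aut}(\Gamma_1)\wr\textrm{Aut}(\Gamma_2)$. Pulling this equality back through $\Phi$ via the two compatible bijections above yields $\textrm{Iso}\big(\Gamma_1[\Gamma_2],\Gamma_1'[\Gamma_2']\big)=\textrm{Iso}(\Gamma_1,\Gamma_1')\wr\textrm{Iso}(\Gamma_2,\Gamma_2')$, as desired. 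I do not expect a genuine obstacle here; the only mildly delicate point is bookkeeping the composition formula~\eqref{e98} correctly (in particular noting that the second-coordinate map $\beta$ in a composite is evaluated at the appropriate first coordinate), but this is routine and already implicit in the way~\eqref{e99} is set up.
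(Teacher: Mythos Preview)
Your proposal is correct and is essentially the same argument as the paper's: both fix a product isomorphism $\Psi=(\psi_1,\psi_2)$ (your $\Phi=(\varphi_1,\widehat{\varphi_2})$), compose an arbitrary isomorphism with its inverse to land in $\textrm{Aut}(\Gamma_1[\Gamma_2])$, apply Lemma~\ref{lex-auto}, and then unwind the composition to recover the wreath form. The paper's version is slightly terser (it works element-by-element rather than phrasing things as compatible bijections of sets), but the key idea and the only nontrivial input are identical.
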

\begin{proof}
Let $\Phi\in \textrm{Iso}\big(\Gamma_1[\Gamma_2], \Gamma_1'[\Gamma_2']\big)$. Pick any $\psi_1\in \textrm{Iso}(\Gamma_1,\Gamma_1')$, $\psi_2\in \textrm{Iso}(\Gamma_2,\Gamma_2')$, and define $\Psi \in \textrm{Iso}\big(\Gamma_1[\Gamma_2], \Gamma_1'[\Gamma_2']\big)$ by $\Psi(v_1,v_2)=(\psi_1(v_1),\psi_2(v_2))$ for all $v_1\in V(\Gamma_1)$ and $v_2\in V(\Gamma_2)$. Then $\Psi^{-1}\circ\Phi\in \textrm{Aut}\big(\Gamma_1[\Gamma_2]\big)$. By Lemma~\ref{lex-auto}, $\Psi^{-1}\circ\Phi=(\varphi,\beta)$ for some $\varphi\in \textrm{Aut}(\Gamma_1)$ and a map $\beta: V(\Gamma_1)\to \textrm{Aut}(\Gamma_2)$. Consequently, $\Phi=(\psi_1\circ\varphi,\gamma)$, where $\gamma : V(\Gamma_1)\to \textrm{Iso}(\Gamma_2,\Gamma_2')$ is defined by $\gamma(v_1)=\psi_2\circ \beta(v_1)$. Hence, $\Phi\in \textrm{Iso}(\Gamma_1,\Gamma_1')\wr \textrm{Iso}(\Gamma_2,\Gamma_2')$.
\end{proof}

Since self-complementary graphs are connected, we deduce the following.
\begin{cor}\label{c7}
Let $\Gamma_1$ and $\Gamma_2$ be self-complementary graphs. Then
\begin{enumerate}
\item $\textrm{Aut}\big(\Gamma_1[\Gamma_2]\big)=\textrm{Aut}(\Gamma_1)\wr \textrm{Aut}(\Gamma_2)$,
\item $\overline{\textrm{Aut}\big(\Gamma_1[\Gamma_2]\big)}=\overline{\textrm{Aut}(\Gamma_1)}\wr \overline{\textrm{Aut}(\Gamma_2)}$.
\end{enumerate}
\end{cor}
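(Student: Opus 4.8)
The plan is to deduce both equalities directly from Corollary~\ref{c6}, using only two elementary facts already recorded in the excerpt: every self-complementary graph is connected (a consequence of Lemma~\ref{l3}, noted in Subsection~\ref{subsection-sc}), and $\overline{\Gamma_1[\Gamma_2]}=\bar\Gamma_1[\bar\Gamma_2]$ (noted at the start of the subsection on lexicographic products).

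First I would treat assertion~(1). Since $\Gamma_2$ is self-complementary, both $\Gamma_2$ and $\bar\Gamma_2$ are connected, so the hypotheses of Corollary~\ref{c6} are satisfied with $\Gamma_i'=\Gamma_i$ for $i=1,2$. It yields $\textrm{Iso}\big(\Gamma_1[\Gamma_2],\Gamma_1[\Gamma_2]\big)=\textrm{Iso}(\Gamma_1,\Gamma_1)\wr\textrm{Iso}(\Gamma_2,\Gamma_2)$, which under the notational convention introduced after~\eqref{e99} is exactly $\textrm{Aut}\big(\Gamma_1[\Gamma_2]\big)=\textrm{Aut}(\Gamma_1)\wr\textrm{Aut}(\Gamma_2)$. (Equivalently, one may invoke Lemma~\ref{lex-auto} directly: because $\Gamma_2$ and $\bar\Gamma_2$ are connected, the conclusions of both implications in that lemma hold vacuously, giving the same equality.)

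For assertion~(2) I would first rewrite $\overline{\textrm{Aut}\big(\Gamma_1[\Gamma_2]\big)}=\textrm{Iso}\big(\Gamma_1[\Gamma_2],\overline{\Gamma_1[\Gamma_2]}\big)=\textrm{Iso}\big(\Gamma_1[\Gamma_2],\bar\Gamma_1[\bar\Gamma_2]\big)$, using $\overline{\Gamma_1[\Gamma_2]}=\bar\Gamma_1[\bar\Gamma_2]$. Then I would apply Corollary~\ref{c6} with the choice $\Gamma_i'=\bar\Gamma_i$: the requirement $\Gamma_i\cong\Gamma_i'$ holds precisely because each $\Gamma_i$ is self-complementary, and $\Gamma_2,\bar\Gamma_2$ are connected as before. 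This gives $\textrm{Iso}\big(\Gamma_1[\Gamma_2],\bar\Gamma_1[\bar\Gamma_2]\big)=\textrm{Iso}(\Gamma_1,\bar\Gamma_1)\wr\textrm{Iso}(\Gamma_2,\bar\Gamma_2)$, which by the same convention after~\eqref{e99} is $\overline{\textrm{Aut}(\Gamma_1)}\wr\overline{\textrm{Aut}(\Gamma_2)}$, since $\overline{\textrm{Aut}(\Gamma)}$ is by definition $\textrm{Iso}(\Gamma,\bar\Gamma)$.

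As this is a corollary of Corollary~\ref{c6}, there is no real obstacle; the only point that needs a little care is the bookkeeping of the $\wr$-notation, namely matching the generic $\textrm{Iso}(\Gamma_1,\Gamma_1')\wr\textrm{Iso}(\Gamma_2,\Gamma_2')$ with the abbreviations $\textrm{Aut}(\Gamma_1)\wr\textrm{Aut}(\Gamma_2)$ and $\overline{\textrm{Aut}(\Gamma_1)}\wr\overline{\textrm{Aut}(\Gamma_2)}$ for the two admissible choices of the primed graphs, and recalling that $\overline{\textrm{Aut}(\Gamma)}=\textrm{Iso}(\Gamma,\bar\Gamma)$.
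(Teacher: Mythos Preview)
Your proposal is correct and follows exactly the approach the paper intends: the paper merely prefaces the corollary with the remark that self-complementary graphs are connected, leaving the two applications of Corollary~\ref{c6} (with $\Gamma_i'=\Gamma_i$ for part~(i) and $\Gamma_i'=\bar\Gamma_i$ together with $\overline{\Gamma_1[\Gamma_2]}=\bar\Gamma_1[\bar\Gamma_2]$ for part~(ii)) implicit. Your write-up simply makes these two invocations explicit, along with the notational identifications, and nothing more is needed.
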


Similarly as above, given graphs $\Gamma_1, \Gamma_1', \Gamma_2,\Gamma_2'$ with nonempty sets $\textrm{Hom}(\Gamma_1,\Gamma_1')$ and $\textrm{Hom}(\Gamma_2,\Gamma_2')$, we define
\begin{equation}\label{e100}
\textrm{Hom}(\Gamma_1,\Gamma_1')\wr \textrm{Hom}(\Gamma_2,\Gamma_2')
\end{equation}
as the set of all homomorphisms $(\varphi, \beta) : \Gamma_1[\Gamma_2]\to \Gamma_1'[\Gamma_2']$ that are of the form~\eqref{e98}, where $\varphi\in \textrm{Hom}(\Gamma_1,\Gamma_1')$ and $\beta: V(\Gamma_1)\to \textrm{Hom}(\Gamma_2,\Gamma_2')$ is a map. If $\Gamma_i'=\Gamma_i$ for $i=1,2$, we write $\textrm{End}(\Gamma_1)\wr \textrm{End}(\Gamma_2)$ instead of~\eqref{e100} (cf. \cite{knauer,kaschek2009,kaschek2010}).

It is obvious that $\Gamma_1$ and $\Gamma_2$ are cores whenever $\Gamma_1[\Gamma_2]$ is a core. The converse is not true in general.
The following result is proved in~\cite[Theorem~3.11]{knauer}.
\begin{lemma}\label{lemma-knauer}
Let $n$ be a positive integer and $\Gamma_2$ a graph. Then $K_n[\Gamma_2]$ is a core if and only if $\Gamma_2$ is a core.
\end{lemma}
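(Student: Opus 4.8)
The plan is to prove both implications, the forward one being easy and the converse carrying all the weight. For the forward direction this is just the special case $\Gamma_1=K_n$ of the remark that both factors are cores whenever $\Gamma_1[\Gamma_2]$ is: if $g\in\textrm{End}(\Gamma_2)$ is not surjective, then $(i,u)\mapsto(i,g(u))$ is a well-defined, non-surjective endomorphism of $K_n[\Gamma_2]$, so $K_n[\Gamma_2]$ is not a core either.

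For the converse, write $\Gamma=K_n[\Gamma_2]$ and let $V_1,\dots,V_n$ be the $n$ natural copies of $\Gamma_2$ inside $\Gamma$; since distinct copies are completely joined, $\Gamma$ is the join of these $n$ copies of $\Gamma_2$, and hence $\omega(\Gamma)=n\,\omega(\Gamma_2)$. If $\Gamma$ were not a core then the retraction onto $\textrm{core}(\Gamma)\subsetneq\Gamma$ would be non-surjective, so it suffices to prove that every retraction $r$ of $\Gamma$ is surjective. Put $W_i:=r(V_i)$ and $T_j:=\textrm{Im}(r)\cap V_j$. Because $r$ is a homomorphism and $V_i,V_{i'}$ are completely joined for $i\neq i'$, the sets $W_1,\dots,W_n$ are pairwise disjoint and pairwise completely joined; because $r$ fixes $\textrm{Im}(r)=\bigcup_jT_j$ pointwise, a short check gives $W_j\cap V_j=T_j$ for every $j$ (any vertex of $V_j$ sent back into $V_j$ lands in $T_j$, and every vertex of $T_j$ is a fixed point of $r$).

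The engine is a clique count. The induced subgraph $\Gamma':=\Gamma[\textrm{Im}(r)]$ receives the homomorphism $r$ and is a subgraph of $\Gamma$, so $\omega(\Gamma')=\omega(\Gamma)=n\,\omega(\Gamma_2)$ by Lemma~\ref{lemma-clique}; on the other hand $\Gamma'$ is the join of the nonempty graphs among $\Gamma[T_1],\dots,\Gamma[T_n]$, so $\omega(\Gamma')$ is the sum of their clique numbers, each lying between $1$ and $\omega(\Gamma_2)$. Comparing the two expressions forces every $T_j$ to be nonempty and $\omega(\Gamma[T_j])=\omega(\Gamma_2)$ for all $j$. Now suppose $W_{j_0}\not\subseteq V_{j_0}$ for some $j_0$, and choose $x\in W_{j_0}\cap V_k$ with $k\neq j_0$. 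Then $x$ is adjacent to every vertex of $W_k$, hence to every vertex of $T_k\subseteq V_k$; since $x\notin W_k\supseteq T_k$ (disjointness of the $W_i$) we have $x\notin T_k$, and all these adjacencies live inside the single copy $V_k\cong\Gamma_2$. Thus adjoining $x$ to a maximum clique of $\Gamma[T_k]$ produces a clique of size $\omega(\Gamma_2)+1$ inside $\Gamma_2$, which is absurd. Hence $W_j\subseteq V_j$ for all $j$, so $r|_{V_j}$ is an endomorphism of $\Gamma_2$ and therefore — as $\Gamma_2$ is a core — an automorphism; consequently $W_j=V_j$, $T_j=V_j$, and $r$ is surjective.

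I expect the main obstacle to be precisely the step that a retraction must respect the partition into copies, i.e. $W_j\subseteq V_j$: everything afterwards is immediate from $\Gamma_2$ being a core. The two facts that make that step work — the identity $W_j\cap V_j=T_j$ (from $r$ being a retraction) and $\omega(\Gamma[T_j])=\omega(\Gamma_2)$ (from the join/clique count) — are what I would be most careful to pin down, since the clean oversized-clique contradiction rests on them. Pleasantly, this route uses no hypothesis on $\Gamma_2$ beyond being a core (in particular it does not need $\Gamma_2$ or $\overline{\Gamma_2}$ to be connected), it subsumes the trivial cases $n=1$ and $\Gamma_2$ complete without separate treatment, and together with the easy direction it gives the full equivalence.
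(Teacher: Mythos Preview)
The paper does not prove this lemma; it simply cites Knauer~\cite[Theorem~3.11]{knauer}. So there is no ``paper's own proof'' to compare against, and your argument stands on its own.

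Your proof is correct. The forward direction is standard (a non-surjective endomorphism of $\Gamma_2$ lifts fibrewise to one of $K_n[\Gamma_2]$). For the converse, the decisive step is exactly the one you flagged: forcing a retraction $r$ to respect the partition $V_1,\dots,V_n$. Your mechanism for this --- the clique count $\omega(\Gamma')=\sum_j\omega(\Gamma[T_j])=n\,\omega(\Gamma_2)$, which pins each $\omega(\Gamma[T_j])$ at $\omega(\Gamma_2)$, combined with the observation that any ``stray'' vertex $x\in W_{j_0}\cap V_k$ with $k\neq j_0$ would be adjacent to all of $T_k$ and hence enlarge a maximum clique of $\Gamma_2$ --- is clean and watertight. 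The auxiliary identities you rely on (disjointness and complete joining of the $W_i$, and $W_j\cap V_j=T_j$ from idempotence of $r$) are all verified correctly. The reduction to retractions is also fine: $\Gamma$ is a core iff its only retract is $\Gamma$ itself, since $\textrm{core}(\Gamma)$ is always a retract.

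One minor remark: the sentence ``each lying between $1$ and $\omega(\Gamma_2)$'' tacitly uses that the nonempty $T_j$ contribute at least $1$, which is what makes the count force \emph{all} $T_j$ to be nonempty; you use this implicitly and it is correct. Overall the argument is elementary, needs no connectivity hypotheses on $\Gamma_2$, and handles the degenerate cases uniformly, as you note.
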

Lemma~\ref{lemma-kaschek2010} is proved in~\cite[Theorem~14]{kaschek2010}.
\begin{lemma}\label{lemma-kaschek2010}
Let  $\Gamma_1$ and $\Gamma_2$ be two graphs, where $\Gamma_1$ is a core, and let $\textrm{core}(\Gamma_1[\Gamma_2])$ be any core of $\Gamma_1[\Gamma_2]$. Then $\textrm{End}\big(\Gamma_1[\Gamma_2]\big)=\textrm{End}(\Gamma_1)\wr \textrm{End}(\Gamma_2)$ if and only if the following assertions are true:
\begin{enumerateB}
\item $\textrm{core}(\Gamma_1[\Gamma_2])=\Gamma_1[\textrm{core}(\Gamma_2)]$, where $\textrm{core}(\Gamma_2)$ is some core of $\Gamma_2$.
\item $S_{\Gamma_1}=\triangle_{\Gamma_1}$ or $\overline{\textrm{core}(\Gamma_2)}$ is connected.
\end{enumerateB}
\end{lemma}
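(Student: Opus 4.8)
The plan is to route both implications through a single notion: call a homomorphism $\Phi\colon\Gamma_1[\Gamma_2]\to\Gamma_1[\Lambda]$ \emph{fibre-preserving} if the first coordinate of $\Phi(v_1,v_2)$ depends only on $v_1$; note that a composition of fibre-preserving maps is fibre-preserving, and that altering a homomorphism only in its second coordinate does not change the first. Two preliminaries are used throughout. First, $\textrm{End}(\Gamma_1)\wr\textrm{End}(\Gamma_2)\subseteq\textrm{End}(\Gamma_1[\Gamma_2])$ always, so the statement reduces to: under (i) and (ii), every $\Phi\in\textrm{End}(\Gamma_1[\Gamma_2])$ is fibre-preserving; indeed then $\Phi=(\varphi,\beta)$ with $\varphi$ the induced map on $V(\Gamma_1)$ and $\beta(v_1)$ the induced map on $V(\Gamma_2)$, where $\beta(v_1)\in\textrm{End}(\Gamma_2)$ is immediate and $\varphi\in\textrm{End}(\Gamma_1)$ will follow below. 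Second, since $\Gamma_1$ is a core, the sets $\{N_{\Gamma_1}(v)\}_{v\in V(\Gamma_1)}$ form an antichain under inclusion: if $N_{\Gamma_1}(v)\subseteq N_{\Gamma_1}(a)$ with $v\neq a$, the map fixing every vertex except sending $v$ to $a$ is a non-injective endomorphism of $\Gamma_1$. In particular $R_{\Gamma_1}=\triangle_{\Gamma_1}$, so condition~(i) of Sabidussi's Lemma~\ref{lex-auto} is vacuous for $\Gamma_1[\Lambda]$, and that lemma simplifies to: $\textrm{Aut}(\Gamma_1[\Lambda])=\textrm{Aut}(\Gamma_1)\wr\textrm{Aut}(\Lambda)$ iff $S_{\Gamma_1}=\triangle_{\Gamma_1}$ or $\bar\Lambda$ is connected. (Also a core with more than one vertex has no isolated vertex, and the case $\Gamma_1=K_1$ is trivial since then $\Gamma_1[\Gamma_2]\cong\Gamma_2$ and both sides equal $\textrm{End}(\Gamma_2)$.)

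Assume (i) and (ii). Write $\Gamma_2':=\textrm{core}(\Gamma_2)$, realised as an induced subgraph of $\Gamma_2$ via a retraction $\pi\colon\Gamma_2\to\Gamma_2'$, and $C:=\Gamma_1[\Gamma_2']$, an induced subgraph of $\Gamma_1[\Gamma_2]$ with a fibre over every vertex of $\Gamma_1$; then $\Pi:=(\mathrm{id},\pi)$ is a fibre-preserving retraction of $\Gamma_1[\Gamma_2]$ onto $C$, so by~(i) and uniqueness of cores $C$ is a core of $\Gamma_1[\Gamma_2]$. Fix $\Phi\in\textrm{End}(\Gamma_1[\Gamma_2])$; since $\Pi$ does not change first coordinates it suffices to show $\theta:=\Pi\circ\Phi\colon\Gamma_1[\Gamma_2]\to C$ is fibre-preserving. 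As $C$ is a core, $\alpha:=\theta|_C\in\textrm{End}(C)=\textrm{Aut}(C)$, and $r:=\alpha^{-1}\circ\theta$ is a retraction of $\Gamma_1[\Gamma_2]$ onto $C$ with $\theta=\alpha\circ r$. By~(ii) and the simplified Sabidussi criterion, $\alpha\in\textrm{Aut}(\Gamma_1)\wr\textrm{Aut}(\Gamma_2')$, hence fibre-preserving. It remains to see that \emph{every} retraction $r\colon\Gamma_1[\Gamma_2]\to C$ is fibre-preserving: if $r(v_1,v_2)=(a_1,a_2)$, then for each $u_1\in N_{\Gamma_1}(v_1)$ the vertex $(v_1,v_2)$ is adjacent to all of $\{u_1\}\times V(\Gamma_2')$, which $r$ fixes, so $(a_1,a_2)$ is adjacent in $C$ to all of $\{u_1\}\times V(\Gamma_2')$; since $\Gamma_2'$ is loopless this forbids $a_1=u_1$ and forces $a_1\sim_{\Gamma_1}u_1$, for every $u_1\in N_{\Gamma_1}(v_1)$. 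Thus $N_{\Gamma_1}(v_1)\subseteq N_{\Gamma_1}(a_1)$, whence $a_1=v_1$ by the antichain property. So $\theta=\alpha\circ r$, and therefore $\Phi$, is fibre-preserving; finally $\theta|_C=\alpha\in\textrm{Aut}(C)$ being fibre-preserving forces the induced map $\varphi$ on $V(\Gamma_1)$ to be an automorphism of $\Gamma_1$, so $\Phi=(\varphi,\beta)\in\textrm{End}(\Gamma_1)\wr\textrm{End}(\Gamma_2)$.

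Conversely, assume $\textrm{End}(\Gamma_1[\Gamma_2])=\textrm{End}(\Gamma_1)\wr\textrm{End}(\Gamma_2)$. Let $\rho$ be a retraction of $\Gamma_1[\Gamma_2]$ onto a copy of its core; by hypothesis $\rho=(\varphi,\beta)$, and $\varphi\in\textrm{Aut}(\Gamma_1)$ since $\Gamma_1$ is a core. Hence the image of $\rho$ is $\bigsqcup_{w_1\in V(\Gamma_1)}\{w_1\}\times B_{w_1}$ with $B_{w_1}:=\beta(\varphi^{-1}(w_1))(V(\Gamma_2))$, and $\Gamma_2$ maps onto each induced subgraph $\Gamma_2[B_{w_1}]$. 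If some $\Gamma_2[B_{w_1}]$ were not a core, applying a non-injective endomorphism of it inside the $w_1$-fibre while fixing all other fibres would yield a non-injective endomorphism of $\textrm{core}(\Gamma_1[\Gamma_2])$, a contradiction; so each $\Gamma_2[B_{w_1}]$ is a core, hence $\cong\textrm{core}(\Gamma_2)$ by Lemma~\ref{lemma-izomorfnostjeder}, and therefore $\textrm{core}(\Gamma_1[\Gamma_2])\cong\Gamma_1[\textrm{core}(\Gamma_2)]$, which is~(i). Realising $C=\Gamma_1[\textrm{core}(\Gamma_2)]$ inside $\Gamma_1[\Gamma_2]$ as before, if (ii) failed then by the simplified Sabidussi criterion $\textrm{Aut}(C)\supsetneq\textrm{Aut}(\Gamma_1)\wr\textrm{Aut}(\textrm{core}(\Gamma_2))$, so some automorphism $\alpha$ of $C$ is not fibre-preserving; then $\iota\circ\alpha\circ\Pi$, with $\iota\colon C\hookrightarrow\Gamma_1[\Gamma_2]$ the inclusion and $\Pi=(\mathrm{id},\pi)$, is an endomorphism of $\Gamma_1[\Gamma_2]$ that is not fibre-preserving, contradicting the hypothesis. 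Hence (ii) holds.

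The step I expect to be the real obstacle is showing that every retraction $\Gamma_1[\Gamma_2]\to\Gamma_1[\textrm{core}(\Gamma_2)]$ is fibre-preserving: a retraction acts nontrivially on the part of $\Gamma_1[\Gamma_2]$ outside the core, so Sabidussi's description of the automorphism group does not apply there, and one must instead exploit that ``$\Gamma_1$ is a core'' forces the neighbourhoods of $\Gamma_1$ into an antichain. A secondary source of friction is the bookkeeping identifying $\textrm{core}(\Gamma_1[\Gamma_2])$ with $\Gamma_1[\textrm{core}(\Gamma_2)]$ compatibly with the ambient fibre structure, and the trivial but separate handling of $\Gamma_1=K_1$ and of isolated vertices.
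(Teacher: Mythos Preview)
The paper does not supply a proof of this lemma; it simply cites \cite[Theorem~14]{kaschek2010}. Your proposal is therefore not competing against any argument in the paper itself, and it stands on its own as an independent proof of Kaschek's result.

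Your argument is essentially correct. The key idea --- that when $\Gamma_1$ is a core the open neighbourhoods form an antichain, and hence every retraction onto $\Gamma_1[\textrm{core}(\Gamma_2)]$ must preserve fibres --- is exactly the right leverage point, and your decomposition $\theta=\alpha\circ r$ with $\alpha\in\textrm{Aut}(C)$ handled via Sabidussi's criterion is clean. The verification that a fibre-preserving automorphism of $C$ necessarily lies in $\textrm{Aut}(\Gamma_1)\wr\textrm{Aut}(\Gamma_2')$ (needed to match ``not fibre-preserving'' with ``not in the wreath product'') is implicit but easy, and your treatment of the $K_1$ and isolated-vertex edge cases is adequate.

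One small point of bookkeeping in the converse direction: you show that the image of a retraction $\rho=(\varphi,\beta)$ is $\bigsqcup_{w_1}\{w_1\}\times B_{w_1}$ with each $\Gamma_2[B_{w_1}]$ a core of $\Gamma_2$, but the sets $B_{w_1}$ need not all coincide as subsets of $V(\Gamma_2)$, so you have established~(i) only up to isomorphism (or, equivalently, for \emph{some} choice of core of $\Gamma_1[\Gamma_2]$). This matches how the paper itself uses the lemma in Corollary~\ref{c8}, where ``(i) is satisfied'' is inferred merely from the existence of a homomorphism onto $\Gamma_1[\textrm{core}(\Gamma_2)]$; so your reading of~(i) is the intended one, but it is worth making this explicit.
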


\begin{cor}\label{c8}
Let  $\Gamma_1$ and $\Gamma_2$ be two graphs, where $\Gamma_1[\textrm{core}(\Gamma_2)]$ is a core. If $\Gamma_1$ is self-complementary, then $\textrm{End}\big(\Gamma_1[\Gamma_2]\big)=\textrm{End}(\Gamma_1)\wr \textrm{End}(\Gamma_2)$.
\end{cor}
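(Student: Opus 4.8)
The plan is to deduce this from Lemma~\ref{lemma-kaschek2010}, exactly in the spirit in which Corollary~\ref{c7} was deduced from Lemma~\ref{lex-auto}. Lemma~\ref{lemma-kaschek2010} requires $\Gamma_1$ to be a core, and this comes for free: since $\Gamma_1[\textrm{core}(\Gamma_2)]$ is a core, the observation preceding Lemma~\ref{lemma-knauer} (that both factors of a lexicographic product are cores whenever the product is a core), applied to the pair $\Gamma_1,\textrm{core}(\Gamma_2)$, shows that $\Gamma_1$ is a core. So it remains to verify conditions~(a) and~(b) of Lemma~\ref{lemma-kaschek2010} for a suitable choice of $\textrm{core}(\Gamma_1[\Gamma_2])$.

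For condition~(a) I would fix a retraction $\psi:\Gamma_2\to\textrm{core}(\Gamma_2)$ and check directly from the definition of the lexicographic product that $(v_1,v_2)\mapsto(v_1,\psi(v_2))$ is a homomorphism $\Gamma_1[\Gamma_2]\to\Gamma_1[\textrm{core}(\Gamma_2)]$ (if $(u_1,u_2)\sim(v_1,v_2)$ then either $u_1\sim_{\Gamma_1}v_1$, which already forces adjacency of the images, or $u_1=v_1$ and $u_2\sim_{\Gamma_2}v_2$, in which case $\psi(u_2)\sim\psi(v_2)$). Since $\textrm{core}(\Gamma_2)$ is an induced subgraph of $\Gamma_2$, the graph $\Gamma_1[\textrm{core}(\Gamma_2)]$ is an induced subgraph of $\Gamma_1[\Gamma_2]$, and the above map fixes every vertex whose second coordinate lies in $\textrm{core}(\Gamma_2)$, hence it is a retraction onto $\Gamma_1[\textrm{core}(\Gamma_2)]$. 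As $\Gamma_1[\textrm{core}(\Gamma_2)]$ is a core by hypothesis, it is a core of $\Gamma_1[\Gamma_2]$; choosing $\textrm{core}(\Gamma_1[\Gamma_2])$ to be precisely this graph makes condition~(a) hold by construction.

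For condition~(b) I would show that $S_{\Gamma_1}=\triangle_{\Gamma_1}$. First, a core has no ``false twins'': if distinct $u,v$ satisfied $N_{\Gamma_1}(u)=N_{\Gamma_1}(v)$, then (there being no loops) $u\nsim v$, and the map sending $v$ to $u$ and fixing every other vertex would be a non-bijective endomorphism of $\Gamma_1$, contradicting that $\Gamma_1$ is a core; hence $R_{\Gamma_1}=\triangle_{\Gamma_1}$. Now for distinct $u,v$ one has $N_{\Gamma_1}[u]=N_{\Gamma_1}[v]$ if and only if $N_{\bar{\Gamma}_1}(u)=N_{\bar{\Gamma}_1}(v)$, so $S_{\Gamma_1}=\triangle_{\Gamma_1}$ is equivalent to $R_{\bar{\Gamma}_1}=\triangle_{\bar{\Gamma}_1}$. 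Since $\Gamma_1$ is self-complementary, $\bar{\Gamma}_1\cong\Gamma_1$ is again a core, so $R_{\bar{\Gamma}_1}=\triangle_{\bar{\Gamma}_1}$ by the same argument, and condition~(b) follows. Lemma~\ref{lemma-kaschek2010} then yields $\textrm{End}(\Gamma_1[\Gamma_2])=\textrm{End}(\Gamma_1)\wr\textrm{End}(\Gamma_2)$.

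The only mildly delicate points are making sure that the core of $\Gamma_1[\Gamma_2]$ may be taken to equal $\Gamma_1[\textrm{core}(\Gamma_2)]$ on the nose rather than merely up to isomorphism (needed so that condition~(a) is literally an equality) --- handled by the explicit retraction above --- and the observation that self-complementarity is exactly what upgrades the easy fact ``core $\Rightarrow$ no false twins'' to the statement ``core $\Rightarrow$ no true twins'' that condition~(b) actually asks for. Neither step should present a genuine obstacle.
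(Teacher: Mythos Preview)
Your proof is correct and follows essentially the same approach as the paper: both apply Lemma~\ref{lemma-kaschek2010}, verify condition~(a) via the obvious retraction $(v_1,v_2)\mapsto(v_1,\psi(v_2))$, and verify condition~(b) by showing $S_{\Gamma_1}=\triangle_{\Gamma_1}$ through the observation that closed-neighborhood twins in $\Gamma_1$ become open-neighborhood twins in $\bar{\Gamma}_1$, which is impossible since $\bar{\Gamma}_1\cong\Gamma_1$ is a core. Your write-up is in fact slightly more detailed than the paper's (it spells out the retraction and the twin equivalences explicitly), but the underlying argument is the same.
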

\begin{proof}
Obviously there exist a homomorphism from $\Gamma_1[\Gamma_2]$ onto $\Gamma_1[\textrm{core}(\Gamma_2)]$. Hence, (i) from Lemma~\ref{lemma-kaschek2010} is satisfied. Since $\Gamma_1[\textrm{core}(\Gamma_2)]$ is a core, the same is true for $\Gamma_1$. Consequently, to end the proof it suffices to prove that $S_{\Gamma_1}=\triangle_{\Gamma_1}$. Suppose the contrary, that is, there are distinct
$u,v\in V(\Gamma_1)$ such that $N_{\Gamma_1}[u]=N_{\Gamma_1}[v]$. Clearly, $u$ and $v$ are adjacent in $\Gamma_1$. Consequently,
$N_{\bar{\Gamma}_1}(u)=N_{\bar{\Gamma}_1}(v)$ and $u,v$ are nonadjacent in $\bar{\Gamma}_1$. The map $\varphi$ on $V(\bar{\Gamma}_1)$ that maps $u$ to $v$ and fixes all other vertices is a nonbijective endomorphism of $\bar{\Gamma}_1$. Since $\bar{\Gamma}_1$ is a core by self-complementarity, we get a contradiction.
\end{proof}

\begin{lemma}\label{zadnja-lex}
Let $\Gamma_1, \Gamma_2$ be graphs, where $\Gamma_1$ vertex-transitive, while $\Gamma_2, \bar{\Gamma}_2$ are both connected. If $\textrm{core}(\Gamma_1)$ is any core of $\Gamma_1$ and $\textrm{core}(\Gamma_1)[\Gamma_2]$ is a core, then
$$\textrm{Hom}\big(\Gamma_1[\Gamma_2],\textrm{core}(\Gamma_1)[\Gamma_2]\big)=\textrm{Hom}\big(\Gamma_1,core(\Gamma_1)\big)\wr \textrm{Hom}(\Gamma_2,\Gamma_2).$$
\end{lemma}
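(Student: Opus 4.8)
The plan is to prove the two inclusions separately. The inclusion ``$\supseteq$'' is a routine edge-check: for $\varphi\in\textrm{Hom}\big(\Gamma_1,\textrm{core}(\Gamma_1)\big)$ and an arbitrary map $\beta\colon V(\Gamma_1)\to\textrm{Hom}(\Gamma_2,\Gamma_2)$, the map defined by~\eqref{e98} carries the two types of edge of $\Gamma_1[\Gamma_2]$ to edges of $\textrm{core}(\Gamma_1)[\Gamma_2]$. For ``$\subseteq$'' I would begin with some reductions. Put $C:=\textrm{core}(\Gamma_1)$. By Lemma~\ref{lemma-core-vt} the graph $C$ is vertex-transitive, and since $C[\Gamma_2]$ is a core so are $C$ and $\Gamma_2$; in particular $\textrm{Hom}(\Gamma_2,\Gamma_2)=\textrm{End}(\Gamma_2)=\textrm{Aut}(\Gamma_2)$. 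As a disconnected vertex-transitive graph has pairwise isomorphic connected components, each of them connected, vertex-transitive and with core $C$, and a homomorphism out of the disjoint union $\Gamma_1[\Gamma_2]$ is determined componentwise, I may assume $\Gamma_1$ is connected; the case $|V(\Gamma_2)|=1$ being trivial, I also assume $|V(\Gamma_2)|>1$.

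Now let $\Phi\in\textrm{Hom}\big(\Gamma_1[\Gamma_2],C[\Gamma_2]\big)$. The induced subgraph $I$ of $C[\Gamma_2]$ on $\Phi\big(V(\Gamma_1[\Gamma_2])\big)$ admits homomorphisms both to $C[\Gamma_2]$ (the inclusion) and from $C[\Gamma_2]$ (the inclusion $C[\Gamma_2]\hookrightarrow\Gamma_1[\Gamma_2]$ followed by $\Phi$), so by Lemma~\ref{lemma-izomorfnostjeder} and the fact that $C[\Gamma_2]$ is a core we get $\textrm{core}(I)\cong C[\Gamma_2]$; comparing cardinalities in $\textrm{core}(I)\subseteq I\subseteq C[\Gamma_2]$ forces $I=C[\Gamma_2]$, so $\Phi$ is onto $C[\Gamma_2]$. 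The restriction of $\Phi$ to the induced subgraph $C[\Gamma_2]$ is an endomorphism of the core $C[\Gamma_2]$, hence an automorphism, and by Corollary~\ref{c6} (applicable because $\Gamma_2$ and $\bar{\Gamma}_2$ are connected) it equals some $(\psi,\gamma)\in\textrm{Aut}(C)\wr\textrm{Aut}(\Gamma_2)$. Since $(\psi,\gamma)\circ(\varphi,\beta)=\big(\psi\circ\varphi,\ v_1\mapsto\gamma(\varphi(v_1))\circ\beta(v_1)\big)$, the set $\textrm{Hom}(\Gamma_1,C)\wr\textrm{Hom}(\Gamma_2,\Gamma_2)$ is invariant under left composition with members of $\textrm{Aut}(C)\wr\textrm{Aut}(\Gamma_2)$, so it is enough to treat $\Theta:=(\psi,\gamma)^{-1}\circ\Phi$, which is a retraction of $\Gamma_1[\Gamma_2]$ onto $C[\Gamma_2]$.

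The heart of the matter — and the hard part — is to show that such a retraction $\Theta$ sends every fibre $\{v_1\}\times V(\Gamma_2)$ into a single fibre $\{c\}\times V(\Gamma_2)$ of $C[\Gamma_2]$. For $v_1\in V(C)$ this is clear, as $\Theta$ fixes $C[\Gamma_2]$ pointwise. For $v_1\notin V(C)$ I would use: (a) $\Gamma_2$ is connected, so $\Theta\big(\{v_1\}\times V(\Gamma_2)\big)$ is a connected subgraph of $C[\Gamma_2]$; (b) if $u_1\sim_{\Gamma_1}v_1$ then the fibres of $u_1$ and $v_1$ are completely joined in $\Gamma_1[\Gamma_2]$, hence so are their $\Theta$-images; (c) if moreover $u_1\in V(C)$ then the $\Theta$-image of the fibre of $u_1$ is the whole fibre $\{u_1\}\times V(\Gamma_2)$, and a set completely joined to an entire fibre must miss that fibre and project into $N_C(u_1)$ (a vertex of that fibre adjacent to the whole fibre would produce a loop in $\Gamma_2$). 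Propagating (b)--(c) along shortest paths in the connected graph $\Gamma_1$ from $v_1$ to $V(C)$, and using vertex-transitivity of $\Gamma_1$ to control how $v_1$ sits relative to $C$, one forces the projection of the connected set $\Theta\big(\{v_1\}\times V(\Gamma_2)\big)$ to a single vertex $\varphi(v_1)$. Once fibres go to fibres, reading $\Theta$ restricted to $\{v_1\}\times V(\Gamma_2)$ through the second coordinate yields $\beta(v_1)\in\textrm{End}(\Gamma_2)=\textrm{Aut}(\Gamma_2)$, so $\Theta=(\varphi,\beta)$ in the notation of~\eqref{e98}; finally $\varphi\in\textrm{Hom}(\Gamma_1,C)$, for if $u_1\sim_{\Gamma_1}v_1$ with $\varphi(u_1)=\varphi(v_1)$ then $\beta(u_1)(x)\sim_{\Gamma_2}\beta(v_1)(y)$ for all $x,y\in V(\Gamma_2)$, which is impossible since $\beta(v_1)$ is onto and $\Gamma_2$ is loopless.

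The main obstacle is exactly this fibre-preservation step. It is the only place where vertex-transitivity of $\Gamma_1$ is genuinely used — without it the statement already fails when $\Gamma_1$ is $C$ together with one sparsely attached extra vertex — and the propagation in (b)--(c) must be carried out carefully around vertices of $\Gamma_1$ with few neighbours in $V(C)$ (here the structure of vertex-transitive graphs and their cores, in the spirit of Lemma~\ref{lemma-core-vt}, is what makes it go through). Everything else is bookkeeping built on Lemmas~\ref{lemma-core-vt} and~\ref{lemma-izomorfnostjeder} and Corollary~\ref{c6}.
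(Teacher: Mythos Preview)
Your bookkeeping (the inclusion $\supseteq$, surjectivity of $\Phi$, reduction to a retraction $\Theta$) is fine, but the fibre-preservation step is a genuine gap. You yourself flag it as ``the main obstacle'' and then write that propagation along paths ``must be carried out carefully'' and that vertex-transitivity ``is what makes it go through'' --- but you never actually carry it out. Concretely, if $v_1\notin V(C)$ has a single neighbour $u_1\in V(C)$, your condition~(c) only pins the projection of $\Theta(\{v_1\}\times V(\Gamma_2))$ into $N_C(u_1)$, not to a single vertex; stepping one further away from $V(C)$ you lose even that. Nothing in your sketch explains how vertex-transitivity of $\Gamma_1$ collapses this to a point, and Lemma~\ref{lemma-core-vt} only tells you $C$ is vertex-transitive, which does not help here.

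The paper sidesteps this difficulty entirely with one clean observation: since $\Gamma_1$ is vertex-transitive, \emph{every} vertex $v_1\in V(\Gamma_1)$ lies in some induced subgraph $\Gamma\cong C$ (take an automorphism moving a chosen vertex of $C$ to $v_1$ and look at the image of $C$). Then $\Gamma[\Gamma_2]\cong C[\Gamma_2]$ is a core, so the restriction $\Phi|_{V(\Gamma[\Gamma_2])}$ is an isomorphism onto $C[\Gamma_2]$, and Corollary~\ref{c6} decomposes it as $(\varphi,\beta)$. In particular the fibre $\{v_1\}\times V(\Gamma_2)$ is sent bijectively to a single fibre, with $\beta(v_1)\in\textrm{Aut}(\Gamma_2)$ --- no propagation needed. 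This is exactly the missing idea in your argument: use vertex-transitivity not to ``control how $v_1$ sits relative to $C$'' but to place a fresh copy of $C$ \emph{through} $v_1$.
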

\begin{proof}
Let $\Phi\in \textrm{Hom}\big(\Gamma_1[\Gamma_2],\textrm{core}(\Gamma_1)[\Gamma_2]\big)$ and $v_1\in V(\Gamma_1)$. Since $\Gamma_1$ is vertex-transitive, there exists a subgraph $\Gamma$ in $\Gamma_1$, which is isomorphic to $\textrm{core}(\Gamma_1)$ and such that $v_1\in V(\Gamma)$. The restriction $\Phi|_{V(\Gamma[\Gamma_2])}$ is a homomorphism from $\Gamma[\Gamma_2]$ to $\textrm{core}(\Gamma_1)[\Gamma_2]$. Since $\textrm{core}(\Gamma_1)[\Gamma_2]$ is a core, we deduce that $\Phi|_{V(\Gamma[\Gamma_2])}$ is an isomorphism. By Corollary~\ref{c6}, $\Phi|_{V(\Gamma[\Gamma_2])}=(\varphi,\beta)$ for some $\varphi\in \textrm{Iso}(\Gamma,\textrm{core}(\Gamma_1))$ and a map $\beta: V(\Gamma)\to \textrm{Iso}(\Gamma_2,\Gamma_2)=\textrm{Aut}(\Gamma_2)$. Since $v_1\in V(\Gamma_1)$ is arbitrary, we deduce in particular that
$$\Phi(v_1,v_2)=(\psi(v_1), \beta_{v_1}(v_2))$$
for all $v_1\in V(\Gamma_1)$ and $v_2\in V(\Gamma_2)$, where $\psi: V(\Gamma_1)\to V\big(\textrm{core}(\Gamma_1)\big)$ is some map and $\beta_{v_1}\in \textrm{Aut}(\Gamma_2)$.

We claim that $\psi$ is a graph homomorphism. Let $v_1\sim_{\Gamma_1} v_1'$. Then
\begin{equation}\label{e101}
(\psi(v_1), \beta_{v_1}(v_2))=\Phi(v_1,v_2)\sim\Phi(v_1',v_2')=(\psi(v_1'), \beta_{v_1'}(v_2'))
\end{equation}
for all $v_2,v_2'\in V(\Gamma_2)$. Since $\beta_{v_1}, \beta_{v_1'}$ are automorphisms, we can find $v_2,v_2'$ such that $\beta_{v_1}(v_2)=\beta_{v_1'}(v_2')$. It follows from~\eqref{e101} that $\psi(v_1)\sim_{\Gamma_1} \psi(v_1')$ and $\psi$ is a graph homomorphism. Hence, $\Phi=(\psi,\beta)$, where the map $\beta: V(\Gamma_1)\to \textrm{Aut}(\Gamma_2)$ is defined by $\beta(v_1):=\beta_{v_1}$.
Therefore, $\Phi\in \textrm{Hom}\big(\Gamma_1,core(\Gamma_1)\big)\wr \textrm{Hom}(\Gamma_2,\Gamma_2)$.
\end{proof}
We remark that since $\textrm{core}(\Gamma_1)[\Gamma_2]$ is a core in Lemma~\ref{zadnja-lex}, then $\Gamma_2$ is also a core. Consequently, $\textrm{Hom}(\Gamma_2,\Gamma_2)=\textrm{Aut}(\Gamma_2)$.

\subsection{Complementary prism}

Let $\Gamma$ be a graph with the vertex set $V(\Gamma)=\{v_1,\ldots,v_n\}$ and the edge set $E(\Gamma)$. The \emph{complementary prism} of $\Gamma$ is the graph $\Gamma\bar{\Gamma}$, which is obtained by the disjoint union of $\Gamma$ and its complement $\bar{\Gamma}$, if we connect each vertex in $\Gamma$ to its copy in $\bar{\Gamma}$. More precisely, $V(\Gamma\bar{\Gamma}) = W_1\cup W_2$, where
$$W_1=W_1(\Gamma\bar{\Gamma})=\{(v_1,1),\ldots,(v_n,1)\}\ \textrm{and}\ W_2=W_2(\Gamma\bar{\Gamma})=\{(v_1,2),\ldots,(v_n,2)\},$$
and the edge set $E(\Gamma\bar{\Gamma})$ is given by the union
\begin{align*}
&\Big\{\{(u,1),(v,1)\}: \{u,v\}\in E(\Gamma)\Big\}\\
\cup&\Big\{\{(u,2),(v,2)\}: \{u,v\}\in E(\bar{\Gamma})\Big\}\\
\cup&\Big\{\{(u,1),(u,2)\}: u\in V(\Gamma)\Big\}.
\end{align*}
In particular, $|V(\Gamma\bar{\Gamma})|=2n$ and $|E(\Gamma\bar{\Gamma})|=\binom{n}{2}+n=\binom{n+1}{2}$. The complementary prism generalize the Petersen graph, which is obtained if $\Gamma$ is the pentagon (see Figure~\ref{f1}).
\begin{figure}[h!]
\centering
\includegraphics[width=0.8\textwidth]{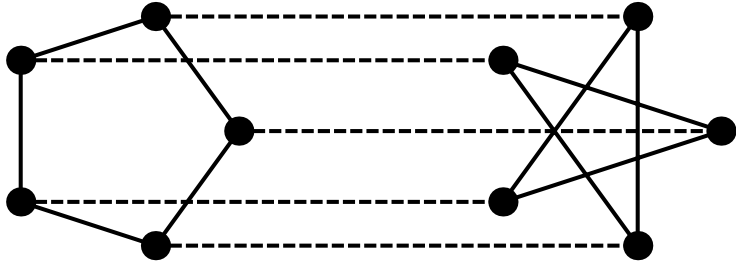}
\caption{The complementary prism $C_5\overline{C_5}$ is the Petersen graph.}
\label{f1}
\end{figure}
We remark that the family of all complementary prisms has a large intersection with the family of all bi-Cayley graphs over abelian groups (cf.~\cite{stefko}).

The following result about the spectrum of~$\Gamma\bar{\Gamma}$ is proved in~\cite[Corollary~3.4]{carvalho2018}.
\begin{lemma}\label{carvalho}
Let $\Gamma$ be a connected $k$-regular graph on $n$ vertices, where $k=\lambda_1\geq \lambda_2\geq\dots\geq \lambda_n$ are the eigenvalues of the adjacency matrix of $\Gamma$. Then the set of all eigenvalues of the adjacency matrix of $\Gamma\bar{\Gamma}$ equals
\begin{align}
\label{e5}&\left\{\frac{n-1\pm \sqrt{(n-1-2k)^2+4}}{2}\right\}\\
\label{e6}&\qquad\qquad\qquad\cup\left\{\frac{-1\pm \sqrt{(2\lambda_i+1)^2+4}}{2} : 2\leq i\leq n\right\}.
\end{align}
\end{lemma}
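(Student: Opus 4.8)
The plan is to diagonalize the adjacency matrix of $\Gamma\bar\Gamma$ directly, exploiting the block structure coming from the bipartition $W_1\cup W_2$ together with the fact that the all-ones vector is an eigenvector of a regular graph. Write $A$ for the adjacency matrix of $\Gamma$, so that $\bar A=J-I-A$ is the adjacency matrix of $\bar\Gamma$, where $J$ is the $n\times n$ all-ones matrix and $I$ the identity. With respect to the ordered vertex set $W_1$ followed by $W_2$, the adjacency matrix of $\Gamma\bar\Gamma$ is
$$M=\begin{pmatrix} A & I\\ I & J-I-A\end{pmatrix}.$$
Since $\Gamma$ is connected and $k$-regular, the eigenvalue $k=\lambda_1$ of $A$ is simple with eigenvector $\mathbf 1$, and I would fix an orthonormal basis $\tfrac{1}{\sqrt n}\mathbf 1,\mathbf v_2,\dots,\mathbf v_n$ of $\mathbb R^n$ consisting of eigenvectors of $A$, where $A\mathbf v_i=\lambda_i\mathbf v_i$ and, because $\mathbf v_i\perp\mathbf 1$, also $J\mathbf v_i=\mathbf 0$ for $2\le i\le n$.

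Next I would look for eigenvectors of $M$ of the form $(a\mathbf w,\,b\mathbf w)^{\top}$ with $\mathbf w\in\{\mathbf 1,\mathbf v_2,\dots,\mathbf v_n\}$. For $\mathbf w=\mathbf v_i$ with $i\ge 2$, the identities $A\mathbf v_i=\lambda_i\mathbf v_i$ and $J\mathbf v_i=\mathbf 0$ show that $(a\mathbf v_i,b\mathbf v_i)^{\top}$ is an eigenvector of $M$ for the eigenvalue $\theta$ exactly when $(a,b)^{\top}$ is an eigenvector of $\bigl(\begin{smallmatrix}\lambda_i & 1\\ 1 & -1-\lambda_i\end{smallmatrix}\bigr)$; its characteristic equation simplifies to $\theta^2+\theta-(\lambda_i^2+\lambda_i+1)=0$, whose roots are $\tfrac{-1\pm\sqrt{(2\lambda_i+1)^2+4}}{2}$, i.e.\ the values listed in~\eqref{e6}. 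For $\mathbf w=\mathbf 1$, using $A\mathbf 1=k\mathbf 1$ and $J\mathbf 1=n\mathbf 1$, one is led instead to the matrix $\bigl(\begin{smallmatrix}k & 1\\ 1 & n-1-k\end{smallmatrix}\bigr)$, with characteristic equation $\theta^2-(n-1)\theta+\bigl(k(n-1-k)-1\bigr)=0$; since $(n-1)^2-4k(n-1-k)=(n-1-2k)^2$, its roots are $\tfrac{n-1\pm\sqrt{(n-1-2k)^2+4}}{2}$, i.e.\ the values in~\eqref{e5}.

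To conclude I would argue that these are all the eigenvalues. For each $i\in\{2,\dots,n\}$ the plane spanned by $(\mathbf v_i,\mathbf 0)^{\top}$ and $(\mathbf 0,\mathbf v_i)^{\top}$ is $M$-invariant and contributes two eigenvalues counted with multiplicity, and the plane spanned by $(\mathbf 1,\mathbf 0)^{\top}$ and $(\mathbf 0,\mathbf 1)^{\top}$ is likewise $M$-invariant and contributes two more; these $n$ planes are mutually orthogonal and together span $\mathbb R^{2n}$, so the full spectrum of $M$ has been located. Hence every eigenvalue of $\Gamma\bar\Gamma$ belongs to the union in~\eqref{e5}--\eqref{e6}, and conversely each value displayed there was exhibited as an eigenvalue, which gives equality of the two sets. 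The one step requiring a little care is this reduction to the invariant planes: it relies on connectedness of $\Gamma$ to ensure that $\lambda_1=k$ is simple, so that the $\mathbf v_i$ with $i\ge 2$ can be chosen orthogonal to $\mathbf 1$ and therefore annihilated by $J$. Beyond that bookkeeping the proof is just the two $2\times 2$ computations above, and there is no genuine obstacle.
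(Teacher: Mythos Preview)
Your proof is correct. The paper does not prove this lemma itself; it simply cites \cite[Corollary~3.4]{carvalho2018}. Your direct block-diagonalization via the common eigenbasis of $A$ and $J$ is the standard route and is exactly what one would expect the cited reference to do as well. The only minor remark is that connectedness is used only to guarantee that $\lambda_1=k$ is simple, so that the split between \eqref{e5} and \eqref{e6} matches the indexing $i=1$ versus $i\ge 2$; your proof already notes this.
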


We apply Corollary~\ref{t1} in the proof of Theorem~\ref{thm-strg}.
\begin{cor}\label{t1}
If $\Gamma$ is as in Lemma~\ref{carvalho}, then the largest and the smallest eigenvalue of $\Gamma \bar{\Gamma}$ equal
\begin{equation}\label{e7}
\frac{n-1+\sqrt{(n-1-2k)^2+4}}{2}
\end{equation}
and
\begin{equation}\label{e8}
\min\left\{\frac{-1-\sqrt{(2\lambda_2+1)^2+4}}{2},\frac{-1-\sqrt{(2\lambda_n+1)^2+4}}{2}\right\},
\end{equation}
respectively.
\end{cor}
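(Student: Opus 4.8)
The plan is to apply Lemma~\ref{carvalho} and then simply locate the extreme entries among the $2n$ eigenvalues it lists. Abbreviate $\alpha_\pm:=\frac{n-1\pm\sqrt{(n-1-2k)^2+4}}{2}$ for the two values in~\eqref{e5} and, for $2\le i\le n$, $\beta_{i,\pm}:=\frac{-1\pm\sqrt{(2\lambda_i+1)^2+4}}{2}$ for the values in~\eqref{e6}. Since $\alpha_+>\alpha_-$ and $\beta_{i,+}>\beta_{i,-}$, the largest eigenvalue of $\Gamma\bar\Gamma$ equals $\max\{\alpha_+,\beta_{2,+},\dots,\beta_{n,+}\}$ and the smallest equals $\min\{\alpha_-,\beta_{2,-},\dots,\beta_{n,-}\}$; the whole task is to evaluate these two. (If $n=1$ then $\Gamma\bar\Gamma\cong K_2$ and the list~\eqref{e6} is empty, so I assume $n\ge2$, whence $\lambda_2$ and $\lambda_n$ are defined.)

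First I would compare the $\beta_{i,\pm}$ with one another. The function $t\mapsto|2t+1|$ is convex, so on the interval $[\lambda_n,\lambda_2]$, which contains $\lambda_2,\dots,\lambda_n$, it attains its maximum at an endpoint; hence $\max_{2\le i\le n}|2\lambda_i+1|=\max\{|2\lambda_2+1|,|2\lambda_n+1|\}$. Because $s\mapsto\frac{-1+\sqrt{s^2+4}}{2}$ increases with $|s|$ while $s\mapsto\frac{-1-\sqrt{s^2+4}}{2}$ decreases with $|s|$, this gives $\max_i\beta_{i,+}=\max\{\beta_{2,+},\beta_{n,+}\}$ and $\min_i\beta_{i,-}=\min\{\beta_{2,-},\beta_{n,-}\}$, the last quantity being exactly~\eqref{e8}. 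For the smallest eigenvalue it then remains to rule out $\alpha_-$: every $\beta_{i,-}\le\frac{-1-\sqrt{4}}{2}=-\frac{3}{2}$, whereas $\alpha_-\ge-1$ since $\alpha_-\ge-1$ is equivalent to $(n+1)^2\ge(n-1-2k)^2+4$, and this holds because $|n-1-2k|\le n-1$ (as $0\le k\le n-1$) together with $(n+1)^2-(n-1)^2=4n\ge4$. So the smallest eigenvalue of $\Gamma\bar\Gamma$ is~\eqref{e8}.

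For the largest eigenvalue I would prove $\alpha_+\ge\beta_{i,+}$ for all $i\ge2$ at once. As $|\lambda_i|\le\lambda_1=k$ for a $k$-regular graph, we have $|2\lambda_i+1|\le2k+1$ and therefore $\beta_{i,+}\le\frac{-1+\sqrt{(2k+1)^2+4}}{2}$, so it suffices to show $\alpha_+\ge\frac{-1+\sqrt{(2k+1)^2+4}}{2}$. Both sides are positive; squaring once, cancelling the additive $4$, and using the identity $(n-1-2k)^2-(2k+1)^2=n(n-2-4k)$, after dividing by $n>0$ the inequality collapses to $\sqrt{(n-1-2k)^2+4}\ge 2k+1-n$, which is trivial when $2k+1-n\le0$ and otherwise follows from $(2k+1-n)^2=(n-1-2k)^2$. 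Hence $\alpha_+$ dominates every $\beta_{i,+}$, and since also $\alpha_+>\alpha_-$, the largest eigenvalue of $\Gamma\bar\Gamma$ is $\alpha_+$, i.e.~\eqref{e7}.

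The \emph{main obstacle} is not conceptual but numerical: several of the comparisons above are tight — for example when $n=2k+1$ one has $\alpha_+=k+1$ and the final inequality holds with no room to spare — so the naive bound $\sqrt{x^2+4}\approx|x|$ throws away too much, and one must carry the additive constants through every step (which is precisely why I square the inequalities rather than estimate each square root separately). Everything else is routine.
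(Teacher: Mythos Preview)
Your proof is correct and follows essentially the same route as the paper: for the largest eigenvalue you reduce via $|\lambda_i|\le k$ to the single comparison $n+\sqrt{(n-1-2k)^2+4}\ge\sqrt{(2k+1)^2+4}$ and then verify it algebraically, just as the paper does (your squaring-and-collapsing is a bit tidier than the paper's case split). For the smallest eigenvalue the paper merely asserts that \eqref{e8} is ``obviously'' the minimum, whereas you actually supply the argument ($\alpha_-\ge -1>-\tfrac{3}{2}\ge\beta_{i,-}$ together with the convexity observation for $\min_i\beta_{i,-}$), so your write-up is in fact more complete on that point.
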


\begin{proof}
Obviously, \eqref{e8} is the smallest among all values in \eqref{e5}-\eqref{e6}. To see that \eqref{e7} is the largest value, it suffices to prove that
$$n+\sqrt{(n-1-2k)^2+4}\geq \sqrt{(2\lambda_i+1)^2+4}$$
for all $i\geq 2$. The left-hand side equals $n+\sqrt{m+n^2-2n-4nk}$, where $m=(2k+1)^2+4$, while the right-hand side is bounded above by $\sqrt{m}$. Therefore it suffices to prove that $n+\sqrt{m+n^2-2n-4nk}\geq \sqrt{m}$, which is obvious if $n^2-2n-4nk\geq 0$. On the other hand, if $n^2-2n-4nk<0$, then it suffices to prove the inequality
\begin{align*}
n^2&\geq \big(\sqrt{m}-\sqrt{m+n^2-2n-4nk}\big)^2\\
&=2m+n^2-2n-4nk-2\sqrt{m(m+n^2-2n-4nk)},
\end{align*}
which is equivalent to
$$\sqrt{m(m+n^2-2n-4nk)}\geq m-n-2nk.$$
If the right-hand side is negative, then the inequality is obviously true. Otherwise we can square the inequality and rearrange it to deduce
$n^2(m-4k^2-4k-1)\geq 0$, which is true since $m=4k^2+4k+5$.
\end{proof}

\begin{remark}\label{opomba2}
Let $\Gamma$ be a regular self-complementary graph on $n$ vertices. Then it follows from Corollary~\ref{t1} that the largest and the smallest eigenvalue of $\Gamma\bar{\Gamma}$ are $\frac{n+1}{2}$ and $\frac{-1-\sqrt{(2\lambda_2+1)^2+4}}{2}$, respectively, where $\lambda_2$ is the second eigenvalue of~$\Gamma$. Recall that the second eigenvalue of a graph is an important invariant (cf.~\cite{alon,friedman,brouwer-haemers}).
Lemma~\ref{carvalho} implies that the second eigenvalue of $\Gamma\bar{\Gamma}$ equals $\max\{\frac{n-3}{2}, \frac{-1+\sqrt{(2\lambda_2+1)^2+4}}{2}\}$ if $n>1$. Observe that $\frac{-1+\sqrt{(2\lambda_2+1)^2+4}}{2}>\frac{n-3}{2}$ if and only if $\lambda_2>\frac{\sqrt{n(n-4)}-1}{2}$, which is a value that is slightly smaller than the upper bound $\frac{n-7}{2}-2\cos(\frac{\pi(n-1)}{n})$ for $\lambda_2$ from Remark~\ref{opomba} (the two values are the same if $\Gamma\cong C_5$). It would be interesting to know if there exists a regular/vertex-transitive self-complementary graph $\Gamma$ on $n$ vertices with the second eigenvalue in the bounds $\frac{\sqrt{n(n-4)}-1}{2}<\lambda_2\leq \frac{n-7}{2}-2\cos(\frac{\pi(n-1)}{n})$. If there are no such vertex-transitive self-complementary graphs, then the second eigenvalue of a complementary prism is $\frac{n-3}{2}$ whenever it is vertex-transitive (cf. Corollary~\ref{c1}).
\end{remark}

Lemma~\ref{p6} is proved in~\cite[Theorem~2]{haynes2007}.
\begin{lemma}\label{p6}
Let $\Gamma$ be any graph. Then $\Gamma\bar{\Gamma}$ is a connected graph of diameter at most three. Moreover,
\begin{align*}
\textrm{diam}(\Gamma\bar{\Gamma})=1 &\Longleftrightarrow \Gamma\cong K_1,\\
\textrm{diam}(\Gamma\bar{\Gamma})=2 &\Longleftrightarrow \textrm{diam}(\Gamma)=2=\textrm{diam}(\bar{\Gamma}).
\end{align*}
\end{lemma}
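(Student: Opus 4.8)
The plan is to compute all pairwise distances in $\Gamma\bar{\Gamma}$ directly from the definition and then read off the diameter. Write a vertex as $(u,i)$ with $u\in V(\Gamma)$, $i\in\{1,2\}$, set $n=|V(\Gamma)|$, and recall that $W_1$ induces a copy of $\Gamma$, $W_2$ a copy of $\bar{\Gamma}$, and the only edges between $W_1$ and $W_2$ are the matching edges $\{(u,1),(u,2)\}$.

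First I would establish the distance formulas. A routine analysis of short paths, using the structural fact above, gives for distinct $u,v\in V(\Gamma)$
$$d_{\Gamma\bar{\Gamma}}\big((u,1),(v,1)\big)=\min\{3,d_{\Gamma}(u,v)\},\qquad d_{\Gamma\bar{\Gamma}}\big((u,2),(v,2)\big)=\min\{3,d_{\bar{\Gamma}}(u,v)\},$$
while $d_{\Gamma\bar{\Gamma}}\big((u,1),(v,2)\big)$ equals $1$ if $u=v$ and $2$ if $u\neq v$. Indeed, an $(u,1)$–$(v,1)$ path staying in $W_1$ has length $d_{\Gamma}(u,v)$ at best, whereas one leaving $W_1$ must use at least two matching edges and — a short case check confirms, since the unique $W_1$-neighbour of $(u,2)$ is $(u,1)$ — has length at least $3$; because $u\nsim_{\Gamma}v$ forces $u\sim_{\bar{\Gamma}}v$, the path $(u,1)\sim(u,2)\sim(v,2)\sim(v,1)$ realizes length $3$ whenever $d_{\Gamma}(u,v)>1$, giving the first formula. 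The second is the same statement with $\Gamma$ and $\bar{\Gamma}$ (equivalently $W_1$ and $W_2$) interchanged, and the mixed case is immediate: no edge joins $(u,1)$ to $(v,2)$ when $u\neq v$, while exactly one of $(u,1)\sim(v,1)\sim(v,2)$ or $(u,1)\sim(u,2)\sim(v,2)$ is a legal $2$-path.

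Granting these formulas the remaining claims are bookkeeping. Every distance is at most $3$ and every pair of vertices is joined, so $\Gamma\bar{\Gamma}$ is connected of diameter at most $3$; more precisely
$$\textrm{diam}(\Gamma\bar{\Gamma})=\max\Big\{\min\{3,\textrm{diam}(\Gamma)\},\ \min\{3,\textrm{diam}(\bar{\Gamma})\},\ c_n\Big\},\qquad c_n=\begin{cases}1,& n=1,\\ 2,& n\geq 2,\end{cases}$$
the last term coming from the mixed pairs. Hence $\textrm{diam}(\Gamma\bar{\Gamma})=1$ forces $n=1$, i.e. $\Gamma\cong K_1$, and conversely $K_1\overline{K_1}\cong K_2$ has diameter $1$. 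If $\textrm{diam}(\Gamma\bar{\Gamma})=2$, then $n\geq 2$, so $c_n=2$ and the maximum above equals $2$ exactly when $\textrm{diam}(\Gamma)\leq 2$ and $\textrm{diam}(\bar{\Gamma})\leq 2$; but $\Gamma\cong K_n$ with $n\geq 2$ would make $\bar{\Gamma}$ disconnected (contradicting $\textrm{diam}(\bar{\Gamma})\leq 2$), and symmetrically, so in fact $\textrm{diam}(\Gamma)=2=\textrm{diam}(\bar{\Gamma})$. The converse implication is read straight off the same formula, since a graph of diameter exactly $2$ has at least three vertices, so $c_n=2$ there as well. One may cite Lemma~\ref{l3} to note that at most one of $\Gamma,\bar{\Gamma}$ is disconnected, though the distance formulas already accommodate the disconnected case via the value $\infty$.

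I expect the only real work to be the first step — verifying the three distance formulas, and in particular ruling out hypothetical short paths that zig-zag between $W_1$ and $W_2$. Everything afterwards is immediate from the displayed diameter formula.
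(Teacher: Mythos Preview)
The paper does not actually prove Lemma~\ref{p6}; it merely cites \cite[Theorem~2]{haynes2007}. Your argument is a correct, self-contained proof. The three distance formulas are right (the key structural point being that any $(u,1)$--$(v,1)$ path that leaves $W_1$ must use two matching edges and hence has length at least $3$, while the detour $(u,1)\sim(u,2)\sim(v,2)\sim(v,1)$ is available whenever $u\nsim_{\Gamma}v$), and the diameter characterizations follow immediately from your displayed formula for $\textrm{diam}(\Gamma\bar{\Gamma})$. The exclusion of $\textrm{diam}(\Gamma)\leq 1$ in the diameter-$2$ case is handled correctly via the observation that $K_n$ with $n\geq 2$ has disconnected complement.
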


\section{The automorphism group of a complementary prism and the relation to self-complementary and non-Cayley vertex-transitive graphs}\label{section-auto}

In this section we determine the automorphism group of $\Gamma\bar{\Gamma}$ for arbitrary finite simple graph $\Gamma$ (see Theorem~\ref{p3} and Propositions~\ref{p4},~\ref{p5}). As an application we show that $\Gamma\bar{\Gamma}$ is vertex-transitive if and only if $\Gamma$ is vertex-transitive and self-complementary (Corollary~\ref{c1}). Moreover, $\Gamma\bar{\Gamma}$ is not a Cayley graph whenever $\Gamma$ has more than one vertex (Corollary~\ref{c2}).

In order to determine the automorphism group of a complementary prism, we need to consider two subfamilies of graphs separately. Given a (possibly null) graph $\Lambda$ we define $C_5(\Lambda)$ as the graph, which is formed by the pentagon~$C_5$ if we replace one vertex by $\Lambda$ in such way, that the two neighbors of the vertex in $C_5$ are connected to all vertices in $\Lambda$. Further, let \emph{$A$-graph} be the graph on five vertices, where three vertices form a triangle and exactly two of them have another neighbor, which is of degree one. Given a (possibly null) graph $\Lambda$ we define $A(\Lambda)$ as the graph, which is formed by the $A$-graph if we replace the vertex of degree two by $\Lambda$ in such way, that the two neighbors of the vertex in $A$ are connected to all vertices in $\Lambda$ (see Figure~\ref{f2}).
\begin{figure}
\centering
\begin{tabular}{ccc}
\includegraphics[width=0.4\textwidth]{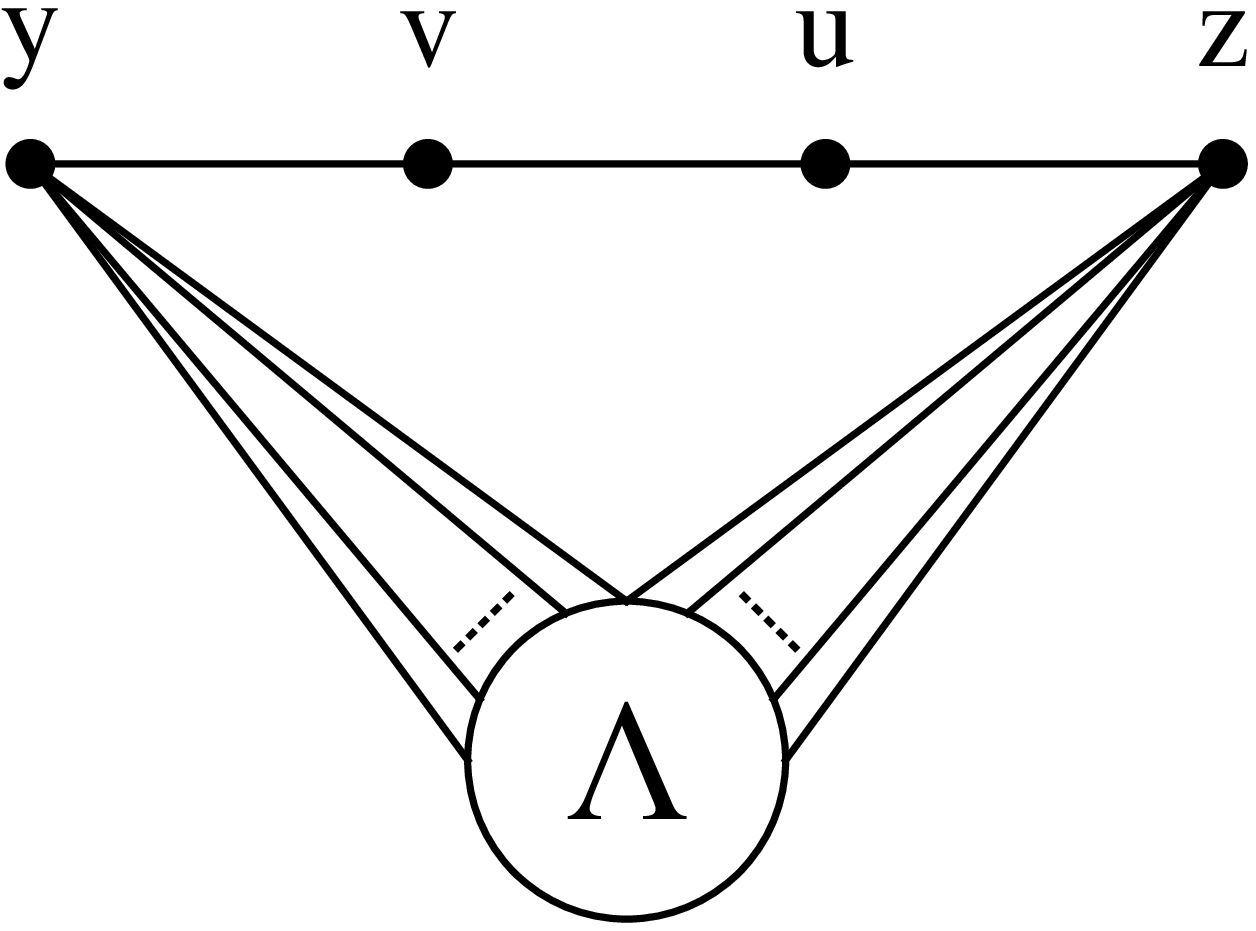}& &
\includegraphics[width=0.4\textwidth]{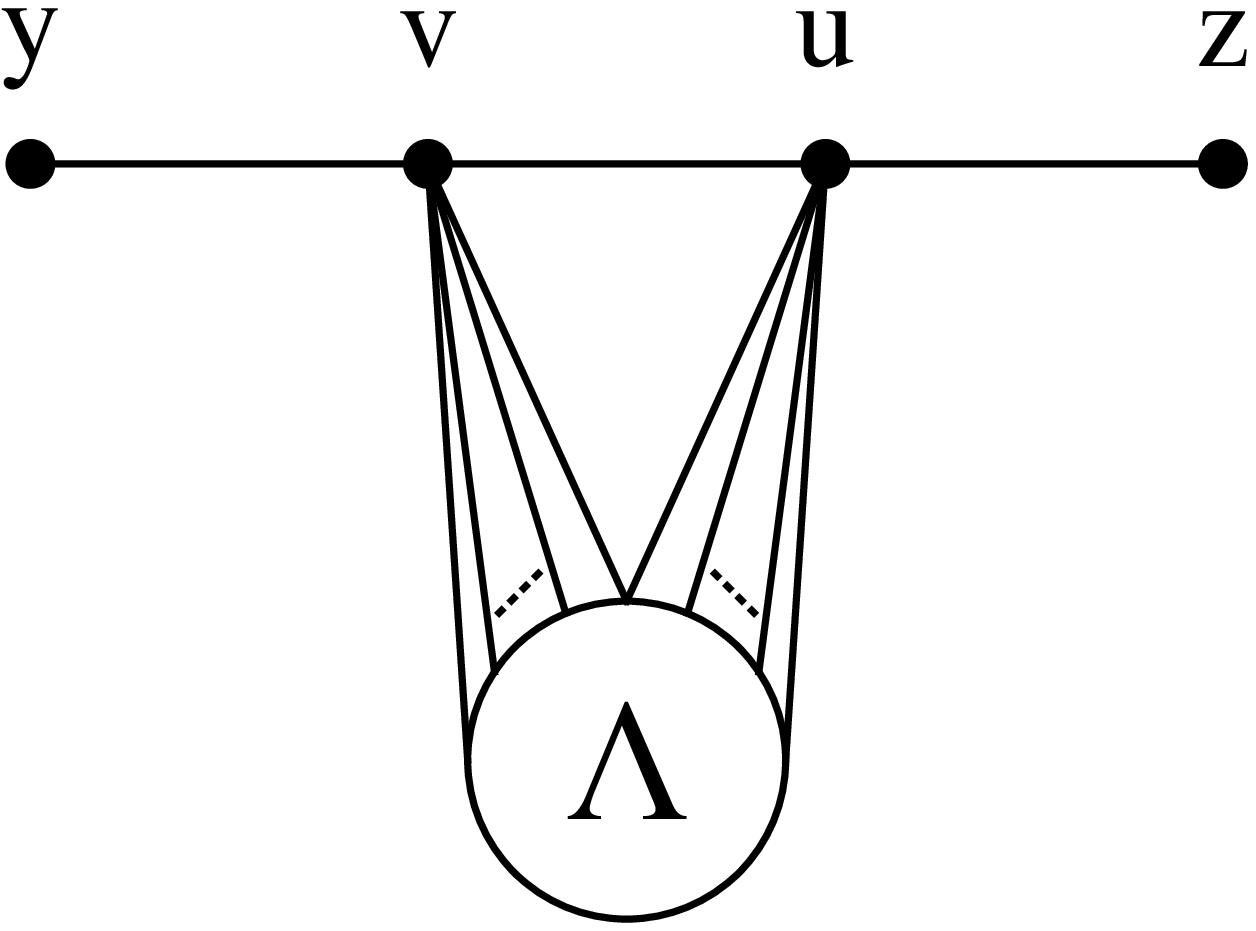}
\end{tabular}
\caption{Graphs $C_5(\Lambda)$ and $A(\Lambda)$ are on the left and on the right, respectively.}
\label{f2}
\end{figure}
Observe that the complements $\overline{C_5(\Lambda)}$ and $\overline{A(\Lambda)}$ are isomorphic to $C_5(\bar{\Lambda})$ and $A(\bar{\Lambda})$, respectively. In particular, $C_5(\Lambda)$ and $A(\Lambda)$ are self-complementary whenever $\Lambda$ is self-complementary. This follows also from the result in~\cite{ruiz} (see also \cite{sc-survey}) and the fact that $C_5$ and $A$-graph are self-complementary. Here we emphasize that graphs $C_5(\Lambda)$, $A(\Lambda)$ are self-complementary even in the degenerate case, where $\Lambda$ is the null graph. Then both $C_5(\Lambda)$ and $A(\Lambda)$ are isomorphic to the path $P_4$. In fact, the degrees of the vertices imply that $C_5(\Lambda)$ and $A(\Lambda')$ are isomorphic if and only if $V(\Lambda)=\emptyset=V(\Lambda')$. Besides, $\textrm{diam}(A(\Lambda))=3$ while $\textrm{diam}(C_5(\Lambda))=2$ unless $\Lambda=\emptyset$.

In the following we temporarily exclude the case $V(\Lambda)=\emptyset$ and the graph $C_5(\Lambda)$ with $|V(\Lambda)|=1$. Then, we can denote the four vertices in $C_5(\Lambda)$ (and in $A(\Lambda)$) that are outside $\Lambda$ by $y,v,u,z$, where $y\sim v\sim u\sim z $ and $\delta(v)=\delta(u)\neq \delta(y)=\delta(z)$. Any automorphism $\varphi\in \textrm{Aut}(\Lambda)$ can be extended to an automorphism of $C_5(\Lambda)$ (or $A(\Lambda)$) in two ways. The first automorphism
\begin{equation}\label{e24}
\widehat{\varphi}_1\ \textrm{fixes all four vertices}\ y,v,u,z.
\end{equation}
The second automorphism $\widehat{\varphi}_2$ satisfies
\begin{equation}\label{e25}
\widehat{\varphi}_2(y)=z,\ \widehat{\varphi}_2(v)=u,\ \widehat{\varphi}_2(u)=v,\  \widehat{\varphi}_2(z)=y.
\end{equation}
Next observe that in the graph $C_5(\Lambda)$ only the vertices in $\Lambda$ have the property that none of their neighbors is of degree two. Similarly, in the graph $A(\Lambda)$ only the vertices in $\Lambda$ have the property that their degree is at least two and their neighbors are not of degree one. Consequently, if $\widehat{\varphi}$ is any automorphism of $C_5(\Lambda)$ (or $A(\Lambda)$), then its restriction to $V(\Lambda)$ is a member of $\textrm{Aut}(\Lambda)$. Formally speaking, if we exclude the case $\Lambda=\emptyset$ and the graph $C_5(\Lambda)$ with $|V(\Lambda)|=1$, then $\textrm{Aut}\big(C_5(\Lambda)\big)$ and $\textrm{Aut}\big(A(\Lambda)\big)$ are both isomorphic to the direct product $\textrm{Aut}(\Lambda)\times \mathbb{Z}_2$, via the map
$$\widehat{\varphi}\mapsto \left\{\begin{array}{ll}(\widehat{\varphi}|_{V(\Lambda)},0) & \textrm{if}\ \widehat{\varphi}\ \textrm{fixes}\ u,v,y,z,\\
(\widehat{\varphi}|_{V(\Lambda)},1) & \textrm{otherwise}.\end{array}\right.$$
If $\Lambda$ is self-complementary, the we deduce in the same way that any antimorphism $\sigma\in \overline{\textrm{Aut}(\Lambda)}$ can be extended to an antimorphism of $C_5(\Lambda)$ (or $A(\Lambda)$) in two ways $\widehat{\sigma}_1$ and $\widehat{\sigma}_2$, by
defining
\begin{equation}\label{e26}
\widehat{\sigma}_1(y)=v,\ \widehat{\sigma}_1(v)=z,\ \widehat{\sigma}_1(u)=y,\ \widehat{\sigma}_1(z)=u
\end{equation}
and
\begin{equation}\label{e27}
\widehat{\sigma}_2(y)=u,\ \widehat{\sigma}_2(v)=y,\ \widehat{\sigma}_2(u)=z,\ \widehat{\sigma}_2(z)=v.
\end{equation}
Conversely, if $\widehat{\sigma}$ is any antimorphism of $C_5(\Lambda)$ (or $A(\Lambda)$), then its restriction to $V(\Lambda)$ is a member of $\overline{\textrm{Aut}(\Lambda)}$. Finally, if $\Gamma=C_5(\Lambda)$, define the map $s_{C_5}: V(\Gamma\bar{\Gamma})\to V(\Gamma\bar{\Gamma})$ by
\begin{align}
\label{e19} s_{C_5}(g,i)&=(g,i)\qquad \big(g\in V(\Lambda), i\in\{1,2\}\big),\\
\nonumber s_{C_5}(y,1)&=(y,1),\ s_{C_5}(v,1)=(y,2),\ s_{C_5}(u,1)=(z,2),\ s_{C_5}(z,1)=(z,1),\\
\nonumber s_{C_5}(y,2)&=(v,1),\ s_{C_5}(v,2)=(u,2),\ s_{C_5}(u,2)=(v,2),\ s_{C_5}(z,2)=(u,1).
\end{align}
It easy to see that $s_{C_5}$ is in fact an automorphism of $\Gamma\bar{\Gamma}$ (cf. Figure~\ref{f3}). Similarly, if $\Gamma=A(\Lambda)$, then the map $s_A: V(\Gamma\bar{\Gamma})\to V(\Gamma\bar{\Gamma})$, defined by,
\begin{align}
\label{e20} s_A(g,i)&=(g,i)\qquad \big(g\in V(\Lambda), i\in\{1,2\}\big),\\
\nonumber s_A(y,1)&=(v,2),\ s_A(v,1)=(v,1),\ s_A(u,1)=(u,1),\ s_A(z,1)=(u,2),\\
\nonumber s_A(y,2)&=(z,2),\ s_A(v,2)=(y,1),\ s_A(u,2)=(z,1),\ s_A(z,2)=(y,2).
\end{align}
is an automorphism of $\Gamma\bar{\Gamma}$ (cf. Figure~\ref{f4}). Moreover, by ignoring \eqref{e19}-\eqref{e20} we get a well defined automorphism of $\Gamma\bar{\Gamma}$ also in the case $V(\Lambda)=\emptyset$. Observe also that the maps \eqref{e24}-\eqref{e25} and \eqref{e26}-\eqref{e27} are the only two automorphisms and antimorphisms, respectively, of the graph $C_5(\emptyset)\cong A(\emptyset)\cong P_4$.

\begin{prop}\label{p4}
Let $\Lambda$ be a (possibly null) graph and let $\Gamma=C_5(\Lambda)$.
\begin{enumerate}
\item If $|V(\Lambda)|=1$, i.e. $\Gamma\bar{\Gamma}$ is the Petersen graph, then the automorphism group $\textrm{Aut}(\Gamma\bar{\Gamma})$ is isomorphic to $S_5$, the symmetric group on five elements.
\item If $\Lambda$ is a (possibly null) self-complementary graph and $|V(\Lambda)|\neq 1$, then the automorphisms of $\Gamma\bar{\Gamma}$ are precisely the maps of the forms
\begin{align}
\label{e28} \Phi(x,i)&=(\widehat{\varphi}(x),i),\\
\nonumber \Phi(x,i)&=(\widehat{\sigma}(x),\bar{i}),\\
\label{e29} \Phi(x,i)&=s_{C_5}(\widehat{\varphi}(x),i),\\
\nonumber \Phi(x,i)&=s_{C_5}(\widehat{\sigma}(x),\bar{i}),
\end{align}
for all $x\in V(\Gamma)$ and $i\in \{1,2\}$, where $\bar{i}\in \{1,2\}\backslash\{i\}$, $\widehat{\varphi}\in \textrm{Aut}(\Gamma)$, $\widehat{\sigma}\in \overline{\textrm{Aut}(\Gamma)}$. The automorphism group $\textrm{Aut}(\Gamma\bar{\Gamma})$ is isomorphic to the semidirect product $$\big(\textrm{Aut}(\Gamma)\cup \overline{\textrm{Aut}(\Gamma)}\big)\rtimes \mathbb{Z}_2.$$
\item If $\Lambda$ is not self-complementary, then the automorphisms of $\Gamma\bar{\Gamma}$ are precisely the maps of the forms~\eqref{e28} and~\eqref{e29}. The automorphism group $\textrm{Aut}(\Gamma\bar{\Gamma})$ is isomorphic to the semidirect product $$\textrm{Aut}(\Gamma)\rtimes \mathbb{Z}_2.$$
\end{enumerate}
\end{prop}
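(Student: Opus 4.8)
The plan is to reduce the statement to a structural analysis of the graph $\Gamma\bar{\Gamma}$ built around a small, easily identifiable set of ``special'' vertices, and then to use the already-established description of $\textrm{Aut}(C_5(\Lambda))$ and $\overline{\textrm{Aut}(C_5(\Lambda))}$ together with the explicit automorphism $s_{C_5}$ to write down all of $\textrm{Aut}(\Gamma\bar{\Gamma})$.

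\medskip

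\textbf{Step 1: the Petersen case.} For part~(i) we have $|V(\Lambda)|=1$, so $\Gamma=C_5$ and $\Gamma\bar{\Gamma}$ is the Petersen graph; here $\textrm{Aut}(\Gamma\bar{\Gamma})\cong S_5$ is classical (cf.~\cite{godsil_knjiga}), so nothing new is needed. The real work is parts (ii) and (iii).

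\medskip

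\textbf{Step 2: a degree/neighbourhood invariant to pin down the special vertices.} Write $n=|V(\Gamma)|=|V(\Lambda)|+4$ (and handle $V(\Lambda)=\emptyset$, where $\Gamma\cong P_4$ and $\Gamma\bar{\Gamma}$ has $8$ vertices, either separately or as a limiting case of the same argument). In $\Gamma\bar{\Gamma}$ every vertex $(x,i)$ has degree $\delta_{\Gamma}(x)+1$ if $i=1$ and $\delta_{\bar\Gamma}(x)+1=n-\delta_{\Gamma}(x)$ if $i=2$; in particular $\delta_{\Gamma\bar\Gamma}(x,1)+\delta_{\Gamma\bar\Gamma}(x,2)=n+1$ for every $x$. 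The four vertices $y,v,u,z$ of $C_5(\Lambda)$ lying outside $\Lambda$ satisfy $\delta_{\Gamma}(v)=\delta_{\Gamma}(u)=|V(\Lambda)|+2$ and $\delta_{\Gamma}(y)=\delta_{\Gamma}(z)=2$, while every $g\in V(\Lambda)$ has $\delta_\Gamma(g)=\delta_\Lambda(g)+2$. Since $\Lambda$ is self-complementary we have $\delta_\Lambda(g)+\delta_{\bar\Lambda}(g)=|V(\Lambda)|-1$, so the degrees inside $\Lambda$ range over an interval that is \emph{disjoint} from $\{2,|V(\Lambda)|+2\}$ except possibly at the endpoints, and exactly at those endpoints one uses the second-neighbourhood condition already isolated in the excerpt: ``in $C_5(\Lambda)$ only the vertices of $\Lambda$ have no neighbour of degree two.'' The upshot, which I would prove as a lemma, is that any $\Phi\in\textrm{Aut}(\Gamma\bar\Gamma)$ must permute the $8$-element set $\{(y,i),(v,i),(u,i),(z,i):i\in\{1,2\}\}$ among itself, and hence also preserve $W_1(\text{over }\Lambda)\cup W_2(\text{over }\Lambda)$. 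This is the step I expect to be the main obstacle: one has to rule out, via the degree sum $n+1$ and the ``no degree-two neighbour'' property, that a vertex of $\Lambda\times\{1\}$ or $\Lambda\times\{2\}$ could be sent to one of the eight special vertices, and conversely; the bookkeeping is finite but must be done carefully for small $|V(\Lambda)|$, which is precisely why $|V(\Lambda)|=1$ is excluded.

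\medskip

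\textbf{Step 3: classify the action on the eight special vertices.} The subgraph of $\Gamma\bar{\Gamma}$ induced on these eight vertices is a fixed small graph (determined once and for all from the adjacencies of $y,v,u,z$ in $C_5$ and the matching edges). I would list its automorphisms; there will be only a few, and each of them either fixes $W_1,W_2$ or swaps them. Composing $\Phi$ with a suitable one of the four candidate maps in~\eqref{e28}--\eqref{e29} (that is, with $\widehat\varphi_1^{-1}$, with $s_{C_5}^{-1}$, with $s_{C_5}^{-1}\circ\widehat\varphi_1^{-1}$, or leaving $\Phi$ as is, and in the swapping case also precomposing with an extension of a fixed antimorphism $\widehat\sigma$) we may assume $\Phi$ fixes all eight special vertices pointwise and fixes each of $W_1,W_2$ setwise.

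\medskip

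\textbf{Step 4: an automorphism fixing all special vertices is determined by its restriction to $\Lambda$.} Once $\Phi$ fixes $(y,1),(v,1),(u,1),(z,1)$ and $(y,2),(v,2),(u,2),(z,2)$ and preserves $W_1$, its restriction $\Phi_1$ to $W_1$ is an automorphism of $\Gamma=C_5(\Lambda)$ fixing $y,v,u,z$; by the discussion in the excerpt (the $\widehat\varphi_1$ vs.\ $\widehat\varphi_2$ dichotomy) this forces $\Phi_1=\widehat\varphi_1$ for a unique $\varphi\in\textrm{Aut}(\Lambda)$. The matching edges $\{(g,1),(g,2)\}$ then force $\Phi(g,2)=(\varphi(g),2)$ for all $g\in V(\Lambda)$, so $\Phi=\widehat\varphi_1$ in the notation of~\eqref{e28}. (When $\Phi$ swaps $W_1$ and $W_2$, the identical argument inside $\bar\Gamma\cong C_5(\bar\Lambda)$ forces $\Phi$ to come from an antimorphism of $\Lambda$, which exists only when $\Lambda$ is self-complementary — this is exactly where (ii) and (iii) part ways.)

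\medskip

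\textbf{Step 5: assemble the group.} Putting Steps 3 and 4 together, every $\Phi\in\textrm{Aut}(\Gamma\bar\Gamma)$ is of one of the four forms~\eqref{e28}--\eqref{e29} (all four allowed when $\Lambda$ is self-complementary, only the two non-swapping ones when it is not), and conversely all such maps are automorphisms, as already checked in the excerpt for $s_{C_5}$ and as is immediate for the $\widehat\varphi_i,\widehat\sigma_i$ extensions. Finally I identify the group: the maps of type~\eqref{e28} form a subgroup isomorphic to $\textrm{Aut}(\Gamma)\cup\overline{\textrm{Aut}(\Gamma)}$ (resp.\ $\textrm{Aut}(\Gamma)$), which by the structure of $\textrm{Aut}(C_5(\Lambda))$ and $\overline{\textrm{Aut}(C_5(\Lambda))}$ recalled above is itself $\cong(\textrm{Aut}(\Lambda)\cup\overline{\textrm{Aut}(\Lambda)})\times\mathbb{Z}_2$ (resp.\ $\textrm{Aut}(\Lambda)\times\mathbb{Z}_2$), and $s_{C_5}$ generates a complementary $\mathbb{Z}_2$ that normalizes this subgroup, giving the semidirect product $(\textrm{Aut}(\Gamma)\cup\overline{\textrm{Aut}(\Gamma)})\rtimes\mathbb{Z}_2$ in case (ii) and $\textrm{Aut}(\Gamma)\rtimes\mathbb{Z}_2$ in case (iii). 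To make the semidirect-product claim precise I would check that $s_{C_5}^2$ lies in (indeed equals the identity of) the type-\eqref{e28} subgroup and compute the conjugation action of $s_{C_5}$ on a generic $\widehat\varphi_1$ and $\widehat\sigma_1$; this is a short direct verification from the explicit formulas~\eqref{e19}, \eqref{e24}--\eqref{e27}.
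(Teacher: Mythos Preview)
Your overall architecture matches the paper's proof closely: isolate the eight ``outer'' vertices by degree/neighbourhood invariants, determine the possible actions on them, and then read off the $\Lambda$-part via the matching edges. Two points deserve correction, one minor and one substantive.

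\medskip
\emph{Minor.} In Step~2 you have the degrees of $\{y,z\}$ and $\{v,u\}$ swapped. In $C_5(\Lambda)$ it is $y$ and $z$ that are joined to all of $\Lambda$, so $\delta_\Gamma(y)=\delta_\Gamma(z)=|V(\Lambda)|+1$ and $\delta_\Gamma(v)=\delta_\Gamma(u)=2$. In $\Gamma\bar\Gamma$ this yields the paper's partition $S_1=\{(y,1),(z,1),(v,2),(u,2)\}$ (degree $|V(\Lambda)|+2$) and $S_2=\{(y,2),(z,2),(v,1),(u,1)\}$ (degree $3$), which are distinct precisely when $|V(\Lambda)|\neq 1$. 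The ``no neighbour of degree three'' criterion then singles out the $\Lambda$-vertices in $\Gamma\bar\Gamma$ exactly as you intend.

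\medskip
\emph{Substantive.} Your Step~3 asserts that every automorphism of the induced $8$-vertex subgraph ``either fixes $W_1,W_2$ or swaps them.'' This is false: $s_{C_5}$ itself is a counterexample, since $s_{C_5}(v,1)=(y,2)$ while $s_{C_5}(y,1)=(y,1)$. The induced $8$-vertex graph alone does not see the $W_1/W_2$ split; what it \emph{does} see is the degree partition $S_1/S_2$, and indeed every automorphism of the induced graph preserves $\{S_1,S_2\}$. The paper exploits this: it shows, using a common-neighbour count when $|V(\Lambda)|\ge 2$ (and a distance argument when $\Lambda=\emptyset$), that within $S_1$ the only pairs with the required property are $\{(y,1),(z,1)\}$ and $\{(v,2),(u,2)\}$, giving the dichotomy \eqref{e21}/\eqref{e22}. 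From that dichotomy one gets either $\Phi(g,i)=(\varphi(g),i)$ or $\Phi(g,i)=(\sigma(g),\bar i)$ on the $\Lambda$-part, and then an explicit $8$-row table for the action on $S_1$ identifies $\Phi$ with one of the eight forms $\Phi_1,\dots,\Phi_8$ (your four types, each with the two extensions $\widehat\varphi_1,\widehat\varphi_2$ or $\widehat\sigma_1,\widehat\sigma_2$). If you replace your ``fix or swap $W_1,W_2$'' claim by ``preserve $S_1$ and $S_2$'' and then run the common-neighbour argument on $S_1$, your plan goes through and coincides with the paper's proof. The group identification in Step~5 is then exactly the paper's: the forms without $s_{C_5}$ give an index-two normal subgroup, and $\langle s_{C_5}\rangle\cong\mathbb{Z}_2$ is a complement.
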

\begin{proof}
Part (i) is well known and it can be found, for example, in \cite[Theorem~4.6]{petersen}.
\begin{figure}[h!]
\centering
\includegraphics[width=0.5\textwidth]{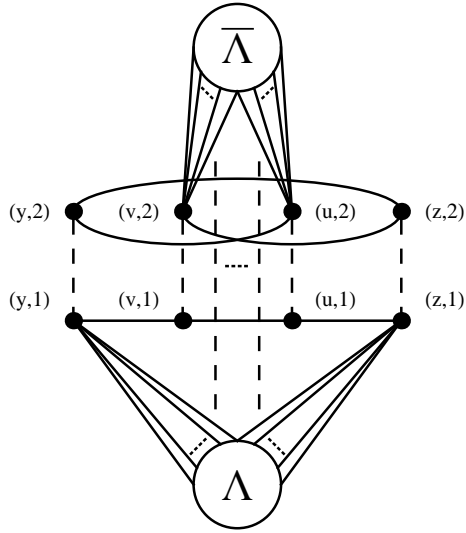}
\caption{Graph $\Gamma\bar{\Gamma}$ if $\Gamma=C_5(\Lambda)$.}
\label{f3}
\end{figure}
To prove (ii) and (iii) let $\Phi\in \textrm{Aut}(\Gamma\bar{\Gamma})$ be arbitrary. Observe that the degrees of vertices in the sets $S_1:=\{(y,1), (z,1), (v,2), (u,2)\}$ and $S_2:=\{(y,2), (z,2), (v,1), (u,1)\}$ all equal $|V(\Lambda)|+2$ and $3$, respectively, which are two different numbers, since $|V(\Lambda)|\neq 1$ (cf.~Figure~\ref{f3}). Further, only the vertices of the form $(g,i)$, with $g\in V(\Lambda)$ and $i\in\{1,2\}$, do not have neighbors of degree three. Hence, $\Phi(S_1)=S_1$, $\Phi(S_2)=S_2$, and
\begin{equation}\label{e23}
\Phi\Big(\big\{(g,i) : g\in V(\Lambda), i\in \{1,2\}\big\}\Big)=(\big\{(g,i) : g\in V(\Lambda), i\in \{1,2\}\big\}.
\end{equation}
Related to the set $S_1$ we next claim that either
\begin{equation}\label{e21}
\Phi\big(\{(y,1),(z,1)\}\big)=\{(y,1),(z,1)\}\quad \textrm{and}\quad \Phi\big(\{(v,2),(u,2)\}\big)=\{(v,2),(u,2)\}
\end{equation}
or
\begin{equation}\label{e22}
\Phi\big(\{(y,1),(z,1)\}\big)=\{(v,2),(u,2)\}\quad \textrm{and}\quad \Phi\big(\{(v,2),(u,2)\}\big)=\{(y,1),(z,1)\}.
\end{equation}
In fact, if $V(\Lambda)\geq 2$, then among the pairs of vertices in $S_1$ only the pairs $\{(y,1),(z,1)\}$ and $\{(v,2),(u,2)\}$ consist of two vertices with at least two common neighbors. If $V(\Lambda)=\emptyset$, then among the pairs of vertices in $S_1$, the property $d_{\Gamma\bar{\Gamma}}\big((y,1),(z,1)\big)=3=d_{\Gamma\bar{\Gamma}}\big((v,2),(u,2)\big)$  is unique to pairs $\{(y,1),(z,1)\}$ and $\{(v,2),(u,2)\}$.
If \eqref{e21} is true, then since vertices of the form $(g,1)$, with $g\in V(\Lambda)$, are adjacent to $(y,1)$ and $(z,1)$ but are nonadjacent to $(v,2)$ and $(u,2)$, it follows from \eqref{e23} that
$$\Phi\Big(\big\{(g,i) : g\in V(\Lambda) \big\}\Big)=\big\{(g,i) : g\in V(\Lambda)\big\}\qquad (i\in\{1,2\}).$$
Consequently, $\Phi(g,i)=(\varphi(g),i)$ for all $g\in V(\Lambda)$ and $i\in\{1,2\}$, where $\varphi$ is some automorphism of $\Lambda$. If \eqref{e22} is true, then we similarly deduce that
$$\Phi\Big(\big\{(g,i) : g\in V(\Lambda) \big\}\Big)=\big\{(g,\bar{i}) : g\in V(\Lambda)\big\}\qquad (i\in\{1,2\})$$
and consequently $\Phi(g,i)=(\sigma(g),\bar{i})$ for all $g\in V(\Lambda)$ and $i\in\{1,2\}$, where $\sigma$ is some antimorphism of $\Lambda$.
In particular, \eqref{e22} is possible only if $\Lambda$ is self-complementary. Finally, observe that $\Phi$ is fully determined by $\varphi$ or $\sigma$ and by its values on elements in the set $S_1$. Hence, by adopting the notation from \eqref{e24}-\eqref{e27}, we can describe $\Phi$ by the table
\begin{center}
\begin{tabular}{|c|c|c|c|c|}
\hline
$\Phi(y,1)$&$\Phi(z,1)$&$\Phi(v,2)$&$\Phi(u,2)$&$\Phi$\rule[-0.5ex]{0pt}{3ex}\\
\hline
\hline
$(y,1)$&$(z,1)$&$(v,2)$&$(u,2)$& $\Phi_1(x,i)=(\widehat{\varphi}_1(x),i)$\rule[-0.5ex]{0pt}{3ex}\\
\hline
$(y,1)$&$(z,1)$&$(u,2)$&$(v,2)$& $\Phi_2(x,i)=s_{C_5}(\widehat{\varphi}_1(x),i)$\rule[-0.5ex]{0pt}{3ex}\\
\hline
$(z,1)$&$(y,1)$&$(v,2)$&$(u,2)$& $\Phi_3(x,i)=s_{C_5}(\widehat{\varphi}_2(x),i)$\rule[-0.5ex]{0pt}{3ex}\\
\hline
$(z,1)$&$(y,1)$&$(u,2)$&$(v,2)$& $\Phi_4(x,i)=(\widehat{\varphi}_2(x),i)$\rule[-0.5ex]{0pt}{3ex}\\
\hline
$(v,2)$&$(u,2)$&$(y,1)$&$(z,1)$& $\Phi_5(x,i)=s_{C_5}(\widehat{\sigma}_2(x),\bar{i})$\rule[-0.5ex]{0pt}{3ex}\\
\hline
$(v,2)$&$(u,2)$&$(z,1)$&$(y,1)$& $\Phi_6(x,i)=(\widehat{\sigma}_1(x),\bar{i})$\rule[-0.5ex]{0pt}{3ex}\\
\hline
$(u,2)$&$(v,2)$&$(y,1)$&$(z,1)$& $\Phi_7(x,i)=(\widehat{\sigma}_2(x),\bar{i})$\rule[-0.5ex]{0pt}{3ex}\\
\hline
$(u,2)$&$(v,2)$&$(z,1)$&$(y,1)$& $\Phi_8(x,i)=s_{C_5}(\widehat{\sigma}_1(x),\bar{i})$\rule[-0.5ex]{0pt}{3ex}\\
\hline
\end{tabular}.
\end{center}
Conversely, all maps of the form $\Phi_1,\ldots,\Phi_8$ are really automorphism of $\Gamma\bar{\Gamma}$. Clearly,
maps $\Phi_5, \Phi_6, \Phi_7, \Phi_8$ are possible only if $\Lambda$ is self-complementary.

To determine the algebraic structure of $\textrm{Aut}(\Gamma\bar{\Gamma})$ assume firstly that $\Lambda$ is self-complementary.
Then the maps of the form $\Phi_1, \Phi_4, \Phi_6, \Phi_7$ form a subgroup $N$ in $\textrm{Aut}(\Gamma\bar{\Gamma})$, which is of index two, and therefore normal. Its elements either map $W_1$ to $W_1$ and $W_2$ to $W_2$ or map $W_1$ to $W_2$ and $W_2$ to $W_1$. On the other hand, the map $s_{C_5}$ does not have this property. Hence, the subgroup $H:=\{\textrm{identity map}, s_{C_5}\}$ intersects $N$ trivially. Therefore $\textrm{Aut}(\Gamma\bar{\Gamma})$ is isomorphic to the semidirect product $N\rtimes H$. Obviously, $H$ is isomorphic to $\mathbb{Z}_2$. The mapping that maps $\widehat{\varphi}\in \textrm{Aut}(\Gamma)$ into $(x,i)\mapsto (\widehat{\varphi}(x),i)$ and $\widehat{\sigma}\in \overline{\textrm{Aut}(\Gamma)}$ into $(x,i)\mapsto (\widehat{\sigma}(x),\bar{i})$ is an isomorphism between groups $\textrm{Aut}(\Gamma)\cup \overline{\textrm{Aut}(\Gamma)}$ and $N$. Hence, $\textrm{Aut}(\Gamma\bar{\Gamma})$ is isomorphic to $\big(\textrm{Aut}(\Gamma)\cup \overline{\textrm{Aut}(\Gamma)}\big)\rtimes \mathbb{Z}_2$, as claimed.

If $\Lambda$ is not self-complementary, then we determine $\textrm{Aut}(\Gamma\bar{\Gamma})$ in the same way. The only exception is that here $N$ consists of maps of the form $\Phi_1$ and $\Phi_4$ and the isomorphism between groups $\textrm{Aut}(\Gamma)$ and $N$ is given by the mapping that maps $\widehat{\varphi}\in \textrm{Aut}(\Gamma)$ into $(x,i)\mapsto (\widehat{\varphi}(x),i)$.
\end{proof}

\begin{prop}\label{p5}
Let $\Lambda$ be a (possibly null) graph and let $\Gamma=A(\Lambda)$.
\begin{enumerate}
\item If $\Lambda$ is a (possibly null) self-complementary graph, then the automorphisms of $\Gamma\bar{\Gamma}$ are precisely the maps of the forms
\begin{align}
\label{e30}\Phi(x,i)&=(\widehat{\varphi}(x),i),\\
\nonumber \Phi(x,i)&=(\widehat{\sigma}(x),\bar{i}),\\
\label{e31}\Phi(x,i)&=s_{A}(\widehat{\varphi}(x),i),\\
\nonumber \Phi(x,i)&=s_{A}(\widehat{\sigma}(x),\bar{i}),
\end{align}
for all $x\in V(\Gamma)$ and $i\in \{1,2\}$, where $\bar{i}\in \{1,2\}\backslash\{i\}$, $\widehat{\varphi}\in \textrm{Aut}(\Gamma)$, $\widehat{\sigma}\in \overline{\textrm{Aut}(\Gamma)}$. The automorphism group $\textrm{Aut}(\Gamma\bar{\Gamma})$ is isomorphic to $$\big(\textrm{Aut}(\Gamma)\cup \overline{\textrm{Aut}(\Gamma)}\big)\rtimes \mathbb{Z}_2.$$

\item If $\Lambda$ is not self-complementary, then the automorphisms of $\Gamma\bar{\Gamma}$ are precisely the maps of the forms~\eqref{e30} and~\eqref{e31}. The automorphism group $\textrm{Aut}(\Gamma\bar{\Gamma})$ is isomorphic to the semidirect product $$\textrm{Aut}(\Gamma)\rtimes \mathbb{Z}_2.$$
\end{enumerate}
\end{prop}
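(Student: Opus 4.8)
The plan is to follow the argument used for Proposition~\ref{p4}, which in the present case is slightly simpler because for $\Gamma=A(\Lambda)$ there is no exceptional graph playing the role of the Petersen graph. Keep the notation in which $y\sim v\sim u\sim z$ are the four vertices of $A(\Lambda)$ lying outside $\Lambda$, with $\delta(v)=\delta(u)=|V(\Lambda)|+2$ and $\delta(y)=\delta(z)=1$; write $W_1,W_2$ for the two copies of $V(\Gamma)$ in $\Gamma\bar\Gamma$ and fix an arbitrary $\Phi\in\textrm{Aut}(\Gamma\bar\Gamma)$. The first step is a degree count in $\Gamma\bar\Gamma$: the four vertices of $S_1:=\{(y,1),(z,1),(v,2),(u,2)\}$ all have degree $2$; the four vertices of $S_2:=\{(y,2),(z,2),(v,1),(u,1)\}$ all have degree $|V(\Lambda)|+3$; and each vertex $(g,i)$ with $g\in V(\Lambda)$ has degree in $\{3,\ldots,|V(\Lambda)|+2\}$. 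These three ranges are pairwise disjoint for every $|V(\Lambda)|\geq 0$ --- when $\Lambda$ is the null graph the third set is empty and the first two are separated by the degrees $2$ and $3$ --- so $\Phi$ maps each of $S_1$, $S_2$, and the set of $\Lambda$-vertices onto itself. This is exactly the point at which $A(\Lambda)$ is easier than $C_5(\Lambda)$, where the corresponding three degree ranges overlap.

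The second step determines how $\Phi$ acts on the $\Lambda$-vertices. A direct check yields three facts: the subgraph of $\Gamma\bar\Gamma$ induced on $S_2$ is the disjoint union of the two edges $\{(v,1),(u,1)\}$ and $\{(y,2),(z,2)\}$; the set of common neighbours of $(v,1)$ and $(u,1)$ is exactly $\{(g,1) : g\in V(\Lambda)\}$; and the set of common neighbours of $(y,2)$ and $(z,2)$ is exactly $\{(g,2) : g\in V(\Lambda)\}$. Since $\Phi$ permutes the two edges of $S_2$, it follows that $\Phi$ either maps $\{(g,1) : g\in V(\Lambda)\}$ and $\{(g,2) : g\in V(\Lambda)\}$ each onto itself --- and then, as $\Phi$ preserves the relation ``$(g,1)\sim(h,1)$ iff $g\sim_\Lambda h$'', its restriction to the $\Lambda$-vertices has the form $(g,i)\mapsto(\widehat{\varphi}(g),i)$ for a unique $\varphi\in\textrm{Aut}(\Lambda)$ --- or it interchanges these two sets, in which case the restriction is $(g,i)\mapsto(\widehat{\sigma}(g),\bar i)$ for some $\sigma\in\overline{\textrm{Aut}(\Lambda)}$, which forces $\Lambda$ to be self-complementary. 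Next, computing the neighbourhoods of the four vertices of $S_1$ (none of which is adjacent to a $\Lambda$-vertex, so the computation is independent of $\Lambda$) one finds that, among the three partitions of $S_1$ into two pairs, exactly one --- namely $\big\{\{(y,1),(z,1)\},\{(v,2),(u,2)\}\big\}$ --- has the property that neither of its two pairs has a common neighbour, whereas in the other two partitions each pair has a unique common neighbour; hence $\Phi$ carries this partition onto itself. Finally, each vertex of $S_2$ is the unique common neighbour of a prescribed pair of vertices of $S_1$, so $\Phi$ is completely determined by $\varphi$ (or $\sigma$) together with the four values $\Phi(y,1),\Phi(z,1),\Phi(v,2),\Phi(u,2)$.

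The third step is the resulting case analysis. Running through the (at most eight) partition-preserving permutations of $S_1$, and using the two extensions $\widehat{\varphi}_1,\widehat{\varphi}_2$ and $\widehat{\sigma}_1,\widehat{\sigma}_2$ from \eqref{e24}--\eqref{e27} together with the involutory automorphism $s_A$ from \eqref{e20}, one checks that every $\Phi$ is one of the maps $(x,i)\mapsto(\widehat{\varphi}(x),i)$, $(x,i)\mapsto(\widehat{\sigma}(x),\bar i)$, $(x,i)\mapsto s_A(\widehat{\varphi}(x),i)$, $(x,i)\mapsto s_A(\widehat{\sigma}(x),\bar i)$ of \eqref{e30}--\eqref{e31}; conversely each of these is visibly an automorphism of $\Gamma\bar\Gamma$, and those of the last two forms exist only if $\Lambda$ is self-complementary. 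For the algebraic structure one argues exactly as in Proposition~\ref{p4}: the automorphisms of the forms $(x,i)\mapsto(\widehat{\varphi}(x),i)$ and $(x,i)\mapsto(\widehat{\sigma}(x),\bar i)$ --- equivalently, those that send the partition $\{W_1,W_2\}$ to itself --- form a subgroup $N$ of index two, hence normal, with $N\cong\textrm{Aut}(\Gamma)\cup\overline{\textrm{Aut}(\Gamma)}$ when $\Lambda$ is self-complementary and $N\cong\textrm{Aut}(\Gamma)$ otherwise; since $\langle s_A\rangle\cong\mathbb{Z}_2$ meets $N$ trivially, $\textrm{Aut}(\Gamma\bar\Gamma)\cong N\rtimes\mathbb{Z}_2$, which yields the two stated descriptions.

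I expect the only real difficulty to be organizational rather than conceptual: the heart of the argument consists of the ``canonicity'' verifications --- that $S_2$ induces two disjoint edges with the stated common-neighbour sets, and that $\big\{\{(y,1),(z,1)\},\{(v,2),(u,2)\}\big\}$ is the unique pair-partition of $S_1$ with no common neighbour in either pair --- followed by the careful bookkeeping of the case table, just as in the proof of Proposition~\ref{p4}. The feature that makes $A(\Lambda)$ cleaner than $C_5(\Lambda)$ is that its three vertex orbits in $\Gamma\bar\Gamma$ are already separated by vertex degrees, and that the canonicity criteria do not depend on $|V(\Lambda)|$, so no small graph needs separate treatment.
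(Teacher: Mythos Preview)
Your proposal is correct and follows essentially the same route as the paper's proof, which is presented there only as a sketch by analogy with Proposition~\ref{p4}. The one noteworthy difference is in how you separate the three vertex classes: you observe that the degree ranges $\{2\}$, $\{|V(\Lambda)|+3\}$, and $\{3,\ldots,|V(\Lambda)|+2\}$ are pairwise disjoint for every $|V(\Lambda)|\ge 0$, whereas the paper uses the slightly more indirect criterion that only the $\Lambda$-vertices simultaneously have degree at least three and lack a neighbour of degree two. Your degree-only argument is a genuine (if small) simplification. The remaining organisational choice --- indexing the case table by the action on $S_1$ rather than on $S_2$ --- is equivalent to the paper's, since as you note each $S_2$-vertex is the unique common neighbour of a fixed pair from $S_1$.
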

The proof of Proposition~\ref{p5} is almost the same as the proof of Proposition~\ref{p4}. Therefore we sketch only the differences.

\begin{proof}[Sketch of the proof]
Here, the vertices in the sets $S_1=\{(y,1),(z,1),(v,2),(u,2)\}$ and $S_2=\{(y,2),(z,2),(v,1),(u,1)\}$ have degree $2$ and $|V(\Lambda)|+3$, respectively, which are different numbers (cf. Figure~\ref{f4}).
\begin{figure}[h!]
\centering
\includegraphics[width=0.5\textwidth]{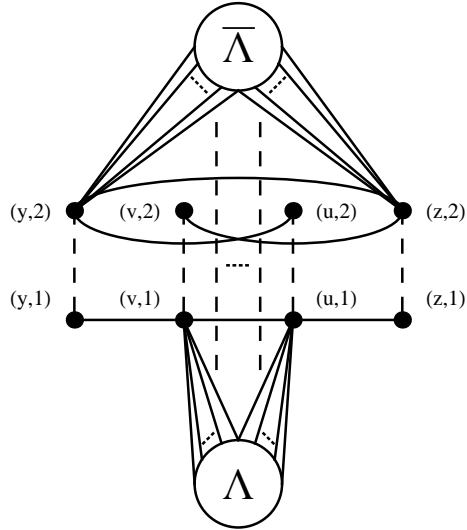}
\caption{Graph $\Gamma\bar{\Gamma}$ if $\Gamma=A(\Lambda)$.}
\label{f4}
\end{figure}
Only the vertices of the form $(g,i)$, with $g\in V(\Lambda)$ and $i\in\{1,2\}$, do not have neighbors of degree two and have degree at least three simultaneously. Hence, any $\Phi\in \textrm{Aut}(\Gamma\bar{\Gamma})$ fixes (setwise) the sets $S_1$, $S_2$, and $\{(g,i) : g\in V(\Lambda), i\in \{1,2\}\big\}$. Related to the set $S_2$ we deduce that either
\begin{equation}\label{e32}
\Phi\big(\{(y,2),(z,2)\}\big)=\{(y,2),(z,2)\}\quad \textrm{and}\quad \Phi\big(\{(v,1),(u,1)\}\big)=\{(v,1),(u,1)\}
\end{equation}
or
\begin{equation}\label{e33}
\Phi\big(\{(y,2),(z,2)\}\big)=\{(v,1),(u,1)\}\quad \textrm{and}\quad \Phi\big(\{(v,1),(u,1)\}\big)=\{(y,2),(z,2)\}.
\end{equation}
In fact, among the pairs of vertices in $S_2$ only the pairs $\{(y,2),(z,2)\}$ and $\{(v,1),(u,1)\}$ consist of two adjacent vertices. If \eqref{e32} is true, then we deduce that $\Phi(g,i)=(\varphi(g),i)$ for all $g\in V(\Lambda)$ and $i\in\{1,2\}$, where $\varphi\in \textrm{Aut}(\Lambda)$. Equality \eqref{e33} is possible only for $\Lambda$ self-complementary, in which case $\Phi(g,i)=(\sigma(g),\bar{i})$ for all $g\in V(\Lambda)$ and $i\in\{1,2\}$, where $\sigma\in \overline{\textrm{Aut}(\Lambda)}$. The automorphism~$\Phi$ is fully determined by $\varphi$ or $\sigma$ and by its values on elements in the set $S_2$ as described by the table
\begin{center}
\begin{tabular}{|c|c|c|c|c|}
\hline
$\Phi(y,2)$&$\Phi(z,2)$&$\Phi(v,1)$&$\Phi(u,1)$&$\Phi$\rule[-0.5ex]{0pt}{3ex}\\
\hline
\hline
$(y,2)$&$(z,2)$&$(v,1)$&$(u,1)$& $\Phi_1(x,i)=(\widehat{\varphi}_1(x),i)$\rule[-0.5ex]{0pt}{3ex}\\
\hline
$(y,2)$&$(z,2)$&$(u,1)$&$(v,1)$& $\Phi_2(x,i)=s_{A}(\widehat{\varphi}_2(x),i)$\rule[-0.5ex]{0pt}{3ex}\\
\hline
$(z,2)$&$(y,2)$&$(v,1)$&$(u,1)$& $\Phi_3(x,i)=s_{A}(\widehat{\varphi}_1(x),i)$\rule[-0.5ex]{0pt}{3ex}\\
\hline
$(z,2)$&$(y,2)$&$(u,1)$&$(v,1)$& $\Phi_4(x,i)=(\widehat{\varphi}_2(x),i)$\rule[-0.5ex]{0pt}{3ex}\\
\hline
$(v,1)$&$(u,1)$&$(y,2)$&$(z,2)$& $\Phi_5(x,i)=s_{A}(\widehat{\sigma}_1(x),\bar{i})$\rule[-0.5ex]{0pt}{3ex}\\
\hline
$(v,1)$&$(u,1)$&$(z,2)$&$(y,2)$& $\Phi_6(x,i)=(\widehat{\sigma}_1(x),\bar{i})$\rule[-0.5ex]{0pt}{3ex}\\
\hline
$(u,1)$&$(v,1)$&$(y,2)$&$(z,2)$& $\Phi_7(x,i)=(\widehat{\sigma}_2(x),\bar{i})$\rule[-0.5ex]{0pt}{3ex}\\
\hline
$(u,1)$&$(v,1)$&$(z,2)$&$(y,2)$& $\Phi_8(x,i)=s_{A}(\widehat{\sigma}_2(x),\bar{i})$\rule[-0.5ex]{0pt}{3ex}\\
\hline
\end{tabular}.
\end{center}
The rest is proved as in Proposition~\ref{p4}, with the exception that here $H=\{\textrm{identity map}, s_{A}\}$.
\end{proof}

To study the automorphism group of $\Gamma\bar{\Gamma}$, where $\Gamma$ is not isomorphic to graphs $C_5(\Lambda)$, $A(\Lambda)$, we need a simple but useful lemma.
\begin{lemma}\label{l4}
Let $\Gamma$ be any graph with $n$ vertices. Each triangle and each $4$-cycle in $\Gamma\bar{\Gamma}$ lies whole in the set $W_1$ or whole in the set $W_2$. Moreover,
\begin{equation}\label{e36}
\delta_{\Gamma\bar{\Gamma}}(v,1)+\delta_{\Gamma\bar{\Gamma}}(v,2)=n+1
\end{equation}
for all $v\in V(\Gamma)$.
\end{lemma}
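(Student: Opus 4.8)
The plan is to treat the two assertions separately, and in each case to exploit the fact that the only edges of $\Gamma\bar{\Gamma}$ joining $W_1$ and $W_2$ are the $n$ ``rungs'' $\{(v,1),(v,2)\}$, which form a perfect matching. First I would handle the claim about triangles and $4$-cycles. Suppose a triangle or a $4$-cycle $K$ in $\Gamma\bar{\Gamma}$ meets both $W_1$ and $W_2$. Since the edges between $W_1$ and $W_2$ form a matching, as we traverse the cycle $K$ the number of edges it uses between $W_1$ and $W_2$ is even and at least two; moreover two such matching edges used by $K$ cannot share an endpoint. For a triangle there is no room for two vertex-disjoint matching edges among three vertices, so a triangle cannot cross. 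For a $4$-cycle crossing the cut, it must use exactly two matching edges, say $\{(u,1),(u,2)\}$ and $\{(w,1),(w,2)\}$ with $u\neq w$; the remaining two edges of the $4$-cycle then join $(u,1)$ to $(w,1)$ and $(u,2)$ to $(w,2)$ (the only way to close up the cycle). But $(u,1)\sim(w,1)$ means $u\sim_\Gamma w$, while $(u,2)\sim(w,2)$ means $u\sim_{\bar\Gamma} w$, i.e. $u\nsim_\Gamma w$ — a contradiction. Hence every triangle and every $4$-cycle lies entirely in $W_1$ or entirely in $W_2$.

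Next I would prove the degree identity~\eqref{e36}. Fix $v\in V(\Gamma)$. In $\Gamma\bar{\Gamma}$ the neighbors of $(v,1)$ are exactly the vertices $(u,1)$ with $u\sim_\Gamma v$ together with the single rung-neighbor $(v,2)$; hence $\delta_{\Gamma\bar\Gamma}(v,1)=\delta_\Gamma(v)+1$. Symmetrically, $\delta_{\Gamma\bar\Gamma}(v,2)=\delta_{\bar\Gamma}(v)+1$. Since $\delta_\Gamma(v)+\delta_{\bar\Gamma}(v)=n-1$ for every vertex $v$ (each vertex is adjacent, in exactly one of $\Gamma$ and $\bar\Gamma$, to each of the other $n-1$ vertices), adding the two displayed equalities gives
$$\delta_{\Gamma\bar\Gamma}(v,1)+\delta_{\Gamma\bar\Gamma}(v,2)=\big(\delta_\Gamma(v)+1\big)+\big(\delta_{\bar\Gamma}(v)+1\big)=(n-1)+2=n+1,$$
which is~\eqref{e36}.

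I do not expect any genuine obstacle here: both parts are essentially bookkeeping about the bipartite-like cut between $W_1$ and $W_2$. The only point that needs a little care is the $4$-cycle case, where one must rule out a crossing $4$-cycle not by a counting argument alone but by observing that the forced pair of ``horizontal'' edges would have to be simultaneously an edge of $\Gamma$ and of $\bar\Gamma$ — that is the one place where the specific structure of the complementary prism (rather than just the matching) is used.
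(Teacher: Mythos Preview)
Your proof is correct and follows essentially the same approach as the paper: the paper also observes that each vertex in $W_1$ has precisely one neighbor in $W_2$ and vice versa, derives~\eqref{e36} from $\delta_\Gamma(v)+\delta_{\bar\Gamma}(v)=n-1$, and declares the triangle and $4$-cycle claim ``obvious.'' You simply spell out what the paper leaves implicit, including the key observation for the $4$-cycle case that a crossing $4$-cycle would force $u\sim_\Gamma w$ and $u\sim_{\bar\Gamma} w$ simultaneously.
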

\begin{proof}
Each vertex in $W_1$ has precisely one neighbor in $W_2$ and vice versa. Since $\delta_{\Gamma}(v)+\delta_{\bar{\Gamma}}(v)=n-1$ for all $v\in V(\Gamma)$, the equality~\eqref{e36} follows. The claim about the triangles and the 4-cycles is obvious.
\end{proof}

\begin{thm}\label{p3}
Let $\Gamma$ be a graph, which is not isomorphic to any of graphs $C_5(\Lambda)$ and $A(\Lambda)$, as $\Lambda$ ranges over all (possibly null) graphs.
\begin{enumerate}
\item If $\Phi$ is an automorphism of $\Gamma\bar{\Gamma}$ and $\Phi(u,i)=(w,j)$ for some distinct $i,j\in\{1,2\}$ and some $u,w\in V(\Gamma)$, then $\Phi(z,1)=(\sigma(z),2)$ and $\Phi(z,2)=(\sigma(z),1)$ for all $z\in V(\Gamma)$, where $\sigma$ is some antimorphism of~$\Gamma$.
\item If $\Phi$ is an automorphism of $\Gamma\bar{\Gamma}$ and $\Phi(u,i)=(w,i)$ for some $i\in\{1,2\}$ and some $u,w\in V(\Gamma)$, then $\Phi(z,j)=(\varphi(z),j)$ for all $z\in V(\Gamma)$ and all $j\in \{1,2\}$, where $\varphi$ is some automorphism of $\Gamma$.
\item The automorphism groups $\textrm{Aut}(\Gamma\bar{\Gamma})$ and $\textrm{Aut}(\Gamma)$ are isomorphic unless $\Gamma$ is self-complementary in which case $\textrm{Aut}(\Gamma\bar{\Gamma})$ is isomorphic to $\textrm{Aut}(\Gamma)\cup \overline{\textrm{Aut}(\Gamma)}$.
\end{enumerate}
\end{thm}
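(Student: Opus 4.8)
The strategy is to exploit the structural rigidity guaranteed by Lemma~\ref{l4}: in $\Gamma\bar{\Gamma}$ each vertex $(v,1)$ has exactly one neighbor in $W_2$ (namely $(v,2)$) and vice versa, and every triangle and $4$-cycle stays inside $W_1$ or inside $W_2$. The perfect matching $M=\{\{(v,1),(v,2)\}:v\in V(\Gamma)\}$ joining $W_1$ and $W_2$ is therefore combinatorially distinguished, and the heart of the proof is to show that any automorphism $\Phi$ must preserve $M$ (as a set of edges) and must either fix both $W_1,W_2$ setwise or swap them. Once that is established, parts (i) and (ii) follow almost immediately: if $\Phi$ swaps $W_1$ and $W_2$, writing $\Phi(z,1)=(\sigma(z),2)$ defines a bijection $\sigma:V(\Gamma)\to V(\Gamma)$; the edges of $\Gamma$ (the $W_1$-edges) go to edges of $\bar{\Gamma}$ (the $W_2$-edges) and conversely, so $\sigma$ is an antimorphism, and $\Phi$ respects $M$ forces $\Phi(z,2)=(\sigma(z),1)$. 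If instead $\Phi$ fixes $W_1$ and $W_2$ setwise, then $\Phi|_{W_1}$ induces an automorphism $\varphi$ of $\Gamma$, and $\Phi$-compatibility with $M$ gives $\Phi(z,j)=(\varphi(z),j)$ for both $j$. Part (iii) is then bookkeeping: the maps of the first type form a subgroup isomorphic to $\textrm{Aut}(\Gamma)$, which is all of $\textrm{Aut}(\Gamma\bar{\Gamma})$ unless $\Gamma$ is self-complementary, in which case the antimorphic maps double the group and $\textrm{Aut}(\Gamma\bar{\Gamma})\cong\textrm{Aut}(\Gamma)\cup\overline{\textrm{Aut}(\Gamma)}$ (note that, unlike in Propositions~\ref{p4} and~\ref{p5}, here there is no extra $\mathbb{Z}_2$ factor coming from a map like $s_{C_5}$ or $s_A$).

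The decisive — and most delicate — step is proving that $\Phi$ preserves the matching $M$ and the bipartition-like pair $\{W_1,W_2\}$. The natural route is to characterize the edges of $M$ intrinsically, i.e.\ in terms that any automorphism must respect. An edge $e=\{(v,1),(v,2)\}\in M$ is a \emph{bridge-like} or \emph{non-triangular} edge: by Lemma~\ref{l4} it lies in no triangle and (for $n\ge 3$) typically in no $4$-cycle either, whereas a generic edge of $W_1$ or $W_2$, once the graph is large enough, does lie in a triangle or a short cycle. I would make this precise by showing: (a) for an edge $\{(u,1),(v,1)\}$ of $W_1$, it lies in a triangle inside $W_1$ iff $u,v$ have a common neighbor in $\Gamma$, and it lies in a $4$-cycle inside $W_1$ iff there is a $u$--$v$ path of length $3$ in $\Gamma$ or $u,v$ have two common neighbors; (b) symmetric statements for $W_2$ via $\bar{\Gamma}$; and (c) a matching edge $\{(v,1),(v,2)\}$ lies in a $4$-cycle iff there exists $w\ne v$ with $w\sim_\Gamma v$ and $w\sim_{\bar\Gamma}v$, which is impossible, so matching edges lie in no $3$- or $4$-cycle. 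Thus, \emph{if} every edge of $W_1\cup W_2$ lies in a triangle or a $4$-cycle, then $M$ is exactly the set of edges lying in no short cycle, hence $\Phi(M)=M$, and since $\Gamma\bar{\Gamma}-M$ has exactly two connected components $W_1,W_2$ (each connected because $\Gamma$ or $\bar{\Gamma}$ is connected by Lemma~\ref{l3}, and the other component may be disconnected but still $\Phi$ permutes the components of $\Gamma\bar\Gamma - M$), $\Phi$ either fixes or swaps $\{W_1,W_2\}$.

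The main obstacle is therefore the genuinely exceptional small or degenerate cases where \emph{some} edge of $W_1$ or $W_2$ fails to lie in any triangle or $4$-cycle — i.e.\ where an edge $\{u,v\}$ of $\Gamma$ (or $\bar{\Gamma}$) has the property that $u,v$ have no common neighbor and are joined by no path of length $3$; such an edge is locally indistinguishable from a matching edge. Working out exactly which graphs $\Gamma$ produce such ambiguous edges is where the graphs $C_5(\Lambda)$ and $A(\Lambda)$ must appear: one should show that if $\Gamma$ is \emph{not} of the form $C_5(\Lambda)$ or $A(\Lambda)$, then in $\Gamma\bar{\Gamma}$ the matching $M$ is still the unique set of $n$ edges no two of which lie in a common triangle or $4$-cycle (or is otherwise canonically recoverable, perhaps after also using degrees via~\eqref{e36} to break remaining ties). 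This case analysis — identifying precisely the obstruction-graphs and verifying they are exactly $C_5(\Lambda)$, $A(\Lambda)$ — is the technical core; I expect it to proceed by assuming an ambiguous edge exists, analyzing the forced local structure of $\Gamma$ around the endpoints of the corresponding edge $\{u,v\}$ (no common neighbor, no $P_4$ through it, together with the complementary condition in $\bar{\Gamma}$), and deducing that $\Gamma$ collapses to one of the two listed families, contradicting the hypothesis. With the exceptional cases thus excluded, the matching and bipartition are pinned down, and the remainder of the proof is the short argument in the first paragraph above.
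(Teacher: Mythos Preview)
Your overall strategy --- show that every automorphism preserves the matching $M$ and hence fixes or swaps $\{W_1,W_2\}$, then read off $\varphi$ or $\sigma$ --- is the right framing and is how the paper proceeds.  The gap is in the mechanism you propose for pinning down $M$.  An intrinsic characterization of $M$ as ``the set of edges lying in no triangle or $4$-cycle'' (or the unique perfect matching among such edges) does not isolate the families $C_5(\Lambda)$, $A(\Lambda)$: the class of graphs $\Gamma$ having some edge in no triangle or $4$-cycle is vastly larger (any tree with $\ge 2$ edges, $P_5$, bipartite graphs of girth $\ge 6$, \dots).  In $P_5\bar{P_5}$, for instance, \emph{all four} $W_1$-edges are triangle/$4$-cycle-free, so your primary invariant fails; your fallbacks (``unique such perfect matching'', degrees) are left unargued, and there is no indication why a case analysis on triangle/$4$-cycle-free $\Gamma$-edges alone would terminate at exactly $C_5(\Lambda)$ and $A(\Lambda)$.

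The paper does not try to characterize $M$ independently of $\Phi$; instead it argues locally and uses $\Phi$ for extra leverage.  Assuming $\Phi(u,1)=(w,2)$, for each $\Gamma$-neighbor $z$ of $u$ one must rule out $\Phi(z,1)=(w,1)$.  Supposing this holds gives not only that $\{u,z\}$ is triangle/$4$-cycle-free in $\Gamma$, but also --- since $\Phi$ carries the matching edges at $u,z$ to edges --- that $\Phi(u,2)\in W_2$ and $\Phi(z,2)\in W_1$, hence $(u,2)$ and $(z,2)$ have at most one common neighbor, i.e.\ $|N_{\bar\Gamma}(u)\cap N_{\bar\Gamma}(z)|\le 1$.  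Combined with $N_{\bar\Gamma}(u)\cup N_{\bar\Gamma}(z)=V(\Gamma)\setminus\{u,z\}$ (from triangle-freeness) and the degree-sum identity~\eqref{e36}, this partitions $V(\Gamma)\setminus\{u,z\}$ into $B_u\sqcup B_z$ (possibly with one extra vertex $y$), and a five-case analysis on $(|B_u|,|B_z|)$ forces $\Gamma\cong C_5(\Lambda)$ or $A(\Lambda)$.  The constraint on $\bar\Gamma$ supplied by the automorphism is precisely what your purely intrinsic approach is missing, and is what cuts the exceptional class down to those two families.
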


\begin{proof} Let $n=|V(\Gamma)|$. If $n=1$ or $n=2$, then  $\Gamma\bar{\Gamma}$ is the complete graph on two vertices or the path on four vertices, respectively, and the claims are obviously true. Hence, in the sequel we may assume that $n\geq 3$.

(i) Let $\Phi, i,j,u,w$ be as in the claim. Since $\Phi$ is bijective, it follows that $\Phi(W_1)\nsubseteq W_1$ and $\Phi(W_2)\nsubseteq W_2$. Hence, we may assume without loss of generality that $i=1$ and $j=2$, i.e.
\begin{equation}\label{e11}
\Phi(u,1)=(w,2).
\end{equation}
Since the inverse of an antimorphism is an antimorphism, it is irrelevant if prove the conclusion for $\Phi$ or for $\Phi^{-1}$ instead. By Lemma~\ref{l3}, $\Gamma$ or $\bar{\Gamma}$ is connected. We may assume that $\Gamma$ is connected (otherwise we consider $\Phi^{-1}$ instead of $\Phi$). Let $z\in V(\Gamma)$ be any neighbor of $u$ in $\Gamma$. Since $(u,1)\sim (z,1)$, it follows that $(w,2)\sim \Phi(z,1)$. Consequently
\begin{equation}\label{e9}
\Phi(z,1)=(z',2)
\end{equation}
for some $z'\in V(\Gamma)$ or
\begin{equation}\label{e10}
\Phi(z,1)=(w,1).
\end{equation}
We will prove that \eqref{e10} always lead to a contradiction. This will essentially end the proof of part (i). In fact, once we prove \eqref{e9},
we can apply the connectedness of $\Gamma$ and repeat the same argument to deduce that $\Phi(z,1)=(\sigma(z),2)$ for all $z\in V(\Gamma)$, where $\sigma: V(\Gamma)\to V(\Gamma)$ is some map. Since $\Phi$ is bijective, $\sigma$ must be bijective and $\Phi(z,2)=(\sigma'(z),1)$ for all $z\in V(\Gamma)$, where $\sigma': V(\Gamma)\to V(\Gamma)$ is some bijection. Since $\Phi(z,1)\sim \Phi(z,2)$, we must have $\sigma(z)=\sigma'(z)$ for all $z\in V(\Gamma)$. Finally, since $\Phi$ is an automorphism, it follows that $\sigma$ must be an antimorphism. Hence it remains to disprove \eqref{e10}.

Suppose that \eqref{e10} is true. By Lemma~\ref{l4},  the edge $\{(w,1), (w,2)\}$ does not lie on any triangle and on any $4$-cycle in $\Gamma\bar{\Gamma}$. Hence, the same is true for $\{(u,1), (z,1)\}$, and consequently also for the edge $\{u,z\}$ in graph $\Gamma$. It follows from the triangle condition that
\begin{equation}\label{e52}
N_{\bar{\Gamma}}(u)\cup N_{\bar{\Gamma}}(z)=V(\Gamma)\backslash \{u,z\},
\end{equation}
while the sets
$$B_{u}:=N_{\Gamma}(u)\backslash\{z\}\quad \textrm{and}\quad B_{z}:=N_{\Gamma}(z)\backslash\{u\}$$
are disjoint. Since $(u,1)\sim (u,2)$ and $(z,1)\sim (z,2)$, Eq.~$\eqref{e11}$, $\eqref{e10}$ imply that
\begin{equation}\label{e34}
\Phi(u,2)\in W_2\quad \textrm{and}\quad \Phi(z,2)\in W_1.
\end{equation}
Hence, $\Phi(u,2)$ and $\Phi(z,2)$ have at most one neighbor in common. Consequently, the same is true for $(u,2)$ and $(z,2)$, i.e. $$|N_{\bar{\Gamma}}(u)\cap N_{\bar{\Gamma}}(z)|\leq 1.$$
From equations $N_{\Gamma}(u)\cup N_{\bar{\Gamma}}(u)=V(\Gamma)\backslash\{u\}$, $N_{\Gamma}(z)\cup N_{\bar{\Gamma}}(z)=V(\Gamma)\backslash\{z\}$, and~\eqref{e52} we deduce that either $N_{\bar{\Gamma}}(u)\cap N_{\bar{\Gamma}}(z)=\emptyset$, in which case
\begin{align}
\label{e13}B_{u}&=N_{\bar{\Gamma}}(z),\ B_{z}=N_{\bar{\Gamma}}(u),\\
\label{e12}\textrm{and}\ V(\Gamma)&\ \textrm{is partitioned by the sets}\ B_u, B_z, \{u,z\},
\end{align}
or
$N_{\bar{\Gamma}}(u)\cap N_{\bar{\Gamma}}(z)=\{y\}$ for some $y\in V(\Gamma)$, in which case
\begin{align}
\label{e15}B_{u}\cup \{y\}&=N_{\bar{\Gamma}}(z),\ B_{z}\cup\{y\}=N_{\bar{\Gamma}}(u),\\
\label{e14}\textrm{and}\ V(\Gamma)&\ \textrm{is partitioned by the sets}\ B_u, B_z, \{u,z\}, \{y\}.
\end{align}
If \eqref{e13}-\eqref{e12} is true, then
$$\delta(u,1)+\delta(z,1)=|B_u|+|B_z|+|\{u,z\}|+|\{(u,2),(z,2)\}|=n+2,$$
while
$$\delta\big(\Phi(u,1)\big)+\delta\big(\Phi(z,1)\big)=\delta(w,1)+\delta(w,2)=\delta_{\Gamma}(w)+\delta_{\bar{\Gamma}}(w)+2=n+1,$$
a contradiction, since $\Phi$ is an automorphism. Hence, we may assume \eqref{e15}-\eqref{e14}. Since the edge $\{u,z\}$ does not lie on any $4$-cycle in $\Gamma$, it follows that there are no edges in $\Gamma$ with one vertex in $B_u$ and the other vertex in $B_z$. In other words,
$(x_u,2)\sim (x_z,2)$ and consequently
\begin{equation}\label{e35}
\Phi(x_u,2)\sim \Phi(x_z,2)
\end{equation}
for all $x_u\in B_u$ and $x_z\in B_z$. Moreover, \eqref{e15} implies that $\Phi(x_u,2)\sim \Phi(z,2)$ and $\Phi(x_z,2)\sim \Phi(u,2)$ for all $x_u\in B_u$ and $x_z\in B_z$. Consequently, it follows from \eqref{e34} that $\Phi(x_u,2)\in W_1$ for all $x_u\in B_u$ with one possible exception, and  $\Phi(x_z,2)\in W_2$ for all $x_z\in B_z$ with one possible exception. Hence, if $|B_u|\geq 3$ and $|B_z|\geq 2$, or $|B_u|\geq 2$ and $|B_z|\geq 3$, we get a contradiction by~\eqref{e35}, since there is no vertex in $W_2$ that is adjacent to more than one vertex in $W_1$ or vice versa. The remaining possibilities are split into five cases. Regardless of the case, recall from \eqref{e15} and \eqref{e34} that $$W_2\ni \Phi(u,2)\sim \Phi(y,2)\sim \Phi(z,2)\in W_1.$$ Hence Lemma~\ref{l4} implies that either
\begin{equation}\label{e55}
n+1=\delta \big(\Phi(u,2)\big)+\delta\big(\Phi(y,2)\big)=\delta(u,2)+\delta(y,2)
\end{equation}
or
\begin{equation}\label{e56}
n+1=\delta \big(\Phi(y,2)\big)+\delta\big(\Phi(z,2)\big)=\delta(y,2)+\delta(z,2),
\end{equation}
depending on whether $\Phi(y,2)\in W_1$ or $\Phi(y,2)\in W_2$, respectively.

\begin{myenumerate}{Case}
\item Let $|B_u|=2=|B_z|$. If we denote $B_u=\{x_u,\dot{x}_u\}$ and $B_z=\{x_z,\dot{x}_z\}$, then $V(\Gamma)=\{z, u, y, x_u, \dot{x}_u, x_z, \dot{x}_z\}$. Recall from~\eqref{e15} that $\delta(u,2)=4=\delta(z,2)$. Consequently,
    \eqref{e55}-\eqref{e56} imply that $\delta(y,2)=4$. Besides $(u,2)$ and $(z,2)$, the vertex $(y,1)$ is another neighbor of $(y,2)$. It follows that precisely one vertex among $(x_u,2), (\dot{x}_u,2), (x_z,2), (\dot{x}_z,2)$ is a neighbor of $(y,2)$ too. Denote this vertex by $(a,2)$.
    Consequently, the vertex $\Phi(a,2)$ is unique among $$\Phi(x_u,2), \Phi(\dot{x}_u,2), \Phi(x_z,2), \Phi(\dot{x}_z,2)$$
    to be adjacent to $\Phi(y,2)$.

    \hspace{1em} If $\Phi(y,2)\in W_1$, then we claim that $a\in \{x_u,\dot{x}_u\}$. The opposite, i.e. $a\in \{x_z,\dot{x}_z\}$, and \eqref{e15} would imply that vertices $\Phi(y,2), \Phi(u,2), \Phi(a,2)$ form a triangle, which is partly in $W_1$ and partly in $W_2$, a contradiction by Lemma~\ref{l4}. Hence, $a\in \{x_u,\dot{x}_u\}$ and $\Phi(a,2),\Phi(y,2), \Phi(u,2), \Phi(x_z,2)$ form a 4-cycle by \eqref{e35}. Since $\Phi(y,2)\in W_1$ and $\Phi(u,2)\in W_2$, we get a contradiction by Lemma~\ref{l4}.

    \hspace{1em} Similarly, if $\Phi(y,2)\in W_2$, then $a\in \{x_z,\dot{x}_z\}$, since the opposite would imply that $\Phi(y,2), \Phi(z,2), \Phi(a,2)$ form a triangle, which is partly in $W_1$ and partly in $W_2$. Hence, $\Phi(a,2),\Phi(y,2), \Phi(z,2), \Phi(x_u,2)$ form a 4-cycle with $\Phi(y,2)\in W_2$ and $\Phi(z,2)\in W_1$, a contradiction.

\item\label{case2} Let $B_z=\emptyset$. Then $V(\Gamma)=\{u,z,y\}\cup B_u$, $|B_u|=n-3$,  and  \eqref{e15} implies that $N_{\bar{\Gamma}}(z)=B_{u}\cup \{y\}$, $N_{\bar{\Gamma}}(u)=\{y\}$. Let $b$ denote the number of edges between $y$ and $B_u$ in $\Gamma$. Then there are $|B_u|-b$ such edges in $\bar{\Gamma}$ (see Figure~\ref{f5}).
    \begin{figure}[h!]
\centering
\begin{tabular}{ccc}
\includegraphics[width=0.31\textwidth]{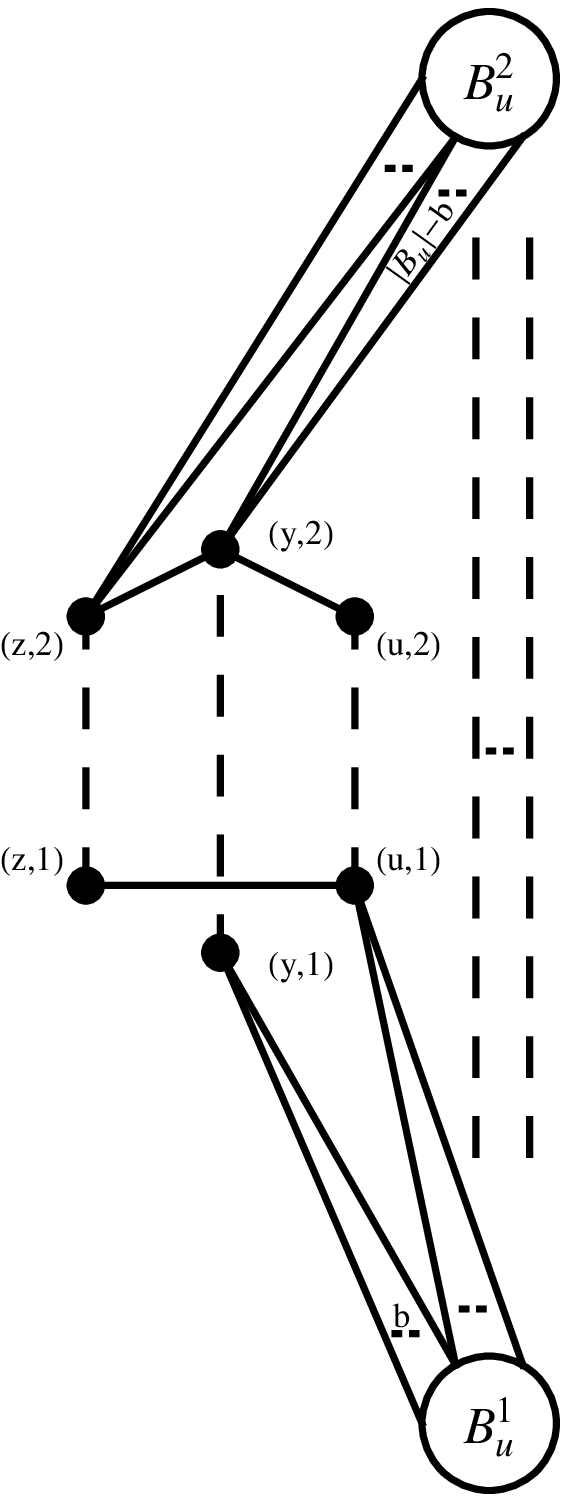}&\phantom{a}\qquad\qquad\qquad\phantom{a} &
\includegraphics[width=0.37\textwidth]{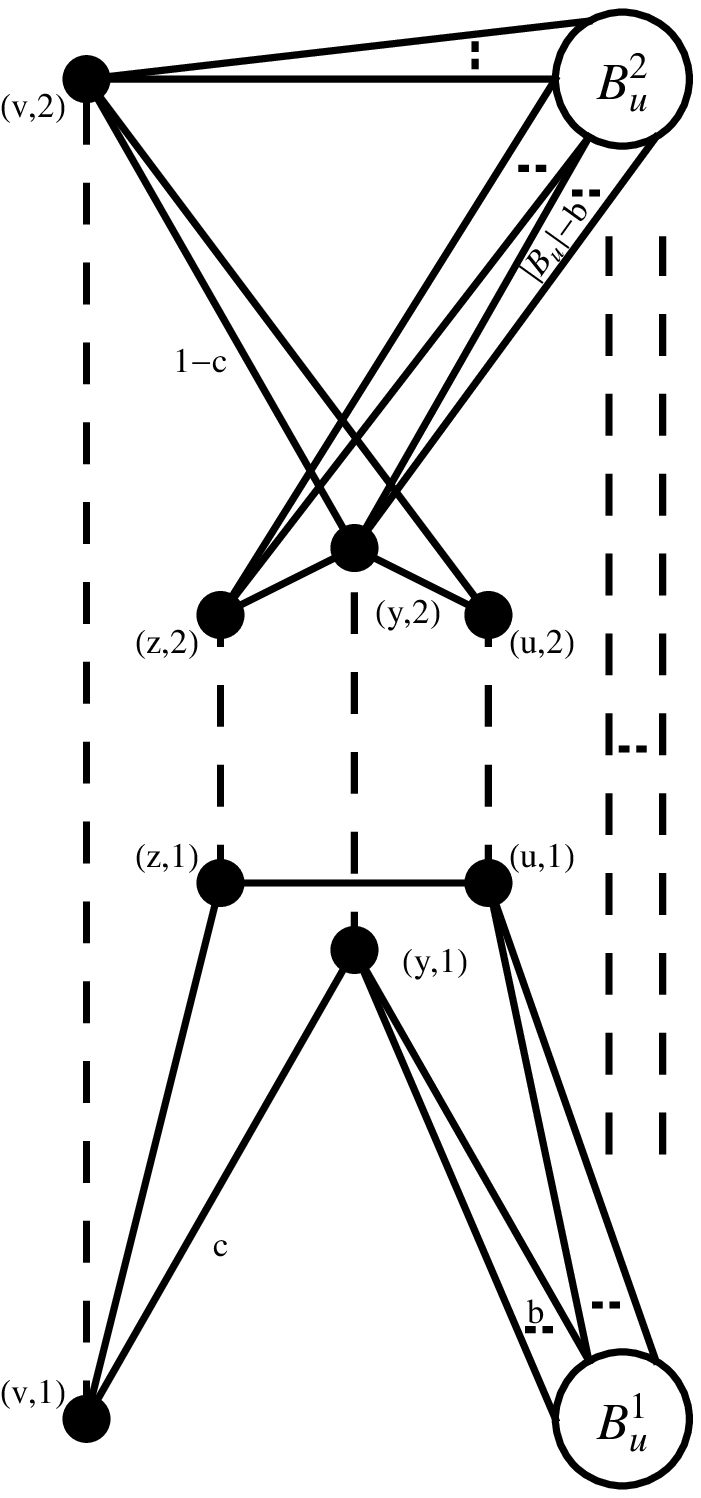}
\end{tabular}
\caption{Graphs from Cases~\ref{case2} and~\ref{case4} are on the left and on the right, respectively. Here, $B_u^{i}=\{(x,i) : x\in B_u\}$ for $i\in\{1,2\}$.}
\label{f5}
\end{figure}
Consequently, \eqref{e55}-\eqref{e56} imply that either
    \begin{equation}\label{e37}
    n+1=2+\big(3+(|B_u|-b)\big)=n+2-b
    \end{equation}
    or
    \begin{equation}\label{e38}
    n+1=\big(3+(|B_u|-b)\big)+(|B_u|+2)=2n-1-b,
    \end{equation}
    depending on whether $\Phi(y,2)\in W_1$ or $\Phi(y,2)\in W_2$, respectively. In case of \eqref{e38} we get $b=n-2>|B_{u}|$, a contradiction. Hence, \eqref{e37} is correct, which yields $b=1$, $\Phi(y,2)\in W_1$, $B_u\neq \emptyset$, and $n\geq 4$. We deduce also that
    $$\delta(y,2)=n-1=\delta(z,2)$$
    and consequently $\delta\big(\Phi(y,2)\big)=n-1=\delta\big(\Phi(z,2)\big)$. Since $\Phi(y,2)$, $\Phi(z,2)$ are both in $W_1$, while
    $$n-1>2=\delta(y,1)=\delta(z,1)\quad \textrm{and}\quad \delta(u,1)=2+|B_u|=n-1,$$
    we deduce that one vertex $(v,1)$ in $W_1$, with $v\in B_u$, must have degree $n-1$. Since $|B_u|=n-3$, this is possible only if, in graph $\Gamma$, vertex $v$ is adjacent to all other vertices in $B_u$, it is the (unique) vertex in $B_u$ adjacent to $y$, and as all vertices in $B_u$ it is adjacent to $u$. To summarize, vertices $y, v, u, z$, in this order, form a path, where $v$ and $u$ are both adjacent to all vertices in $B_u\backslash \{v\}$, while $y$ and $z$ have no neighbors in $B_u\backslash \{v\}$. Hence, $\Gamma$ is isomorphic to $A(\Lambda)$, where $\Lambda$ is induced by the set $B_u\backslash \{v\}$. This is a contradiction by the assumption.

\item Let $B_u=\emptyset$. Then $V(\Gamma)=\{u,z,y\}\cup B_z$, $|B_z|=n-3$,  and  \eqref{e15} implies that $N_{\bar{\Gamma}}(u)=B_{z}\cup \{y\}$, $N_{\bar{\Gamma}}(z)=\{y\}$. Let $b$ denote the number of edges between $y$ and $B_z$ in $\Gamma$. Then there are $|B_z|-b$ such edges in $\bar{\Gamma}$. Equations \eqref{e55}-\eqref{e56} imply that either
    \begin{equation}\label{e53}
    n+1=(|B_z|+2)+\big(3+(|B_z|-b)\big)=2n-1-b
    \end{equation}
    or
    \begin{equation}\label{e54}
    n+1=\big(3+(|B_z|-b)\big)+2=n+2-b,
    \end{equation}
    depending on whether $\Phi(y,2)\in W_1$ or $\Phi(y,2)\in W_2$, respectively. In case of \eqref{e53} we get $b=n-2>|B_{z}|$, a contradiction. Hence, \eqref{e54} is correct, which yields $b=1$ and
    \begin{equation}\label{e57}
    \Phi(y,2)\in W_2.
    \end{equation}
    In particular, $B_z\neq \emptyset$ and $n\geq 4$. Let $v$ be the unique vertex in $B_z$ that is a neighbor of $y$ in $\Gamma$. Then for each $x_z\in B_z\backslash\{v\}$ vertices $\Phi(x_z,2)$, $\Phi(y,2)$, $\Phi(u,2)$ form a triangle. Since $\Phi(u,2)\in W_2$ by \eqref{e34}, Lemma~\ref{l4} implies that
    \begin{equation}\label{e58}
    \Phi(x_z,2)\in W_2
    \end{equation}
    for all $x_z\in  B_z\backslash\{v\}$. The vertex $v$ can not be adjacent to all other vertices from the set $B_z$ in graph $\Gamma$, since this would imply that $\Gamma$ is isomorphic to the graph $A(\Lambda)$ with  $\Lambda$ being induced by the set $B_z\backslash \{v\}$. Hence, there is $x\in B_z$ such that $v\sim_{\bar{\Gamma}} x$. Consequently, vertices $\Phi(v,2)$, $\Phi(x,2)$, $\Phi(u,2)$ form a triangle. Since $\Phi(u,2)\in W_2$, Lemma~\ref{l4} implies that
    \begin{equation}\label{e59}
    \Phi(v,2)\in W_2.
    \end{equation}
    Recall that $\delta\big(\Phi(u,2)\big)=\delta(u,2)=n-1$. Hence, \eqref{e11} and \eqref{e57}-\eqref{e59} imply that all neighbors of $\Phi(u,2)$ are in $W_2$. This is a contradiction, since each vertex in~$W_2$ has one neighbor in $W_1$.

\item\label{case4} Let $|B_z|=1$ and $|B_u|\geq 1$.  Then $V(\Gamma)=\{u,z,y,v\}\cup B_u$, where $B_z=\{v\}$  and  \eqref{e15} implies that $N_{\bar{\Gamma}}(z)=B_{u}\cup \{y\}$, $N_{\bar{\Gamma}}(u)=\{y,v\}$. Observe also that $n\geq 5$. For graph $\Gamma$ let $b$ denote the number of edges between $y$ and $B_u$ and let $c$ be the number of edges between $y$ and $v$ in $\Gamma$. Then
    $$0\leq b\leq |B_u|=n-4,\qquad c\in\{0,1\},$$
    and the corresponding numbers of edges in the complement $\bar{\Gamma}$ are $|B_u|-b=n-4-b$ and $1-c$, respectively (see Figure~\ref{f5}). Eq.~\eqref{e55}-\eqref{e56} imply that either
    \begin{equation}\label{e39}
    n+1=3+\big(3+(|B_u|-b)+(1-c)\big)=n+3-b-c
    \end{equation}
    or
    \begin{equation}\label{e40}
    n+1=\big(3+(|B_u|-b)+(1-c)\big)+|B_u|+2=2n-2-b-c,
    \end{equation}
    depending on whether $\Phi(y,2)\in W_1$ or $\Phi(y,2)\in W_2$, respectively. In case of \eqref{e40}, we deduce that $b+c=n-3$, which is possible only if $b=n-4$ and $c=1$. In this case $\Gamma$ is isomorphic to the graph $C_5(\Lambda)$, where graph $\Lambda$ is induced by the set $B_u$. This is a contradiction by the assumption. Hence, we may assume \eqref{e39}, i.e. $b+c=2$ and $\Phi(y,2)\in W_1$. If $c=0$, then vertices $\Phi(u,2)$, $\Phi(y,2)$, $\Phi(v,2)$ form a triangle that is partly in $W_1$ and partly in $W_2$. Since this is not possible by Lemma~\ref{l4}, we have $c=1=b$. If $n=5$, $\Gamma$ is the pentagon, i.e. $C_5(\Lambda)$, where $\Lambda$ is the graph with one vertex. Since this is ruled out by the assumption, we may assume that $n\geq 6$. Consequently, $y$ has at least one neighbor $s\in B_u$ in $\bar{\Gamma}$ and vertices $\Phi(u,2), \Phi(v,2), \Phi(s,2), \Phi(y,2)$, in this order, form a 4-cycle in $\Gamma\bar{\Gamma}$. Since $\Phi(y,2)\in W_1$ and $\Phi(u,2)\in W_2$ we get in contradiction by Lemma~\ref{l4}.

\item Let $|B_u|=1$ and $|B_z|\geq 1$. Then $V(\Gamma)=\{u,z,y,v\}\cup B_z$, where $B_u=\{v\}$  and  \eqref{e15} implies that $N_{\bar{\Gamma}}(u)=B_{z}\cup \{y\}$, $N_{\bar{\Gamma}}(z)=\{y,v\}$. Moreover, $n\geq 5$. For graph $\Gamma$ let $b$ denote the number of edges between $y$ and $B_z$ and let~$c$ be the number of edges between $y$ and $v$ in $\Gamma$. Then
    $$0\leq b\leq |B_z|=n-4,\qquad c\in\{0,1\},$$
    and the corresponding numbers of edges in the complement $\bar{\Gamma}$ are $|B_z|-b$ and $1-c$, respectively. Equations \eqref{e55}-\eqref{e56} imply that either
    \begin{equation}\label{e60}
    n+1=|B_z|+2+\big(3+(|B_z|-b)+(1-c)\big)=2n-2-b-c
    \end{equation}
    or
    \begin{equation}\label{e61}
    n+1=\big(3+(|B_z|-b)+(1-c)\big)+3=n+3-b-c,
    \end{equation}
    depending on whether $\Phi(y,2)\in W_1$ or $\Phi(y,2)\in W_2$, respectively. In case of \eqref{e60}, we deduce that $b+c=n-3$, which is possible only if $b=n-4$ and $c=1$. In this case $\Gamma$ is isomorphic to graph $C_5(\Lambda)$, where graph $\Lambda$ is induced by the set $B_z$. This is a contradiction by the assumption. Hence, we may assume \eqref{e61}, i.e. $b+c=2$ and
    \begin{equation}\label{e62}
    \Phi(y,2)\in W_2.
    \end{equation}
    Consequently, either
    $$c=0,\ b=2\qquad \textrm{or}\qquad c=1=b.$$
    For each of the $|B_z|-b$ vertices $x_z\in B_z$ that are adjacent to $y$ in $\bar{\Gamma}$ we deduce that $\Phi(x_z,2)$, $\Phi(y,2)$, $\Phi(u,2)$ form a triangle. Consequently, Lemma~\ref{l4} and \eqref{e62} imply that
    \begin{equation}\label{e63}
    \Phi(x_z,2)\in W_2
    \end{equation}
    for any such $x_z$.

    \hspace{1em} Suppose that $c=0$ and $b=2$. Let $x,\dot{x}\in B_z$ be those vertices that are not adjacent to $y$ in $\bar{\Gamma}$. Then $\Phi(x,2)$, $\Phi(v,2)$, $\Phi(y,2)$, $\Phi(u,2)$ and $\Phi(\dot{x},2)$, $\Phi(v,2)$, $\Phi(y,2)$, $\Phi(u,2)$ are both $4$-cycles in $\Gamma\bar{\Gamma}$. Lemma~\ref{l4} and \eqref{e62} imply that
    \begin{equation}\label{e64}
    \Phi(x,2), \Phi(\dot{x},2)\in W_2.
    \end{equation}
    Recall that $\delta\big(\Phi(u,2)\big)=\delta(u,2)=|B_z|+2=n-2$. Hence, \eqref{e11} and \eqref{e62}-\eqref{e64} imply that all neighbors of $\Phi(u,2)$ are in $W_2$. This is a contradiction, since a vertex in $\Gamma\bar{\Gamma}$ can not have all neighbors in $W_2$.

    \hspace{1em} Let $c=1=b$. Let $x\in B_z$ be the vertex that is not adjacent to $y$ in~$\bar{\Gamma}$. If $x$ is adjacent to all vertices from the set $B_z\backslash \{x\}$ in graph $\Gamma$, then $\Gamma$ is isomorphic to the graph $C_5(\Lambda)$, where graph $\Lambda$ is induced by the set $B_z\backslash \{x\}$. This is a contradiction by the assumption. Hence, $x\sim_{\bar{\Gamma}} x'$ for some $x'\in B_z$. Consequently, $\Phi(x,2)$, $\Phi(x',2)$, $\Phi(u,2)$ form a triangle. Since $\Phi(u,2)\in W_2$ by \eqref{e34}, Lemma~\ref{l4} implies that $\Phi(x,2)\in W_2$. Together with \eqref{e11} and \eqref{e62}-\eqref{e63} we deduce that all neighbors of $\Phi(u,2)$ are in $W_2$, a contradiction as above.

    \hspace{1em} The proof of part (i) is completed.
\end{myenumerate}

(ii) Let $\Phi, i,v,w$ be as in the claim. We may assume that $i=1$, i.e.
$\Phi(v,1)=(w,1)$. By part (i) it follows that $\Phi(u,1)=(\varphi(u),1)$ for all $u\in V(\Gamma)$ for some map $\varphi:  V(\Gamma)\to V(\Gamma)$. Since $\Phi$ is bijective, it follows that $\varphi$ is bijective and $\Phi(u,2)=(\varphi'(u),2)$ for all $u\in V(\Gamma)$, where $\varphi': V(\Gamma)\to V(\Gamma)$ is some bijection. Since $\Phi(u,1)\sim \Phi(u,2)$, it follows that $\varphi(u)=\varphi'(u)$ for all $u\in V(\Gamma)$. Since $\Phi$ is an automorphism, it follows that $\varphi$ is an automorphism of $\Gamma$.

(iii) Consider the map $\mathfrak{I}: \textrm{Aut}(\Gamma)\to\textrm{Aut}(\Gamma\bar{\Gamma})$, defined by $\mathfrak{I}(\varphi)=\Phi_{\varphi}$ for all $\varphi\in \textrm{Aut}(\Gamma)$, where $\Phi_{\varphi}(u,i)=(\varphi(u),i)$ for all $u\in V(\Gamma)$ and $i\in\{1,2\}$. Clearly, $\mathfrak{I}$ is an injective group homomorphism. If $\Gamma$ is not self-complementary, then parts (i), (ii) imply that $\mathfrak{I}$ is also surjective, i.e. an isomorphism. If $\Gamma$ is self-complementary, then we can extend $\mathfrak{I}$ to an injective group homomorphism $\textrm{Aut}(\Gamma)\cup \overline{\textrm{Aut}(\Gamma)}\to \textrm{Aut}(\Gamma\bar{\Gamma})$, by defining $\mathfrak{I}(\sigma)=\Phi_{\sigma}$ for all $\sigma\in \overline{\textrm{Aut}(\Gamma)}$, where $\Phi_{\sigma}(u,i)=(\sigma(u),\bar{i})$ for all $u\in V(\Gamma)$ and $i\in\{1,2\}$. Here $\bar{i}\in \{1,2\}\backslash \{i\}$. By parts (i), (ii), the extended $\mathfrak{I}$ is surjective, and therefore it is a group isomorphism.
\end{proof}

Recall that $|\overline{\textrm{Aut}(\Gamma)}|=|\textrm{Aut}(\Gamma)|$ for any self-complementary graph $\Gamma$. Consequently, Propositions~\ref{p4} and \ref{p5}, Theorem~\ref{p3}, and the well known fact that the automorphism group of the pentagon is the dihedral group with 10 elements, while $|S_5|=120$, imply the following result.

\begin{cor}
Let $\Gamma$ be any graph. Then the number $$\frac{|\textrm{Aut}(\Gamma\bar{\Gamma})|}{|\textrm{Aut}(\Gamma)|}$$ equals:
\begin{enumerate}
\item 12 if $\Gamma$ is the pentagon;
\item 4 if $\Gamma$ is isomorphic to either $A(\Lambda)$ or $C_5(\Lambda)$, where $\Lambda$ is a (possibly null) self-complementary graph, and where we assume that $|V(\Lambda)|\neq 1$ in the $C_5(\Lambda)$ case;
\item 2 if $\Gamma$ is a self-complementary graph that is not included in (i) and (ii) or $\Gamma$ is isomorphic to either $A(\Lambda)$ or $C_5(\Lambda)$, where $\Lambda$ is not self-complementary;
\item 1 otherwise.
\end{enumerate}
\end{cor}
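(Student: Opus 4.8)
The plan is a direct case analysis: in each of the four cases one identifies which of the structure results already established --- Proposition~\ref{p4}, Proposition~\ref{p5}, or Theorem~\ref{p3}(iii) --- describes $\textrm{Aut}(\Gamma\bar{\Gamma})$, reads off its order, and divides by $|\textrm{Aut}(\Gamma)|$. The only two arithmetic facts used repeatedly are that a semidirect product $G\rtimes\mathbb{Z}_2$ has order $2|G|$, and that for a self-complementary graph $\Gamma$ one has $|\textrm{Aut}(\Gamma)\cup\overline{\textrm{Aut}(\Gamma)}|=2|\textrm{Aut}(\Gamma)|$, since $\textrm{Aut}(\Gamma)$ is a subgroup of index two there.

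First I would dispose of case (i): here $\Gamma=C_5=C_5(\Lambda)$ with $|V(\Lambda)|=1$, so $\Gamma\bar{\Gamma}$ is the Petersen graph, $\textrm{Aut}(\Gamma\bar{\Gamma})\cong S_5$ has order $120$ by Proposition~\ref{p4}(i), while $\textrm{Aut}(C_5)$ is the dihedral group of order $10$; this gives the ratio $12$. For case (ii), $\Gamma$ is $C_5(\Lambda)$ with $|V(\Lambda)|\neq 1$ or $A(\Lambda)$ for a self-complementary (possibly null) $\Lambda$; since $\overline{C_5(\Lambda)}\cong C_5(\bar{\Lambda})$ and $\overline{A(\Lambda)}\cong A(\bar{\Lambda})$, such a $\Gamma$ is itself self-complementary, and Proposition~\ref{p4}(ii), resp.\ Proposition~\ref{p5}(i), gives $\textrm{Aut}(\Gamma\bar{\Gamma})\cong\big(\textrm{Aut}(\Gamma)\cup\overline{\textrm{Aut}(\Gamma)}\big)\rtimes\mathbb{Z}_2$, whose order is $2\cdot 2|\textrm{Aut}(\Gamma)|=4|\textrm{Aut}(\Gamma)|$, so the ratio is $4$; the degenerate instance $C_5(\emptyset)\cong A(\emptyset)\cong P_4$ fits here as well.

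Case (iii) splits in two. If $\Gamma$ is self-complementary but is neither the pentagon nor isomorphic to any $C_5(\Lambda)$ or $A(\Lambda)$, then Theorem~\ref{p3}(iii) yields $\textrm{Aut}(\Gamma\bar{\Gamma})\cong\textrm{Aut}(\Gamma)\cup\overline{\textrm{Aut}(\Gamma)}$ of order $2|\textrm{Aut}(\Gamma)|$; if instead $\Gamma\cong C_5(\Lambda)$ or $A(\Lambda)$ with $\Lambda$ not self-complementary, then Proposition~\ref{p4}(iii), resp.\ Proposition~\ref{p5}(ii), gives $\textrm{Aut}(\Gamma\bar{\Gamma})\cong\textrm{Aut}(\Gamma)\rtimes\mathbb{Z}_2$, again of order $2|\textrm{Aut}(\Gamma)|$; in both sub-cases the ratio is $2$. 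Finally, in case (iv) $\Gamma$ is not self-complementary and not of the form $C_5(\Lambda)$ or $A(\Lambda)$, so Theorem~\ref{p3}(iii) gives $\textrm{Aut}(\Gamma\bar{\Gamma})\cong\textrm{Aut}(\Gamma)$ and the ratio is $1$.

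The one point that needs a short justification, and which I expect to be the only genuine obstacle, is that the four cases are mutually exclusive and exhaustive --- equivalently, that each case is phrased so that it always triggers the right structure theorem. The key observation is that $C_5(\Lambda)$ (resp.\ $A(\Lambda)$) is self-complementary if and only if $\Lambda$ is, and that $C_5(\Lambda)\cong A(\Lambda')$ forces $V(\Lambda)=\emptyset=V(\Lambda')$ (both noted at the start of Section~\ref{section-auto}). Hence a self-complementary graph isomorphic to some $C_5(\Lambda)$ or $A(\Lambda)$ has $\Lambda$ self-complementary and therefore lands in case (i) (if it is the pentagon) or case (ii); so case (iii) indeed captures precisely the remaining self-complementary graphs, for which Theorem~\ref{p3} applies. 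Conversely, a non-self-complementary graph isomorphic to some $C_5(\Lambda)$ or $A(\Lambda)$ must have $\Lambda$ not self-complementary, placing it in the second half of case (iii); all other graphs fall under Theorem~\ref{p3}(iii) with $\Gamma$ not self-complementary, i.e.\ case (iv). This bookkeeping, together with the order computations above, completes the proof.
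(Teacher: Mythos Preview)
Your proposal is correct and follows essentially the same approach as the paper: the paper's proof is simply the one-sentence observation that the result follows from Propositions~\ref{p4}, \ref{p5}, Theorem~\ref{p3}, the index-two fact $|\overline{\textrm{Aut}(\Gamma)}|=|\textrm{Aut}(\Gamma)|$, and $|S_5|/|\textrm{Aut}(C_5)|=120/10$. Your write-up is more careful than the paper's in spelling out the exclusivity and exhaustiveness of the four cases, but the underlying argument is the same.
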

\begin{remark}
Observe that the map
$\mathfrak{I}: \textrm{Aut}(\Gamma)\to\textrm{Aut}(\Gamma\bar{\Gamma})$, defined by $\mathfrak{I}(\varphi)=\Phi_{\varphi}$, where   $\Phi_{\varphi}(u,i)=(\varphi(u),i)$; $(u\in V(\Gamma), i\in\{1,2\})$ is an injective group homomorphism. In this sense $\textrm{Aut}(\Gamma)$ can be understood as a subgroup of $\textrm{Aut}(\Gamma\bar{\Gamma})$. It turns out that only in the case of the pentagon, it is not a normal subgroup. In all other cases $\textrm{Aut}(\Gamma\bar{\Gamma})/\mathfrak{I}(\textrm{Aut}(\Gamma))$ is a quotient group, which is trivial in case (iv) and isomorphic to $\mathbb{Z}_2$ in case (iii). In case (ii) it turns out that the quotient group is isomorphic to the Klein four-group $\mathbb{Z}_2\times \mathbb{Z}_2$.
\end{remark}

\begin{remark}
Observe that among the graphs $C_5(\Lambda)$ and $A(\Lambda)$ the pentagon is unique to be regular. Hence, the assumption in Theorem~\ref{p3} is valid for all other regular graphs.
\end{remark}

Clearly, if $\Gamma$ has $n$ vertices, then $\Gamma\bar{\Gamma}$ is regular if and only if $\Gamma$ is $(\frac{n-1}{2})$-regular (see also~\cite[Theorem~3.6]{carvalho2018}). In this case $\Gamma\bar{\Gamma}$ is $(\frac{n+1}{2})$-regular. Next we consider the vertex-transitive property.

\begin{cor}\label{c1}
Let $\Gamma$ be any graph. Then $\Gamma\bar{\Gamma}$ is vertex-transitive if and only if~$\Gamma$ is vertex-transitive and self-complementary.
\end{cor}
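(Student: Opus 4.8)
The plan is to prove the two implications separately, using only the explicit descriptions of $\textrm{Aut}(\Gamma\bar{\Gamma})$ already obtained, namely the ``induced'' automorphisms $\Phi_\varphi$ and $\Phi_\sigma$ for the backward direction and Theorem~\ref{p3} for the forward direction.

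\textbf{Sufficiency.} Suppose $\Gamma$ is vertex-transitive and self-complementary. For each $\varphi\in\textrm{Aut}(\Gamma)$ the map $\Phi_\varphi(u,i)=(\varphi(u),i)$ belongs to $\textrm{Aut}(\Gamma\bar{\Gamma})$, and for each antimorphism $\sigma$ of $\Gamma$ the map $\Phi_\sigma(u,i)=(\sigma(u),\bar{i})$ belongs to $\textrm{Aut}(\Gamma\bar{\Gamma})$ as well (both facts are recorded in the proof of Theorem~\ref{p3}(iii)). Given any $(u,1),(v,1)\in W_1$, vertex-transitivity of $\Gamma$ supplies $\varphi$ with $\varphi(u)=v$, so $\Phi_\varphi$ carries $(u,1)$ to $(v,1)$; the same argument shows $\textrm{Aut}(\Gamma\bar{\Gamma})$ acts transitively on $W_2$. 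Self-complementarity of $\Gamma$ provides an antimorphism $\sigma$, and $\Phi_\sigma$ maps the $W_1$-vertex $(v_1,1)$ to the $W_2$-vertex $(\sigma(v_1),2)$. Composing these automorphisms shows $\textrm{Aut}(\Gamma\bar{\Gamma})$ is transitive on $W_1\cup W_2=V(\Gamma\bar{\Gamma})$, i.e. $\Gamma\bar{\Gamma}$ is vertex-transitive.

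\textbf{Necessity.} Suppose $\Gamma\bar{\Gamma}$ is vertex-transitive, hence regular; by the equivalence stated just before the corollary this forces $\Gamma$ to be $(\frac{n-1}{2})$-regular, where $n=|V(\Gamma)|$. If $\Gamma$ is the pentagon we are done, since $C_5$ is both vertex-transitive and self-complementary. Otherwise, by the remark noting that the pentagon is the only regular graph among the $C_5(\Lambda)$ and $A(\Lambda)$, $\Gamma$ is not isomorphic to any of those graphs, so Theorem~\ref{p3} applies. I would first show $\Gamma$ must be self-complementary: if it were not, then by part~(i) no automorphism of $\Gamma\bar{\Gamma}$ could send a vertex of $W_1$ into $W_2$ (such a map would yield an antimorphism of $\Gamma$), so every automorphism of $\Gamma\bar{\Gamma}$ would preserve the nonempty sets $W_1$ and $W_2$ setwise and $\textrm{Aut}(\Gamma\bar{\Gamma})$ could not be transitive, a contradiction. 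Knowing $\Gamma$ is self-complementary, I would then show it is vertex-transitive: for $u,v\in V(\Gamma)$ pick $\Phi\in\textrm{Aut}(\Gamma\bar{\Gamma})$ with $\Phi(u,1)=(v,1)$; since this sends a $W_1$-vertex to a $W_1$-vertex, part~(ii) gives $\Phi(z,j)=(\varphi(z),j)$ for some $\varphi\in\textrm{Aut}(\Gamma)$, and in particular $\varphi(u)=v$.

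The only point that needs any care is the two exceptional families $C_5(\Lambda)$ and $A(\Lambda)$, on which Theorem~\ref{p3} says nothing; but since the pentagon is the unique regular graph among them, the regularity of $\Gamma$ forced by vertex-transitivity of $\Gamma\bar{\Gamma}$ collapses this into the single trivially checked case $\Gamma\cong C_5$. Everything else is routine bookkeeping with the already-established structure of $\textrm{Aut}(\Gamma\bar{\Gamma})$, so I do not expect a genuine obstacle.
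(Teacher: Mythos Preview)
Your proof is correct and follows essentially the same approach as the paper: both use the induced automorphisms $\Phi_\varphi$ and $\Phi_\sigma$ for sufficiency, and for necessity both deduce regularity to exclude the exceptional families $C_5(\Lambda)$, $A(\Lambda)$ (apart from the pentagon) and then invoke parts (i) and (ii) of Theorem~\ref{p3}. The only difference is cosmetic organization---you first derive self-complementarity by contradiction and then vertex-transitivity, whereas the paper picks two automorphisms $\Phi,\Psi$ at once and reads off both conclusions in parallel.
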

\begin{proof}
Clearly, the claim is true if $\Gamma$ is the pentagon, i.e. if $\Gamma\bar{\Gamma}$ is the Petersen graph. In the sequel we assume that  $\Gamma$ is not the pentagon.

Assume that $\Gamma$ is vertex-transitive and self-complementary. Pick any two vertices $(v,i)$ and $(w,j)$ in $\Gamma\bar{\Gamma}$, where $v,w\in V(\Gamma)$ and $i,j\in\{1,2\}$.

Suppose firstly that $i=j$. By the assumption there exists an automorphism $\varphi$ of $\Gamma$ such that $\varphi(v)=w$. Since $\varphi$ is also an automorphism of $\bar{\Gamma}$ it follows that the map $\Phi(u,t)=(\varphi(u),t)$, defined for all $u\in V(\Gamma)$ and $t\in\{1,2\}$, is an automorphism of $\Gamma\bar{\Gamma}$ that maps $(v,i)$ into $(w,j)$.

Suppose now that $i\neq j$. By the assumption there exist an antimorphism $\sigma : \Gamma\to \bar{\Gamma}$ and an
automorphism $\varphi$ of $\Gamma$ such that $\varphi(\sigma(v))=w$. Then the map $\Phi(u,t)=(\varphi(\sigma(u)),\bar{t})$, defined for all $u\in V(\Gamma)$ and $t\in\{1,2\}$, where $\bar{t}\in\{1,2\}\backslash\{t\}$, is an automorphism of $\Gamma\bar{\Gamma}$ that maps $(v,i)$ into $(w,j)$.

Conversely, assume that $\Gamma\bar{\Gamma}$ is vertex-transitive. In particular, it is regular, which implies that $\Gamma$ is regular. Hence, the condition in Theorem~\ref{p3} is satisfied. Let $v,w\in V(\Gamma)$ be arbitrary. By the assumption there exist $\Phi,\Psi\in \textrm{Aut}(\Gamma\bar{\Gamma})$ such that $\Phi(v,1)=(w,1)$ and $\Psi(v,1)=(w,2)$. By parts (ii) and (i) in Theorem~\ref{p3}, there are $\varphi\in \textrm{Aut}(\Gamma)$ and $\sigma\in \overline{\textrm{Aut}(\Gamma)}$ such that
$\Phi(u,t)=(\varphi(u),t)$ and $\Psi(u,t)=(\sigma(u),\bar{t})$ for all $u\in V(\Gamma)$ and $t\in \{1,2\}$.
In particular, $\Gamma$ is self-complementary and
$$(\varphi(v),1)=\Phi(v,1)=(w,1).$$
Therefore $\varphi(v)=w$ and $\Gamma$ is vertex-transitive.
\end{proof}

Recall that each Cayley graph is vertex-transitive, while the converse is not true. This is witnessed also by vertex-transitive complementary prisms as we show below. We remark that a corresponding result for a bipartite double cover, which is defined similarly as a complementary prism, is different~\cite{ademir}.

\begin{cor}\label{c2}
If $\Gamma$ is a graph with $|V(\Gamma)|>1$, then $\Gamma\bar{\Gamma}$ is not a Cayley graph.
\end{cor}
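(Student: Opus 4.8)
The plan is to argue by contradiction, combining Sabidussi's theorem (Lemma~\ref{sabid}) with the explicit description of $\textrm{Aut}(\Gamma\bar{\Gamma})$ from Theorem~\ref{p3}. Assume $\Gamma\bar{\Gamma}$ is a Cayley graph. Then $\Gamma\bar{\Gamma}$ is vertex-transitive, so by Corollary~\ref{c1} the graph $\Gamma$ is vertex-transitive and self-complementary; being self-complementary and regular it is $(\tfrac{n-1}{2})$-regular with $n:=|V(\Gamma)|\equiv 1\pmod 4$, in particular $n$ is odd. First I would dispose of the case $\Gamma\cong C_5$, where $\Gamma\bar{\Gamma}$ is the Petersen graph: a Cayley representation would, by Lemma~\ref{sabid}, require a transitive subgroup of order $10$ in $\textrm{Aut}(\Gamma\bar{\Gamma})\cong S_5$; since $S_5$ has no element of order $10$, such a subgroup is dihedral, generated by a $5$-cycle $c$ and an involution inverting $c$, and one checks (identifying the vertices with the $2$-subsets of a $5$-set) that the two $\langle c\rangle$-orbits are preserved by the whole subgroup, so it cannot be transitive --- a contradiction. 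Alternatively one may simply invoke the classical fact that the Petersen graph is not a Cayley graph.

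\textbf{The main argument.} So assume $\Gamma\not\cong C_5$. By the remark following Theorem~\ref{p3}, the pentagon is the only regular graph among the graphs $C_5(\Lambda)$ and $A(\Lambda)$, so Theorem~\ref{p3} applies to $\Gamma$. By Lemma~\ref{sabid} there is a subgroup $G\le\textrm{Aut}(\Gamma\bar{\Gamma})$ acting transitively on $V(\Gamma\bar{\Gamma})$ with $|G|=|V(\Gamma\bar{\Gamma})|=2n$. Parts (i) and (ii) of Theorem~\ref{p3} say that every automorphism of $\Gamma\bar{\Gamma}$ either fixes each of $W_1$ and $W_2$ setwise or interchanges them; hence, putting $G_1:=\{\Phi\in G:\Phi(W_1)=W_1\}$, the set $G_1$ is the kernel of the action of $G$ on $\{W_1,W_2\}$, and this action is transitive because $G$ is transitive on $V(\Gamma\bar{\Gamma})=W_1\cup W_2$. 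Therefore $[G:G_1]=2$ and $|G_1|=n$.

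\textbf{Where the parity enters.} Since $|G_1|=n$ is odd, $G_1$ has no element of order two, while $G$, of order $2n$, has such an element $t$ by Cauchy's theorem. Consequently $t\notin G_1$, i.e.\ $t$ interchanges $W_1$ and $W_2$, so by Theorem~\ref{p3}(i) we have $t(x,i)=(\sigma(x),\bar{i})$ for all $x\in V(\Gamma)$ and $i\in\{1,2\}$, where $\bar{i}$ is the element of $\{1,2\}\setminus\{i\}$ and $\sigma$ is some antimorphism of $\Gamma$. Composing $t$ with itself gives $t^2(x,i)=(\sigma^2(x),i)$, and $t^2$ being the identity forces $\sigma^2$ to be the identity, so the order of $\sigma$ divides $2$. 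But $\Gamma$ is self-complementary with $n>1$, so by Lemma~\ref{order_antim} the order of $\sigma$ is divisible by $4$ --- a contradiction. Hence $\Gamma\bar{\Gamma}$ is not a Cayley graph.

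\textbf{Main obstacle.} The only delicate points are the bookkeeping with the halves $W_1$ and $W_2$ --- one must be sure that every automorphism of $\Gamma\bar{\Gamma}$ respects the partition $\{W_1,W_2\}$, which is exactly what Theorem~\ref{p3}(i)--(ii) supplies and is precisely what fails for the pentagon (hence the separate treatment of that case) --- and the observation that the order-two element produced by Cauchy's theorem must lie outside $G_1$, which is where the oddness of $n$ is used. Everything else is routine finite group theory.
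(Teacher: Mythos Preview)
Your proof is correct and follows essentially the same route as the paper: contradiction via Sabidussi, Corollary~\ref{c1}, Theorem~\ref{p3}, and Lemma~\ref{order_antim}. The only cosmetic difference is how the involution swapping $W_1$ and $W_2$ is obtained: the paper picks any $\Psi\in G$ sending $(v_0,1)$ to $(v_0,2)$ and uses the regularity of the $G$-action (trivial point-stabilizer) to force $\Psi^2=\mathrm{id}$, whereas you invoke Cauchy's theorem and the oddness of $|G_1|=n$ to locate an involution directly --- both lead to the same contradiction with Lemma~\ref{order_antim}.
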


\begin{proof}
Suppose that  $\Gamma\bar{\Gamma}$ is a Cayley graph. By Lemma~\ref{sabid} there exists a subgroup $G$ in $\textrm{Aut}(\Gamma\bar{\Gamma})$ that acts transitively on $V(\Gamma\bar{\Gamma})$ and $|G|=|V(\Gamma\bar{\Gamma})|$. Since each Cayley graph is vertex-transitive, Corollary~\ref{c1} implies that $\Gamma$ is vertex-transitive and self-complementary. Since the Petersen graph is not a Cayley graph, it follows that $\Gamma$ is not the pentagon. Consequently, the assumptions of Theorem~\ref{p3} are satisfied. Pick an arbitrary vertex $v_0\in V(\Gamma)$. Since $G$ acts transitively on $V(\Gamma\bar{\Gamma})$, there exists $\Psi\in G$ such that $\Psi(v_0,1)=(v_0,2)$. By part~(i) of Theorem~\ref{p3} there exists $\sigma\in \overline{\textrm{Aut}(\Gamma)}$ such that
\begin{equation*}\label{e43}
\Psi(x,i)=(\sigma(x),\bar{i})
\end{equation*}
for all $x\in V(\Gamma)$ and $i\in\{1,2\}$. In particular, $\sigma(v_0)=v_0$ and $\Psi^2\in G$ fixes the vertex $(v_0,1)$. Since $G$ acts transitively on vertices and satisfies $|G|=|V(\Gamma\bar{\Gamma})|$, there are no other elements in $G$ that fix $(v_0,1)$. Hence, $\Psi^2$ is the identity map, and therefore $(v,i)=\Psi(\Psi(v,i))=(\sigma^2(v),i)$ for all $v\in V(\Gamma)$ and $i\in\{1,2\}$. It follows that $\sigma^2$ is the identity map on $V(\Gamma)$, which contradicts Lemma~\ref{order_antim}.
\end{proof}

Corollaries~\ref{c1} and~\ref{c2} combine two seemingly unrelated but rich research areas in graph theory: 1) non-Cayley vertex-transitive graphs and 2) vertex-transitive self-complementary graphs. The claim about being seemingly unrelated is backed by the fact that the first known construction of a non-Cayley vertex-transitive self-complementary graph appeared only in 2001 by Li and Praeger~\cite{praeger}.

The study of (general) non-Cayley vertex-transitive graphs has a longer tradition. Backed by the results in \cite{alspach1982,alspach1979,frucht,godsil1980,marusic1985,mckay}, in 1983 Maru\v{s}i\v{c}~\cite{marusic1983} proposed a problem to characterize all \emph{Cayley numbers}, i.e., positive integers $n$ such that each vertex-transitive graph on $n$ numbers is a Cayley graph. This problem fostered a lot of research (see for example \cite{pisanski,ted2008,tedpablo,AMC,istvan,klavdija2020,praeger1998,praegerJCTB,adam,klavdija2012,seress,scapellato,praeger1994,praeger1996,praeger1993JCTB,seress1998} and the references therein) and to the best of our knowledge the problem remains open in a few special cases.

The first results about the order of vertex-transitive self-complementary graphs dates back to Sachs~\cite{sachs} who was interested in the order of circulant graphs. Zelinka~\cite{zelinka} was probably the first to study the order of (general) vertex-transitive self-complementary graphs. Parallelly, Kotzig~\cite{kotzig} proposed a list of problems on regular self-complementary graphs, including a problem on the order. All mentioned motivated a lot of research related to vertex-transitive self-complementary graphs (see for example \cite{alspach1999,ted2004,rosa,jajcay,Li1997,Li2014,muzychuk,peisert,rao,suprunenko,zhang} and the references therein). In particular, Rao~\cite{rao} constructed a vertex-transitive self-complementary graph of any order $n$ with the prime decomposition of the form
$$n=p_1^{a_1}\cdots p_{s}^{a_s},$$
where $p_i^{a_i}\equiv 1 (\textrm{mod}~4)$ for all $1\leq i\leq s$. In 1999 Muzychuk~\cite{muzychuk} proved that there are no vertex-transitive self-complementary graphs with different orders (see also the survey~\cite{beezer}).

By Corollaries~\ref{c1} and~\ref{c2}, $\Gamma\bar{\Gamma}$ is a vertex-transitive non-Cayley graph whenever $\Gamma$ is a vertex-transitive self-complementary graph with more than one vertex. Hence, in this way we obtain vertex-transitive non-Cayley graphs of order
\begin{equation}\label{e45}
2p_1^{a_1}\cdots p_{s}^{a_s},\ \textrm{where}\ p_i^{a_i}\equiv 1 (\textrm{mod}~4)\ \textrm{for all}\ i,
\end{equation}
and not all $a_i$ are zero. Recall from Lemma~\ref{p6} that these graphs are connected. Actually, the diameter of any such graph is two, since the diameter of any vertex-transitive self-complementary graph is two as well (see for example~\cite{sc-survey}). One way to obtain such  graphs $\Gamma\bar{\Gamma}$ is by choosing $\Gamma$ to be a lexicographic product of graphs $\Gamma_1,\ldots,\Gamma_s$, where each $\Gamma_i$ is the Paley graph on $p_i^{a_i}$ vertices (see~\cite{rao,beezer}). Further, it is known from number theory~(see for example \cite[p.~142]{dudley}), that each positive integer is a sum of two squares unless the prime decomposition contains a prime congruent to $3$ $(\textrm{mod}~4)$ to an odd power. Since the sum of two squares is an odd number if and only if precisely one of the two squares is odd, we deduce that the values in~\eqref{e45} are the same as the values
\begin{equation}\label{e46}
2\Big((2i)^2+(2j+1)^2\Big)\quad \textrm{with}\quad  i,j\in \{0,1,2,\ldots\}\ \textrm{and}\ (i,j)\neq (0,0).
\end{equation}
Note that the values~\eqref{e45}-\eqref{e46} are not new non-Cayley numbers. In fact, observe that each number in \eqref{e45}-\eqref{e46} is either a multiple of
\begin{equation}\label{e47}
2p,\ \textrm{where}\  p\ \textrm{is a prime with}\ p\equiv 1\ (\textrm{mod}~4)
\end{equation}
or
\begin{equation}\label{e48}
2p^2\ \textrm{where}\  p\ \textrm{is a prime with}\ p\equiv 3\ (\textrm{mod}~4).
\end{equation}
An example of non-Cayley vertex-transitive graph of order~\eqref{e47} is the generalized Petersen graph $G(2p,i)$, where $i\in\{0,1,\ldots,p-1\}$ is the odd integer that is the square root of $-1$ in the field $\mathbb{Z}_p$. These graphs were studied in~\cite{frucht} (see also~\cite{alspach1979,marusic1983,nedela,lovrecic}). Non-Cayley vertex-transitive graphs of order~\eqref{e48} were included in~\cite[Theorem~2,(c)]{praeger1994}. By considering a disjoint union of multiple copies of such graphs one obtain a non-Cayley vertex-transitive graph of order~\eqref{e45}-\eqref{e46}. Clearly, such graph is disconnected and consequently different from a complementary prism $\Gamma\bar{\Gamma}$. The disjoint union can also be replaced by some other graph operations, which yield connected graphs. Roughly speaking, if $\Gamma'$ is a connected vertex-transitive non-Cayley graph, and $\Gamma''$ is a connected vertex-transitive graph, then the lexicographic products $\Gamma''[\Gamma']$ and $\Gamma''[\overline{\Gamma'}]$ are connected vertex-transitive graphs, which are often non-Cayley. For example, if $\Gamma'$ is a connected graph and $m\geq 2$ is an integer, then $K_m[\overline{\Gamma'}]$ is vertex-transitive and non-Cayley if and only if $\Gamma'$ is vertex-transitive and non-Cayley. Here we skip the details and the proofs of these facts, since we do not rely on them. The bottom line is that even these graph operations cannot produce vertex-transitive non-Cayley graphs of the form $\Gamma\bar{\Gamma}$, as it follows from Proposition~\ref{p7}.

\begin{prop}\label{p7}
Let $\Gamma$ be any graph. Then $\Gamma\bar{\Gamma}$ is not isomorphic to a lexicographic product $\Gamma_1[\Gamma_2]$, where both
$\Gamma_1$ and $\Gamma_2$ have at least two vertices.
\end{prop}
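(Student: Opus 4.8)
The plan is to argue by contradiction, assuming $\Gamma\bar{\Gamma}\cong\Gamma_1[\Gamma_2]$ with $|V(\Gamma_1)|,|V(\Gamma_2)|\geq 2$, and to exploit the rigid local structure of complementary prisms isolated in Lemma~\ref{l4}: every triangle and every $4$-cycle of $\Gamma\bar{\Gamma}$ lies entirely in $W_1$ or entirely in $W_2$. The key observation is that a lexicographic product $\Gamma_1[\Gamma_2]$ with $|V(\Gamma_2)|\geq 2$ is extremely rich in short cycles straddling the ``fibers'' $\{v_1\}\times V(\Gamma_2)$. First I would record the easy degenerate cases: if $\Gamma$ has at most $2$ vertices then $\Gamma\bar{\Gamma}$ is $K_2$ or $P_4$, neither of which decomposes as a nontrivial lexicographic product, so we may assume $n=|V(\Gamma)|\geq 3$ and $|V(\Gamma\bar{\Gamma})|=2n\geq 6$.

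The heart of the argument treats the two fibers $F:=\{u_1\}\times V(\Gamma_2)$ and $F':=\{v_1\}\times V(\Gamma_2)$ over an edge $u_1\sim_{\Gamma_1}v_1$ (such an edge exists unless $\Gamma_1$ has no edges; I address that case below). In $\Gamma_1[\Gamma_2]$ every vertex of $F$ is adjacent to every vertex of $F'$, so $F\cup F'$ contains a complete bipartite graph $K_{|V(\Gamma_2)|,|V(\Gamma_2)|}$, which for $|V(\Gamma_2)|\geq 2$ contains a $4$-cycle, and for $|V(\Gamma_2)|\geq 2$ with an edge inside $\Gamma_2$ or $\bar\Gamma_2$ also a triangle. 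I would use this to show that under the isomorphism the set $F\cup F'$ must be split by $\{W_1,W_2\}$ in a very constrained way; then, moving along a second edge $v_1\sim_{\Gamma_1} w_1$ (using that $\Gamma_1$, being a connected component carrier, has a connected component with at least one edge when it has edges) one shows the partition into $W_1,W_2$ would have to agree on overlapping fibers, forcing one fiber to meet both $W_1$ and $W_2$ while still spanning a $K_{2,2}$ crossing the partition — contradicting Lemma~\ref{l4}. A cleaner route I would try first: if $|V(\Gamma_2)|\geq 2$, each fiber $\{v_1\}\times V(\Gamma_2)$ together with a single outside neighbor (which exists because $\Gamma_1$ connected-ish and $\Gamma\bar{\Gamma}$ connected) yields, for any two fiber-vertices $a,b$ and outside vertex $c$, a path $a\sim c\sim b$ and, if $a\sim_{\Gamma_2}b$ or $a\sim_{\bar\Gamma_2}b$, a triangle $a,b,c$; comparing with Lemma~\ref{l4} forces every fiber to lie inside $W_1$ or inside $W_2$, and then two fibers over an edge of $\Gamma_1$ give a $K_{2,2}$ either inside one $W_i$ (impossible: $W_1,W_2$ induce $\Gamma,\bar\Gamma$ which are complementary, so one of them is triangle/$C_4$-poor in the relevant spot — more precisely count edges) or crossing (contradiction).

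The remaining case is $\Gamma_1$ edgeless, i.e. $\Gamma_1\cong\overline{K_m}$ with $m\geq 2$, so $\Gamma_1[\Gamma_2]$ is the disjoint union of $m$ copies of $\Gamma_2$; but $\Gamma\bar{\Gamma}$ is connected by Lemma~\ref{p6}, so $m=1$, contradicting $m\geq 2$. Dually, if $\Gamma_1\cong K_m$ then $\overline{\Gamma_1[\Gamma_2]}=\overline{K_m}[\bar\Gamma_2]$ is disconnected while $\overline{\Gamma\bar{\Gamma}}$ — one checks directly, or via Lemma~\ref{l3} applied to $\Gamma\bar{\Gamma}$ together with $\textrm{diam}(\Gamma\bar{\Gamma})\le 3$ — is connected, again a contradiction; so $\Gamma_1$ has an edge and a non-edge, which is exactly the generic situation handled above.

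The step I expect to be the main obstacle is making the ``every fiber lies in a single $W_i$, hence a contradiction'' argument fully rigorous rather than merely plausible: one must be careful that the outside neighbor $c$ of a fiber actually exists and produces genuine triangles/$4$-cycles (this needs $\Gamma\bar{\Gamma}$ connected and the fiber not to be all of $V(\Gamma\bar{\Gamma})$, both guaranteed since $2n\geq 6$ and $|V(\Gamma_2)|\le n$), and that the $K_{2,2}$ obtained from two adjacent fibers cannot be accommodated inside $\Gamma$ or inside $\bar\Gamma$ without violating Lemma~\ref{l4} — which is automatic because that $K_{2,2}$ is a $4$-cycle and Lemma~\ref{l4} already forbids a $4$-cycle meeting both $W_1$ and $W_2$, so the contradiction is immediate once the fibers are shown to be monochromatic with respect to $\{W_1,W_2\}$ but two adjacent monochromatic fibers of opposite colors are forced by the alternating structure. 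I would therefore organize the write-up as: (1) reduce to $n\ge 3$; (2) rule out $\Gamma_1$ complete or empty via connectivity of $\Gamma\bar{\Gamma}$ and of its complement; (3) show each fiber of $\Gamma_1[\Gamma_2]$ is contained in $W_1$ or in $W_2$, using Lemma~\ref{l4}; (4) derive a contradiction from two adjacent fibers plus Lemma~\ref{l4}.
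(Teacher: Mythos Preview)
Your overall plan is sound and arrives at essentially the same two-step structure as the paper (fibers are monochromatic with respect to $\{W_1,W_2\}$; then two adjacent fibers of opposite color yield a contradiction), but the paper's execution is shorter and avoids both Lemma~\ref{l4} and any case split on $\Gamma_1$. The paper simply observes that connectivity of $\Gamma\bar{\Gamma}$ forces $\Gamma_1$ to be connected, hence to have an edge. If a fiber $\varphi(\{u\}\times V(\Gamma_2))$ met both $W_1$ and $W_2$, say at $(w,1)$ and $(w',2)$, then every vertex of the adjacent fiber over $u'$ would be a common neighbor of $(w,1)$ and $(w',2)$; but any such pair has at most one common neighbor in $\Gamma\bar{\Gamma}$, contradicting $|V(\Gamma_2)|\ge 2$. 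Once fibers are monochromatic, surjectivity and connectedness of $\Gamma_1$ give adjacent fibers of opposite color, and the complete bipartite join between them contradicts the fact that each vertex of $W_1$ has exactly one neighbor in $W_2$.

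Your route via Lemma~\ref{l4}, using the $4$-cycle $a,c,b,d$ spanned by two vertices in each of two adjacent fibers, works equally well for both the monochromaticity step and the final contradiction, so it is a legitimate alternative. Two points to clean up. First, the separate treatment of $\Gamma_1\cong K_m$ is unnecessary (a complete graph on $m\ge 2$ vertices is connected and has an edge, so the main argument already applies), and your justification there is wrong: Lemma~\ref{l3} does \emph{not} show that $\overline{\Gamma\bar{\Gamma}}$ is connected; it only says that at least one of a graph and its complement is connected, and here $\Gamma\bar{\Gamma}$ already is, so nothing follows about the complement. Second, your triangle argument for monochromaticity (``if $a\sim_{\Gamma_2}b$ or $a\sim_{\bar\Gamma_2}b$'') is not right as stated: inside a fiber the induced graph is $\Gamma_2$, not $\bar\Gamma_2$, so you only get a triangle when $\Gamma_2$ has an edge. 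Use the $4$-cycle version (which you already invoke for step~(4)) for step~(3) as well; then no hypothesis on $\Gamma_2$ beyond $|V(\Gamma_2)|\ge 2$ is needed.
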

\begin{proof}
Suppose that $\varphi$ is an isomorphism from $\Gamma_1[\Gamma_2]$ onto $\Gamma\bar{\Gamma}$, where $|V(\Gamma_i)|=:n_i\geq 2$ for both $i\in \{1,2\}$. Since $\Gamma\bar{\Gamma}$ is connected by Lemma~\ref{p6}, we deduce that $\Gamma_1$ is connected. For each $u\in V(\Gamma_1)$ denote $V(\Gamma_2)_u=\{(u,v): v\in V(\Gamma_2)\}$ and observe that
\begin{equation}\label{e110}
|\varphi(V(\Gamma_2)_{u})|\geq 2\qquad (u\in V(\Gamma_1)).
\end{equation}
We claim that for each such  $u$ we have
\begin{equation}\label{e49}
\varphi(V(\Gamma_2)_u)\subseteq W_1=W_1(\Gamma\bar{\Gamma})\quad \textrm{or}\quad \varphi(V(\Gamma_2)_u)\subseteq W_2=W_2(\Gamma\bar{\Gamma}).
\end{equation}

Assume that this is not true, i.e., there is $u\in V(\Gamma_1)$ such that
\begin{equation}\label{e50}
\varphi(V(\Gamma_2)_u)\cap W_1\neq \emptyset\quad \textrm{and}\quad \varphi(V(\Gamma_2)_u)\cap W_2\neq \emptyset.
\end{equation}
Since $n_1\geq 2$, there exists $u'\in V(\Gamma_1)$ that is adjacent to $u$ in $\Gamma_1$. Then
\begin{equation}\label{e51}
\textrm{each vertex in}\ \varphi(V(\Gamma_2)_{u'})\ \textrm{is adjacent to each vertex in}\ \varphi(V(\Gamma_2)_u).
\end{equation}
By~\eqref{e50}, there exist $w,w'\in V(\Gamma)$ such that  $(w,1),(w',2)\in \varphi(V(\Gamma_2)_u)$. Since vertices of the form $(w,1),(w',2)$ have at most one common neighbor, we get in contradiction with~\eqref{e51} and~\eqref{e110}.

Hence, the claim~\eqref{e49} is correct. Since $\varphi$ is surjective and $\Gamma_1$ is connected, there are adjacent $u,u'\in V(\Gamma_1)$ such that $\varphi(V(\Gamma_2)_u)\subseteq W_1$ and $\varphi(V(\Gamma_2)_{u'})\subseteq W_2$. Then \eqref{e51} still holds, but because of~\eqref{e110} it contradicts the fact that a vertex in $W_1$ is adjacent to precisely one vertex in $W_2$ and vice versa.
\end{proof}

\section{Hamiltonian properties of $\Gamma\bar{\Gamma}$ and its Cheeger number}

In this short section we provide few observations regarding Hamiltonian paths/cycles and the Cheeger number in a complementary prism.
Hamiltonicity of $\Gamma\bar{\Gamma}$ from complexity point of view was studied in~\cite{cp11}. Here our main focus is on the case, where $\Gamma\bar{\Gamma}$ is vertex-transitive. Recall that Hamiltonian properties of vertex-transitive graphs were extensively studied in the past (see for example~\cite{lovasz-problem,bermond,marusic1983b,babai-handbook,draganklavdija2009} and the references therein), while $K_1\overline{K_1}$ and $C_5\overline{C_5}$ are among the five known vertex-transitive graphs without a Hamiltonian cycle.

\begin{prop}\label{p8}
If $\Gamma$ is a regular self-complementary graph, then $\Gamma\bar{\Gamma}$ has a Hamiltonian path.
\end{prop}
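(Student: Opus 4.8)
The plan is to build a Hamiltonian path of $\Gamma\bar\Gamma$ by taking a Hamiltonian path of $\Gamma$ living inside the copy $W_1$, a Hamiltonian path of $\bar\Gamma$ living inside the copy $W_2$, and gluing the two together along a single matching edge of $\Gamma\bar\Gamma$. Note that the Hamiltonian-connectedness results already available (Lemma~\ref{hamconnected}, and the upcoming Corollary~\ref{p9}) do not apply here, since they require the stronger hypotheses of vertex-transitivity or strong regularity together with $n>5$; here we only assume regularity, so we must argue directly but can afford to settle for a path rather than a cycle.

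First I would dispose of the degenerate case: if $n:=|V(\Gamma)|=1$, then $\Gamma\cong K_1$ and $\Gamma\bar\Gamma\cong K_2$, which trivially has a Hamiltonian path; so assume $n>1$. Next, since $\Gamma$ is self-complementary we have $\bar\Gamma\cong\Gamma$, hence $\bar\Gamma$ is again a regular self-complementary graph on $n>1$ vertices, and Lemma~\ref{regularscHamiltonskost} provides a Hamiltonian cycle in $\Gamma$ \emph{and} one in $\bar\Gamma$. Fix any vertex $x\in V(\Gamma)$. Deleting from the Hamiltonian cycle of $\Gamma$ one of the two edges at $x$ yields an ordering $(u_1,\dots,u_n)$ of $V(\Gamma)$ with $u_n=x$ and $u_i\sim_\Gamma u_{i+1}$ for $1\le i\le n-1$; similarly, deleting from the Hamiltonian cycle of $\bar\Gamma$ one of the two edges at $x$ yields an ordering $(w_1,\dots,w_n)$ of $V(\Gamma)$ with $w_1=x$ and $w_i\sim_{\bar\Gamma} w_{i+1}$ for $1\le i\le n-1$.

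Finally I would check that
$$(u_1,1),(u_2,1),\dots,(u_n,1),(w_1,2),(w_2,2),\dots,(w_n,2)$$
is a Hamiltonian path of $\Gamma\bar\Gamma$: consecutive pairs of the form $\{(u_i,1),(u_{i+1},1)\}$ are edges because $u_i\sim_\Gamma u_{i+1}$; the pair $\{(u_n,1),(w_1,2)\}=\{(x,1),(x,2)\}$ is a matching edge of $\Gamma\bar\Gamma$; consecutive pairs $\{(w_i,2),(w_{i+1},2)\}$ are edges because $w_i\sim_{\bar\Gamma} w_{i+1}$; and all $2n$ listed vertices are distinct, the first $n$ forming all of $W_1$ and the last $n$ forming all of $W_2$. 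There is no real obstacle beyond this bookkeeping; the only points needing care are the degenerate case $n=1$ (where the cited Hamiltonicity lemma does not apply) and ensuring the two lifted paths are oriented so that they meet exactly at the matching edge over the common chosen vertex $x$.
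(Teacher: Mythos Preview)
Your proof is correct and follows essentially the same approach as the paper: handle $n=1$ separately, then use Lemma~\ref{regularscHamiltonskost} to obtain Hamiltonian cycles in $\Gamma$ and $\bar\Gamma$, break each into a Hamiltonian path so that one ends and the other begins at a common vertex $x$, and splice them together via the matching edge $\{(x,1),(x,2)\}$. The only cosmetic difference is that the paper phrases the alignment as ``rotating'' the second cycle so that $w_1=v_n$, whereas you fix the pivot vertex $x$ in advance.
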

\begin{proof}
Let $n=|V(\Gamma)|$. If $n=1$, then  $\Gamma\bar{\Gamma}$ is the complete graph on two vertices and the claim is obviously true. Let $n>1$. By Lemma~\ref{regularscHamiltonskost} there is a Hamiltonian cycle $v_1,v_2,\ldots,v_n,v_1$ in $\Gamma$. Similarly, there is a Hamiltonian cycle $w_1,w_2,\ldots,w_n,w_1$ in $\bar{\Gamma}$. By rotating the vertices in the second cycle we can achieve that $w_1=v_n$. Then $$(v_1,1), \ldots, (v_n,1),(w_1,2),\ldots, (w_n,2)$$ is a Hamiltonian path in  $\Gamma\bar{\Gamma}$.
\end{proof}

We remark that it follows from \cite[Theorem~1.1]{kuhn} and Lemma~\ref{DAM} that graphs from Proposition~\ref{p8} have a Hamiltonian cycle if they have sufficiently many vertices. Next we consider Hamiltonian-connectivity.

\begin{lemma}\label{l7}
Let $\Gamma$ be a graph on $n>1$ vertices such that both $\Gamma$ and $\bar{\Gamma}$ are Hamiltonian-connected. Then $\Gamma\bar{\Gamma}$ is Hamiltonian-connected.
\end{lemma}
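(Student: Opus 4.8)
The plan is to fix the two prescribed endpoints, split into cases according to which of the sets $W_1,W_2$ each of them lies in, and in every case write down an explicit Hamiltonian path. Throughout I use that $W_1$ induces a copy of $\Gamma$ (under $(v,1)\leftrightarrow v$), that $W_2$ induces a copy of $\bar{\Gamma}$, and that the only edges between $W_1$ and $W_2$ are the matching edges $(v,1)\sim (v,2)$. As a preliminary remark, if $n=2$ then one of $\Gamma,\bar{\Gamma}$ is edgeless and hence not Hamiltonian-connected, contrary to the hypothesis; so I may assume $n\geq 3$. Now let $a,b$ be distinct vertices of $\Gamma\bar{\Gamma}$.

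If $a=(u,1)$ and $b=(w,1)$ both lie in $W_1$, then $u\neq w$, and I would take a Hamiltonian path $u=p_1,p_2,\dots,p_n=w$ of $\Gamma$ together with a Hamiltonian path $p_1=r_1,r_2,\dots,r_n=p_2$ of $\bar{\Gamma}$ (which exists since $p_1\neq p_2$ and $\bar{\Gamma}$ is Hamiltonian-connected), and claim that
$$(p_1,1),\,(r_1,2),\,(r_2,2),\dots,(r_n,2),\,(p_2,1),\,(p_3,1),\dots,(p_n,1)$$
is the desired Hamiltonian path: the matching edges $(p_1,1)(p_1,2)$ and $(p_2,2)(p_2,1)$ splice the three blocks, which jointly cover $W_2$ (through $r_1,\dots,r_n$) and $W_1$ (through $p_1,\dots,p_n$) each exactly once, while the endpoints are $a$ and $b$. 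The case $a,b\in W_2$ is symmetric, with $\Gamma$ and $\bar{\Gamma}$ interchanged.

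Finally, suppose $a=(u,1)\in W_1$ and $b=(w,2)\in W_2$ (the reverse case being symmetric). Since $n\geq 3$ I can pick a vertex $v\in V(\Gamma)\setminus\{u,w\}$; then with a Hamiltonian path $u=q_1,\dots,q_n=v$ of $\Gamma$ and a Hamiltonian path $v=s_1,\dots,s_n=w$ of $\bar{\Gamma}$,
$$(q_1,1),(q_2,1),\dots,(q_n,1),\,(s_1,2),(s_2,2),\dots,(s_n,2)$$
is a Hamiltonian path of $\Gamma\bar{\Gamma}$ from $a$ to $b$, glued along the single matching edge $(v,1)(v,2)$.

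I do not expect a serious obstacle. The one idea that matters is that the whole detour through the complementary side of the prism can be carried out in a single block --- in the $W_1$--$W_1$ case, spliced right after the first vertex of a Hamiltonian path of $\Gamma$ --- so that one never has to decompose $\Gamma$ or $\bar{\Gamma}$ into two vertex-disjoint paths with prescribed endpoints, which would be strictly stronger than Hamiltonian-connectedness. The only things to be careful about are the trivial case $n=2$ and checking that every sub-path invoked has its two endpoints distinct, which is exactly what forces the choice $v\notin\{u,w\}$ (and hence the use of $n\geq 3$) in the last case.
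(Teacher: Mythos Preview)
Your proof is correct and follows essentially the same approach as the paper's own proof: the same three cases, the same constructions (in the $W_1$--$W_1$ case, a Hamiltonian path of $\Gamma$ with a Hamiltonian path of $\bar{\Gamma}$ spliced between its first two vertices; in the mixed case, an intermediate vertex $v\notin\{u,w\}$ serving as the single crossing point), and the same handling of the degenerate case $n=2$.
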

\begin{proof}
From the assumptions it follows that $n>2$. Pick arbitrary vertices $(u,i)$ and $(v,j)$ in  $\Gamma\bar{\Gamma}$.

Suppose firstly that $i=1=j$. Pick a Hamiltonian path $$u=:u_1\sim_{\Gamma} u_2\sim_{\Gamma}\cdots \sim_{\Gamma} u_n:=v$$
in $\Gamma$ that joins vertices $u,v$, and a Hamiltonian path in $\bar{\Gamma}$ that joins vertices $u_1,u_2$, that is,
$$u_1=:w_1\sim_{\bar{\Gamma}} w_2\sim_{\bar{\Gamma}}\cdots \sim_{\bar{\Gamma}} w_n:=u_2.$$ Then
$$(u,1)=(u_1,1)\sim (w_1,2)\sim \cdots \sim (w_n,2)\sim (u_2,1)\sim \cdots \sim (u_n,1)=(v,1)$$
 is a Hamiltonian path in $\Gamma\bar{\Gamma}$ that start in $(u,i)$ and ends in $(v,j)$.

 If $i=2=j$, we obtain the required Hamiltonian path in a symmetrical way. Let $\{i,j\}=\{1,2\}$. We may assume that $i=1$ and $j=2$. Pick any vertex $z\in V(\Gamma)\backslash\{u,v\}$ and Hamiltonian paths of the form
 $$u=:z_1\sim_{\Gamma} z_2\sim_{\Gamma}\cdots \sim_{\Gamma} z_n:=z$$
 and
$$ z=:v_1\sim_{\bar{\Gamma}} v_2\sim_{\bar{\Gamma}}\cdots \sim_{\bar{\Gamma}} v_n:=v$$
in $\Gamma$ and $\bar{\Gamma}$, respectively. Then
$$(u,1)=(z_1,1)\sim \cdots \sim (z_n,1)\sim (v_1,2)\sim \cdots \sim (v_n,2)=(v,2)$$
 is a Hamiltonian path in $\Gamma\bar{\Gamma}$ that start in $(u,i)$ and ends in $(v,j)$.
\end{proof}

\begin{cor}\label{p9}
Let $\Gamma$ be a self-complementary graph on $n>5$ vertices, which is vertex-transitive or strongly regular. Then  $\Gamma\bar{\Gamma}$ is Hamiltonian-connected.
\end{cor}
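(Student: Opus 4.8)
The plan is to reduce everything to the two preceding lemmas, Lemma~\ref{hamconnected} and Lemma~\ref{l7}. The key observation is that self-complementarity is a symmetric property: if $\Gamma$ is self-complementary, then $\bar{\Gamma}\cong\Gamma$, so $\bar{\Gamma}$ is itself a self-complementary graph on the same number $n>5$ of vertices. Moreover $\textrm{Aut}(\bar{\Gamma})=\textrm{Aut}(\Gamma)$, so $\bar{\Gamma}$ is vertex-transitive whenever $\Gamma$ is, and the complement of a strongly regular graph is strongly regular, so $\bar{\Gamma}$ is strongly regular whenever $\Gamma$ is. Hence $\bar{\Gamma}$ satisfies exactly the same hypotheses as $\Gamma$.

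Next I would apply Lemma~\ref{hamconnected} twice, once to $\Gamma$ and once to $\bar{\Gamma}$, to conclude that both $\Gamma$ and $\bar{\Gamma}$ are Hamiltonian-connected. Since $n>5>1$, the hypotheses of Lemma~\ref{l7} are then met, and that lemma immediately delivers the desired conclusion that $\Gamma\bar{\Gamma}$ is Hamiltonian-connected.

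There is no genuine obstacle at the level of this corollary; all the substance sits in the two cited lemmas. Lemma~\ref{hamconnected} is where the real work happens — it runs the Chv\'{a}tal--Erd\H{o}s condition $\alpha(\Gamma)<\kappa(\Gamma)$, using the clique--coclique bound (via Lemmas~\ref{clique-coclique} and \ref{sr-1wr}) to get $\alpha(\Gamma)\leq\lfloor\sqrt{n}\rfloor$ together with $\kappa(\Gamma)\geq\frac{n+1}{4}$ from Lemma~\ref{DAM}, and disposes of the sporadic order $n=9$ by hand — while Lemma~\ref{l7} explicitly splices Hamiltonian paths of $\Gamma$ and of $\bar{\Gamma}$ through the matching edges of $\Gamma\bar{\Gamma}$. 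The only points worth a careful check are that the structural property genuinely passes to the complement (done above) and that the hypothesis $n>5$ is precisely what Lemma~\ref{hamconnected} demands rather than merely the weaker $n>1$ needed for Lemma~\ref{l7} — both of which are immediate.
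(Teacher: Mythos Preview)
Your proposal is correct and matches the paper's own proof, which simply states that the claim follows immediately from Lemmas~\ref{l7} and~\ref{hamconnected}. Your added remark that $\bar{\Gamma}\cong\Gamma$ inherits the same hypotheses is exactly the (trivial) observation implicit in the paper's one-line argument.
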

\begin{proof}
The claim follows immediately from Lemmas~\ref{l7} and~\ref{hamconnected}.
\end{proof}
Corollaries~\ref{p9} and~\ref{c1} imply that $\Gamma\bar{\Gamma}$ is Hamiltonian-connected whenever it is vertex-transitive and non-isomorphic to the Petersen graph and to $K_2$.

In Mohar's paper~\cite{mohar2} it is mentioned that the Cheeger number of the Petersen graph is 1. As we prove below, this is true for each complementary prism~$\Gamma\bar{\Gamma}$, where $\Gamma\ncong K_2, \overline{K_2}$ is regular. More generally, we have the following.
\begin{prop}\label{cheeger}
If $\Gamma$ is a graph on $n$ vertices, then $h(\Gamma\bar{\Gamma})=1$ unless either $\Gamma$ or $\bar{\Gamma}$ has an edge $\{u,v\}$ such that $\delta(u)=1$ and $\delta(v)=n-1$ in which case  $h(\Gamma\bar{\Gamma})=\frac{n-1}{n}$.
\end{prop}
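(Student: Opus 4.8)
The plan is to compute $h(\Gamma\bar\Gamma)$ directly from the definition. For an arbitrary partition $\{S,T\}$ of $V(\Gamma\bar\Gamma)$ I would record which copies of a vertex of $\Gamma$ lie in $S$: let $C$ be the set of $v$ with $(v,1),(v,2)\in S$, let $D$ be the set of $v$ with $(v,1),(v,2)\in T$, and let $A$ (resp.\ $B$) be the set of $v$ with only $(v,1)$ (resp.\ only $(v,2)$) in $S$. After possibly renaming $S,T$ we may assume $|C|\le|D|$, whence $|S|=|A|+|B|+2|C|\le|A|+|B|+2|D|=|T|$. Counting cut edges by type — the $W_1$-edges contribute $e_\Gamma(A\cup C,B\cup D)$, the $W_2$-edges contribute $e_{\bar\Gamma}(B\cup C,A\cup D)$, and exactly $|A|+|B|$ matching edges are cut, where $e_\Gamma(X,Y)$ denotes the number of edges of $\Gamma$ with one end in $X$ and one in $Y$ — and then using $e_\Gamma(X,Y)+e_{\bar\Gamma}(X,Y)=|X|\,|Y|$ for disjoint $X,Y$, I would obtain
$$e(S,T)=|A|+|B|+|A||B|+|C||D|+e_\Gamma(A,D)+e_\Gamma(C,B)+e_{\bar\Gamma}(B,D)+e_{\bar\Gamma}(C,A).$$
Taking $S=W_1$ (so $C=V(\Gamma)$, $A=B=D=\emptyset$) gives $e(W_1,W_2)=n$ and hence $h(\Gamma\bar\Gamma)\le 1$.

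Next I would determine exactly when a partition beats this bound. Since $|S|=|A|+|B|+2|C|$, the identity shows $e(S,T)<|S|$ is equivalent to $|A||B|+|C||D|+e_\Gamma(A,D)+e_\Gamma(C,B)+e_{\bar\Gamma}(B,D)+e_{\bar\Gamma}(C,A)<2|C|$. As all summands are nonnegative integers, the right side forces $|C|\ge 1$, and then $|C||D|<2|C|$ forces $|D|\le 1$; with $|C|\le|D|$ this gives $|C|=|D|=1$, so $|A|+|B|=n-2$ and $|S|=|T|=n$. Writing $C=\{c\}$, $D=\{d\}$, the inequality reads $|A||B|+1+(\text{four nonnegative integer terms})<2$, so every summand but the $1$ vanishes; in particular $A=\emptyset$ or $B=\emptyset$. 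In the case $B=\emptyset$ (the case $A=\emptyset$ is symmetric under $c\leftrightarrow d$) we have $A=V(\Gamma)\setminus\{c,d\}$, and the vanishing of $e_\Gamma(A,D)$ and $e_{\bar\Gamma}(C,A)$ says $N_\Gamma(d)\subseteq\{c\}$ and $N_\Gamma(c)\supseteq V(\Gamma)\setminus\{c,d\}$. Thus if $c\sim_\Gamma d$ then $\{c,d\}$ is an edge of $\Gamma$ with $\delta_\Gamma(d)=1$, $\delta_\Gamma(c)=n-1$, and if $c\not\sim_\Gamma d$ then $\{c,d\}$ is an edge of $\bar\Gamma$ with $\delta_{\bar\Gamma}(c)=1$, $\delta_{\bar\Gamma}(d)=n-1$. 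Moreover in this situation the identity gives $e(S,T)=(n-2)+1=n-1$, so the ratio is exactly $\tfrac{n-1}{n}$. Consequently every partition has ratio $\ge 1$ or ratio equal to $\tfrac{n-1}{n}$; hence $h(\Gamma\bar\Gamma)<1$ implies that $\Gamma$ or $\bar\Gamma$ carries an edge $\{u,v\}$ with $\delta(u)=1$, $\delta(v)=n-1$, and then $h(\Gamma\bar\Gamma)=\tfrac{n-1}{n}$.

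For the converse I would produce the witnessing partition. Since $\Gamma\bar\Gamma\cong\bar\Gamma\overline{\bar\Gamma}$ (swap the two sides of the prism), it suffices to assume $\Gamma$ has an edge $\{u,v\}$ with $\delta_\Gamma(u)=1$, $\delta_\Gamma(v)=n-1$. Put $S=\{(x,1):x\ne u\}\cup\{(v,2)\}$ and $T=V(\Gamma\bar\Gamma)\setminus S$, so $|S|=|T|=n$. Then $n-2$ matching edges are cut, the $W_1$-cut is $e_\Gamma(V(\Gamma)\setminus\{u\},\{u\})=\delta_\Gamma(u)=1$, and the $W_2$-cut is $e_{\bar\Gamma}(\{v\},V(\Gamma)\setminus\{v\})=\delta_{\bar\Gamma}(v)=0$, giving $e(S,T)=n-1$ and $h(\Gamma\bar\Gamma)\le\tfrac{n-1}{n}<1$. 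Combining the two halves yields the stated dichotomy. The only genuinely delicate part is the bookkeeping leading to the displayed identity and the extraction of the degree conditions from the vanishing of the four edge terms; the remainder is a short case split on $|C|,|D|$. I would also check that the degenerate cases $n\le 2$ (where the conditions $\delta(u)=1$ and $\delta(v)=n-1$ may coincide) are correctly covered — they are subsumed by the argument above, since for $n\le 2$ one of $\Gamma,\bar\Gamma$ always contains the required edge.
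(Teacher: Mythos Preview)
Your argument is correct and follows essentially the same route as the paper: both partition $V(\Gamma)$ according to which copies of each vertex lie in $S$ (your $A,B,C,D$ are the paper's $V_1\setminus V_2$, $V_2\setminus V_1$, $V_1\cap V_2$, $V(\Gamma)\setminus(V_1\cup V_2)$), use $e_\Gamma(X,Y)+e_{\bar\Gamma}(X,Y)=|X||Y|$, and reduce the search for a ratio below $1$ to the case $|C|=|D|=1$ with one of $A,B$ empty. Your exact identity for $e(S,T)$ is slightly cleaner than the paper's inequality~\eqref{e105}, but the analysis is otherwise the same.

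Two harmless slips: when you take $S=W_1$ you have $A=V(\Gamma)$, not $C=V(\Gamma)$ (the conclusion $e(W_1,W_2)=n$ is of course correct); and your closing remark that for $n\le 2$ one of $\Gamma,\bar\Gamma$ always contains the required edge fails for $n=1$, though your main argument already handles $n=1$ correctly (there $|C|=|D|=1$ is impossible, so no partition beats~$1$).
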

\begin{proof}
Let $\{S,T\}$ be any partition of $V(\Gamma\bar{\Gamma})$ such that $1\leq |S|\leq |T|$. In particular, $|S|\leq n$.

If we select $\{S,T\}=\{W_1,W_2\}$, we deduce that $\frac{e(S,T)}{|S|}=1$ and hence $h(\Gamma\bar{\Gamma})\leq 1$. Further, if there exists an edge $\{u,v\}\in E(\Gamma)$ such that $\delta_{\Gamma}(u)=1$ and $\delta_{\Gamma}(v)=n-1$, then the selection $S=\{(u,1)\}\cup \{(w,2) : w\in V(\Gamma)\backslash \{v\}\}$ and $T=S^c$ implies that $h(\Gamma\bar{\Gamma})\leq \frac{n-1}{n}$, since the edges in $\Gamma\bar{\Gamma}$ that contribute to $e(S,T)$ are precisely $\{(u,1),(v,1)\}$ and $\{(w,1),(w,2)\}$ with $w\in V(\Gamma)\backslash\{u,v\}$.
Symmetrically, if there exists an edge $\{u,v\}\in E(\bar{\Gamma})$ such that $\delta_{\bar{\Gamma}}(u)=1$ and $\delta_{\bar{\Gamma}}(v)=n-1$, then the inequality $h(\Gamma\bar{\Gamma})\leq \frac{n-1}{n}$ is deduced by selecting $S=\{(u,2)\}\cup \{(w,1) : w\in V(\Gamma)\backslash \{v\}\}$.

To deduce the reversed inequalities, write $S=S_1\cup S_2$, where $S_i=\{(v,i): v\in V_i\}\subseteq W_i$ and $V_i\subseteq V(\Gamma)$ for $i\in \{1,2\}$. Consider the partition $$\{V_1\backslash V_2, V_2\backslash V_1,V_1\cap V_2, V(\Gamma)\backslash (V_1\cup V_2)\}$$ of $V(\Gamma)$.
For each pair of distinct vertices $u,v\in V(\Gamma)$ we have either $u\sim_{\Gamma} v$ or $u\sim_{\bar{\Gamma}} v$. If we consider pairs, where
$u\in V_1\cap V_2$ and $v\in V(\Gamma)\backslash (V_1\cup V_2)$ or
$u\in V_1\backslash V_2$ and $v\in V_2\backslash V_1$, together with edges $\{(w,1),(w,2)\}$ with $w\in (V_1\backslash V_2)\cup (V_2\backslash V_1)$,
we deduce that
\begin{equation}\label{e105}
\frac{e(S,T)}{|S|}\geq \frac{|V_1\backslash V_2|+|V_2\backslash V_1|+|V_1\cap V_2|\cdot|V(\Gamma)\backslash (V_1\cup V_2)|+|V_1\backslash V_2|\cdot |V_2\backslash V_1|}{|V_1\backslash V_2|+|V_2\backslash V_1|+2 |V_1\cap V_2|}.
\end{equation}
Consequently,
$$\frac{e(S,T)}{|S|}\geq 1\ \textrm{if either}\ |V_1\cap V_2|=0\ \textrm{or}\ |V(\Gamma)\backslash (V_1\cup V_2)|\geq 2.$$

Suppose now that $|V_1\cap V_2|\geq 1$ and $|V(\Gamma)\backslash (V_1\cup V_2)|\in \{0,1\}$. Since $$n\geq |S|=|V_1\backslash V_2|+|V_2\backslash V_1|+2 |V_1\cap V_2|=n+|V_1\cap V_2|-|V(\Gamma)\backslash (V_1\cup V_2)|,$$
we deduce that $1\leq |V_1\cap V_2|\leq |V(\Gamma)\backslash (V_1\cup V_2)|\leq 1$, that is,
$$V_1\cap V_2=\{u\},\ V(\Gamma)\backslash (V_1\cup V_2)=\{v\},\ \textrm{and}\ |S|=n$$
for some $u,v\in V(\Gamma)$.
Moreover, the right-hand side of \eqref{e105} simplifies into
\begin{equation}\label{e106}
\frac{|V_1\backslash V_2|+|V_2\backslash V_1|+1+|V_1\backslash V_2|\cdot |V_2\backslash V_1|}{|V_1\backslash V_2|+|V_2\backslash V_1|+2},
\end{equation}
which is smaller than 1 if and only if
\begin{align}
\label{e108}|V_1\backslash V_2|=0\quad &\textrm{and}\quad |V_2\backslash V_1|=n-2\\
\nonumber &\textrm{or}\\
\label{e109}|V_2\backslash V_1|=0\quad &\textrm{and}\quad |V_1\backslash V_2|=n-2.
\end{align}
In these two cases \eqref{e106} equals $\frac{n-1}{n}$ and therefore $h(\Gamma\bar{\Gamma})\geq \frac{n-1}{n}$. Furthermore, since $|S|=n$, we have
$\frac{e(S,T)}{|S|}<1$ if and only if
\begin{equation}\label{e107}
e(S,T)=n-1.
\end{equation}
In the case of~\eqref{e108}, the equation~\eqref{e107} is equivalent to relations $u\nsim_{\Gamma} w$ and $v\nsim_{\bar{\Gamma}} w$ for all $w\in V_2\backslash V_1$. Since $u\sim_{\Gamma} v$ or $u\sim_{\bar{\Gamma}} v$, we get $\delta_{\Gamma}(u)=1$, $\delta_{\Gamma}(v)=n-1$ or $\delta_{\bar{\Gamma}}(v)=1$, $\delta_{\bar{\Gamma}}(u)=n-1$, respectively. Symmetrically, in the case of~\eqref{e109}, \eqref{e107} implies either $\{u,v\}\in E(\Gamma)$, $\delta_{\Gamma}(v)=1$, $\delta_{\Gamma}(u)=n-1$ or $\{u,v\}\in E(\bar{\Gamma})$, $\delta_{\bar{\Gamma}}(u)=1$, $\delta_{\bar{\Gamma}}(v)=n-1$, which ends the proof.
\end{proof}

\section{The core of $\Gamma\bar{\Gamma}$}\label{cores}

In this section we study the core of graph $\Gamma\bar{\Gamma}$. Its highlights are Theorems~\ref{thm-strg} and~\ref{thm-vt} that deal with a self-complementary graph $\Gamma$ that is strongly regular or vertex-transitive, respectively. Our first result is crucial and provides substantial information about $\textrm{core}(\Gamma\bar{\Gamma})$ for general graph $\Gamma$.
\begin{lemma}\label{lemma-core}
Let $\Gamma$ be any graph that is not isomorphic to $K_2$ and $\overline{K_2}$. If $\textrm{core}(\Gamma\bar{\Gamma})$ is any core of $\Gamma\bar{\Gamma}$, then one of the following five possibilities is true.
\begin{enumerate}
\item Graph $\Gamma\bar{\Gamma}$ is a core.
\item All vertices of $\textrm{core}(\Gamma\bar{\Gamma})$ are contained in $W_1$, in which case
$$\textrm{core}(\Gamma\bar{\Gamma})\cong\textrm{core}(\Gamma).$$
\item All vertices of $\textrm{core}(\Gamma\bar{\Gamma})$ are contained in $W_2$, in which case
$$\textrm{core}(\Gamma\bar{\Gamma})\cong\textrm{core}(\bar{\Gamma}).$$
\item There exists a partition $\{V_1,V_2,V_3\}$ of the vertex set $V(\Gamma)=V(\bar{\Gamma})$, where the sets $V_1$ and $V_2$ are nonempty, with the following properties.
\begin{enumerate}
\item The vertex set of $\textrm{core}(\Gamma\bar{\Gamma})$ equals
\begin{equation}\label{e82}
\{(v,2): v\in V_1\cup V_2\}\cup \{(v,1): v\in V_2\}.
\end{equation}
\item In $\Gamma$, there is no edge with one endpoint in $V_1$ and the other endpoint in $V_2$.
\item In $\Gamma$, each vertex in $V_3$ is adjacent to at most one vertex in $V_2$.
\item In $\Gamma$, there exists a vertex in $V_2$ that has no neighbors in $V_3$.
\item If $\Psi$ is any retraction from $\Gamma\bar{\Gamma}$ onto $\textrm{core}(\Gamma\bar{\Gamma})$, then
\begin{align}
\label{e76} &\Psi(\{(v,2) : v\in V_3\})\subseteq \{(v,2) : v\in V_1\},\\
\label{e77}&\Psi(\{(v,1) : v\in V_3\})\subseteq \{(v,2) : v\in V_1\cup V_2\},\\
\label{e78}&\Psi(\{(v,1) : v\in V_1\})\subseteq \{(v,2) : v\in V_1\cup V_2\}.
\end{align}
\end{enumerate}
\item There exists a partition $\{V_1,V_2,V_3\}$ of the vertex set $V(\Gamma)=V(\bar{\Gamma})$, where the sets $V_1$ and $V_2$ are nonempty,  with the following properties.
\begin{enumerate}
\item The vertex set of $\textrm{core}(\Gamma\bar{\Gamma})$ equals $$\{(v,1): v\in V_1\cup V_2\}\cup \{(v,2): v\in V_2\}.$$
\item In $\bar{\Gamma}$, there is no edge with one endpoint in $V_1$ and the other endpoint in $V_2$.
\item In $\bar{\Gamma}$, each vertex in $V_3$ is adjacent to at most one vertex in $V_2$.
\item In $\bar{\Gamma}$, there exists a vertex in $V_2$ that has no neighbors in $V_3$.
\item If $\Psi$ is any retraction from $\Gamma\bar{\Gamma}$ onto $\textrm{core}(\Gamma\bar{\Gamma})$, then
\begin{align*}
&\Psi(\{(v,1) : v\in V_3\})\subseteq \{(v,1) : v\in V_1\},\\
&\Psi(\{(v,2) : v\in V_3\})\subseteq \{(v,1) : v\in V_1\cup V_2\},\\
&\Psi(\{(v,2) : v\in V_1\})\subseteq \{(v,1) : v\in V_1\cup V_2\}.
\end{align*}
\end{enumerate}
\end{enumerate}
\end{lemma}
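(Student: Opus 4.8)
The plan is to pick a retraction $\Psi\colon\Gamma\bar\Gamma\to\textrm{core}(\Gamma\bar\Gamma)$ and analyze the image $\mathcal{C}:=V(\textrm{core}(\Gamma\bar\Gamma))\subseteq W_1\cup W_2$ according to how it meets $W_1$ and $W_2$. First I would dispose of the easy alternatives: if $\Psi$ is bijective we are in case (i); if $\mathcal{C}\subseteq W_1$, then $\Psi$ restricted to the subgraph induced by $W_1$ (which is $\Gamma$) still retracts onto $\mathcal{C}$, so $\mathcal{C}$ induces a core to which $\Gamma$ admits a homomorphism, forcing $\textrm{core}(\Gamma\bar\Gamma)\cong\textrm{core}(\Gamma)$ — case (ii); symmetrically $\mathcal{C}\subseteq W_2$ gives case (iii). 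So from now on assume $\mathcal{C}$ meets both $W_1$ and $W_2$ and $\Psi$ is not injective. By the obvious symmetry exchanging $\Gamma\leftrightarrow\bar\Gamma$ and $W_1\leftrightarrow W_2$, it suffices to handle one of the two remaining cases, say (v) (or (iv), whichever is more convenient to write as the ``base'' case), and obtain the other by applying the argument to the complement.

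Now define $V_1,V_2,V_3$ combinatorially from $\mathcal{C}$: in case (v), let $V_2=\{v: (v,1)\in\mathcal{C}\text{ and }(v,2)\in\mathcal{C}\}$ be the vertices of $\Gamma$ whose \emph{both} copies survive in the core, let $V_1=\{v:(v,1)\in\mathcal{C},\ (v,2)\notin\mathcal{C}\}$, and let $V_3$ be the remaining vertices; the case where $(v,2)\in\mathcal{C}$ but $(v,1)\notin\mathcal{C}$ for some $v$ is exactly the mirror situation that lands in case (iv), so under the case-(v) assumption that set is empty and $\{V_1,V_2,V_3\}$ is a genuine partition with (a) holding by construction. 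The nonemptiness of $V_2$ comes from $\mathcal C$ meeting both $W_1$ and $W_2$ together with the perfect-matching structure: a vertex $(v,i)\in\mathcal C$ has its matching-partner $(v,\bar i)$ as a neighbor, and since $\textrm{core}(\Gamma\bar\Gamma)$ is connected by Lemma~\ref{lemma-core-povezanost} (as $\Gamma\bar\Gamma$ is connected by Lemma~\ref{p6}), one traces an edge of the core crossing the matching, which can only happen at a $v$ with both copies in $\mathcal C$. Nonemptiness of $V_1$ follows because $\Psi$ is not injective: if every surviving vertex had both copies present, $\mathcal C$ would be all of $W_1\cup W_2$ or a ``full sub-prism'' on which one checks directly that no proper retraction exists (the matching edges have no triangle/$4$-cycle by Lemma~\ref{l4}, which rigidifies the situation). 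Properties (b)–(d) are then forced by the requirement that $\Psi$ maps $\Gamma\bar\Gamma$ into the subgraph induced by $\mathcal C$: an edge of $\bar\Gamma$ between $V_1$ and $V_2$ would produce, via the matching edges, a configuration in $\Gamma\bar\Gamma$ with no image in $\mathcal C$; (c) and (d) are similar local forcing statements about neighborhoods in $V_3$, proved by assuming the contrary and exhibiting a subgraph (a path or a short cycle through the relevant vertices) that cannot be mapped into $\mathcal C$ while respecting that $\Psi$ fixes $\mathcal C$ pointwise.

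Finally, for property (e) I would track where $\Psi$ can send the ``killed'' vertices. A vertex $(v,2)$ with $v\in V_3$ is matched to $(v,1)$ with $v\in V_3$ (also killed), and both must go somewhere in $\mathcal C$; using that $\Psi$ is a homomorphism fixing $\mathcal C$ and using the adjacency constraints from (b)–(d) and Lemma~\ref{l4}, one shows step by step that the only room available is the claimed target sets — e.g. $(v,2)$ with $v\in V_3$ cannot land on any $(w,1)$ because its neighbors in $W_1$ would then need images adjacent to $(w,1)$ in the core, contradicting (c); it cannot land on $(w,2)$ with $w\in V_2$ for a parallel reason involving (d); so it lands on $(w,2)$, $w\in V_1$, which is the first inclusion, and the other two inclusions follow by analogous neighborhood bookkeeping. \textbf{The main obstacle} I expect is precisely this last bookkeeping: pinning down the images of the $V_1$- and $V_3$-copies requires carefully combining three ingredients at once — that matching edges lie on no triangle or $4$-cycle (Lemma~\ref{l4}), that $\Psi$ fixes the core pointwise, and the just-established structural restrictions (b)–(d) — and making sure the case distinction between (iv) and (v) is clean (i.e. that the ``which side is $V_1$ on'' dichotomy is genuinely exhaustive and that no mixed configuration sneaks through). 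Getting the bookkeeping airtight, rather than any single clever idea, is where the real work lies.
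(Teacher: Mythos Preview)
Your overall architecture matches the paper's: write $V(\textrm{core}(\Gamma\bar\Gamma))=\{(u,1):u\in U_1\}\cup\{(u,2):u\in U_2\}$, dispose of $U_1=\emptyset$ or $U_2=\emptyset$ via Lemma~\ref{lemma-izomorfnostjeder}, use connectedness (Lemmas~\ref{p6}, \ref{lemma-core-povezanost}) to get $U_1\cap U_2\neq\emptyset$, and split on the status of $U_1\setminus U_2$ and $U_2\setminus U_1$. But two steps you wave through are where the actual content sits. The ``full sub-prism'' case $U_1=U_2\subsetneq V(\Gamma)$ is not a one-liner: the paper treats $|U_1|=1$, $|U_1|=2$, $|U_1|\geq 3$ separately, and $|U_1|=1$ is exactly where the hypothesis $\Gamma\not\cong K_2,\overline{K_2}$ is used (to exhibit a $5$-cycle in $\Gamma\bar\Gamma$, contradicting $\textrm{core}(\Gamma\bar\Gamma)\cong K_2$ via Lemma~\ref{lemma-chromatic}); for $|U_1|\geq 3$ one shows any retraction must send an outside pair $(v,1),(v,2)$ to a \emph{single} matching edge $(u,2),(u,1)$ and then reaches a contradiction from two $U_1$-neighbors on one side. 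Lemma~\ref{l4} alone does not do this.

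The ``mixed configuration'' you identify as the main obstacle is indeed the crux, and the paper resolves it in a specific order you have inverted: assuming $U_2\setminus U_1\neq\emptyset$, one first proves inclusion~\eqref{e78} (for any retraction, $(v_1,1)$ with $v_1\in U_2\setminus U_1$ must land in $W_2$), \emph{then} uses this together with its mirror version to rule out $U_1\setminus U_2\neq\emptyset$ by looking at the forced $\Gamma$- or $\bar\Gamma$-edge between the two difference sets. So claim~(a) and part of~(e) are proved together, not (a) first and (e) last as in your plan. Finally, your proposed mechanism for (d) is the wrong kind of argument: it is not proved by a subgraph that fails to map into $\mathcal C$, but by constructing an explicit non-bijective endomorphism $\ddot\Psi$ of $\textrm{core}(\Gamma\bar\Gamma)$ itself, defined via $\ddot\Psi(v_2,1):=\Psi(f(v_2),2)$ using the already established inclusions~\eqref{e76}--\eqref{e77}, and contradicting that the core is a core. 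Properties (b), (c) and the three inclusions in (e) are local forcing as you suggest, but (d) is genuinely different.
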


\begin{proof}
The vertex set of $\textrm{core}(\Gamma\bar{\Gamma})$ can be written as $$\{(u,1) : u\in U_1\}\cup \{(u,2): u\in U_2\},$$
where $U_1$, $U_2$ are two subsets of $V(\Gamma)$ and at least one of them is nonempty. If $U_2=\emptyset$, then the vertices of $\textrm{core}(\Gamma\bar{\Gamma})$ are contained in $W_1$ and any retraction of $\Gamma\bar{\Gamma}$ onto $\textrm{core}(\Gamma\bar{\Gamma})$ is a homomorphism from $\Gamma\bar{\Gamma}$ to $\Gamma$. Since $\Gamma$ is isomorphic to an induced subgraph in $\Gamma\bar{\Gamma}$ we have also the obvious homomorphism in the other direction. From Lemma~\ref{lemma-izomorfnostjeder} we infer that  $\textrm{core}(\Gamma\bar{\Gamma})\cong \textrm{core}(\Gamma)$, i.e. (ii) is true. If $U_1=\emptyset$, then we deduce that (iii) is true in the same way. In the sequel we assume that $U_1\neq \emptyset\neq U_2$. Moreover, Lemmas~\ref{p6} and~\ref{lemma-core-povezanost} imply that $\textrm{core}(\Gamma\bar{\Gamma})$ is connected. Hence, $U_1\cap U_2\neq \emptyset$. We now separate three cases.
\begin{myenumerate}{Case}
\item Let $U_1=U_2$. If both sets equal $V(\Gamma)$, then (i) is true. We next show that the inequality in $U_1=U_2\neq V(\Gamma)$ leads to a contradiction.

    \hspace{1em} Firstly, if $|U_1|=1$ , then  $|V(\Gamma)\backslash U_1|=1$ is ruled out by the assumption, since~$\Gamma$ is not isomorphic to $K_2$ and $\overline{K_2}$. Suppose that $|U_1|=1$ and $|V(\Gamma)\backslash U_1|\geq 2$. Pick arbitrary distinct vertices $v,v'\in V(\Gamma)\backslash U_1$ and denote the unique vertex in $U_1$ by $u$. Then either $v\sim_{\Gamma} v'$ or  $v\sim_{\bar{\Gamma}} v'$. In the former case,  $u$ cannot be $\Gamma$-adjacent to both $v$ and $v'$. In fact, the opposite would imply that $\Gamma$ contains a triangle, while $\textrm{core}(\Gamma\bar{\Gamma})\cong K_2$, a contradiction by Lemma~\ref{lemma-chromatic}. Hence, $\Gamma\bar{\Gamma}$ contains one of the following three $5$-cycles
\begin{align*}
&(u,1), (v,1), (v',1), (v',2), (u,2), (u,1),\\
&(u,1), (v',1), (v,1), (v,2), (u,2), (u,1),\\
&(u,2), (v,2), (v,1), (v',1), (v',2), (u,2),
\end{align*}
which implies a contradiction by Lemma~\ref{lemma-chromatic} in the same way as above. If $v\sim_{\bar{\Gamma}} v'$ we get a contradiction in a symmetrical way.

\hspace{1em} Next, observe that $|U_1|=2$ is not possible, since it would imply that $\textrm{core}(\Gamma\bar{\Gamma})$ is isomorphic to a path on four vertices, which is not a core.

\hspace{1em} Finally, assume that $|U_1|\geq 3$. Pick arbitrary $v\in V(\Gamma)\backslash U_1$ and any retraction~$\Psi$ from $\Gamma\bar{\Gamma}$ onto $\textrm{core}(\Gamma\bar{\Gamma})$. Suppose that $\Psi(v,1)=(u,1)$ for some $u\in U_1=U_2$. Then $(v,1)\nsim (u,1)$, i.e. $(v,2)\sim (u,2)$ and consequently, $(u,2)\sim \Psi(v,2)\sim \Psi(v,1)=(u,1)$, which is not possible, since $(u,1)$ and $(u,2)$ do not have common neighbors. Hence,
\begin{equation}\label{e74}
\Psi(v,1)=(u,2)
\end{equation}
for some $u\in U_1=U_2$. In symmetrical way we deduce that $\Psi(v,2)=(\dot{u},1)$ for some $\dot{u}\in U_1=U_2$. Since $\Psi(v,1)\sim \Psi(v,2)$ it follows that $\dot{u}=u$, i.e.
\begin{equation}\label{e75}
\Psi(v,2)=(u,1).
\end{equation}
Since $|U_1|\geq 3$, we can find distinct $\ddot{u},\dddot{u}\in U_1$ such that either $(\ddot{u},1)\sim (v,1)\sim (\dddot{u},1)$ or $(\ddot{u},2)\sim (v,2)\sim (\dddot{u},2)$. Consequently, either
$$(\ddot{u},1)\sim \Psi(v,1)\sim (\dddot{u},1)\quad \textrm{or}\quad  (\ddot{u},2)\sim \Psi(v,2)\sim (\dddot{u},2),$$
which contradicts either~\eqref{e74} or~\eqref{e75}.

\item\label{case2b} Let $U_2\backslash U_1\neq \emptyset$. We prove that in this case (iv) is true. Denote $V_1:=U_2\backslash U_1$ and $V_2:=U_1\cap U_2$. Then $V_1\neq \emptyset\neq V_2$ and $V_1\cap V_2=\emptyset$. Pick any retraction $\Psi$ from $\Gamma\bar{\Gamma}$ onto $\textrm{core}(\Gamma\bar{\Gamma})$.

    \hspace{1em} Suppose that (b) is not true, i.e. there are vertices $v_1\in V_1$ and $v_2\in V_2$ such that $v_1\sim_{\Gamma} v_2$.  Since $(v_2,1)$ and $(v_1,2)$ are vertices in $\textrm{core}(\Gamma\bar{\Gamma})$ and $(v_1,2)\sim (v_1,1)\sim (v_2,1)$ it follows that
    $$(v_1,2)\sim \Psi(v_1,1)\sim (v_2,1).$$ This is a contradiction, since $\Psi(v_1,1)\neq (v_1,1)$. In fact, $\Psi(v_1,1)$ is a vertex in $\textrm{core}(\Gamma\bar{\Gamma})$ while $(v_1,1)$ is not. Hence, the claim in (b) is correct.

    \hspace{1em} Next we prove~\eqref{e78}. If $v_1\in V_1$, then $(v_1,1)$ is not a vertex in $\textrm{core}(\Gamma\bar{\Gamma})$. Consequently $(v_1,1)\neq \Psi(v_1,1)\sim \Psi(v_1,2)=(v_1,2)$, which implies that
    \begin{equation}\label{e79}
    \Psi(v_1,1)\in \{(u,2) : u\in U_2\},
    \end{equation}
    i.e.~\eqref{e78}.

    \hspace{1em} To prove the claim (a), assume the contrary, i.e. $U_1\backslash U_2\neq \emptyset$. Then we show as in the previous two paragraphs that there are no edges in $\bar{\Gamma}$ with one endpoint in $V_1':=U_1\backslash U_2$ and the other endpoint in $V_2$, and for arbitrary $v_1'\in V_1'$ we have \begin{equation}\label{e80}
    \Psi(v_1',2)\in \{(u,1): u\in U_1\}.
    \end{equation}
    Let $v_1\in V_1$ be arbitrary. Recall that vertices $(v_1,1)$ and $(v_1',2)$ are not contained in  $\textrm{core}(\Gamma\bar{\Gamma})$, that is, $(v_1,1)\notin \{(u,1) : u\in U_1\}$ and $(v_1',2)\notin \{(u,2) : u\in U_2\}$. Since either
    $$\Psi(v_1,1)\sim \Psi(v_1',1)=(v_1',1)\quad \textrm{or}\quad \Psi(v_1',2)\sim \Psi(v_1,2)=(v_1,2),$$
    we get in contradiction with \eqref{e79} or \eqref{e80}, respectively. Hence, $U_1\backslash U_2=\emptyset$ and the claim (a) is true.

    \hspace{1em} Let $V_3:=V(\Gamma)\backslash U_2$. Then $\{V_1,V_2,V_3\}$ is a partition of $V(\Gamma)$.

    \hspace{1em} To prove (c) assume the contrary, that is, there exist $v_3\in V_3$ and distinct $v_2,v_2'\in V_2$ such that $v_2 \sim_{\Gamma} v_3\sim_{\Gamma} v_2'$. Then $(v_2,1)\sim (v_3,1)\sim (v_2',1)$ and consequently $W_1 \ni (v_2,1)=\Psi(v_2,1) \sim \Psi (v_3,1)\sim\Psi(v_2',1)=(v_2',1)\in W_1$. Therefore $\Psi (v_3,1)\in W_1$. By (a) it follows that $$\Psi (v_3,1)=(v_2'',1)$$ for some $v_2''\in V_2\backslash \{v_2,v_2'\}$. Consequently, $(v_2'',1)\nsim (v_3,1)$, i.e. $(v_2'',2)\sim (v_3,2)\sim (v_3,1)$ and therefore $(v_2'',2)\sim \Psi(v_3,2)\sim (v_2'',1)$, which is not possible, as $(v_2'',1)$ and $(v_2'',1)$ do not have common neighbors. Hence, (c) is true.

    \hspace{1em} Suppose that the inclusion \eqref{e76} is not true. Then by (a) there exist $v_3\in V_3$ and $v_2\in V_2$ such that $\Psi(v_3,2)=(v_2,i)$ for some
    $i\in \{1,2\}$. Assume firstly that $i=2$. Then, $(v_3,2)\nsim (v_2,2)$, i.e. $(v_3,1)\sim (v_2,1)$. Consequently
    $$(v_2,1)=\Psi(v_2,1)\sim \Psi(v_3,1)\sim \Psi(v_3,2)=(v_2,2),$$
    which is not possible. Assume now that $i=1$, that is,
    \begin{equation}\label{e84}
    \Psi(v_3,2)=(v_2,1).
    \end{equation}
    It follows from (c) that $(v_3,2)$ is adjacent to all vertices of the form $(v,2)$ where $v\in V_2$, with one possible exception. Since $\Psi(v,2)=(v,2)$ and $\Psi(v_3,2)\in W_1$, it follows that $|V_2|\in \{1,2\}$. Moreover, if $|V_2|=2$, then
    \begin{equation}\label{e83}
    (v_3,2)\ \textrm{is adjacent to precisely one vertex of the form}\ (v,2),\ v\in V_2.
    \end{equation}
    Suppose that $|V_2|=2$, i.e. $V_2=\{v_2,v\}$ for some $v\neq v_2$. Then \eqref{e84}-\eqref{e83} imply
    $$(v_3,2)\sim (v_2,2)\ \textrm{and}\ (v_3,1)\sim (v,1),$$
    since the opposite would yield $(v_2,1)=\Psi(v_3,2)\sim \Psi(v,2)=(v,2)$, which is not true. Consequently,
    $$(v_2,1)=\Psi(v_3,2)\sim \Psi(v_3,1)\sim \Psi(v,1)=(v,1).$$
    This is a contradiction, since $(v_2,1)$ and $(v,1)$ do not have common neighbors in $\textrm{core}(\Gamma\bar{\Gamma})$ as they are its unique vertices that are contained in $W_1$. On the other hand if $|V_2|=1$, i.e. $V_2=\{v_2\}$, then the graph with the vertex set~\eqref{e82} is not a core. In fact, in such a case a map $\dot{\Psi}$, which satisfies $\dot{\Psi}(v_2,1)=(v_1,2)$ for arbitrary $v_1\in V_1$ and fixes all other vertices of $\textrm{core}(\Gamma\bar{\Gamma})$ is a non-bijective endomorphism of $\textrm{core}(\Gamma\bar{\Gamma})$. Hence the inclusion \eqref{e76} is correct.

    \hspace{1em} If~\eqref{e77} is not true, then there are $v_3\in V_3$, $v_2\in V_2$ such that $\Psi(v_3,1)=(v_2,1)$. Since $\Psi(v_3,2)\sim \Psi(v_3,1)$, we get in contradiction with \eqref{e76}, and~\eqref{e77} is correct.

    \hspace{1em} It remains to prove the statement (d). Suppose that each vertex $v_2\in V_2$ has at least one $\Gamma$-neighbor in the set $V_3$. Fix one of them and denote it by $f(v_2)$. Choose an arbitrary retraction $\Psi$ from $\Gamma\bar{\Gamma}$ onto $\textrm{core}(\Gamma\bar{\Gamma})$. Define a map $\ddot{\Psi}$ on the vertex set of $\textrm{core}(\Gamma\bar{\Gamma})$, which is described in~\eqref{e82}, by
    \begin{align*}
    \ddot{\Psi}(v,i):=\left\{\begin{array}{ll}(v,i) & \textrm{if}\ v\in V_1\cup V_2\ \textrm{and}\ i=2,\\
    \Psi(f(v),2) & \textrm{if}\ v\in V_2\ \textrm{and}\ i=1.\end{array}\right.
    \end{align*}
    We claim that $\ddot{\Psi}$ is a non-bijective endomorphism of $\textrm{core}(\Gamma\bar{\Gamma})$. This gives the desired contradiction that proves (d). Pick any $v_2\in V_2$. Then~\eqref{e76} implies that $\Psi(f(v_2),2)=(v_1,2)$ for some $v_1\in V_1$. Consequently
    $$\ddot{\Psi}(v_2,1)=\Psi(f(v_2),2)=(v_1,2)=\ddot{\Psi}(v_1,2),$$
    which shows that the map $\ddot{\Psi}$ is not bijective. Moreover, since
    $$(v_2,1)=\Psi(v_2,1)\sim \Psi(f(v_2),1),$$
    it follows from~\eqref{e77} that each $v_2\in V_2$ satisfies
    \begin{equation}\label{e85}
    \Psi(f(v_2),1)=(v_2,2).
    \end{equation}
    To prove that $\ddot{\Psi}$ is an endomorphism, suppose that $(v,i)\sim (u,j)$ are two adjacent vertices in $\textrm{core}(\Gamma\bar{\Gamma})$. If $i=2=j$, then $\ddot{\Psi}(v,i)=(v,i)\sim (u,j)=\ddot{\Psi}(u,j)$. If $\{i,j\}=\{1,2\}$, say $i=1$ and $j=2$, then $v=u\in V_2$ and~\eqref{e85} implies that
    $$\ddot{\Psi}(u,j)=\ddot{\Psi}(v,2)=(v,2)=\Psi(f(v),1)\sim \Psi(f(v),2)=\ddot{\Psi}(v,1)=\ddot{\Psi}(v,i).$$
    If $i=1=j$, then $v,u\in V_2$, and~\eqref{e85} implies that $$\Psi(f(v),1)=(v,2)\nsim (u,2)=\Psi(f(u),1).$$ Hence,
    $(f(v),1)\nsim (f(u),1)$, i.e. $(f(v),2)\sim (f(u),2)$. Consequently,
    $$\ddot{\Psi}(v,i)=\ddot{\Psi}(v,1)=\Psi(f(v),2)\sim \Psi(f(u),2)=\ddot{\Psi}(u,1)=\ddot{\Psi}(u,j),$$
    which shows that $\ddot{\Psi}$ is an endomorphism of $\textrm{core}(\Gamma\bar{\Gamma})$.

\item Let $U_1\backslash U_2\neq \emptyset$. Then we proceed symmetrically as in Case~\ref{case2b} to deduce that (v) is true.\qedhere
\end{myenumerate}
\end{proof}

\begin{remark}
The set $V_3$ in Lemma~\ref{lemma-core} is allowed to be empty. In this case either $\Gamma$ or $\bar{\Gamma}$ is disconnected.
\end{remark}
\begin{exa}\label{ex:misteriozen}
The possibilities~(iv),(v) in Lemma~\ref{lemma-core} can occur even if the graph $\Gamma$ is regular. For example, if $\Gamma$ is a (disjoint) union of a triangle and a pentagon, then the core of $\Gamma\bar{\Gamma}$ is the graph, which is induced by all vertices in~$W_2$ together with the triangle part of $W_1$. The verification of this fact is left to the reader. On the other hand it seems mysteriously complicated to provide an example of a regular graph $\Gamma$, where both $\Gamma$ and $\bar{\Gamma}$ are connected and the possibility~(iv) or (v) occur for $\textrm{core}(\Gamma\bar{\Gamma})$. Below we provide an example of such graph $\Gamma$ on 505 vertices that is 194-regular. Consider the complement of the complete graph on 99 vertices, $\overline{K_{99}}$, the graph $\overline{K(10,4)} - e$ that we described in \eqref{e86}-\eqref{e87}, and the Cayley graph $\textrm{Cay}(\mathbb{F}_{49}\times \mathbb{F}_4, S)$, where $\mathbb{F}_{49}$ and $\mathbb{F}_{4}=\{0,1,\imath,1+\imath\}$ are finite fields with 49 and 4 elements, respectively, $\mathbb{F}_{49}\times \mathbb{F}_4$ is the corresponding additive group, and $S=\{(x,y): x\in \mathbb{F}_{49}^{\ast}, y\in\{0,1\}\}$. Here, $\mathbb{F}_{49}^{\ast}=\mathbb{F}_{49}\backslash \{0\}$. The graph $\textrm{Cay}(\mathbb{F}_{49}\times \mathbb{F}_4, S)$ has two connected components, namely
$${\cal C}_1=\{(x,y): x\in \mathbb{F}_{49}, y\in \{0,1\}\}\ \textrm{and}\ {\cal C}_2=\{(x,y): x\in \mathbb{F}_{49}, y\in \{\imath,1+\imath\}\}.$$
Fix any $(x_1,y_1)\in {\cal C}_1$ and $(x_2,y_2)\in {\cal C}_2$ and number the vertices $\{w_1,\ldots,w_{196}\}$ of $\textrm{Cay}(\mathbb{F}_{49}\times \mathbb{F}_4, S)$ is such way that $w_{195}=(x_1,y_1)$ and $w_{196}=(x_2,y_2)$. Further, number the vertices $\{v_1,\ldots,v_{99}\}$ of $\overline{K_{99}}$  in some order and let $u_1,u_2$ be the two endpoints of the deleted edge $e$ in $\overline{K(10,4)} - e$. We form the graph $\Gamma$ from the disjoint union of graphs $\overline{K_{99}}$, $\overline{K(10,4)} - e$, $\textrm{Cay}(\mathbb{F}_{49}\times \mathbb{F}_4, S)$ by adding the following edges:
\begin{itemize}
\item $\{w_{195},u_1\}$, $\{w_{196},{u_2}\}$,
\item $\{w_{195},v_i\}$, $\{w_{196},v_i\}$ where $i\in \{1,\ldots,97\}$,
\item $\{w_j,v_{98}\}$, $\{w_j,v_{99}\}$ where $j\in \{1,\ldots,194\}$,
\item $\{v_i,w_{s}\}$ where $i\in \{1,\ldots,97\}$ and $s\in \{1,\ldots,194\}\backslash\{i,i+97\}$.
\end{itemize}
Observe that the graph $\Gamma$ is 194-regular. The connectedness of $\overline{K(10,4)} - e$ imply the connectedness of $\Gamma$. The connectedness of $\bar{\Gamma}$ is obvious.  We next prove that the vertices of a core of $\Gamma\bar{\Gamma}$ are contained in the set $X_1\cup X_2$, where
\begin{align*}
X_1&=\left\{(u,1) : u\in V\big(\overline{K(10,4)} - e\big)\right\},\\
X_2&=\left\{(u,2) : u\in V\big(\overline{K(10,4)} - e\big)\right\}\cup \{(v_1,2),\ldots,(v_{99},2)\},
\end{align*}
and none of the possibilities (i), (ii), (iii) of Lemma~\ref{lemma-core} can occur. Observe that the sets
$$C=\{(x,0) : x\in \mathbb{F}_{49}\}\quad \textrm{and}\quad I=\{(0,y) : y\in \mathbb{F}_{4}\}$$
are a clique and an independent set in $\textrm{Cay}(\mathbb{F}_{49}\times \mathbb{F}_4, S)$, respectively. In fact, by Lemma~\ref{clique-coclique} their orders are the largest possible. Consequently, the maps $p_1: (x,y)\mapsto (x,0)$ and $p_2: (x,y)\mapsto (0,y)$ are endomorphisms of $\textrm{Cay}(\mathbb{F}_{49}\times \mathbb{F}_4, S)$ and $\overline{\textrm{Cay}(\mathbb{F}_{49}\times \mathbb{F}_4, S)}$ onto their maximum cliques, respectively (compare with~\cite[Proposition~1]{JCTA}). Let $f_1: C\to\{v_1,\ldots,v_{49}\}$ and $f_2: I\to\{v_{50},v_{51},v_{52},v_{53}\}$ be any bijections and consider the map $\Psi$ on $\Gamma\bar{\Gamma}$, which fixes the vertices in $X_1\cup X_2$ and satisfies
\begin{align*}
\Psi(w_j,1)&=\left\{\begin{array}{ll}\big(f_1(p_1(w_j)),2\big)& \textrm{if}\ j\in\{1,\ldots,194\},\\
(u_1,2)& \textrm{if}\ j=195,\\
(u_2,2)& \textrm{if}\ j=196,\\
\end{array}\right.\\
\Psi(w_j,2)&=\left\{\begin{array}{ll}\big(f_2(p_2(w_j)),2\big)& \textrm{if}\ j\notin\{50,\ldots,53,147,\ldots,150\},\\
(v_{54},2)& \textrm{if}\ j=50,\\
(v_{55},2)& \textrm{if}\ j=51,\\
(v_{56},2)& \textrm{if}\ j=52,\\
(v_{57},2)& \textrm{if}\ j=53,\\
(v_{58},2)& \textrm{if}\ j=147,\\
(v_{59},2)& \textrm{if}\ j=148,\\
(v_{60},2)& \textrm{if}\ j=149,\\
(v_{61},2)& \textrm{if}\ j=150,
\end{array}\right.,\\
\Psi(v_i,1)&=\left\{\begin{array}{ll}(v_{62},2)&\qquad\quad\ \; \textrm{if}\ i\neq 62,\\
(v_{63},2)&\qquad\quad\ \; \textrm{if}\ i=62.
\end{array}\right.
\end{align*}
It is straightforward to check that $\Psi$ is a retraction. Since it is not bijective, the claim (i) in Lemma~\ref{lemma-core} is not true. Moreover, Remark~\ref{remark} implies that  $$\textrm{core}(\Gamma\bar{\Gamma})\cong\textrm{core}(\langle X_1\cup X_2\rangle),$$ where $\langle X_1\cup X_2\rangle$ is the subgraph, which is induced by the set~$X_1\cup X_2$. If the claim (ii) in Lemma~\ref{lemma-core} is true, then
$$\textrm{core}(\Gamma)\cong\textrm{core}(\langle X_1\cup X_2\rangle)$$
and consequently Lemmas~\ref{lemma-izomorfnostjeder}, \ref{lemma-clique}, and \eqref{e86}-\eqref{e87} imply that
\begin{align*}
101&\leq \omega(K_{99})+\omega(K(10,4) + e)=\omega(\langle X_2\rangle)\\
&\leq \omega(\langle X_1\cup X_2\rangle)=\omega(\Gamma)=\omega(\overline{K(10,4)} - e)\leq 84,
\end{align*}
a contradiction.
Next observe that $\Psi$ maps $W_2$ into $\langle X_2\rangle$. Hence, Lemma~\ref{lemma-chromatic} implies that
\begin{equation}\label{e88}
\chi(\bar{\Gamma})\leq \chi(\langle X_2\rangle).
\end{equation}
If the claim (iii) in Lemma~\ref{lemma-core} is true, then
$$\textrm{core}(\bar{\Gamma})\cong\textrm{core}(\langle X_1\cup X_2\rangle)$$
and consequently Lemmas~\ref{lemma-izomorfnostjeder}, \ref{lemma-chromatic},  \eqref{e86}-\eqref{e87}, and \eqref{e88} imply that
\begin{align*}
104&\leq \chi(\overline{K(10,4)} - e)= \chi(\langle X_1\rangle)
\leq \chi(\langle X_1\cup X_2\rangle)\\
&=\chi(\bar{\Gamma})\leq \chi(\langle X_2\rangle)=\chi(K_{99})+\chi(K(10,4) + e)=103,
\end{align*}
a contradiction.\hfill$\Box$
\end{exa}

\begin{cor}\label{c3}
Let $\Gamma$ be any graph on $n$ vertices that is $\big(\frac{n-1}{2}\big)$-regular. Then only the statements (i), (ii), (iii) in Lemma~\ref{lemma-core} are possible.
\end{cor}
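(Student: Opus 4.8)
The plan is to rule out possibilities (iv) and (v) of Lemma~\ref{lemma-core}; since $\big(\frac{n-1}{2}\big)$-regularity forces $n$ to be odd, we have $n\geq 3$, so $\Gamma$ is not isomorphic to $K_2$ or $\overline{K_2}$ and Lemma~\ref{lemma-core} applies, leaving exactly the five listed alternatives. I would first record the elementary observation that $\bar{\Gamma}$ is likewise $\big(\frac{n-1}{2}\big)$-regular, because the complement of a $k$-regular graph on $n$ vertices is $(n-1-k)$-regular and here $n-1-\frac{n-1}{2}=\frac{n-1}{2}$. This fact will be used twice.

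To eliminate (iv) I would argue by contradiction. Suppose the partition $\{V_1,V_2,V_3\}$ of $V(\Gamma)$ from Lemma~\ref{lemma-core}(iv) exists, with $V_1\neq\emptyset\neq V_2$. By property (d) there is a vertex $v_2\in V_2$ with no $\Gamma$-neighbour in $V_3$, and by property (b) it has no $\Gamma$-neighbour in $V_1$; hence all $\frac{n-1}{2}$ of its neighbours lie in $V_2\setminus\{v_2\}$, which forces $|V_2|\geq\frac{n+1}{2}$. Now pass to $\bar{\Gamma}$: property (b) says that $V_1$ and $V_2$ are completely joined in $\bar{\Gamma}$, so any vertex $v_1\in V_1$ satisfies $\delta_{\bar{\Gamma}}(v_1)\geq|V_2|\geq\frac{n+1}{2}>\frac{n-1}{2}$, contradicting the $\big(\frac{n-1}{2}\big)$-regularity of $\bar{\Gamma}$. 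Thus (iv) cannot occur.

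Finally, possibility (v) is handled in exactly the same way after interchanging the roles of $\Gamma$ and $\bar{\Gamma}$: the statement of (v) is obtained from that of (iv) by this interchange, and $\bar{\Gamma}$ is again $\big(\frac{n-1}{2}\big)$-regular, so the identical argument yields a contradiction. I do not anticipate any genuine obstacle here; the only points requiring a little care are that just conditions (b) and (d) of item (iv) are needed, and that the contradiction is extracted from the regularity of the \emph{complement}, so the hypothesis of Corollary~\ref{c3} must be invoked for both $\Gamma$ and $\bar{\Gamma}$.
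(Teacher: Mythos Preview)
Your argument is correct and is genuinely shorter than the paper's. Both proofs begin the same way: property~(b) means that in $\bar{\Gamma}$ every vertex of $V_1$ is joined to all of $V_2$, so $|V_2|\le\frac{n-1}{2}$ by the regularity of $\bar{\Gamma}$. At this point you invoke property~(d) to produce a vertex $v_2\in V_2$ whose $\frac{n-1}{2}$ $\Gamma$-neighbours all lie in $V_2\setminus\{v_2\}$, forcing $|V_2|\ge\frac{n+1}{2}$ and an immediate contradiction. The paper, by contrast, never uses~(d); instead it continues with property~(c) and part of~(e), building an injective map $f:V_2\to V_3$, deriving the inequality $(n_2-n_1-1)n_2\ge(n_2-2)n_3$, and separately arguing that $n_1\ge 2$ and $n_2\ge 2$ before reaching a contradiction. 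Your route is more economical precisely because property~(d) already encodes, in ready-to-use form, the existence of the vertex that the paper in effect reconstructs by counting. One cosmetic remark: $n$ odd allows $n=1$ as well as $n\ge 3$, but this is harmless since for $n=1$ the sets $V_1,V_2$ cannot both be nonempty, so (iv) and (v) are vacuously excluded.
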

\begin{proof}
Suppose the claim (iv) from Lemma~\ref{lemma-core} is true. Then $V\big(\textrm{core}(\Gamma\bar{\Gamma})\big)$ equals~\eqref{e82}. Let $n_i=|V_i|$ for $i=1,2,3$. Then $n_1,n_2\geq 1$. By statement (b), each vertex in $V_1$ is adjacent to each vertex in $V_2$, viewed in the graph $\bar{\Gamma}$, which is  $\big(\frac{n-1}{2}\big)$-regular. Hence, $n_2\leq \frac{n-1}{2}$. Consequently, each vertex $v\in V_2$ has some neighbor $f(v)\in V_3$, viewed in graph $\Gamma$. In fact, the opposite would imply that~$v$ has at least $n_1+n_3=n-n_2\geq n-\frac{n-1}{2}=\frac{n+1}{2}$ neighbors in $\bar{\Gamma}$, which is not possible. By statement (c), the map $f: V_2\to V_3$ is injective. Hence,
\begin{equation}\label{e89}
n_3\geq n_2.
\end{equation}
Moreover, each vertex $v\in V_2$ has zero and at most $n_2-1$ $\Gamma$-neighbors in $V_1$ and $V_2$, respectively. Consequently, it has at least $\frac{n-1}{2}-(n_2-1)$ $\Gamma$-neighbors in $V_3$. By (c) it follows that $n_3\geq \big(\frac{n-1}{2}-(n_2-1)\big)n_2$. Since $n=n_1+n_2+n_3$, we deduce that
\begin{equation}\label{e90}
(n_2-n_1-1)n_2\geq (n_2-2)n_3.
\end{equation}
Further observe that $$n_2\geq 2,$$ since the equality $n_2=1$ would imply that the graph with the vertex set~\eqref{e82} is not a core. We claim also that
\begin{equation}\label{e91}
n_1\geq 2.
\end{equation}
If $n_1=1$, then we infer from the inclusion~\eqref{e76} that in graph $\bar{\Gamma}$ there are no edges inside the set $V_3$ and there are no edges connecting the vertex in $V_1$ with some vertex in $V_3$. Consequently, we deduce by the statement (b) in Lemma~\ref{lemma-core} (iv) that a vertex $f(v)\in V_3$ as above, has in graph $\Gamma$ (at least) $$(n_3-1)+1+1\geq \left(\frac{n-1}{2}-1\right)+1+1=\frac{n+1}{2}$$
neighbors. This is a contradiction that proves the bound~\eqref{e91}. Consequently, the equality $n_2=2$ is not possible by~\eqref{e90}-\eqref{e91}, while the inequality $n_2>2$ transforms \eqref{e90} into $$\frac{n_2-n_1-1}{n_2-2} n_2\geq n_3,$$ which contradicts \eqref{e89}.

In the same way we see that the claim (v) in Lemma~\ref{lemma-core} is not true.
\end{proof}

\begin{exa}\label{exa1}
In Corollary~\ref{c3} the possibility (ii) or (iii) may occur even if a $(\frac{n-1}{2})$-regular graph is self-complementary.
Consider the graph $\Gamma$ on the vertex set $\{1,2,\ldots,13\}$ with the adjacency matrix as in Figure~\ref{f10}. The map $\sigma: \Gamma\to \bar{\Gamma}$, which maps vertices $1,2,\ldots,13$ into
$$1,8,9,10,11,12,13,5,4,3,2,7,6,$$
respectively, is  an antimorphism. Any core of the graph $\Gamma\bar{\Gamma}$ is isomorphic to the complete graph on five vertices. For example, vertices $$(1,1), (2,1), (3,1), (4,1), (5,1)$$ form such a core $C$, yielding the possibility (ii). In fact, if we denote
\begin{align*}
U_1&=\{(1,1), (8,1), (9,1), (10,1), (4,2), (5,2), (11,2)\},\\
U_2&=\{(2,1), (6,1), (7,1), (1,2), (3,2)\},\\
U_3&=\{(3,1), (11,1), (2,2), (8,2)\},\\
U_4&=\{(4,1), (12,1), (7,2), (9,2), (13,2)\},\\
U_5&=\{(5,1), (13,1), (6,2), (10,2), (12,2)\},
\end{align*}
then the map $\Psi$ on $V(\Gamma\bar{\Gamma})$, which maps vertices from the set $U_i$ into $(i,1)$, for all $i\in \{1,2,3,4,5\}$, is a retraction onto $C$. If we replace edges $\{3,9\}$, $\{4,10\}$ in graph $\Gamma$ by edges $\{3,10\}$, $\{4,9\}$, we obtain a graph $\Gamma'$, which has the same properties as $\Gamma$. Namely, $\sigma$ is an antimorphism of $\Gamma'$ and $\Psi$ is a retraction onto the clique $C$. Both graphs $\Gamma$, $\Gamma'$ were found with the help of the database~\cite{repository2}.\hfill$\Box$
\begin{figure}
\centering
\begin{tabular}{c}
{\tiny $
\left(
\begin{array}{ccccccccccccc}
 0 & 1 & 1 & 1 & 1 & 1 & 1 & 0 & 0 & 0 & 0 & 0 & 0 \\
 1 & 0 & 1 & 1 & 1 & 0 & 0 & 1 & 1 & 0 & 0 & 0 & 0 \\
 1 & 1 & 0 & 1 & 1 & 0 & 0 & 1 & 1 & 0 & 0 & 0 & 0 \\
 1 & 1 & 1 & 0 & 1 & 0 & 0 & 0 & 0 & 1 & 1 & 0 & 0 \\
 1 & 1 & 1 & 1 & 0 & 0 & 0 & 0 & 0 & 1 & 1 & 0 & 0 \\
 1 & 0 & 0 & 0 & 0 & 0 & 0 & 1 & 1 & 1 & 1 & 1 & 0 \\
 1 & 0 & 0 & 0 & 0 & 0 & 0 & 1 & 1 & 1 & 1 & 0 & 1 \\
 0 & 1 & 1 & 0 & 0 & 1 & 1 & 0 & 0 & 0 & 0 & 1 & 1 \\
 0 & 1 & 1 & 0 & 0 & 1 & 1 & 0 & 0 & 0 & 0 & 1 & 1 \\
 0 & 0 & 0 & 1 & 1 & 1 & 1 & 0 & 0 & 0 & 0 & 1 & 1 \\
 0 & 0 & 0 & 1 & 1 & 1 & 1 & 0 & 0 & 0 & 0 & 1 & 1 \\
 0 & 0 & 0 & 0 & 0 & 1 & 0 & 1 & 1 & 1 & 1 & 0 & 1 \\
 0 & 0 & 0 & 0 & 0 & 0 & 1 & 1 & 1 & 1 & 1 & 1 & 0
\end{array}
\right)
$}\\
\\
\includegraphics[width=0.8\textwidth]{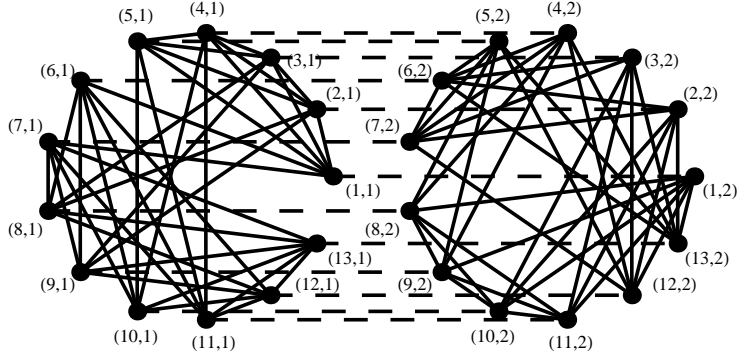}
\end{tabular}
\caption{The adjacency matrix of the graph $\Gamma$ in Example~\ref{exa1} and the graph $\Gamma\bar{\Gamma}$.}
\label{f10}
\end{figure}
\end{exa}

\begin{cor}\label{c4}
Let $\Gamma$ be any graph on $n$ vertices, which is not isomorphic to $K_2$, $\overline{K_2}$, $P_3$, and $\overline{P_3}$. If
$\textrm{core}(\Gamma\bar{\Gamma})$ is regular, then only the statements (i), (ii), (iii) in Lemma~\ref{lemma-core} are possible.
\end{cor}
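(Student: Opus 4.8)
The plan is to reduce this to Lemma~\ref{lemma-core} by showing that the partition structures described in cases~(iv) and~(v) force $\textrm{core}(\Gamma\bar{\Gamma})$ to be non-regular, which under the hypothesis is impossible. By symmetry (replacing $\Gamma$ by $\bar{\Gamma}$ interchanges~(iv) and~(v), and the hypotheses are self-complementary in $\Gamma$), it suffices to rule out case~(iv). So assume case~(iv) holds, with partition $\{V_1,V_2,V_3\}$ of $V(\Gamma)$, with $V_1,V_2$ nonempty, and vertex set of $\textrm{core}(\Gamma\bar{\Gamma})$ as in~\eqref{e82}: namely $\{(v,2): v\in V_1\cup V_2\}\cup\{(v,1): v\in V_2\}$.

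First I would record the degrees inside $\textrm{core}(\Gamma\bar{\Gamma})$ of the two types of vertices of $W_1$-kind and $W_2$-kind that appear. A vertex $(v,1)$ with $v\in V_2$ has, among its neighbours in $\Gamma\bar{\Gamma}$, the matching partner $(v,2)$ (which lies in the core), its $\Gamma$-neighbours among $V_2$ (in the core) — note by property~(b) it has no $\Gamma$-neighbours in $V_1$ — and its $\Gamma$-neighbours in $V_3$, whose images under a retraction $\Psi$ land, by~\eqref{e77}, among $\{(w,2):w\in V_1\cup V_2\}$. By property~(c) a vertex of $V_3$ has at most one $\Gamma$-neighbour in $V_2$; this is the combinatorial lever. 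The key point I want to extract is: the $W_1$-part of the core, i.e. $\{(v,1):v\in V_2\}$, induces exactly the subgraph $\Gamma[V_2]$ together with the pendant matching edges to $\{(v,2):v\in V_2\}$, and there are \emph{no} core-edges from $\{(v,1):v\in V_2\}$ to $\{(w,2):w\in V_1\}$ (again by~(b), since $(v,1)\sim(w,2)$ in $\Gamma\bar{\Gamma}$ never happens for $v\ne w$, and $V_1\cap V_2=\emptyset$). Thus in $\textrm{core}(\Gamma\bar{\Gamma})$, each vertex $(v,1)$ with $v\in V_2$ has degree $1+\deg_{\Gamma[V_2]}(v)\le |V_2|$, whereas a vertex $(w,2)$ with $w\in V_1$ has degree equal to $1+|V_2|+\deg_{\bar\Gamma[V_1]}(w)$ — the $1$ for its matching partner $(w,1)$? no: $(w,1)$ with $w\in V_1$ is \emph{not} in the core. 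So $(w,2)$, $w\in V_1$, is adjacent in the core to: all $(v,2)$, $v\in V_2$, with $w\sim_{\bar\Gamma} v$ — by property~(b) \emph{every} such pair is a $\bar\Gamma$-edge, so that is all of $V_2$; plus its $\bar\Gamma$-neighbours inside $V_1$. Hence $\deg(w,2)\ge |V_2|$ for $w\in V_1$.

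Now I would compare. Pick $v_0\in V_2$ with no $\Gamma$-neighbour in $V_3$ (exists by property~(d)). Then the $\Gamma$-neighbourhood of $v_0$ lies in $V_2$ (none in $V_1$ by~(b), none in $V_3$ by choice), so $\deg_{\Gamma\bar\Gamma}(v_0,1)=1+\deg_{\Gamma[V_2]}(v_0)\le |V_2|$, and since all neighbours of $(v_0,1)$ already lie in the core, $\deg_{\textrm{core}}(v_0,1)=\deg_{\Gamma\bar\Gamma}(v_0,1)\le|V_2|$. On the other hand, take any $w\in V_1$: the vertex $(w,2)$ is adjacent in the core to all of $\{(v,2):v\in V_2\}$ (size $|V_2|$) plus at least one more vertex, namely... here I need to produce one extra neighbour. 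If $|V_1|\ge 2$ and $\bar\Gamma[V_1]$ has an edge at $w$, done; if $|V_1|=1$ this fails, but then I argue differently: with $V_1=\{w\}$, property~(a)'s requirement that $\textrm{core}(\Gamma\bar{\Gamma})$ be a core, together with the structure, can be shown to collapse (compare the argument near~\eqref{e84}–\eqref{e85} and in Corollary~\ref{c3}'s treatment of $n_1=1$). More cleanly: if $\textrm{core}(\Gamma\bar{\Gamma})$ is $r$-regular, then \emph{every} $(v,1)$ with $v\in V_2$ has degree $r=1+\deg_{\Gamma[V_2]}(v)$, forcing $\Gamma[V_2]$ to be $(r-1)$-regular, and every $(v,2)$, $v\in V_2$, has degree $r$ accounting for: partner $(v,1)$, all $\bar\Gamma$-neighbours in $V_1$ (which is all of $V_1$ by~(b)), $\bar\Gamma$-neighbours in $V_2$, and images of $\Psi$ on $V_3$-vertices. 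Counting carefully with~\eqref{e76}–\eqref{e78} and~(c), I expect to derive $|V_1|+\big((|V_2|-1)-(r-1)\big)+ (\text{something}) = r-1$, i.e. two incompatible expressions for $r$ coming from the $V_1$-side and the $V_2$-side of the core, giving a contradiction exactly as in the degree-count of Corollary~\ref{c3}. Finally I note the excluded small graphs $K_2,\overline{K_2},P_3,\overline{P_3}$ are precisely those forced out when $|V_2|$ or $|V_1|$ is too small for the regularity argument to bite (e.g. $P_3\overline{P_3}$ has a regular core $C_4$ arising via case~(iv)), so the hypothesis is sharp.

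\textbf{Main obstacle.} The delicate part is the degenerate subcase $|V_1|=1$ (and its mirror), where $(w,2)$ has exactly $|V_2|$ core-neighbours from $V_2$ and no guaranteed extra neighbour inside $V_1$; there one must instead exploit that $\textrm{core}(\Gamma\bar{\Gamma})$ is a genuine core (no proper retractions) — mirroring the $n_1=1$ analysis in the proof of Corollary~\ref{c3} and the $|V_2|=1$ analysis near~\eqref{e84} — to force either a contradiction with regularity or an identification of $\Gamma\bar{\Gamma}$ with $P_3\overline{P_3}$ or $\overline{P_3}P_3$ (hence $\Gamma\in\{P_3,\overline{P_3}\}$), which is excluded. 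Tracking the retraction images through~\eqref{e76}–\eqref{e78} while maintaining the degree bookkeeping is the computational heart of the argument.
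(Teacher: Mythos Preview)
Your approach is the same as the paper's---compare the degrees in $\textrm{core}(\Gamma\bar{\Gamma})$ of the three vertex types and let regularity force structural constraints---but you stop at ``I expect to derive two incompatible expressions for $r$'' instead of actually deriving them. The computation is short and should be carried out. Writing $n_i=|V_i|$ and $\bar{k}_i$ for the degree of $\bar{\Gamma}[V_i]$ (regularity of the core forces each of $\bar{\Gamma}[V_1]$, $\Gamma[V_2]$, $\bar{\Gamma}[V_2]$ to be regular), the core-degree of $(w,2)$, $w\in V_1$, is $\bar{k}_1+n_2$; that of $(v,1)$, $v\in V_2$, is $1+(n_2-1-\bar{k}_2)=n_2-\bar{k}_2$; that of $(v,2)$, $v\in V_2$, is $1+n_1+\bar{k}_2$. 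Equating the first two gives $\bar{k}_1+\bar{k}_2=0$, hence $\bar{k}_1=\bar{k}_2=0$; equating the last two then gives $n_2=n_1+1$. So $V_1$ and $V_2$ are $\Gamma$-cliques, $V_2$ is a $\bar{\Gamma}$-independent set, and the core is a $C_5$-like blow-up. Your attempt to compare the degree at a vertex of $V_1$-type with the vertex $v_0$ from property~(d) is unnecessary once you use all three equalities.

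The ``main obstacle'' you flag is then resolved cleanly, not by mimicking the $n_1=1$ argument of Corollary~\ref{c3}, but as follows. For $v\in V_1$, the retraction image $\Psi(v,1)$ lies in $\{(w,2):w\in V_1\cup V_2\}$ by~\eqref{e78} and is adjacent to $(v,2)$; since $\bar{k}_1=0$ the only core-neighbours of $(v,2)$ in that set are $\{(w,2):w\in V_2\}$. But this set is $\bar{\Gamma}$-independent (from $\bar{k}_2=0$), while $V_1$ is a $\Gamma$-clique, so if $n_1\ge 2$ two distinct $\Psi(v,1),\Psi(v',1)$ would have to be adjacent inside an independent set---contradiction. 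Hence $n_1=1$, $n_2=2$, and $\textrm{core}(\Gamma\bar{\Gamma})\cong C_5$. If $V_3=\emptyset$ then $\Gamma\cong K_1\cup K_2=\overline{P_3}$, excluded; otherwise pick $w\in V_3$, use~\eqref{e76} to get $\Psi(w,2)=(v_1,2)$ (where $V_1=\{v_1\}$), deduce $v_1\sim_\Gamma w$, and produce a triangle in the pentagon. Finally, a small factual slip: the core of $P_3\overline{P_3}$ is $C_5$, not $C_4$ (retract the degree-one vertex $(2,2)$ onto $(1,1)$ and observe the remaining five vertices form a $5$-cycle).
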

\begin{proof}
Suppose the claim (iv) from Lemma~\ref{lemma-core} is true. Let $\bar{\Gamma}_1,\Gamma_2,\bar{\Gamma}_2$ be the subgraphs in $\Gamma\bar{\Gamma}$, which are induced by the vertex sets
\begin{equation}\label{e92}
\{(v,2): v\in V_1\},\quad\{(v,1): v\in V_2\},\quad \{(v,2): v\in V_2\},
\end{equation}
respectively. Since $\textrm{core}(\Gamma\bar{\Gamma})$ is regular, it follows from \eqref{e82} and the statement~(b) in Lemma~\ref{lemma-core} (iv) that all three graphs $\bar{\Gamma}_1,\Gamma_2,\bar{\Gamma}_2$ are regular. If $\bar{k}_1, n_1$ and $\bar{k}_2, n_2$ are the degree and the order of $\bar{\Gamma}_1$ and $\bar{\Gamma}_2$, respectively, then it follows from~\eqref{e82} that the degree of a vertex in $\Gamma\bar{\Gamma}$ equals
$$\bar{k}_1+n_2=(n_2-\bar{k}_2-1)+1=\bar{k}_2+n_1+1,$$
depending on the part~\eqref{e92} of the vertex set that it belongs. From the first equality we deduce that $\bar{k}_1+\bar{k}_2=0$ and hence $\bar{k}_1=0=\bar{k}_2$. The second equality yields $n_2=n_1+1$. Consequently, $\bar{\Gamma}_1\cong \overline{K_{n_1}}$, $\Gamma_2\cong K_{n_1+1}$, $\bar{\Gamma}_2\cong \overline{K_{n_1+1}}$. If $\Psi$ is a retraction from $\Gamma\bar{\Gamma}$ onto $\textrm{core}(\Gamma\bar{\Gamma})$, then a vertex in the set $\{(v,1): v\in V_1\}$ cannot be mapped into $\{(v,2): v\in V_1\}$ by $\Psi$, since $\bar{\Gamma}_1\cong \overline{K_{n_1}}$. Consequently, it follows from~\eqref{e78} that $n_1=1$, $\textrm{core}(\Gamma\bar{\Gamma})$ is isomorphic to a pentagon, and $$\Psi(v_1,1)=(v_2,2)$$ for some $v_2\in V_2$, where  $V_1=\{v_1\}$. Since $\Gamma\ncong\overline{P_3}$, it follows that $V_3\neq \emptyset$. Let $w\in V_3$. From~\eqref{e76} we deduce that
$\Psi(w,2)=(v_1,2)$. In particular, $(v_1,2)\nsim (w,2)$, i.e. $(v_1,1)\sim (w,1)\sim (w,2)$. Consequently, $ (v_2,2), \Psi(w,1), (v_1,2)$ form a triangle in the pentagon $\textrm{core}(\Gamma\bar{\Gamma})$, a contradiction.

In the same way we see that the claim (v) in Lemma~\ref{lemma-core} can be true only if~$\Gamma$ is isomorphic to the path on three vertices.
\end{proof}

\begin{thm}\label{thm-strg}
If $\Gamma$ is a strongly regular self-complementary graph, then $\Gamma\bar{\Gamma}$ is a core.
\end{thm}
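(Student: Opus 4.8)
The plan is to show that, of the five possibilities for $\textrm{core}(\Gamma\bar{\Gamma})$ listed in Lemma~\ref{lemma-core}, only possibility~(i) can hold. Put $n:=|V(\Gamma)|$; since $n=1$ makes $\Gamma\bar{\Gamma}\cong K_2$, which is trivially a core, assume $n>1$, so that Lemma~\ref{strg} applies. As $\Gamma$ is $\big(\frac{n-1}{2}\big)$-regular, Corollary~\ref{c3} already excludes possibilities~(iv) and~(v), so it suffices to rule out~(ii) and~(iii). In case~(ii) the vertices of $\textrm{core}(\Gamma\bar{\Gamma})$ lie in $W_1$, so composing a retraction onto $\textrm{core}(\Gamma\bar{\Gamma})$ with the inclusion of $\langle W_1\rangle\cong\Gamma$ gives a homomorphism $\Gamma\bar{\Gamma}\to\Gamma$; case~(iii) likewise yields a homomorphism $\Gamma\bar{\Gamma}\to\bar{\Gamma}$. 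By Lemma~\ref{lemma-theta} both cases force $\vartheta(\overline{\Gamma\bar{\Gamma}})\le\max\{\vartheta(\bar{\Gamma}),\vartheta(\Gamma)\}$, and the idea is to contradict this using the spectrum.

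By Lemma~\ref{strg} the eigenvalues of $\Gamma$ are $\frac{n-1}{2}$, $\lambda_2=\frac{\sqrt{n}-1}{2}$ and $\lambda_n=\frac{-\sqrt{n}-1}{2}$; since $\Gamma$ is self-complementary, $\bar{\Gamma}\cong\Gamma$ has the same spectrum. Feeding the smallest eigenvalue $\frac{-\sqrt{n}-1}{2}$ into Lemma~\ref{lemma-thetabound} gives $\vartheta(\Gamma)=\vartheta(\bar{\Gamma})\le\frac{n}{1-\frac{(n-1)/2}{(-\sqrt{n}-1)/2}}=\sqrt{n}$. For the other side, $\Gamma\bar{\Gamma}$ is a connected $\big(\frac{n+1}{2}\big)$-regular graph on $2n$ vertices, and the crucial point is that $2\lambda_2+1=\sqrt{n}$ and $2\lambda_n+1=-\sqrt{n}$, so the two expressions in~\eqref{e8} of Corollary~\ref{t1} coincide: the smallest eigenvalue of $\Gamma\bar{\Gamma}$ is exactly $\frac{-1-\sqrt{n+4}}{2}$. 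Lemma~\ref{lemma-thetabound} then yields $\vartheta(\Gamma\bar{\Gamma})\le\frac{2n}{1+\frac{n+1}{1+\sqrt{n+4}}}$, and combining with Lemma~\ref{lemma-thetaproduct}, which gives $\vartheta(\Gamma\bar{\Gamma})\,\vartheta(\overline{\Gamma\bar{\Gamma}})\ge 2n$, we obtain $\vartheta(\overline{\Gamma\bar{\Gamma}})\ge 1+\frac{n+1}{1+\sqrt{n+4}}$.

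What remains is the elementary inequality $1+\frac{n+1}{1+\sqrt{n+4}}>\sqrt{n}$ for $n>1$. Writing $t=\sqrt{n+4}$, the left side becomes $\frac{(t+2)(t-1)}{t+1}$ and the right side $\sqrt{(t-2)(t+2)}$; squaring (both sides being positive) and clearing denominators reduces the claim to $(t+2)(t-1)^2>(t-2)(t+1)^2$, i.e. $t^3-3t+2>t^3-3t-2$, which is true. Hence $\vartheta(\overline{\Gamma\bar{\Gamma}})>\sqrt{n}\ge\max\{\vartheta(\Gamma),\vartheta(\bar{\Gamma})\}$, contradicting the consequence of~(ii) and~(iii). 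Thus possibility~(i) of Lemma~\ref{lemma-core} holds, i.e.\ $\Gamma\bar{\Gamma}$ is a core.

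The step I expect to be the real obstacle is getting the spectral comparison to be strict: the two $\vartheta$-values being compared differ only by a multiplicative factor $1+O(n^{-1/2})$, so crude bounds cannot separate them. One genuinely needs the exact smallest eigenvalue of $\Gamma\bar{\Gamma}$ from Corollary~\ref{t1}, the fortunate coincidence that for a strongly regular \emph{self-complementary} $\Gamma$ the two alternatives in~\eqref{e8} are equal, and the exact evaluation $\vartheta(\Gamma)=\sqrt{n}$ from Lemma~\ref{strg} together with Lemma~\ref{lemma-thetabound}. Apart from that, the proof is mainly assembly of already-quoted results.
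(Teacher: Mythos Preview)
Your proof is correct and follows essentially the same approach as the paper: reduce via Corollary~\ref{c3} to ruling out cases~(ii) and~(iii) of Lemma~\ref{lemma-core}, obtain a homomorphism $\Gamma\bar{\Gamma}\to\Gamma$ (or $\bar{\Gamma}$), and derive a contradiction from Lemmas~\ref{lemma-theta}, \ref{lemma-thetaproduct}, \ref{lemma-thetabound} together with the exact smallest eigenvalue of $\Gamma\bar{\Gamma}$ from Corollary~\ref{t1}. The only cosmetic differences are that the paper uses self-complementarity to treat just case~(iii), and verifies the final strict inequality $n+1>(\sqrt{n}-1)(\sqrt{n+4}+1)$ via AM--GM rather than your substitution $t=\sqrt{n+4}$; the two inequalities are algebraically equivalent.
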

\begin{proof}
The claim is obvious if $n:=|V(\Gamma)|=1.$ Let $n>1$. By Corollary~\ref{c3} only the statements (i), (ii), (iii) in Lemma~\ref{lemma-core} could be possible. Since $\Gamma$ is self-complementary it suffices to prove that (iii) leads to a contradiction. Hence, assume that $\textrm{core}(\Gamma\bar{\Gamma})\cong\textrm{core}(\bar{\Gamma})$. By Lemma~\ref{lemma-izomorfnostjeder} there exists a homomorphism between $\Gamma\bar{\Gamma}$ and $\bar{\Gamma}$. Hence, Lemma~\ref{lemma-theta} implies that $\vartheta\big(\overline{\Gamma\bar{\Gamma}}\big)\leq \vartheta(\Gamma)$ and  Lemma~\ref{lemma-thetaproduct} yields
\begin{equation}\label{e93}
\frac{2n}{\vartheta(\Gamma\bar{\Gamma})}\leq \vartheta(\Gamma).
\end{equation}
Recall that graphs $\Gamma$ and $\Gamma\bar{\Gamma}$ are regular of degree $\frac{n-1}{2}$ and $\frac{n+1}{2}$, respectively. Moreover, by Lemma~\ref{strg} and Corollary~\ref{t1}, their smallest eigenvalues equal $\frac{-\sqrt{n}-1}{2}$ and $\frac{-\sqrt{n+4}-1}{2}$, respectively. Hence, if we apply Lemma~\ref{lemma-thetabound} on both sides of~\eqref{e93} we deduce that
$$\frac{2n}{\frac{2n}{1-\frac{\frac{n+1}{2}}{\frac{-\sqrt{n+4}-1}{2}}}}\leq \frac{n}{1-\frac{\frac{n-1}{2}}{\frac{-\sqrt{n}-1}{2}}},$$
which simplifies into
$$n+1\leq (\sqrt{n}-1)(\sqrt{n+4}+1).$$
However, the inequality between the geometric and the arithmetic mean yields
\begin{align*}
(\sqrt{n}-1)(\sqrt{n+4}+1)&=\sqrt{n(n+4)}+\sqrt{n}-\sqrt{n+4}-1\\
&<\sqrt{n(n+4)}-1\\
&\leq \frac{n+(n+4)}{2}-1=n+1,
\end{align*}
a contradiction.
\end{proof}

\begin{cor}\label{c5}
Let $\Gamma$ be a vertex-transitive self-complementary graph. Then only the statements (i), (ii), (iii) in Lemma~\ref{lemma-core} are possible.
\end{cor}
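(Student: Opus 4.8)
The plan is to derive this as an immediate consequence of Corollary~\ref{c3}. First I would note that a vertex-transitive graph is regular, so a vertex-transitive self-complementary graph $\Gamma$ on $n$ vertices is in particular a regular self-complementary graph. Then I would invoke the elementary edge-counting fact (already recorded in Subsection~\ref{subsection-sc}) that a regular self-complementary graph is necessarily $\big(\frac{n-1}{2}\big)$-regular: if $\Gamma$ is $k$-regular, then $\bar{\Gamma}$ is $(n-1-k)$-regular, and since $\Gamma\cong\bar{\Gamma}$ they have the same number of edges, which forces $k=n-1-k$.

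Once $\Gamma$ is identified as a $\big(\frac{n-1}{2}\big)$-regular graph, it satisfies the hypothesis of Corollary~\ref{c3}, which already states that among the five alternatives in Lemma~\ref{lemma-core} only (i), (ii), (iii) can occur; this yields the conclusion. I would also make the harmless observation that the graphs $K_2$ and $\overline{K_2}$, excluded in Lemma~\ref{lemma-core}, are not self-complementary (indeed, no $\big(\frac{n-1}{2}\big)$-regular graph exists for $n=2$), so the cited results genuinely apply, while the degenerate case $\Gamma\cong K_1$ is trivially covered since then $\Gamma\bar{\Gamma}\cong K_2$ is a core and (i) holds.

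There is no substantive obstacle here: the entire combinatorial work — ruling out the partition-type possibilities (iv) and (v) under the $\big(\frac{n-1}{2}\big)$-regularity assumption via the degree estimates on the sets $V_1,V_2,V_3$ — has already been carried out in the proof of Corollary~\ref{c3}. The only content of the present corollary is the trivial reduction "vertex-transitive and self-complementary $\Longrightarrow$ $\big(\frac{n-1}{2}\big)$-regular", so the proof is a one-line deduction.
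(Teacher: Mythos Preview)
Your proposal is correct and matches the paper's own proof, which likewise states that the claim follows directly from Corollary~\ref{c3}. The paper additionally notes an alternative route via Corollary~\ref{c4} together with Lemma~\ref{lemma-core-vt} (using that $\textrm{core}(\Gamma\bar{\Gamma})$ is vertex-transitive, hence regular), but your reduction is precisely the primary argument given.
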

\begin{proof}
The claim follows directly from Corollary~\ref{c3}. The same claim follows also from Corollary~\ref{c4}, Lemma~\ref{lemma-core-vt}, and the fact that the order of a regular self-complementary graph equals 1 modulo 4.
\end{proof}

\begin{lemma}\label{lema-core-not-complete}
If $\Gamma$ is a vertex-transitive self-complementary graph on $n>1$ vertices, then $\textrm{core}(\Gamma\bar{\Gamma})$ is not a complete graph.
\end{lemma}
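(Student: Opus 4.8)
The plan is to apply Corollary~\ref{c5}, which forces $\textrm{core}(\Gamma\bar{\Gamma})$ into one of the possibilities (i), (ii), (iii) of Lemma~\ref{lemma-core}, and then to rule out each of them producing a complete graph. Throughout, write $n=|V(\Gamma)|$; since $\Gamma$ is vertex-transitive and self-complementary, $\Gamma$ is regular on $n\equiv 1\pmod 4$ vertices, $\bar{\Gamma}\cong\Gamma$, and $\alpha(\Gamma)=\omega(\bar{\Gamma})=\omega(\Gamma)=:m$. In possibility (i) we have $\textrm{core}(\Gamma\bar{\Gamma})=\Gamma\bar{\Gamma}$, and by Lemma~\ref{p6} the diameter of $\Gamma\bar{\Gamma}$ is at least two, because $n>1$ means $\Gamma\not\cong K_1$; since a complete graph has diameter at most one, $\Gamma\bar{\Gamma}$ is not complete. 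So it remains to treat (ii) and (iii).

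In both of these cases $\textrm{core}(\Gamma\bar{\Gamma})\cong\textrm{core}(\Gamma)$: in case (ii) this is immediate, and in case (iii) it follows from $\textrm{core}(\bar{\Gamma})\cong\textrm{core}(\Gamma)$, which holds since $\bar{\Gamma}\cong\Gamma$. Assume for contradiction that this common core is a complete graph. Then $\chi(\Gamma)=\omega(\Gamma)=m$, and there is a graph homomorphism $\varphi:\Gamma\bar{\Gamma}\to K_m$. Restricting $\varphi$ to the induced copies $W_1\cong\Gamma$ and $W_2\cong\bar{\Gamma}$ yields proper $m$-colourings $c_1$ of $\Gamma$ and $c_2$ of $\bar{\Gamma}$, and from the matching edges $(v,1)\sim(v,2)$ of $\Gamma\bar{\Gamma}$ we get $c_1(v)\neq c_2(v)$ for every $v\in V(\Gamma)$.

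The decisive step is now to contradict the existence of these two colourings. Since $\chi(\Gamma)=m=\omega(\Gamma)$, Corollary~\ref{lemma-preimage} applies to $c_1:\Gamma\to K_{\omega(\Gamma)}$, and — because $\bar{\Gamma}$ is also vertex-transitive and $\omega(\bar{\Gamma})=\alpha(\Gamma)=m$ by self-complementarity — also to $c_2:\bar{\Gamma}\to K_{\omega(\bar{\Gamma})}$. Hence $\alpha(\Gamma)\omega(\Gamma)=n$, every colour class of $c_1$ has size $\alpha(\Gamma)$ and is therefore a maximum independent set of $\Gamma$, and every colour class of $c_2$ has size $\alpha(\bar{\Gamma})=m$ and is therefore a maximum clique of $\Gamma$. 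As the equality $\alpha(\Gamma)\omega(\Gamma)=|V(\Gamma)|$ holds, the vertex-transitive clause of Lemma~\ref{clique-coclique} gives $|C\cap I|=1$ for each such clique $C$ and each such independent set $I$. Fixing any colour $\ell$, the set $c_1^{-1}(\ell)\cap c_2^{-1}(\ell)$ therefore consists of exactly one vertex $w$, which satisfies $c_1(w)=\ell=c_2(w)$, contradicting $c_1(w)\neq c_2(w)$. This rules out (ii) and (iii) and completes the proof.

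The reduction via Corollary~\ref{c5} and the diameter observation for case (i) are routine; I expect the only delicate point to be the bookkeeping in the last paragraph, namely invoking Corollary~\ref{lemma-preimage} twice (once for $\Gamma$, once for $\bar{\Gamma}$, which is justified by vertex-transitivity and $\omega(\bar{\Gamma})=\alpha(\Gamma)=\omega(\Gamma)$) and then pairing a colour class of $c_1$ against a colour class of $c_2$ inside the equality clause of Lemma~\ref{clique-coclique}, so that a single vertex is forced to receive the same colour from $c_1$ and $c_2$.
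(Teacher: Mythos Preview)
Your proof is correct, but it takes a genuinely different route from the paper's. The paper never invokes Lemma~\ref{lemma-core} or Corollary~\ref{c5} here; it argues directly that $\chi(\Gamma\bar{\Gamma})>\omega(\Gamma\bar{\Gamma})$. Using Lemma~\ref{clique-coclique} one has $\omega(\Gamma\bar{\Gamma})=\omega(\Gamma)\le\sqrt{n}$, and for any independent set $I$ of $\Gamma\bar{\Gamma}$, writing $I_i=I\cap W_i$ with underlying sets $J_1,J_2\subseteq V(\Gamma)$, the matching edges force $J_1\cap J_2=\emptyset$; since $J_1$ is independent and $J_2$ is a clique in $\Gamma$, the equality clause of Lemma~\ref{clique-coclique} gives $|J_1|\cdot|J_2|<n$, whence $|I|=|J_1|+|J_2|<2\sqrt{n}$, and so $\chi(\Gamma\bar{\Gamma})\ge 2n/\alpha(\Gamma\bar{\Gamma})>\sqrt{n}\ge\omega(\Gamma\bar{\Gamma})$.

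Your argument instead restricts a putative homomorphism $\Gamma\bar{\Gamma}\to K_m$ to $W_1$ and $W_2$, obtaining optimal colourings $c_1$ of $\Gamma$ and $c_2$ of $\bar{\Gamma}$ that disagree at every vertex, and then uses Corollary~\ref{lemma-preimage} together with the equality clause of Lemma~\ref{clique-coclique} to force $|c_1^{-1}(\ell)\cap c_2^{-1}(\ell)|=1$, a contradiction. Both proofs ultimately hinge on the same clique--coclique intersection property, but you apply it to a single pair (one colour class from each side) while the paper applies it to bound $\alpha(\Gamma\bar{\Gamma})$ globally. Two remarks worth making: first, your detour through Corollary~\ref{c5} and the case split is unnecessary---if $\textrm{core}(\Gamma\bar{\Gamma})\cong K_t$ then $t=\omega(\Gamma\bar{\Gamma})=\omega(\Gamma)=m$ directly (since by Lemma~\ref{l4} any clique of size $\ge 3$ lies in $W_1$ or $W_2$), and your colouring contradiction applies without knowing which of (i)--(iii) holds; second, the paper's version yields the quantitative inequality $\chi(\Gamma\bar{\Gamma})>\sqrt{n}\ge\omega(\Gamma\bar{\Gamma})$, whereas yours gives only the qualitative conclusion.
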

\begin{proof}
We need to prove that $\chi(\Gamma\bar{\Gamma})>\omega(\Gamma\bar{\Gamma})$. Since $n>1$ and $\Gamma$ is both self-complementary and vertex-transitive, Lemma~\ref{clique-coclique} implies that
$$\omega(\Gamma\bar{\Gamma})=\max\{\alpha(\Gamma),\omega(\Gamma)\}=\omega(\Gamma)\leq \sqrt{n}.$$
Let $I$ be any independent set in $\Gamma\bar{\Gamma}$. Then $I$ is a disjoint union of sets $I_1\subseteq W_1$ and $I_2\subseteq W_2$. If we write $I_i=\{(u,i) : u\in J_i\}$ for $i\in \{1,2\}$,  where $J_1, J_2\subseteq V(\Gamma)$, then
\begin{equation}\label{e94}
J_1\cap J_2=\emptyset.
\end{equation}
Since $J_1$ and $J_2$ are independent set and a clique in $\Gamma$, respectively, we have $|J_1|\leq \alpha(\Gamma)=\omega(\Gamma)\leq \sqrt{n}$ and $|J_2|\leq \omega(\Gamma)=\sqrt{n}$, while Lemma~\ref{clique-coclique} and \eqref{e94} imply that $|J_1|\cdot |J_2|<n$. Hence,
$|I|=|J_1|+|J_2|< 2\sqrt {n}$, and therefore
$$\chi(\Gamma\bar{\Gamma})\geq \frac{|V(\Gamma\bar{\Gamma})|}{\alpha(\Gamma\bar{\Gamma})}>\frac{2n}{2\sqrt{n}}=\sqrt{n}\geq \omega(\Gamma\bar{\Gamma}).\qedhere$$
\end{proof}

\begin{thm}\label{thm-vt}
Let $\Gamma$ be a vertex-transitive self-complementary graph. If $\Gamma$ is either a core or its core is a complete graph, then $\Gamma\bar{\Gamma}$ is a core.
\end{thm}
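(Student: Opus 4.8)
The plan is to feed the assumption into Corollary~\ref{c5} and then close off the one surviving possibility using Lemma~\ref{lema-core-not-complete} in one case and Lemma~\ref{fiksna_tocka} in the other. Write $n=|V(\Gamma)|$. If $n=1$ then $\Gamma\bar{\Gamma}\cong K_2$ is a core, so assume $n>1$; being vertex-transitive, $\Gamma$ is regular, and being regular and self-complementary, $n\equiv 1\pmod 4$. By Corollary~\ref{c5} only possibilities (i), (ii), (iii) of Lemma~\ref{lemma-core} can occur for $\textrm{core}(\Gamma\bar{\Gamma})$; possibility (i) is the desired conclusion, and since $\Gamma$ is self-complementary $\textrm{core}(\bar{\Gamma})\cong\textrm{core}(\Gamma)$, so (iii) is subsumed by (ii). Hence it suffices to reach a contradiction from the assumption that (ii) holds, i.e. that all vertices of $\textrm{core}(\Gamma\bar{\Gamma})$ lie in $W_1$ and $\textrm{core}(\Gamma\bar{\Gamma})\cong\textrm{core}(\Gamma)$. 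If $\textrm{core}(\Gamma)$ is a complete graph this is immediate: then $\textrm{core}(\Gamma\bar{\Gamma})$ would be a complete graph, contradicting Lemma~\ref{lema-core-not-complete} (which applies since $n>1$). So from now on I would assume that $\Gamma$ itself is a core.

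In that case $\textrm{core}(\Gamma)=\Gamma$, so $\textrm{core}(\Gamma\bar{\Gamma})$ is an induced subgraph of $\Gamma\bar{\Gamma}$ contained in $W_1$, having $n$ vertices and isomorphic to $\Gamma$; since the subgraph induced on $W_1$ is already a copy of $\Gamma$ on exactly $n$ vertices, $\textrm{core}(\Gamma\bar{\Gamma})$ must equal that whole induced subgraph. Thus there is a retraction $\Psi\colon\Gamma\bar{\Gamma}\to\langle W_1\rangle$ fixing every vertex of $W_1$, and we may write $\Psi(v,2)=(\psi_0(v),1)$ for a map $\psi_0\colon V(\Gamma)\to V(\Gamma)$. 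Two observations then do the work: applying $\Psi$ to the matching edge $(v,1)\sim(v,2)$ and using $\Psi(v,1)=(v,1)$ gives $v\sim_{\Gamma}\psi_0(v)$, so in particular $\psi_0(v)\neq v$ for all $v$; and applying $\Psi$ to an edge $(u,2)\sim(w,2)$ of $W_2$ (so $u\sim_{\bar{\Gamma}}w$) gives $\psi_0(u)\sim_{\Gamma}\psi_0(w)$, so $\psi_0$ is a graph homomorphism $\bar{\Gamma}\to\Gamma$. Picking any antimorphism $\sigma\in\overline{\textrm{Aut}(\Gamma)}$, the composite $\psi_0\circ\sigma$ lies in $\textrm{End}(\Gamma)=\textrm{Aut}(\Gamma)$ because $\Gamma$ is a core, hence $\psi_0=(\psi_0\circ\sigma)\circ\sigma^{-1}$ is a composition with exactly one antimorphism factor, so $\psi_0\in\overline{\textrm{Aut}(\Gamma)}$ is itself an antimorphism of the regular self-complementary graph $\Gamma$. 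By Lemma~\ref{fiksna_tocka} it must fix some vertex, contradicting $\psi_0(v)\neq v$. This rules out (ii), leaving (i), i.e. $\Gamma\bar{\Gamma}$ is a core.

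\textbf{Where the difficulty lies.} The structural reduction and the complete-core case are essentially free once Corollary~\ref{c5} and Lemmas~\ref{lemma-core},~\ref{lema-core-not-complete} are in hand, so the real content is the core case. The delicate point there is the passage from ``$\textrm{core}(\Gamma\bar{\Gamma})$ is isomorphic to $\Gamma$ and sits inside $W_1$'' to ``$\textrm{core}(\Gamma\bar{\Gamma})$ is exactly $\langle W_1\rangle$ and therefore a retraction fixing $W_1$ pointwise exists'' — this is what lets us manufacture $\psi_0$ and forces it to be a fixed-point-free antimorphism. Once that is set up, the clash with Lemma~\ref{fiksna_tocka} (equivalently, with the fact that $n\equiv 1\pmod 4$ and the cycle structure of antimorphisms of regular self-complementary graphs) is what finishes the argument; I expect no further obstacle beyond bookkeeping with the two adjacency relations in $\Gamma\bar{\Gamma}$.
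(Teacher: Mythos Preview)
Your proposal is correct and follows essentially the same approach as the paper: reduce via Corollary~\ref{c5} to cases (i)--(iii), dispose of the complete-core case with Lemma~\ref{lema-core-not-complete}, and in the core case turn the retraction into an antimorphism of $\Gamma$ whose fixed-point behaviour (Lemma~\ref{fiksna_tocka}) contradicts the matching edges. The only cosmetic differences are that the paper argues with case~(iii) while you argue with case~(ii), and the paper phrases the final contradiction as ``$\Psi$ sends both endpoints of a matching edge to the same vertex'' rather than ``$\psi_0$ is fixed-point free,'' which are equivalent; your reduction of (iii) to (ii) would be better justified by the automorphism of $\Gamma\bar{\Gamma}$ swapping $W_1$ and $W_2$ (available since $\Gamma$ is self-complementary) than by the isomorphism of cores alone, but the paper is equally terse on this point.
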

\begin{proof}
By Corollary~\ref{c5}, only the statements (i), (ii), (iii) in Lemma~\ref{lemma-core} are possible. Suppose that (iii) is correct, that is,
\begin{equation}\label{e95}
V(\textrm{core}(\Gamma\bar{\Gamma}))\subseteq W_2
\end{equation}
and
\begin{equation}\label{e96}
\textrm{core}(\Gamma\bar{\Gamma})\cong \textrm{core}(\bar{\Gamma}).
\end{equation}
If the core of $\Gamma$ is a complete graph, then the self-complementarity and \eqref{e96} yields a contradiction by Lemma~\ref{lema-core-not-complete}. Suppose now that $\Gamma$ is a core. Then the self-complementarity and \eqref{e96} imply
that $\textrm{core}(\Gamma\bar{\Gamma})\cong \bar{\Gamma}$. Hence, \eqref{e95} yields
\begin{equation}\label{e97}
V(\textrm{core}(\Gamma\bar{\Gamma}))=W_2.
\end{equation}
Let
$\psi_1(v)=(v,1)$; $(v\in V(\Gamma))$ be the canonical isomorphism between $\Gamma$ and the subgraph in $\Gamma\bar{\Gamma}$, which is induced by the set $W_1$. Similarly, let $\psi_2(v)=(v,2)$; $(v\in V(\Gamma))$ be the canonical isomorphism between $\bar{\Gamma}$ and the subgraph induced by $W_2$. If $\Psi$ is any retraction from $\Gamma\bar{\Gamma}$ onto $\textrm{core}(\Gamma\bar{\Gamma})$, and $\sigma$ is any antimorphism between $\bar{\Gamma}$ and $\Gamma$, then the composition $\sigma \circ \psi_2^{-1}\circ (\Psi|_{W_1})\circ \psi_1$ is an endomorphism of $\Gamma$. Since $\Gamma$ is a core, the restriction $\Psi|_{W_1}$ is an isomorphism between the subgraphs in $\Gamma\bar{\Gamma}$ that are induced by the sets $W_1$ and $W_2$, respectively. Consequently $\psi_2^{-1}\circ (\Psi|_{W_1})\circ \psi_1: \Gamma\to\bar{\Gamma}$ is an antimorphism. By Lemma~\ref{fiksna_tocka}, there exists $v\in V(\Gamma)$ such that $\big(\psi_2^{-1}\circ (\Psi|_{W_1})\circ \psi_1\big)(v)=v$. Consequently, $\Psi(v,1)=(\Psi|_{W_1})(v,1)=(v,2)$. Since $\Psi$ is a retraction, \eqref{e97} implies that $\Psi(v,2)=(v,2)$. Since $\{(v,1), (v,2)\}$ is an edge in $\Gamma\bar{\Gamma}$, we have a contradiction.

In the same way we see that (ii) in Lemma~\ref{lemma-core} is not possible.
\end{proof}
\begin{remark}
The proof of Theorem~\ref{thm-vt} can be viewed as an additional proof of Theorem~\ref{thm-strg} if the following modifications are made: Corollary~\ref{c5} is replaced by Corollary~\ref{c3}, and the analog of Lemma~\ref{lema-core-not-complete} for strongly regular self-complementary graphs is proved by the application of~\cite[Corolarry 3.8.6 and Theorem~3.8.4]{godsil_cambridge} instead of Lemma~\ref{clique-coclique}. Finally, a nontrivial result from~\cite{roberson-strgreg} is applied, which states that a primitive strongly regular graph is either a core or its core is complete.
\end{remark}

Recall from the introduction that many `nice' graphs are either cores or their cores are complete. Hence it is expected that many vertex-transitive self-complementary graphs fulfil the assumptions in Theorem~\ref{thm-vt}. Consequently, we state the following open problem.

\begin{openproblem}\label{op}
Does there exist a vertex-transitive self-complementary graph $\Gamma$  such that $\Gamma\bar{\Gamma}$ is not a core?
\end{openproblem}

Note that edge-transitive self-complementary graphs are also arc-transitive (see~\cite{sc-survey}). Since self-complementary graphs are connected by Lemma~\ref{l3}, it follows that any edge-transitive self-complementary graph is also vertex-transitive.
However, such graphs are always strongly regular (cf.~\cite{sc-survey}) and therefore Theorem~\ref{thm-strg} implies that $\Gamma\bar{\Gamma}$ is a core. Despite the orders of vertex-transitive self-complementary graphs were fully determined by Muzychuk~\cite{muzychuk}, there is a major gap between the understanding of vertex-transitive self-complementary graphs and the understanding of edge-transitive self-complementary graphs. In fact, the later were completely characterized by Peisert~\cite{peisert}. Recall also that the first non-Cayley vertex-transitive self-complementary graph was constructed only in 2001~\cite{praeger}, and the construction is highly nontrivial. We believe that all these facts indicate that Open Problem~\ref{op} may be challenging. Below we consider the only vertex-transitive self-complementary graphs we are aware of, which are neither cores nor their cores are complete graphs (i.e. they do not satisfy the assumption in Theorem~\ref{thm-vt}). They do not solve Open Problem~\ref{op}.

\begin{prop}\label{prop-lex1}
Let $\Gamma_1,\Gamma_2$ be vertex-transitive self-complementary graphs. If $\textrm{core}(\Gamma_1)$ is complete, $\Gamma_2$ is a core, and $\Gamma=\Gamma_1[\Gamma_2]$, then $\Gamma\bar{\Gamma}$ is a core.
\end{prop}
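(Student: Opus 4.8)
The plan is to combine the structural dichotomy of Lemma~\ref{lemma-core} with the lexicographic‐product machinery from Subsection~2.4 in order to reduce the statement to Theorem~\ref{thm-vt}. First I would observe that $\Gamma=\Gamma_1[\Gamma_2]$ is vertex-transitive and self-complementary, since $\Gamma_1$ and $\Gamma_2$ are (the product of vertex-transitive graphs is vertex-transitive by \cite[Theorem~10.14]{handbook_produkti}, and $\overline{\Gamma_1[\Gamma_2]}=\bar{\Gamma}_1[\bar{\Gamma}_2]$). In particular $\Gamma$ is regular of degree $\frac{|V(\Gamma)|-1}{2}$, so Corollary~\ref{c5} applies: only the possibilities (i),(ii),(iii) of Lemma~\ref{lemma-core} can occur for $\textrm{core}(\Gamma\bar{\Gamma})$. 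Since $\Gamma$ is self-complementary, it suffices to rule out (iii), i.e. to show that we cannot have $\textrm{core}(\Gamma\bar{\Gamma})\cong\textrm{core}(\bar{\Gamma})$ with all vertices of the core contained in $W_2$.

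The key point is to identify $\textrm{core}(\Gamma)$ explicitly. Since $\textrm{core}(\Gamma_1)=K_m$ for some $m$ and $\Gamma_2$ is a core, Lemma~\ref{lemma-knauer} tells us that $K_m[\Gamma_2]$ is a core; and there is an obvious homomorphism $\Gamma_1[\Gamma_2]\to K_m[\Gamma_2]$ (apply a retraction of $\Gamma_1$ onto $K_m$ in the first coordinate) and one back (since $K_m[\Gamma_2]$ is an induced subgraph of $\Gamma_1[\Gamma_2]$ because $K_m$ embeds in $\Gamma_1$). Hence by Lemma~\ref{lemma-izomorfnostjeder}, $\textrm{core}(\Gamma)\cong K_m[\Gamma_2]$. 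Thus $\Gamma$ is, in general, \emph{neither} a core \emph{nor} does it have a complete core, so Theorem~\ref{thm-vt} does not apply directly; this is exactly why a separate argument is needed. However, $\textrm{core}(\Gamma_1)[\Gamma_2]=K_m[\Gamma_2]$ is a core, which is precisely the hypothesis appearing in Lemma~\ref{zadnja-lex} and in Corollary~\ref{c8}. The idea is to use these lemmas to describe homomorphisms out of $\Gamma\bar{\Gamma}$ very explicitly once we know the retraction lands in $W_2$.

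So suppose, for contradiction, that (iii) of Lemma~\ref{lemma-core} holds. Then there is a retraction $\Psi:\Gamma\bar{\Gamma}\to\textrm{core}(\Gamma\bar{\Gamma})$ with image inside $W_2$, hence $\Psi|_{W_1}$ composed with the canonical isomorphism $\psi_1:\Gamma\to\langle W_1\rangle$ and $\psi_2^{-1}:\langle W_2\rangle\to\bar{\Gamma}$ gives a homomorphism $\Gamma\to\bar{\Gamma}$, i.e. (composing with an antimorphism $\sigma:\bar{\Gamma}\to\Gamma$) an endomorphism $f$ of $\Gamma=\Gamma_1[\Gamma_2]$. Since $\textrm{core}(\Gamma)=K_m[\Gamma_2]$ has $m\,|V(\Gamma_2)|=\alpha(\Gamma)\omega(\Gamma)$ vertices, Corollary~\ref{lemma-preimage}-type counting (applied to the composition $\Gamma\to\textrm{core}(\Gamma)\to K_m$) pins down the fibre structure of $f$. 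Concretely: the $m$ fibres of the composed homomorphism $\Gamma\to K_m$ are precisely the independent sets $\{v_1\}\times V(\Gamma_2)$ for $v_1$ ranging over the vertices of a maximum clique of $\Gamma_1$, and on each such fibre $f$ restricts to an automorphism of $\Gamma_2$. Pushing this back through $\Psi$, one gets that $\Psi$ permutes the "columns" $\{(v,i):v\in\{v_1\}\times V(\Gamma_2)\}$ in a controlled way, and the argument of Lemma~\ref{zadnja-lex} forces $\Psi|_{W_1}$ to have the product form $(\psi,\beta)$ with $\psi$ a homomorphism $\Gamma_1\to\textrm{core}(\Gamma_1)=K_m$ and each $\beta(v_1)\in\textrm{Aut}(\Gamma_2)$. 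In particular the induced antimorphism $\psi_2^{-1}\circ(\Psi|_{W_1})\circ\psi_1$ of the form $(\text{antimorphism of }\Gamma_1,\ \text{family of antimorphisms of }\Gamma_2)$ must, by Lemma~\ref{fiksna_tocka} applied to the \emph{regular} self-complementary graph $\Gamma_1$, fix some vertex $v_1^{\star}\in V(\Gamma_1)$; restricting to the column over $v_1^{\star}$ and invoking Lemma~\ref{fiksna_tocka} again for $\Gamma_2$ (also regular self-complementary) produces a vertex $v_2^{\star}$ of $\Gamma_2$ with $\Psi(v^{\star},1)=(v^{\star},2)$ where $v^{\star}=(v_1^{\star},v_2^{\star})$. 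Since $\Psi$ is a retraction and $V(\textrm{core}(\Gamma\bar{\Gamma}))=W_2$, also $\Psi(v^{\star},2)=(v^{\star},2)$, which collapses the edge $\{(v^{\star},1),(v^{\star},2)\}$ of $\Gamma\bar{\Gamma}$ — a contradiction. By self-complementarity the same argument rules out (ii), so only (i) remains, i.e. $\Gamma\bar{\Gamma}$ is a core.

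The main obstacle I anticipate is the step where one upgrades "$\Psi|_{W_1}$ is an isomorphism onto $\langle W_2\rangle$" (which follows easily once (iii) holds, because $\textrm{core}(\Gamma\bar{\Gamma})\cong\bar{\Gamma}$ would force $\langle W_2\rangle$ to be a core, forcing $\Psi|_{W_1}$ to be an isomorphism) into the precise product decomposition needed to feed Lemma~\ref{fiksna_tocka} into $\Gamma_1$ and then into $\Gamma_2$ separately. Making sure that Lemma~\ref{zadnja-lex} (which is phrased for homomorphisms into $\textrm{core}(\Gamma_1)[\Gamma_2]$) can be legitimately applied — in particular verifying its hypotheses that $\Gamma_2$ and $\bar{\Gamma}_2$ are connected (both true since $\Gamma_2$ is self-complementary, hence connected, and so is its complement) and that $\textrm{core}(\Gamma_1)[\Gamma_2]$ is a core (Lemma~\ref{lemma-knauer}) — is where the care lies. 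Everything else is bookkeeping with the canonical isomorphisms $\psi_1,\psi_2$ and the two applications of Lemma~\ref{fiksna_tocka}.
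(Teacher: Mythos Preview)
Your overall strategy is right (reduce to ruling out case~(iii) via Corollary~\ref{c5}, decompose homomorphisms into product form, and look for a vertex where $\Psi|_{W_1}$ and $\Psi|_{W_2}$ collide), but there is a genuine gap in the step where you claim that $\Psi|_{W_1}$ is an isomorphism onto $\langle W_2\rangle$ and hence that the first coordinate of $\psi_2^{-1}\circ(\Psi|_{W_1})\circ\psi_1$ is an \emph{antimorphism} of $\Gamma_1$. Under (iii) we only have $\textrm{core}(\Gamma\bar{\Gamma})\cong\textrm{core}(\bar{\Gamma})$, not $\textrm{core}(\Gamma\bar{\Gamma})\cong\bar{\Gamma}$. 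Since $\textrm{core}(\Gamma_1)$ is complete and $\Gamma_1$ is typically not complete, $\bar{\Gamma}=\bar{\Gamma}_1[\bar{\Gamma}_2]$ is \emph{not} a core; its core is $K_m[\bar{\Gamma}_2]$, which is strictly smaller. So $\Psi|_{W_1}$ is only a homomorphism $\Gamma\to\bar{\Gamma}$ whose first coordinate is some $\varphi\in\textrm{Hom}(\Gamma_1,K_m)$ (after composing with a retraction), not a bijection on $V(\Gamma_1)$. Lemma~\ref{fiksna_tocka} therefore cannot be applied to $\Gamma_1$ at all.

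The paper's proof handles the first coordinate by a different mechanism. It writes both $f_2:=\psi\circ(\Phi|_{W_2})\circ\psi_2$ and $f_1:=\psi\circ(\Phi|_{W_1})\circ\psi_1$ as maps into $K_m[\bar{\Gamma}_2]$ and uses Lemma~\ref{zadnja-lex} together with Corollary~\ref{c7} to put each in product form $(\varphi_i,\beta_i)$ (with an antimorphism $(\sigma_1,\gamma)$ absorbed into $f_1$). The first coordinates $\varphi_2$ and $\varphi_1\circ\sigma_1$ are then two homomorphisms $\bar{\Gamma}_1\to K_m$; by Corollary~\ref{lemma-preimage} the fibre of each over a fixed $v\in V(K_m)$ is a maximum independent set, respectively a maximum clique, of $\bar{\Gamma}_1$, and the \emph{equality case} of Lemma~\ref{clique-coclique} forces these to meet in a single vertex $v_1$. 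Only \emph{after} $v_1$ is found does Lemma~\ref{fiksna_tocka} enter, applied to the antimorphism $\big(\beta_1(\sigma_1(v_1))\circ\gamma(v_1)\big)^{-1}\circ\beta_2(v_1)$ of $\Gamma_2$, to produce $v_2$ with $f_1(v_1,v_2)=f_2(v_1,v_2)$ and hence the edge collapse. In short: your proposed double use of Lemma~\ref{fiksna_tocka} must be replaced by ``clique--coclique intersection on $\Gamma_1$, then Lemma~\ref{fiksna_tocka} on $\Gamma_2$''.
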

\begin{proof}
Recall that the lexicographic product $\Gamma$ is vertex-transitive and self-complementary. By Corollary~\ref{c5} only possibilities (i), (ii), (iii) in Lemma~\ref{lemma-core} may occur. Suppose that (iii) is correct, that is, $V(\textrm{core}(\Gamma\bar{\Gamma}))\subseteq W_2$ and $\textrm{core}(\Gamma\bar{\Gamma})\cong \textrm{core}(\bar{\Gamma})$. Let $\phi: \bar{\Gamma}_1\to K_m$ be any endomorphism onto a complete core of $\bar{\Gamma}_1$. Then the map $\bar{\Gamma}=\bar{\Gamma}_1[\bar{\Gamma}_2]\to K_m[\bar{\Gamma}_2]$, defined by $(v_1,v_2)\mapsto (\phi(v_1),v_2)$ for all $v_1\in V(\bar{\Gamma}_1),v_2\in V(\bar{\Gamma}_2)$, is a graph homomorphism. By Lemma~\ref{lemma-knauer} it follows that
$$\textrm{core}(\Gamma\bar{\Gamma})\cong \textrm{core}(\bar{\Gamma})=K_m[\bar{\Gamma}_2].$$
Let $\psi: \textrm{core}(\Gamma\bar{\Gamma})\to K_m[\bar{\Gamma}_2]$ be any isomorphism and let $\Phi: \Gamma\bar{\Gamma}\to \textrm{core}(\Gamma\bar{\Gamma})$ be any endomorphism. Let $\psi_1, \psi_2$ be defined as in the proof of Theorem~\ref{thm-vt}. Then $f_2:=\psi \circ (\Phi|_{W_2})\circ \psi_2\in \textrm{Hom}(\bar{\Gamma}_1[\bar{\Gamma}_2],K_m[\bar{\Gamma}_2])$. Similarly, if $\sigma : \Gamma\to \bar{\Gamma}$ is any antimorphism and $f_1:=\psi \circ (\Phi|_{W_1})\circ \psi_1$, then $f_1\circ \sigma^{-1}\in \textrm{Hom}(\bar{\Gamma}_1[\bar{\Gamma}_2],K_m[\bar{\Gamma}_2])$. By Lemma~\ref{zadnja-lex} and Corollary~\ref{c7} we deduce that
\begin{equation*}
f_2=(\varphi_2,\beta_2)\quad \textrm{and}\quad f_1=(\varphi_1,\beta_1)\circ (\sigma_1,\gamma),
\end{equation*}
where
\begin{align*}
&\varphi_1, \varphi_2\in \textrm{Hom}(\bar{\Gamma}_1,K_m),\\
&\beta_1,\beta_2: V(\bar{\Gamma}_1)\to \textrm{Hom}(\bar{\Gamma}_2,\bar{\Gamma}_2)=\textrm{Aut}(\bar{\Gamma}_2),\\
&\sigma_1\in \overline{\textrm{Aut}(\Gamma_1)},\\
&\gamma : V(\Gamma_1)\to \overline{\textrm{Aut}(\Gamma_2)}.
\end{align*}
That is,
\begin{align}
\label{e102}f_2(v_1,v_2)&=\big(\varphi_2(v_1),\beta_2(v_1)(v_2)\big),\\ \label{e103}f_1(v_1,v_2)&=\Big((\varphi_1\circ\sigma_1)(v_1),\big(\beta_1(\sigma_1(v_1))\circ\gamma(v_1)\big)(v_2)\Big)
\end{align}
for all $v_1\in V(\Gamma_1),v_2\in V(\Gamma_2)$. Pick any $v\in V(K_m)$. By Corollary~\ref{lemma-preimage}, $\varphi_2^{-1}(v)$ is an independent set in $\bar{\Gamma}_1$ of order $\alpha(\bar{\Gamma}_1)$, $(\varphi_1\circ\sigma_1)^{-1}(v)$ is a clique in~$\bar{\Gamma}_1$ of order $\alpha(\Gamma_1)=\omega(\bar{\Gamma}_1)$, and $\alpha(\bar{\Gamma}_1)\omega(\bar{\Gamma}_1)=|V(\Gamma_1)|$. By Lemma~\ref{clique-coclique}, there exists $$v_1\in \varphi_2^{-1}(v)\cap (\varphi_1\circ\sigma_1)^{-1}(v).$$ Since $\big(\beta_1(\sigma_1(v_1))\circ\gamma(v_1)\big)^{-1} \circ \beta_2(v_1)\in \overline{\textrm{Aut}(\Gamma_2)}$, Lemma~\ref{fiksna_tocka} yields a vertex $v_2\in V(\Gamma_2)$ such that $$\big(\beta_1(\sigma_1(v_1))\circ\gamma(v_1)\big)(v_2)=\beta_2(v_1)(v_2).$$ Consequently,
$f_1(v_1,v_2)=f_2(v_1,v_2)$ by \eqref{e102}-\eqref{e103}. Therefore
\begin{align*}
\Phi\big((v_1,v_2),1\big)&=(\psi^{-1}\circ f_1\circ \psi_1^{-1})\big((v_1,v_2),1\big)\\
&=(\psi^{-1}\circ f_2\circ \psi_2^{-1})\big((v_1,v_2),2\big)=\Phi\big((v_1,v_2),2\big),
\end{align*}
which is a contradiction, since $\{\big((v_1,v_2),1\big),\big((v_1,v_2),2\big)\}$ is an edge in $\Gamma\bar{\Gamma}$.

In the same way we see that (ii) in Lemma~\ref{lemma-core} is not possible.
\end{proof}

If we swap the assumptions regarding $\Gamma_1$ and $\Gamma_2$ in Proposition~\ref{prop-lex1}, then we are able to deduce the same conclusion under the additional condition that $\Gamma_1[\textrm{core}(\Gamma_2)]$ is a core.
\begin{prop}\label{prop-lex2}
Let $\Gamma_1,\Gamma_2$ be vertex-transitive self-complementary graphs. If $\textrm{core}(\Gamma_2)$ is complete, $\Gamma_1[\textrm{core}(\Gamma_2)]$ is a core, and $\Gamma=\Gamma_1[\Gamma_2]$, then $\Gamma\bar{\Gamma}$ is a~core.
\end{prop}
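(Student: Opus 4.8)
The plan is to follow the template of the proof of Proposition~\ref{prop-lex1}, with two substitutions dictated by the fact that here the \emph{first} factor $\Gamma_1$ is a core while the \emph{second} factor $\Gamma_2$ only has a complete core: the role played there by Lemma~\ref{zadnja-lex} will be taken over by Corollary~\ref{c8}, and the fixed-point argument (Lemma~\ref{fiksna_tocka}) and the clique--coclique argument (Lemma~\ref{clique-coclique}) will be applied to the opposite coordinates. Since $\Gamma_1,\Gamma_2$ are vertex-transitive and self-complementary, so is $\Gamma=\Gamma_1[\Gamma_2]$, hence Corollary~\ref{c5} says that one of (i), (ii), (iii) of Lemma~\ref{lemma-core} holds; I would rule out (iii), case (ii) being symmetric (swap $W_1\leftrightarrow W_2$, $\Gamma\leftrightarrow\bar\Gamma$), which forces (i). Write $K_m:=\textrm{core}(\Gamma_2)$ with $m=\omega(\Gamma_2)=\omega(\bar\Gamma_2)$. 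The first step is to identify $\textrm{core}(\bar\Gamma)$: because $\Gamma_1[\textrm{core}(\Gamma_2)]$ is a core, $\Gamma_1$ is a core; moreover $\bar\Gamma_1\cong\Gamma_1$ and $\textrm{core}(\bar\Gamma_2)\cong\textrm{core}(\Gamma_2)$, so $\bar\Gamma_1[\textrm{core}(\bar\Gamma_2)]\cong\Gamma_1[K_m]$ is a core. Applying Corollary~\ref{c8} to the pair $\bar\Gamma_1,\bar\Gamma_2$ then yields both $\textrm{End}(\bar\Gamma)=\textrm{End}(\bar\Gamma_1)\wr\textrm{End}(\bar\Gamma_2)$ and, via Lemma~\ref{lemma-kaschek2010}(i), $\textrm{core}(\bar\Gamma)=\bar\Gamma_1[\textrm{core}(\bar\Gamma_2)]$, which is an induced subgraph of $\bar\Gamma$ isomorphic to $\Gamma_1[K_m]$.

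Assuming (iii), so that $V(\textrm{core}(\Gamma\bar\Gamma))\subseteq W_2$ and $\textrm{core}(\Gamma\bar\Gamma)\cong\textrm{core}(\bar\Gamma)$, I would fix a retraction $\Phi\colon\Gamma\bar\Gamma\to\textrm{core}(\Gamma\bar\Gamma)$, an isomorphism $\psi$ of $\textrm{core}(\Gamma\bar\Gamma)$ onto the induced subgraph $\bar\Gamma_1[\textrm{core}(\bar\Gamma_2)]$ of $\bar\Gamma$, an antimorphism $\sigma\colon\Gamma\to\bar\Gamma$, and the canonical embeddings $\psi_1,\psi_2$ of $\Gamma,\bar\Gamma$ onto $\langle W_1\rangle,\langle W_2\rangle$ (as in the proof of Theorem~\ref{thm-vt}). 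Put $f_2:=\psi\circ(\Phi|_{W_2})\circ\psi_2$ and $f_1:=\psi\circ(\Phi|_{W_1})\circ\psi_1$. Then both $f_2$ and $f_1\circ\sigma^{-1}$ are homomorphisms from $\bar\Gamma=\bar\Gamma_1[\bar\Gamma_2]$ into its induced subgraph $\bar\Gamma_1[\textrm{core}(\bar\Gamma_2)]$, hence lie in $\textrm{End}(\bar\Gamma)$. By Corollary~\ref{c8} and Corollary~\ref{c7}(2) I can write $f_2=(\varphi_2,\beta_2)$ and $f_1=(\varphi_1,\beta_1)\circ(\sigma_1,\gamma)$ with $\varphi_1,\varphi_2\in\textrm{End}(\bar\Gamma_1)=\textrm{Aut}(\bar\Gamma_1)$ (as $\bar\Gamma_1$ is a core), $\sigma_1\in\overline{\textrm{Aut}(\Gamma_1)}$, $\gamma\colon V(\Gamma_1)\to\overline{\textrm{Aut}(\Gamma_2)}$, and --- crucially, since the images of $f_1,f_2$ lie in $\bar\Gamma_1[\textrm{core}(\bar\Gamma_2)]$ --- each $\beta_1(v_1),\beta_2(v_1)\in\textrm{Hom}(\bar\Gamma_2,\textrm{core}(\bar\Gamma_2))$. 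Thus
\begin{align*}
f_1(v_1,v_2)&=\Big((\varphi_1\circ\sigma_1)(v_1),\,\big(\beta_1(\sigma_1(v_1))\circ\gamma(v_1)\big)(v_2)\Big),\\
f_2(v_1,v_2)&=\big(\varphi_2(v_1),\,\beta_2(v_1)(v_2)\big)
\end{align*}
for all $v_1\in V(\Gamma_1)$ and $v_2\in V(\Gamma_2)$.

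To finish I would produce a single point where $f_1$ and $f_2$ agree. On the $\Gamma_1$-coordinate the map $\varphi_2^{-1}\circ\varphi_1\circ\sigma_1$ is an antimorphism of the regular self-complementary graph $\Gamma_1$, so by Lemma~\ref{fiksna_tocka} it has a fixed vertex $v_1$, giving $(\varphi_1\circ\sigma_1)(v_1)=\varphi_2(v_1)$. Fixing this $v_1$, observe that $\beta_1(\sigma_1(v_1))\circ\gamma(v_1)$ is a homomorphism $\Gamma_2\to\textrm{core}(\bar\Gamma_2)=K_{\omega(\Gamma_2)}$ and $\beta_2(v_1)$ is a homomorphism $\bar\Gamma_2\to K_{\omega(\bar\Gamma_2)}$; choosing a vertex $w$ of $K_m$, Corollary~\ref{lemma-preimage} shows that the fiber of the first over $w$ is a maximum independent set of $\Gamma_2$, the fiber of the second over $w$ is a maximum independent set of $\bar\Gamma_2$ (equivalently, a maximum clique of $\Gamma_2$), and $\alpha(\Gamma_2)\omega(\Gamma_2)=|V(\Gamma_2)|$; since $\Gamma_2$ is vertex-transitive, Lemma~\ref{clique-coclique} then provides a vertex $v_2$ lying in both fibers, so $\big(\beta_1(\sigma_1(v_1))\circ\gamma(v_1)\big)(v_2)=w=\beta_2(v_1)(v_2)$. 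Hence $f_1(v_1,v_2)=f_2(v_1,v_2)$, which forces $\Phi\big((v_1,v_2),1\big)=\Phi\big((v_1,v_2),2\big)$ --- impossible, since $\Phi$ is a homomorphism and $\{((v_1,v_2),1),((v_1,v_2),2)\}$ is an edge of $\Gamma\bar\Gamma$. This excludes (iii); (ii) is excluded the same way, so $\Gamma\bar\Gamma$ is a core. (The degenerate instances, $|V(\Gamma)|=1$ or $\Gamma$ the pentagon, are immediate since then $\Gamma\bar\Gamma$ is $K_2$ or the Petersen graph.)

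I expect the main obstacle to be the structural input at the start: unlike in Proposition~\ref{prop-lex1}, the core $\textrm{core}(\bar\Gamma)=\bar\Gamma_1[\textrm{core}(\bar\Gamma_2)]$ is \emph{not} of the form $\textrm{core}(\bar\Gamma_1)[\bar\Gamma_2]$, so Lemma~\ref{zadnja-lex} is not directly applicable; one must instead recognise homomorphisms into $\bar\Gamma_1[\textrm{core}(\bar\Gamma_2)]$ as endomorphisms of $\bar\Gamma$, invoke Corollary~\ref{c8}, and then keep careful track of the fact that the block maps $\beta_i(v_1)$ take values in $\textrm{Hom}(\bar\Gamma_2,\textrm{core}(\bar\Gamma_2))$ rather than in $\textrm{Aut}(\bar\Gamma_2)$. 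A related subtlety is that these $\beta_i(v_1)$ are no longer invertible, so on the $\Gamma_2$-coordinate one cannot use Lemma~\ref{fiksna_tocka} as in Proposition~\ref{prop-lex1} and must resort to the clique--coclique fiber-intersection argument of Lemma~\ref{clique-coclique}; correspondingly the roles of the two coordinates are swapped relative to Proposition~\ref{prop-lex1}, the two proofs being dual in this respect.
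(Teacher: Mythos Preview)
Your proposal is correct and follows essentially the same approach as the paper's own proof: both replace Lemma~\ref{zadnja-lex} by Corollary~\ref{c8} to decompose $f_2$ and $f_1\circ\sigma^{-1}$ as wreathed endomorphisms of $\bar\Gamma_1[\bar\Gamma_2]$, then apply Lemma~\ref{fiksna_tocka} on the first coordinate and the clique--coclique intersection (Corollary~\ref{lemma-preimage} and Lemma~\ref{clique-coclique}) on the second to produce a point where $f_1$ and $f_2$ coincide. Your commentary on the obstacles --- that the $\beta_i(v_1)$ land in $\textrm{Hom}(\bar\Gamma_2,K_m)$ rather than $\textrm{Aut}(\bar\Gamma_2)$, forcing the swap of coordinate roles relative to Proposition~\ref{prop-lex1} --- is exactly the point the paper highlights as well.
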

Since Propositions~\ref{prop-lex1} and~\ref{prop-lex2} have similar proofs, we sketch only the main differences.
\begin{proof}[Sketch of the proof]
Denote $\textrm{core}(\bar{\Gamma}_2)=:K_m$. Similarly as in the proof of Proposition~\ref{prop-lex1} we deduce that the condition (iii) in Lemma~\ref{lemma-core} would yield
$$\textrm{core}(\Gamma\bar{\Gamma})\cong \textrm{core}(\bar{\Gamma})=\bar{\Gamma}_1[K_m].$$ Here, the only difference is the application of Lemma~\ref{lemma-knauer}, which is replaced by the assumption that $\bar{\Gamma}_1[K_m]$ is a core. Let $\psi: \textrm{core}(\Gamma\bar{\Gamma})\to \bar{\Gamma}_1[K_m]$ be any isomorphism, and define $\Phi, \psi_1, \psi_2, f_1, f_2, \sigma$ as in the proof of Proposition~\ref{prop-lex1}.
Then $f_2, f_1\circ \sigma^{-1}\in \textrm{Hom}(\bar{\Gamma}_1[\bar{\Gamma}_2],\bar{\Gamma}_1[K_m])$. Hence, these maps may be interpreted as members of $\textrm{End}(\bar{\Gamma}_1[\bar{\Gamma}_2])$, which equals $\textrm{End}(\bar{\Gamma}_1)\wr \textrm{End}(\bar{\Gamma}_2)$ by Corollary~\ref{c8}.
Note that since $\Gamma_1[\textrm{core}(\Gamma_2)]$ is a core, the graph $\bar{\Gamma}_1\cong \Gamma_1$ is a core as well. Therefore
$f_2=(\varphi_2,\beta_2)$ and $f_1=(\varphi_1,\beta_1)\circ (\sigma_1,\gamma)$, where
\begin{align*}
&\varphi_1, \varphi_2\in \textrm{End}(\bar{\Gamma}_1)=\textrm{Aut}(\bar{\Gamma}_1),\\
&\beta_1,\beta_2: V(\bar{\Gamma}_1)\to \textrm{End}(\bar{\Gamma}_2),\\
&\sigma_1\in \overline{\textrm{Aut}(\Gamma_1)},\\
&\gamma : V(\Gamma_1)\to \overline{\textrm{Aut}(\Gamma_2)},
\end{align*}
and the image of $\beta_i(v_1)$ is in $V(K_m)$ for all $v_1\in V(\bar{\Gamma}_1)$ and $i=1,2$. Clearly, \eqref{e102}-\eqref{e103} is still true.
Since $\varphi_2^{-1}\circ \varphi_1\circ \sigma_1\in \overline{\textrm{Aut}(\Gamma_1)}$, Lemma~\ref{fiksna_tocka} yields a vertex $v_1\in V(\Gamma_1)$ such that
$$(\varphi_1\circ\sigma_1)(v_1)=\varphi_2(v_1).$$
Let $v\in V(K_m)$ be any vertex. By Corollary~\ref{lemma-preimage}, $(\beta_2(v_1))^{-1}(v)$ is an independent set in $\bar{\Gamma}_2$ of order $\alpha(\bar{\Gamma}_2)$, $\big(\beta_1(\sigma_1(v_1))\circ\gamma(v_1)\big)^{-1}(v)$ is a clique in $\bar{\Gamma}_2$ of order $\alpha(\Gamma_2)=\omega(\bar{\Gamma}_2)$, and $\alpha(\bar{\Gamma}_2)\omega(\bar{\Gamma}_2)=|V(\bar{\Gamma}_2)|$. By Lemma~\ref{clique-coclique} there exists $$v_2\in (\beta_2(v_1))^{-1}(v) \cap \big(\beta_1(\sigma_1(v_1))\circ\gamma(v_1)\big)^{-1}(v).$$ Consequently, \eqref{e102}-\eqref{e103}  imply that $f_1(v_1,v_2)=f_2(v_1,v_2)$ and we deduce the same contradiction as in the proof of Proposition~\ref{prop-lex1}.
\end{proof}

\begin{exa}
Paley graph $\textrm{Paley}(q)$ is a core if $q$ is not a square, while its core is complete if $q$ is a square~\cite[Proposition~3.3]{cameron}. Hence, $\Gamma_1=\textrm{Paley}(q_1)$ and $\Gamma_2=\textrm{Paley}(q_2)$ satisfy the assumptions in Proposition~\ref{prop-lex1} whenever $q_1$ is a square and $q_2$ is not. In the reversed order, graphs $\Gamma_1=\textrm{Paley}(q_2)$ and $\Gamma_2=\textrm{Paley}(q_1)$ satisfy the assumptions in Proposition~\ref{prop-lex2} at least for $q_2=5$. In fact, $\textrm{Paley}(5)\cong C_5$ and $\Gamma_1[\textrm{core}(\Gamma_2)]\cong C_5[K_{\sqrt{q_1}}]$ is a core by~\cite[Theorem~3.11]{knauer}.\hfill$\Box$
\end{exa}

Clearly, if vertex-transitive self-complementary graphs $\Gamma_1$ and $\Gamma_2$ have both complete cores, then the same is true for $\Gamma=\Gamma_1[\Gamma_2]$, and therefore $\Gamma\bar{\Gamma}$ is a core by Theorem~\ref{thm-vt}. In view of Open Problem~\ref{op},
the following three combinations in the lexicographic product $\Gamma_1[\Gamma_2]$ of vertex-transitive self-complementary graphs should be also addressed:
\begin{itemize}
\item $\Gamma_1$ is a core, $\textrm{core}(\Gamma_2)=K_m$, and $\Gamma_1[K_m]$ is neither a core nor its core is complete (if such graphs exist);
\item $\Gamma_1$ and $\Gamma_2$ are both cores, and $\Gamma_1[\Gamma_2]$ is neither a core nor its core is complete (if such graphs exist);
\item lexicographic products with more than two factors.
\end{itemize}
However, they seem quite challenging. In fact, in general we only know that $\textrm{core}(\Gamma_1[\Gamma_2])=\Gamma_1'[\textrm{core}(\Gamma_2)]$, where $\Gamma_1'$ is a subgraph in $\Gamma_1$~(see~\cite{hahn}).\bigskip

\noindent{\bf Acknowledgements and comments.} Graph $\Gamma\bar{\Gamma}$ was denoted by $\Gamma\equiv\bar{\Gamma}$ in an earlier draft of this paper. The author used the same notation also in his talk presentation\cprotect\footnote{Slides are available at \verb|https://8ecm.si/system/admin/abstracts/presentations/|\\
\verb|000/000/098/original/Orel_8ECM_2021.pdf?1626466439|. Accessed October 2, 2021.} at the 8th European Congress of Mathematics. Later during 8ECM he attended
the talk of Paula Carvalho, where he learned that graph $\Gamma\bar{\Gamma}$ was introduced in~\cite{haynes2007} and is known as the `complementary prism'. He would like to thank Paula Carvalho also for letting him know that Lemmas~\ref{carvalho} and \ref{p6} were already proved in \cite{carvalho2018} and~\cite{haynes2007}, respectively.

This work is supported in part by the Slovenian Research Agency (research program P1-0285
and research projects N1-0140, N1-0208 and N1-0210).

\end{document}